%\documentclass[varwidth]{standalone}
%\usepackage{amssymb,amsfonts,amsmath}
%\usepackage{graphicx} 

%\usepackage{tikz}
%\usepackage{microtype}
%\centering
\documentclass{amsart}
\usepackage{graphicx}
\usepackage{mathtools,etoolbox}

\usepackage{ifthen}
\usepackage[T1]{fontenc}
\usepackage[utf8]{inputenc}
\usepackage[all]{xy}
\usepackage{graphicx}
\usepackage{enumerate}
\usepackage{xspace}
\usepackage{pdfsync}
\usepackage{epic}
\usepackage{dsfont}
\usepackage{tikz-cd}
\usepackage{amssymb}
\usepackage{stmaryrd}

\usepackage{multirow}

\usepackage{hyperref}
\usepackage{varioref}
\usepackage[hyperpageref]{backref}

\usepackage{xcolor}
\hypersetup{
    colorlinks,
    linkcolor={red!60!black},
    citecolor={blue!60!black},
    urlcolor={blue!50!black}
}

\theoremstyle{theorem} \newtheorem{theorem}{Theorem}
\newtheorem{proposition}{Proposition} [section]
\newtheorem{lemma}[proposition]{Lemma}
\newtheorem{definition}[proposition]{Definition}
\newtheorem{corollary}[proposition]{Corollary}
\newtheorem{conjecture}[proposition]{Conjecture}
\newtheorem{problem}[proposition]{Problem}
\theoremstyle{definition} \newtheorem{question}{Question}
\theoremstyle{remark} \newtheorem{example}{Example}
\theoremstyle{remark} \newtheorem{remark}[proposition]{Remark}
\theoremstyle{definition} \newtheorem{defi}[proposition]{Definition}
\theoremstyle{definition}

\def\C{\mathbb{C}}
\def\P{\mathbb{P}}

\newcommand{\quotient}[2]{{\left.\raisebox{-.2em}{$#1$}\middle\backslash\raisebox{.2em}{$#2$}\right.}}

\newcommand\Q{{\mathbb{Q}}}

\def\det{\mathop{\rm det}\nolimits}

\def\Supp{\mathop{\rm Supp}\nolimits}
\def\Sym{\mathop{\rm Sym}\nolimits}
\def\Jac{\mathop{\rm Jac}\nolimits}
%\def\Pic{\mathop{\rm Pic}\nolimits}

% mathcal characters

\newcommand{\calL}{{\mathcal L}}

\newcommand{\calO}{{\mathcal O}}

\newcommand{\PP}{{\mathbb P}}

\def\bydef{\coloneqq}

\begin{document}
\title[]{Hyperbolicity and specialness of symmetric powers}

\

\author{Beno\^it Cadorel}
\address{Institut \'{E}lie Cartan de Lorraine, UMR 7502, Universit\'{e} de Lorraine, Site de Nancy, B.P. 70239, F-54506 Vandoeuvre-l\`{e}s-Nancy Cedex}

\

\email{benoit.cadorel@univ-lorraine.fr}
\author{Fr\'ed\'eric Campana}
\address{Institut \'{E}lie Cartan de Lorraine, UMR 7502, Universit\'{e} de Lorraine, Site de Nancy, B.P. 70239, F-54506 Vandoeuvre-l\`{e}s-Nancy Cedex}

\

\email{frederic.campana@univ-lorraine.fr}

\author{Erwan Rousseau}
\address{Institut Universitaire de France \& Aix Marseille Univ., CNRS, Centrale Marseille, I2M, Marseille, France}

\

\email{erwan.rousseau@univ-amu.fr}

\thanks{E. R. was partially supported by the ANR project \lq\lq FOLIAGE\rq\rq{}, ANR-16-CE40-0008.}

\date{\today}

\begin{abstract}
Inspired by the computation of the Kodaira dimension of symmetric powers $X_m$ of a complex projective variety $X$ of dimension $n \geq 2$ by Arapura and Archava, we study their analytic and algebraic hyperbolic properties. First we show that $X_m$ is special if and only if $X$ is special (except when the core of $X$ is a curve). Then we construct dense entire curves in (sufficiently hig) symmetric powers of K3 surfaces and product of curves. We also give a criterion based on the positivity of jet differentials bundles that implies pseudo-hyperbolicity of symmetric powers. As an application, we obtain the Kobayashi hyperbolicity of symmetric powers of generic projective hypersurfaces of sufficiently high degree. On the algebraic side, we give a criterion implying that subvarieties of codimension $\leq n-2$ of symmetric powers are of general type. This applies in particular to varieties with ample cotangent bundles. Finally, based on a metric approach we study symmetric powers of ball quotients.

\end{abstract}

\maketitle

%\tableofcontents

%%%%%%%%%%%%%%%%%%%%%%%%%%%%%%%%%%%
%%%%%%%%%%%%%%%%%%%%%%%%%%%%%%%%%%%%%%
%%%%%%%%%%%%%%%%%%%%%%%%%%%%%%%%%%%
%%%%%%%%%%%%%%%%%%%%%%%%%%%%%%%%%%%%%%

\section{Introduction}
In \cite{AA02}, it is shown that for any smooth complex projective variety $X$ with $n=\dim X \geq 2$ and Kodaira dimension $k$, the Kodaira dimension of the symmetric product $X_m$ is equal to $mk$. In particular, $X$ is of general type if and only if $X_m$ is of general type. Green-Griffiths-Lang conjectures claim that varieties of general type should have hyperbolic properties concerning entire curves or rational points. More precisely, varieties of general type should be \emph{pseudo-hyperbolic} i.e. there should exist a proper subvariety containing all entire curves and all but finitely many rational points. Therefore, if we believe in these conjectures, symmetric product of varieties of general type should share the same hyperbolic properties and the following conjecture should be true. 

\begin{conjecture}\label{symLang}
Let $X$ be a complex projective variety with $n=\dim X \geq 2$. Then $X$ is pseudo-hyperbolic if and only if $X_m$ is pseudo-hyperbolic for some $m$.
\end{conjecture}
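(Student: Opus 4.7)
The plan is to prove the two implications separately. The direction $(\Leftarrow)$ should reduce to a purely geometric embedding argument, while the direction $(\Rightarrow)$ splits naturally into an analytic part, which I expect to be tractable via a lifting argument, and an arithmetic part, which appears to require strong uniform finiteness conjectures for rational points.

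For $(\Leftarrow)$, assuming $Z \subsetneq X_m$ is an exceptional subvariety for $X_m$ (for some $m \geq 2$), I would consider, for each tuple $(x_2, \ldots, x_m) \in X^{m-1}$, the closed embedding $\iota_{\underline{x}} : X \to X_m$, $x \mapsto [x, x_2, \ldots, x_m]$. Since $Z$ is proper in $X_m$, a generic choice of $\underline{x}$ ensures $\iota_{\underline{x}}(X) \not\subset Z$, so $W := \iota_{\underline{x}}^{-1}(Z)$ is a proper subvariety of $X$. Any entire curve $f : \mathbb{C} \to X$ produces an entire curve $\iota_{\underline{x}} \circ f$ in $X_m$, which must land in $Z$; hence $f$ lands in $W$. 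For rational points, after a finite extension of the base field one may pick $(x_2, \ldots, x_m)$ to be $k$-rational, and the same argument transports the arithmetic exceptional set back to $X$.

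For the analytic part of $(\Rightarrow)$, starting from an exceptional subvariety $Y \subsetneq X$ for $X$, I would propose $Z := \pi(Y^m) \subsetneq X_m$, where $\pi : X^m \to X_m$ is the quotient by $\mathfrak{S}_m$. Given any entire curve $g : \mathbb{C} \to X_m$, the fibre product $\mathbb{C} \times_{X_m} X^m \to \mathbb{C}$ is a finite ramified cover, so each of its components is a non-compact Riemann surface admitting a finite map to $\mathbb{C}$ and hence isomorphic to $\mathbb{C}$. This yields a lift $\tilde{g} : \mathbb{C} \to X^m$ of $g$ (possibly after reparametrisation); its $m$ projections $f_i : \mathbb{C} \to X$ are entire curves landing in $Y$ by hypothesis, so $\tilde{g}(\mathbb{C}) \subset Y^m$ and $g(\mathbb{C}) \subset Z$, which has dimension $m \dim Y < m n$ and is therefore proper in $X_m$.

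The main obstacle is the arithmetic part of $(\Rightarrow)$. A $k$-rational point of $X_m$ corresponds to a Galois-invariant effective $0$-cycle of degree $m$ on $X$, whose support may consist of points defined over extensions of $k$ of degree up to $m$. Controlling these requires a uniform Lang-type bound on $X(K)$ outside a fixed exceptional set, valid for all extensions $K/k$ of degree at most $m$. A conditional proof could proceed by constructing the arithmetic exceptional set in $X_m$ as the image under $\pi$ of appropriate products of such Lang loci, and combining it with $\pi(Y^m)$ from the analytic step; but making this rigorous, even conditionally on a uniform Lang conjecture, is the central difficulty that likely justifies the conjectural status of the statement.
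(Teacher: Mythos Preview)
The statement is labelled a \emph{conjecture} in the paper, and the authors say explicitly that they ``were not able to prove conjecture \ref{symLang} in full generality''. So there is no proof in the paper to compare against; the relevant question is whether your proposal actually closes the problem.

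Your argument for $(\Leftarrow)$ is fine and matches what the paper has in mind when it says this direction ``is not difficult''.

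The gap is in the analytic half of $(\Rightarrow)$. You assert that each irreducible component of the fibre product $\mathbb C \times_{X_m} X^m$ is a non-compact Riemann surface admitting a finite map to $\mathbb C$, \emph{and hence is isomorphic to $\mathbb C$}. The last clause is false: a finite \emph{ramified} cover of $\mathbb C$ can have arbitrary genus. Concretely, the affine curve $w^2 = z(z-1)(z-2)$ is a degree $2$ branched cover of $\mathbb C$ whose smooth compactification has genus $1$; it is an elliptic curve minus a point, which is Kobayashi hyperbolic and certainly not $\mathbb C$. The map $q : X^m \to X_m$ is ramified precisely along the big diagonal, so as soon as the entire curve $g : \mathbb C \to X_m$ meets the singular locus (which you cannot exclude), the pulled-back cover is genuinely branched and the lift $\tilde g$ lives on some curve $C$ that may well be hyperbolic. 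In that case the projections $\mathrm{pr}_i \circ \tilde g : C \to X$ are maps from a hyperbolic curve, and pseudo-hyperbolicity of $X$ gives no control over them whatsoever. This is not a technical wrinkle; it is the heart of why the implication is open. The paper's Theorem \ref{thmhypcrit} gets around this by working on $\widetilde U$, the universal cover of $\mathbb C$ minus the preimage of the diagonal, but then has to pay for the resulting poles with a strong positivity hypothesis on jet differentials --- exactly the kind of extra input your argument is missing.

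You are right that the arithmetic half is conditional at best, but note that already the analytic half is genuinely open, not just the arithmetic one.
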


Remark that if $X_m$ is pseudo-hyperbolic for some $m$, it is not difficult to prove that $X$ is pseudo-hyperbolic, so the interesting question is to show that $X_m$ is pseudo-hyperbolic if $X$ is.

The second author has proposed vast generalizations of Green-Griffiths-Lang conjectures based on the \emph{specialness} property. Special varieties are opposite to varieties of general type in the following sense: they do not admit any fibration with (orbifold) base of general type or equivalently the core is of dimension $0$ (see \cite{Cam04} for a detailed background on special varieties and the core map). Conjecturally, they should correspond to varieties admitting Zariski dense entire curves and a (potentially) dense set of rational points (when defined over a number field).

In the first part of this paper we study specialness of symmetric powers and prove the following result.

\begin{theorem}
If $n\geq 2$, $X$ is special if and only if so is $X_m$ for some $m$ (unless when $n\geq 2$, and the core of $X$ is a curve, in which case $X_m$ is special for sufficiently big $m$ even if $X$ is not special).
\end{theorem}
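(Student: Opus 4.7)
The plan is to relate the core of $X_m$ to the core of $X$ by passing through the product $X^m$, where Campana's theory already applies directly. The stability of the core under products gives $c_{X^m}=c_X^{\times m}:X^m\to C(X)^m$, the target being endowed with the product orbifold structure. The symmetric group $S_m$ acts equivariantly, so descending along the quotient map $\pi:X^m\to X_m$ produces a candidate fibration $c_{[m]}:X_m\to C(X)_m$ with an induced orbifold divisor $\Delta^{(m)}$ on the symmetric power. The whole proof will revolve around controlling how orbifold multiplicities transform under $\pi$ and under $C(X)^m\to C(X)_m$.

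For the direction ``$X$ special $\Rightarrow X_m$ special'', I would argue by contradiction: suppose $X_m$ admits an orbifold fibration $f:X_m\to (Z,\Delta_Z)$ with base of orbifold general type. Pulling back along $\pi$ gives an $S_m$-equivariant fibration $f\circ\pi:X^m\to Z$, and because the ramification of $\pi$ is concentrated over the big diagonal, one can track precisely how the orbifold multiplicities of $f$ compare to those of $f\circ\pi$. The upshot is that the orbifold base of $f\circ\pi$ remains of general type, contradicting the fact that $X^m$ is special whenever $X$ is (Campana's product theorem).

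For the converse, I would exploit the candidate fibration $c_{[m]}$. When $\dim C(X)\geq 2$, the Arapura--Archava formula gives $\kappa(C(X)_m)=m\dim C(X)=\dim C(X)_m$, so $C(X)_m$ is already of general type as a variety and $(C(X)_m,\Delta^{(m)})$ is a fortiori of orbifold general type, forbidding $X_m$ from being special. When $\dim C(X)=1$ the analysis is genuinely different: the divisors supporting $\Delta^{(m)}$ on $C(X)_m$ behave like hyperplanes with positivity bounded independently of $m$, while $K_{C(X)_m}$ depends on the genus and the multiplicities of $\Delta$ in a controlled way. An explicit intersection-theoretic computation on the symmetric power of a curve shows that the orbifold canonical class $K_{C(X)_m}+\Delta^{(m)}$ ceases to be big once $m$ passes a threshold determined by the core data; the fibers of $c_{[m]}$, being symmetric products of the special fibers of the core map, are then themselves special, and one concludes that $X_m$ is special.

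The main obstacle is the orbifold bookkeeping in the two passages $X^m\leftrightarrow X_m$ and $C(X)^m\leftrightarrow C(X)_m$: in the forward direction, one must show that the multiplicity changes induced by the Galois cover $\pi$ cannot destroy the orbifold general type property of a hypothetical base; and in the exceptional case, one must run the orbifold computation on the symmetric power of a curve quantitatively enough to pinpoint the transition from big to non-big, while also ruling out the appearance of a different, unrelated fibration that would still obstruct specialness.
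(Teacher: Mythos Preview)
Your proposal has a genuine gap in the converse direction (``$X$ not special, $\dim C(X)\geq 2$ $\Rightarrow$ $X_m$ not special''). You write that Arapura--Archava gives $\kappa(C(X)_m)=m\dim C(X)$, so that $C(X)_m$ is of general type as a variety. But Arapura--Archava says $\kappa(C(X)_m)=m\,\kappa(C(X))$, and the base of the core is only of \emph{orbifold} general type: the pair $(C(X),\Delta_c)$ satisfies $\kappa(C(X),K_{C(X)}+\Delta_c)=\dim C(X)$, while $\kappa(C(X))$ itself may be strictly smaller. So $C(X)_m$ need not be of general type, and your ``a fortiori'' step collapses. You cannot simply discard $\Delta_c$ and recover it afterwards.

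What the paper actually does is work with the orbifold from the start. One first checks that $c^m:X^m\to B^m$ has orbifold base $(B^m,D_{c^m})$ of Kodaira dimension $m\dim B$. The $\mathfrak{S}_m$-invariant sections of $(K_{B^m}+D_{c^m})^{\otimes k}$ descend to $B_m$ and pull back to the smooth locus of $X_m$ via $c_m$; the crucial and nontrivial step is then to show that these sections extend to a resolution $X'_m$ as sections of $\mathrm{Sym}^k(\Omega^{mp}_{X'_m})$. This is exactly what the Reid--Tai--Weissauer criterion provides, once one verifies (using the eigenvalue computation of Arapura--Archava for the $\mathfrak{S}_m$-action) that the relevant numerical condition holds in dimension $p=\dim B\geq 2$. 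Your outline never engages with the extension problem across the quotient singularities of $X_m$, and this is precisely where the hypothesis $\dim C(X)\geq 2$ is used.

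For the curve case, the paper does not attempt your intersection-theoretic computation on $C(X)_m$; when $D_c=0$ it exploits local sections of $c$ (hence of $c_m$) to reduce to the specialness of $B_m$ for $m\geq g$, and when $D_c\neq 0$ it passes to a good orbifold cover $\tilde B\to B$ to kill $D_c$ and reduce to the previous case. Your worry about ``ruling out a different, unrelated fibration'' is thus bypassed rather than confronted head-on.
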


In the line of the above conjectures, one should expect to have the corresponding hyperbolic properties. In fact, this phenomenon has already been studied in \cite{HT00} where the authors prove potential density of rational points for sufficiently high symmetric powers of K3 surfaces. Here we prove the analytic analogue showing that these symmetric powers contain dense entire curves (see Theorem \ref{C^2-dom K3}).

We also study the case of product of curves.

\begin{theorem}
Let $G$ (resp. $C)$ be a curve of genus $g(G)\leq 1$ (resp. $g(C)>1)$, and  $S=G\times C$.
If $m\geq g$, then $S_m$ contains dense entire curves.
\end{theorem}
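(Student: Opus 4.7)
The strategy is to exhibit a dominant holomorphic map $\Psi : \mathbb{C}^{2m} \to S_m$; composing with any Zariski-dense entire curve $\mathbb{C} \to \mathbb{C}^{2m}$ (which exists trivially, e.g.\ $t \mapsto (e^{\alpha_1 t}, \ldots, e^{\alpha_{2m} t})$ for generic $\alpha_i$) then yields the desired Zariski-dense entire curve in $S_m$, since dominant holomorphic maps preserve Zariski-density.

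The construction of $\Psi$ uses the tower $S_m \xrightarrow{\pi_m} C_m \xrightarrow{u} J(C)$, where $\pi_m$ is induced by the projection $S \to C$ and $u$ is the Abel--Jacobi morphism. For $m \geq g := g(C)$, the map $u$ is surjective with generic fiber $|D| \simeq \mathbb{P}^{m-g}$ by Riemann--Roch, and $\pi_m$ has generic fiber $G^m$ (over a reduced divisor $c_1 + \cdots + c_m$, the fiber is $\prod_i G \simeq G^m$). Pulling back the tower along the universal cover $\pi : \mathbb{C}^g \to J(C)$, the space $\widetilde{C_m} := C_m \times_{J(C)} \mathbb{C}^g$ is the projectivization of a rank-$(m{-}g{+}1)$ vector bundle that extends across the Brill--Noether locus (of codimension $m + 2 - g \geq 2$ in $J(C)$) by Hartogs and is then holomorphically trivial on the Stein contractible $\mathbb{C}^g$ by Oka--Grauert. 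Thus $\widetilde{C_m} \simeq \mathbb{C}^g \times \mathbb{P}^{m-g}$ birationally.

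Restricting further to the affine Stein contractible chart $\mathbb{C}^m := \mathbb{C}^g \times \mathbb{C}^{m-g}$ and invoking the zero section $\sigma : D \mapsto \{(0_G, c) : c \in D\}$ of $\pi_m$ together with the simple connectedness of $\mathbb{C}^m$ (which kills the $\mathfrak{S}_m$-monodromy of the fibration), the pulled-back $\widetilde{S_m}|_{\mathbb{C}^m}$ trivializes to $\mathbb{C}^m \times G^m$. Since $G^m$ is itself dominated by $\mathbb{C}^m$ (affine open $\mathbb{C}^m \hookrightarrow (\mathbb{P}^1)^m$ when $g(G) = 0$; universal cover $\mathbb{C}^m \to E^m$ when $g(G) = 1$), composition gives
\[
\Psi : \mathbb{C}^{2m} = \mathbb{C}^m \times \mathbb{C}^m \longrightarrow \mathbb{C}^m \times G^m \simeq \widetilde{S_m}|_{\mathbb{C}^m} \hookrightarrow \widetilde{S_m} \twoheadrightarrow S_m,
\]
the last arrow being the surjective projection from the fiber product, and the composition is holomorphic and dominant, as required.

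The main technical obstacle is the trivialization of the $G^m$-fibration over the Stein chart $\mathbb{C}^m$. For $G = E$ this is clean: the principal $E^m$-bundle has the zero section and is hence trivial. For $G = \mathbb{P}^1$, one must trivialize a $\mathrm{PGL}_2^m \rtimes \mathfrak{S}_m$-torsor on $\mathbb{C}^m$, using both the vanishing of $H^1(\mathbb{C}^m, \mathrm{PGL}_2)$ by Oka--Grauert and the simple connectedness of $\mathbb{C}^m$ (which trivializes the residual $\mathfrak{S}_m$-cover). Care must also be taken around the codimension-$1$ diagonal locus of $C_m$, where the fiber structure of $\pi_m$ degenerates; since this is a proper closed subset of the Stein chart, dominance of $\Psi$ onto the full $S_m$ follows from dominance onto its complement in $\widetilde{S_m}$.
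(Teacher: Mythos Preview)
Your strategy is different from the paper's, and it contains a genuine gap at the trivialization step. You claim that the simple connectedness of $\mathbb{C}^m$ ``kills the $\mathfrak{S}_m$-monodromy'' of the $G^m$-fibration $\widetilde{S_m}|_{\mathbb{C}^m}\to\mathbb{C}^m$. But that fibration is only a locally trivial $G^m$-bundle over the complement $\mathbb{C}^m\setminus\Delta'$ of the preimage of the diagonal of $C_m$, and this complement is \emph{not} simply connected: $\Delta'$ has codimension~$1$. The monodromy representation $\pi_1(\mathbb{C}^m\setminus\Delta')\to\mathfrak{S}_m$ is the pull-back of the cover $(C^m)^\circ\to C_m^\circ$, and small loops around $\Delta'$ give transpositions; since your map $\mathbb{C}^m\to C_m$ factors through the abelian cover $\widetilde{C_m}\to C_m$ (deck group $\mathbb{Z}^{2g}$), the surjection onto the nonabelian $\mathfrak{S}_m$ (for $m\geq 3$) cannot become trivial. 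So $\widetilde{S_m}|_{\mathbb{C}^m}$ is \emph{not} isomorphic, nor even birational over $\mathbb{C}^m$, to $\mathbb{C}^m\times G^m$. Passing to the $\mathfrak{S}_m$-cover does not help either: that cover is $\mathbb{C}^m\times_{C_m}(C^m)^\circ$, which projects to $C^m$; any meromorphic map $\mathbb{C}^m\dashrightarrow C$ is constant because $C$ is Brody hyperbolic, so this torsor has no rational section. The existence of the zero section $\sigma$ is irrelevant here, since the origin of $G^m$ is $\mathfrak{S}_m$-fixed and says nothing about the permutation monodromy. Whether $S_m$ is actually $\mathbb{C}^{2m}$-dominable is thus left open by your argument.

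The paper bypasses this obstruction entirely. It first takes a (classically) dense entire curve $f:\mathbb{C}\to C_m$, then forms the incidence curve $V=\{(z,c)\in\mathbb{C}\times C:c\in f(z)\}$, a one-dimensional Stein space equipped with a proper degree-$m$ map $\pi:V\to\mathbb{C}$ and a projection $F:V\to C$. Any holomorphic $g:V\to G$ (which exists since $V$ is Stein and $G=\mathbb{P}^1$ or an elliptic curve) yields $h:\mathbb{C}\to S_m$, $z\mapsto (g\times F)(\pi^{-1}(z))$. Because $V$ is Stein one may interpolate values of $g$ on any discrete set of fibres of $\pi$, and an elementary density argument then makes $h(\mathbb{C})$ dense in $S_m$. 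The point is that working over a one-dimensional Stein base with a \emph{branched} cover $V\to\mathbb{C}$ handles the unordered nature of the $G$-coordinates without ever trying to trivialize the $G^m$-bundle globally. A minor additional remark: your construction, even if it worked, would give a Zariski-dense entire curve, whereas the statement asks for density in the usual topology; this is easily fixed, but the monodromy issue above is not.
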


As recently observed by Levin \cite{L}, such symmetric powers provide negative answers to Puncturing Problems as formulated by Hassett and Tschinkel in \cite{HT} in the arithmetic and geometric setting, which can be generalized in the analytic setting as follows.

\begin{problem}(Analytic Puncturing Problem)
Let $X$ be a projective variety with canonical singularities and $Z$ a subvariety of codimension $\geq 2$. Assume that there are Zariski dense entire curves on $X$. Is there a Zariski dense entire curve on $X \setminus Z$ ?
\end{problem}

Considering $Z:=\Delta_m \subset S_m$, the small diagonal, one easily gets that Zariski dense entire curves cannot avoid $Z$ giving a counter-example to the above problem.

In the second part of this paper, we study hyperbolic properties of symmetric powers. We were not able to prove conjecture \ref{symLang} in full generality, and the following particular case seems already interesting and nontrivial.

\begin{problem}
Let $X$ be a complex projective manifold with $\dim X \geq 2$. Assume $\Omega_X$ is ample. Show that $X_m$ is pseudo-hyperbolic.
\end{problem}

We provide partial answers to this problem and consider more generally jet differentials bundles $E_{k,r}^{GG} \Omega_X$ whose sections correspond to algebraic differential equations or equivalently to sections of lines bundles on the jets spaces $\pi_k: X_k^{GG} \to X$ (see section \ref{jets} and \cite{Dem95} for an introduction to these objects). First, we establish a criteria which ensures strong algebraic degeneracy of entire curves in symmetric powers i.e. the Zariski closure of the union of entire curves, known as the exceptional set $\mathrm{Exc}(X_m)$, is a proper subvariety.

\begin{theorem}
	Let $X$ be a complex projective manifold. Let $A$ be a very ample line bundle on $X$. Let $Z \subsetneq X$, and $k, r ,d \in \mathbb N^\ast$. We make the following hypotheses.

\begin{enumerate}\itemsep=0.5em
		\item Assume that
			$$
			\mathrm{Bs} \left(H^0 (X, E_{k,r}^{GG} \Omega_X \otimes \mathcal O(- d A)) \right) \subset X_k^{GG, \mathrm{sing}} \cup \pi_k^{-1}(Z).
			$$

		\item Assume that $\frac{d}{r} > 2 m(m-1)$.
\end{enumerate}
	Then $\mathrm{Exc}(X_m)\neq X_m$. 
\end{theorem}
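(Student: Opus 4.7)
The overall plan is to construct, from any section $\omega \in H^0(X, E_{k,r}^{GG}\Omega_X \otimes \mathcal{O}(-dA))$, a symmetric jet differential on $X_m$ and then apply the Siu-Demailly fundamental vanishing theorem. Write $q: X^m \to X_m$ for the quotient by $\mathfrak{S}_m$ and $\Delta \subset X^m$ for the big diagonal. The natural candidate on $X^m$ is
$$\widetilde{\Omega}_\omega \bydef \prod_{i=1}^m \mathrm{pr}_i^*\omega,$$
which is manifestly $\mathfrak{S}_m$-invariant and lies in $H^0\!\bigl(X^m,\, E_{k,rm}^{GG}\Omega_{X^m} \otimes \bigotimes_i \mathrm{pr}_i^*\mathcal{O}(-dA)\bigr)$. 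Writing $A^{(m)}$ for the ample line bundle on $X_m$ obtained by descending $\bigotimes_i \mathrm{pr}_i^*A$, $\widetilde{\Omega}_\omega$ descends to a section $\Omega_\omega \in H^0\!\bigl(X_m^{\mathrm{reg}},\, E_{k,rm}^{GG}\Omega_{X_m} \otimes (A^{(m)})^{-d}\bigr)$ over the regular locus of the quotient.

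Next I would fix a resolution of singularities $\mu: Y \to X_m$ with exceptional divisor $F$, and extend $\mu^*\Omega_\omega$ meromorphically across $F$. A local computation in $\mathfrak{S}_m$-Luna charts around a generic point of $q(\Delta)$ shows that the jacobian of $q$ introduces, in the descent of jet coordinates, a pole of order at most $2m(m-1)\,r$ along $F$, so that
$$\mu^*\Omega_\omega \in H^0\!\bigl(Y,\, E_{k,rm}^{GG}\Omega_Y \otimes \mu^*(A^{(m)})^{-d} \otimes \mathcal{O}_Y\bigl(2m(m-1)\,r\,F\bigr)\bigr).$$
Hypothesis (2), namely $d/r > 2m(m-1)$, then guarantees that $\mu^*(A^{(m)})^{d}\otimes \mathcal{O}_Y(-2m(m-1)\,r\,F)$ is still big on $Y$ (the negativity along $F$ being absorbed by the ample pullback $\mu^*(A^{(m)})$), placing $\mu^*\Omega_\omega$ in the scope of the fundamental vanishing theorem applied on $Y$.

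To conclude, for a non-constant entire curve $g: \mathbb{C} \to X_m$ not contained in $q(\Delta)$, its proper transform $\tilde g: \mathbb{C} \to Y$ is again entire, and the vanishing theorem gives $\tilde g^{[k]}(\mathbb{C}) \subset \{\mu^*\Omega_\omega = 0\}$ for every $\omega$. At a generic $t \in \mathbb{C}$, choosing a lift $(x_1,\ldots,x_m) \in X^m \setminus \Delta$ of $g(t)$ with corresponding $k$-jets $\xi_1,\ldots,\xi_m$, the vanishing $\prod_i \omega(\xi_i)=0$ for all $\omega$ forces, by the standard union-of-subspaces argument (a vector space cannot be the union of finitely many proper subspaces), that for some index $i$ one has $\omega(\xi_i) = 0$ for every $\omega$, i.e.\ $\xi_i \in \mathrm{Bs}\bigl(H^0(X, E_{k,r}^{GG}\Omega_X \otimes \mathcal{O}(-dA))\bigr)$. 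Since the corresponding coordinate entire curve is non-constant, $\xi_i$ is smooth in $X_k^{GG}$ away from a discrete set, and by hypothesis (1) this forces $x_i \in Z$. Propagating by continuity yields
$$g(\mathbb{C}) \subset q\!\bigl(\bigcup_i \mathrm{pr}_i^{-1}(Z)\bigr) \cup q(\Delta),$$
a proper subvariety of $X_m$, whence $\mathrm{Exc}(X_m) \neq X_m$. The main obstacle is the middle step: proving the pole-order bound $2m(m-1)\,r$ requires a careful local analysis of how Demailly-Semple jet bundles transform under the Galois cover $q$ and its resolution, the constant reflecting the quadratic vanishing of the Jacobian of $q$ along each of the $\binom{m}{2}$ pair-diagonals.
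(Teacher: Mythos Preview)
Your overall strategy matches the paper's: symmetrize a jet differential from $X$ to $X^m$, descend it to a resolution of $X_m$, compensate the poles along the exceptional locus, and invoke the fundamental vanishing theorem. The endgame via the union-of-subspaces argument is equivalent to the paper's direct selection of a single $\sigma$ that does not vanish on any of the $m$ projected jets at a well-chosen point.

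The genuine gap is the middle step, which you correctly flag as ``the main obstacle'' but then dispatch with two assertions that do not stand. First, the pole of the descended jet differential along the exceptional divisor is governed by the orbifold structure: in the paper's formalism (Proposition~\ref{proporbjet} and the remark following it) the descent $\sigma_\flat$ lands in the divisible orbifold bundle $E_{k,rm}^{GG}\Omega_{(\widetilde{X}_m,\Delta)}^{\rm div}$, and clearing it costs vanishing of order at most $rm$ along the reduced exceptional divisor, not $2m(m-1)r$. Second, and more seriously, your claim that $\mu^\ast(A^{(m)})^d\otimes\mathcal O_Y(-cF)$ remains big because ``the negativity along $F$ [is] absorbed by the ample pullback $\mu^\ast(A^{(m)})$'' is invalid: $\mu^\ast(A^{(m)})$ is only nef on the resolution (it is trivial on $\mu$-exceptional curves), so no power of it alone produces any vanishing along $F$. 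Producing sections of $\pi^\ast A_\flat$ that vanish along $F$ at a known rate is a genuine construction, carried out in Proposition~\ref{propbaselocus}: one builds explicit $\mathfrak{S}_m$-invariant Vandermonde-type sections $\prod_{i<j}(X_iY_j-X_jY_i)^2$ of $2(m-1)A^\sharp$ on $X^m$ vanishing to order~$2$ along the big diagonal (Lemma~\ref{lemsect}), and checks that their descent vanishes along $F$ (Lemma~\ref{lemvanishingorder}). This yields $\mathbb B\bigl(\pi^\ast A_\flat-\tfrac{1}{2(m-1)}\Delta\bigr)\subset|\Delta|$, and \emph{this} is where the factor $2(m-1)$ originates; combined with the orbifold degree $rm$ one needs $2(m-1)\cdot rm$ copies of $A_\flat$, which is exactly hypothesis~(2). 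So the number $2m(m-1)r$ is correct, but it is the product of two independent contributions---the orbifold pole $rm$ and the cost $2(m-1)$ in units of $A_\flat$ per unit of vanishing---not a single Jacobian pole, and without the discriminant construction the argument does not close. Two minor further points: you must treat the case where some coordinate curve $\mathrm{pr}_i\circ g$ is constant, since then every $\Omega_\omega$ vanishes on $g$ trivially and nothing is learned (the paper reduces to a smaller $X_{m'}$); and a global holomorphic lift of the curve to $X^m\setminus\Delta$ exists only on the universal cover of $\mathbb C\setminus f^{-1}(q(\Delta))$, as the paper is careful to note.
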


In fact, there is a precise description of a proper subvariety containing the exceptional locus (see Theorem \ref{thmhypcrit} for details). Our criteria applies to all situations where we have enough positivity for jet differentials bundles. Thanks to the recent works around the Kobayashi conjecture \cite{bro17, den17, dem18, RY18, BK19}, we obtain the following result.

\begin{theorem}
Let $X \subset \mathbb P^{n+1}$ be a generic hypersurface of degree
	$$
	d \geq (2n - 1)^5 (2m^2 + 10 n - 1).
	$$
	Then $X_m$ is hyperbolic.
\end{theorem}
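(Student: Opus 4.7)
The plan is to combine the hyperbolicity criterion just stated (Theorem \ref{thmhypcrit}, with $Z = \emptyset$) with the recent effective solutions of the Kobayashi conjecture for generic projective hypersurfaces, and then to bootstrap to full Kobayashi hyperbolicity by induction on $m$ along the diagonal stratification of $X_m$.

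First I would invoke the Brotbek--Deng--Demailly--Riedl--Yang theorems: for a generic hypersurface $X \subset \mathbb{P}^{n+1}$ of degree $\geq (2n-1)^5$, there exist global sections of $E_{k,r}^{GG} \Omega_X \otimes \mathcal{O}(-\delta A)$ whose base locus is contained in the singular jet locus $X_k^{GG,\mathrm{sing}}$. This verifies hypothesis (1) of the criterion with $Z = \emptyset$. To also secure hypothesis (2), namely $\delta/r > 2m(m-1)$, one needs room to twist: inspection of those constructions shows that the admissible twist $\delta$ scales linearly with $d = \deg X$, while the relevant weight $r$ depends only on $k$ and $n$. Multiplying the baseline bound $(2n-1)^5$ by the factor $2m^2 + 10n - 1$ therefore produces a sufficient reserve to enforce $\delta/r > 2m(m-1)$, which accounts for the stated numerical bound.

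Once both hypotheses hold, the criterion yields $\mathrm{Exc}(X_m) \subsetneq X_m$. Using the sharper description of the exceptional locus furnished by Theorem \ref{thmhypcrit}, which identifies it with (a subset of) the union of diagonal strata of $X_m$, I would close the argument by induction on $m$. The base case $m = 1$ is exactly the Kobayashi hyperbolicity of $X$ itself, again a consequence of the input bound. Each diagonal stratum of $X_m$ is isomorphic to a symmetric product $X_{m'}$ with $m' < m$ (possibly crossed with symmetric factors of lower multiplicity), so the inductive hypothesis forces every entire curve trapped in such a stratum to be constant, and $X_m$ is Kobayashi hyperbolic.

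The chief difficulty I anticipate is the tight bookkeeping of the degree bound: one must track through the Brotbek--Deng--Riedl--Yang constructions to verify that inflating the Kobayashi degree by precisely the factor $2m^2 + 10n - 1$ produces jet differentials of ratio $\delta/r > 2m(m-1)$, rather than something larger. A secondary subtlety is that the diagonals of $X_m$ are themselves singular symmetric powers, so the inductive step requires either a desingularization compatible with the entire-curve analysis, or a formulation of the criterion robust to these singular strata; either way, this needs to be checked to make the induction genuinely close.
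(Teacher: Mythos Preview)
Your proposal is correct and follows essentially the same route as the paper: combine the Riedl--Yang reduction with the B\'erczi--Kirwan effective bound (applied to the auxiliary hypersurface in $\mathbb{P}^{2n}$, which is what produces the $(2n-1)^5$ factor) to obtain $Z = \emptyset$ in Theorem \ref{thmhypcrit}, then induct on $m$ through the diagonal strata. Your closing worries are well taken but not serious obstacles: the degree bookkeeping is exactly the computation in the preceding corollary with $n$ replaced by $2n-1$, and the induction on the singular locus is handled just as in that corollary's proof (the paper simply says ``$(X_m)_{\mathrm{sing}}$ is a union of $X_{m'}$ for $m' < m$'' and inducts).
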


Then we establish a criterion ensuring that any subvariety $V \subset X_m$ of ${\rm codim} V \leq n - 2$ is of general type (see Theorem \ref{thmsubv}). It applies in particular to varieties with ample cotangent bundle.

\begin{theorem} \label{thmsubv}
	Let $X$ be a complex projective manifold with $n=\dim X \geq 2$. Assume $\Omega_X$ is ample. Then, any subvariety $V \subseteq X_m$ such that ${\rm codim} V \leq n - 2$ and $V \not\subset  X_m^{\rm sing}$ is of general type.
\end{theorem}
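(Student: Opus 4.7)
The plan is to lift the problem from $X_m$ to the ordinary product $X^m$ and prove it by induction on $m$ via the projections $X^m \to X^{m-1}$, using Koll\'ar's theorem on the subadditivity of Kodaira dimensions when the base is of general type. Let $\pi \colon X^m \to X_m$ be the quotient map, so that $X_m^{\rm sing} = \pi(\Delta)$, where $\Delta \subseteq X^m$ is the big diagonal. Given $V$ as in the statement, pick an irreducible component $W$ of $\pi^{-1}(V) \subseteq X^m$. Then $\mathrm{codim}\, W \leq n-2$ and $W \not\subseteq \Delta$; since $\pi|_W$ is finite surjective onto $V$, the variety $V$ is of general type iff $W$ is. It therefore suffices to prove: \emph{any irreducible $W \subseteq X^m$ with $\mathrm{codim}\, W \leq n-2$ and $W \not\subseteq \Delta$ is of general type.}

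For the base case $m = 1$, after resolving, $W$ is a smooth subvariety of $X$ of dimension $\geq 2$; then $\Omega_W$ is a quotient of the ample bundle $\Omega_X|_W$, hence ample, so $K_W = \det \Omega_W$ is ample and $W$ is of general type. For the inductive step, consider $\sigma \colon X^m \to X^{m-1}$ (forgetting the last factor) and put $Z := \sigma(W)$. The key dimension bound is
\[
\dim W - \dim Z \;\geq\; \dim W - (m-1)n \;\geq\; \bigl(n(m-1) + 2\bigr) - (m-1)n \;=\; 2,
\]
so, by generic smoothness, the generic fiber $W_y$ of $\sigma|_W$ is a smooth subvariety of $X$ of dimension $\geq 2$, hence of general type by the same quotient-of-ample-bundle argument. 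Concerning the base: if $Z = X^{m-1}$, then $K_{X^{m-1}}$ is ample, so $X^{m-1}$ is of general type; if $Z \subsetneq X^{m-1}$, the bound $\dim Z \geq \dim W - n \geq n(m-2) + 2$ gives $\mathrm{codim}_{X^{m-1}}\, Z \leq n - 2$, while $Z \not\subseteq \Delta_{X^{m-1}}$ is inherited from $W$, and the inductive hypothesis yields that $Z$ is of general type.

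In both sub-cases, we have (after compatible resolutions of $W$ and $Z$, and Stein factorization to ensure connected fibers) a surjective morphism $\sigma|_W \colon W \to Z$ whose base is of general type and whose generic fiber is of general type. Koll\'ar's theorem on the subadditivity of the Kodaira dimension for fibrations over a base of general type then gives $\kappa(W) \geq \dim Z + \kappa(W_y) = \dim W$, so $W$ is of general type.

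The main obstacle is purely the dimension count: the hypothesis ``$\mathrm{codim}\, W \leq n - 2$'' is \emph{exactly} what is needed to close the induction, guaranteeing both that the generic fibers of $\sigma|_W$ are positive-dimensional subvarieties of $X$ (hence of general type) and that the image $Z$, when a proper subvariety, still satisfies $\mathrm{codim}_{X^{m-1}}\, Z \leq n - 2$. Once this count is in place, Koll\'ar's additivity, generic smoothness in characteristic zero, and Stein factorization are standard tools.
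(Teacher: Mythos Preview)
There is a genuine gap in your reduction step. You claim that since $\pi|_W\colon W\to V$ is finite surjective, ``$V$ is of general type iff $W$ is.'' The direction you actually need is $W$ of general type $\Rightarrow$ $V$ of general type, and this is \emph{false} for finite quotients in general: a hyperelliptic curve of genus $\geq 2$ is of general type, yet its quotient by the hyperelliptic involution is $\mathbb{P}^1$. More to the point, the paper's own sharpness example (the remark after Theorem~\ref{thmsubv}) exhibits exactly this phenomenon in the present setting: for $X=C\times Y$ with $g(C)=2$ and $\Omega_Y$ ample, the preimage $W=C\times C\times Y\subset X^2$ is of general type, but $V=W/\mathfrak{S}_2\cong S^2C\times Y$ is not. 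That example sits at $\operatorname{codim}=n-1$, just outside the hypothesis, but it shows your reduction principle has no chance of being a general fact; you would need a specific argument tied to the codimension bound, and none is given.

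This descent problem is precisely what the paper's proof is organized around. Rather than proving $W$ is of general type and then trying to push this down, the paper produces $\mathfrak{S}_m$-invariant sections of $(\bigwedge^d\Omega_{X^m})^{\otimes q}$ (via the property $(H_{\emptyset,A})$ and Proposition~\ref{propsectinv}) and then invokes the Reid--Tai--Weissauer criterion (Proposition~\ref{propextension}) to extend the induced form to a resolution $\widetilde{X}_m$. The verification of the relevant condition $(I_{x,d})$ in Lemma~\ref{lemcrit} is exactly where the hypothesis $d\geq n(m-1)+2$ (i.e.\ $\operatorname{codim} V\leq n-2$) enters, and it is what makes the descent work. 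Your inductive argument that $W\subset X^m$ is of general type is essentially correct (modulo the minor sloppiness that $\Omega_{\widetilde{W}}$ is only a quotient of $\phi^*\Omega_X$ over $V^{\rm reg}$, not globally on a resolution --- though the conclusion that subvarieties of $X$ are of general type is of course standard), but without a mechanism to pass from $W$ to $V$ it does not prove the theorem.
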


In view of the above problem, if we believe in Lang's conjectural characterization of the exceptional locus, this should tell that ${\rm codim\, Exc}(X_m) \geq n - 1$ for varieties with $\Omega_X$ ample.

As a first corollary, we obtain geometric restrictions on the exceptional locus of non-hyperbolic \emph{algebraic} curves in $X_m$.

\begin{corollary} \label{corolexcalg} Assume that $\Omega_X$ is ample. Then, there exist countably many proper algebraic subsets $V_k \subsetneq X_m$ $(k \in \mathbb N)$ containing the image of any non-hyperbolic algebraic curve, such that ${\rm codim}_{X_m} (V_k) \geq n - 1$ for all $k \in \mathbb N$.
\end{corollary}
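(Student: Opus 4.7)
The plan is to deduce Corollary~\ref{corolexcalg} directly from Theorem~\ref{thmsubv} combined with the classical Lang--Campana countability principle for subvarieties not of general type.

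The first step is to recall the following general fact, valid for any smooth projective variety $Y$ of general type: there exists a countable family $\{V_k\}_{k \in \mathbb{N}}$ of proper irreducible subvarieties $V_k \subsetneq Y$, each of which is itself \emph{not} of general type, such that every irreducible subvariety $W \subsetneq Y$ that is not of general type is contained in some $V_k$. One constructs this family by ranging over the countably many irreducible components of the Hilbert scheme $\mathrm{Hilb}(Y)$: for each component $H_i$ whose universal family $\pi_i \colon \mathcal{U}_i \to H_i$ has generic fiber not of general type, the image $V_i \subset Y$ under evaluation $p_i \colon \mathcal{U}_i \to Y$ is itself not of general type (being covered by a family of non-general-type subvarieties, by a standard result of Campana's specialness theory), and every fiber of the family is contained in $V_i$. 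Iterating on the proper Zariski-closed strata of $H_i$ along which the Kodaira dimension of the fiber can jump then produces the complete list.

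For the second step, I would apply this fact to $Y = X_m$, which is itself of general type since $X$ is (this follows from the ampleness of $\Omega_X$ together with \cite{AA02}). Each $V_k$ is a proper irreducible subvariety of $X_m$ not of general type, so by the contrapositive of Theorem~\ref{thmsubv}, either $V_k \subseteq X_m^{\mathrm{sing}}$ or $\mathrm{codim}_{X_m}(V_k) \geq n-1$. The singular locus $X_m^{\mathrm{sing}}$ is the image of the big diagonal of $X^m$ and has codimension $n$ in $X_m$; in the first case we may simply replace $V_k$ by $X_m^{\mathrm{sing}}$, so in any event $\mathrm{codim}_{X_m}(V_k) \geq n-1$.

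Finally, any non-hyperbolic algebraic curve $C \subset X_m$ has normalization of geometric genus $\leq 1$, and is therefore not of general type as a one-dimensional subvariety of $X_m$. By construction it is contained in some $V_k$, which completes the proof. The only delicate point is the countability principle at the first step, specifically the statement that each image $V_i$ is itself not of general type; once this is granted, the corollary is immediate from Theorem~\ref{thmsubv}.
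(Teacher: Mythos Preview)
Your approach is essentially the same as the paper's: both use a Hilbert-scheme countability argument together with Theorem~\ref{thmsubv}. The paper, however, works directly with families of \emph{curves} rather than invoking a general Lang--Campana stratification for all non-general-type subvarieties. Concretely, the paper uses \cite[Proposition 2.8]{kol95} to parametrize all curves in $X_m$ by a scheme $B$ with countably many components, restricts to the sublocus $B_{\rm non\,hyp}$ of curves of genus $\leq 1$, and takes the $V_k$ to be the closures in $X_m$ of the images of the irreducible components of the universal family over $B_{\rm non\,hyp}$. Each such $V_k$ is then dominated by a family of non-hyperbolic curves, so is not of general type by the easy addition inequality, and Theorem~\ref{thmsubv} gives the codimension bound.

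Your more general first step (every non-general-type subvariety lies in one of countably many maximal non-general-type subvarieties) is stronger than needed and, as you note yourself, is the only delicate point; for curves the required fact is just that a variety swept out by genus-$\leq 1$ curves is not of general type, which is elementary. Two minor corrections: $X_m$ is singular, so your ``smooth projective $Y$'' hypothesis does not literally apply, though the Hilbert-scheme argument works regardless; and your handling of the case $V_k \subset X_m^{\rm sing}$ by replacing $V_k$ with the singular locus (of codimension $n$) is fine for the version of the statement you are proving, whereas the paper proves a slightly stronger version requiring an induction on $m$.
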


We also obtain some genus estimates for curves lying on $X$ in the spirit of \cite[Corollary 4]{AA02}.

\begin{corollary}
	Assume that $\Omega_X$ is ample, and let $Y \subset X$ be a closed submanifold. Let $1 \leq l \leq d$ be integers. Assume that $l$ generic points of $Y$ and $d - l$ generic points of $X$ lie on a curve of geometric genus $g$.
	Then if $$l \cdot {\rm codim} Y \leq  \dim X - 2,$$ we have $g > d$.
\end{corollary}

In the last section, we give some metric criteria which in particular apply to quotient of bounded symmetric domains. We obtain the following result for ball quotients.

\begin{theorem} Let $X = \quotient{\Gamma}{\mathbb B^n}$ be a ball quotient by a torsion free lattice with only unipotent parabolic elements, and let $\overline{X} = X \cup D$ be a smooth minimal compactification (see \cite{mok12}). Let $m \geq 1$. Then :
	\begin{enumerate}[(a)]
		\item Let $V \subset \overline{X}_m$ be a subvariety with ${\rm codim} V \leq n - 6$ and $V \not\subset \mathfrak{d}_1(D) \cup (\overline{X}_m)_{\rm sing}$ (where $\mathfrak{d}_i (V) = \{ [x_1, ..., x_m] \in \overline{X}_m | x_1, ..., x_i \in V \}$). Then $V$ is of general type.

		\item Let $p \geq n(m-1) + 6$, and $f : \mathbb C^p \to \overline{X}_m$ be a holomorphic map such that $f(\mathbb C^p) \not\subset  \mathfrak{d}_1(D) \cup (\overline{X}_m)_{\rm sing}$. Then ${\rm Jac}(f)$ is identically degenerate. 
	\end{enumerate}	
\end{theorem}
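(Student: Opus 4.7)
The plan is to reduce both statements to the general metric hyperbolicity criterion established in the previous section of the paper, by building an $S_m$-invariant Finsler pseudo-metric of sufficiently negative curvature on a tensor power of $\Omega_{\overline{X}^m}(\log)$ which descends to $\overline{X}_m$. The key analytic input is the construction, due to Mok \cite{mok12}, of a Kähler--Einstein metric on $X$ coming from the Bergman metric on $\mathbb B^n$ which, under the hypothesis that all parabolic elements are unipotent, extends across $D$ to a singular metric of Poincaré type whose associated smooth Hermitian metric on $\Omega_{\overline{X}}(\log D)$ has holomorphic sectional curvature bounded above by a strictly negative constant.

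First I would transport this structure to $\overline{X}_m$. The product metric on $\overline{X}^m$ is $S_m$-invariant and induces, after symmetrisation, a Finsler pseudo-metric on a suitable tensor power of $\Omega_{\overline{X}_m}$, regular outside $\mathfrak{d}_1(D) \cup (\overline{X}_m)_{\rm sing}$. On the complement of this locus, the pointwise holomorphic sectional curvature remains uniformly negative, inherited factor by factor from the ball quotient. This object is exactly the input to which the general criteria of the previous section apply.

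For part (a), I would argue that for $V \subseteq \overline{X}_m$ as in the statement, the restriction of the pseudo-metric provides a singular Hermitian metric on a symmetric power of $\Omega_V$ (on a log resolution) with negative curvature in the sense of currents, and then invoke the Bogomolov--Demailly--Nadel-type vanishing packaged into the earlier criterion to deduce that $K_V$ (and its log analogue) is big. The codimension assumption $\mathrm{codim}\, V \leq n - 6$ is precisely what is required by that criterion to guarantee that enough of the fibrewise negative curvature survives on $V$: the diagonal and boundary loci have codimension at most $n$, but an additional slack has to be budgeted for the Poincaré-type growth of the metric near the cusps and for the passage through a log resolution of $\mathfrak{d}_1(D) \cup (\overline{X}_m)_{\rm sing}$.

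For part (b), I would pull back the Finsler pseudo-metric via $f$ and apply the Ahlfors--Schwarz--Yau lemma: if $\mathrm{Jac}(f)$ were not identically degenerate, the induced (possibly singular) pseudo-metric would be nondegenerate on a nonempty open subset of $\mathbb C^p$ with negative holomorphic sectional curvature, which by standard volume comparison is impossible. The bound $p \geq n(m-1) + 6$ matches the dimension of $(\overline{X}_m)_{\rm sing}$ up to the same additive $6$ appearing in (a), and forces $f(\mathbb C^p)$ to escape every stratum along which the metric collapses. The principal technical obstacle I foresee is showing that the Finsler pseudo-metric extends with sufficient regularity across $\mathfrak{d}_1(D)$ and the big diagonal of $\overline{X}^m$: the interaction of the quotient $\overline{X}^m \to \overline{X}_m$ with the logarithmic singularities of the Bergman metric near $D$ produces curvature terms that must be controlled on the toroidal local model at the cusps (and on a symmetric-product blow-up along the diagonals) before the general criterion can legitimately be applied. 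This local curvature computation is what dictates the specific $-6$ and $+6$ shifts in the two hypotheses.
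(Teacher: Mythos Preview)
Your overall strategy is the paper's: reduce to the metric criterion Proposition~\ref{propcritmetric} using the Bergman metric on $(\mathbb B^n)^m$, and conclude~(b) via Ahlfors--Schwarz (as packaged in Remark~\ref{remCpsing}). But you misidentify where the constant~$6$ comes from and omit the two specific inputs that generate it.

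The shift is \emph{not} a regularity issue for the metric near cusps or diagonals. Extension of the relevant tensors across $(\overline{X}_m)_{\rm sing}$ is governed by the Reid--Tai--Weissauer criterion via Lemma~\ref{lemcrit}, and that already works for $p \geq n(m-1)+2$; near $\mathfrak d_1(D)$ the Bergman metric is good in Mumford's sense \cite{mum77}, so again no extra slack enters there. What actually forces $p \geq n(m-1)+6$ is hypothesis~(iii) of Proposition~\ref{propcritmetric}: one must produce a pluricanonical section whose norm vanishes along the boundary at order strictly greater than $\tfrac{1}{C_p}$, where $C_p = C_p\bigl((\mathbb B^n)^m\bigr)$ is the Ricci-type curvature constant~\eqref{eqCp}. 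The whole of Section~\ref{sectconstantsball} is devoted to computing these constants (Proposition~\ref{propCpball}), giving $C_p = \tfrac{p - n(m-1) + 1}{n+1}$ in the relevant range. The sections themselves are supplied by the Bakker--Tsimerman effective ampleness theorem \cite{BT18} for $K_{\overline X} + (1-\alpha)D$, which is only available past an explicit threshold in~$\alpha$. Matching the two thresholds yields $p - n(m-1) + 1 > 2\pi$, hence $p \geq n(m-1)+6$. Neither the $C_p$ computation nor the Bakker--Tsimerman input appears in your outline, and the heuristic you propose (``local curvature computation on the toroidal model at the cusps and on a blow-up along the diagonals'') does not lead to this constant.
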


\subsubsection*{Acknowledgments} 
The authors would like to thank Ariyan Javanpeykar for very interesting discussions about several themes of this paper. They also thank Aaron Levin for sharing with them his paper on Puncturing Problems.

\section{Notations and conventions} 
We introduce here a few notations pertaining to symmetric products of manifolds, that we will use throughout the text.

\subsection{Symmetric products}\label{sectnot}

Let $X$ be a complex projective manifold. 
\begin{enumerate}\itemsep=0.5em
		\item For any $m \in \mathbb N^\ast$, we will denote by $X_m = \quotient{\mathfrak{S}_m}{X^m}$ the $m$-th symmetric product of $X$. We let $q : X^m \to X_m$ be the natural projection. Elements of $X_m$ will be denoted by $[x_1, x_2, ..., x_m]$ (where $(x_1, ..., x_m) \in X^m)$. Also, if $s>0,m_1,\dots ,m_s$ are positive integers such that $\sum_im_i=m$, and $x_1,\dots,x_s\in X$ are pairwise distinct, we write $[x_1^{m_1},\dots,x_s^{m_s}]:=[x_1,\dots,x_1,\dots,x_s,\dots,x_s]$, where each $x_i$ is repeated $m_i$ times, for $i=1,\dots,s$.

		\item	For any $V \subset X$ and any $1 \leq i \leq m$, we let $\mathfrak{d}_i (V) = \{ [x_1, ..., x_m] \in X_m | x_1, ..., x_i \in V \}$.

		\item For any $1 \leq i \leq m$, we let $\mathfrak{D}_i(X_m) = \{ [x_1, ..., x_m] \in X_m | x_1 = ... = x_i \}$ be the $i$-th diagonal locus. Note that ${\rm codim}\, \mathfrak{D}_i (X_m) = n(i - 1)$.
			
		\item For any divisor $A$ on $X$, we will denote by $A^\sharp = \sum\limits_{i=1}^m {\rm pr}_i^\ast A$ the associated $\mathfrak{S}_m$-invariant divisor on $X^m$, and by $A_\flat = q_\ast A^\sharp$ the projection on $X_m$. Note that $A_\flat$ is a Cartier divisor on $X_m$, hence it induces a well-defined line bundle.
\end{enumerate}

Remark that the construction $X \rightsquigarrow X_m$ is functorial, any holomorphic map $f~:~X~\to~Y$ inducing a natural holomorphic map $f_m : X_m \longrightarrow Y_m$.

\subsection{The Reid-Tai-Weissauer criterion} \label{sectcrit} For later reference, we now recall an important criterion for the extension of differential forms on resolutions of quotient singularities.
\medskip

Let $G$ be a group acting on a complex manifold $X$ of dimension $n$. The criterion can be stated in terms of the following condition: 
\medskip

\begin{bfseries} Condition $(\mathrm{I}_{x,d})$. \end{bfseries} Let $x \in X$, and let $d \in \mathbb N$. We say that the condition $(I_{x, d})$ is satisfied, if for any $g \in G - \{ 1 \}$ stabilizing $x$, the following holds. Assume that $g$ has order $r$, and let $(z_1, ..., z_n)$ be coordinates centered at $x$ such that $g$ acts by
$$
g \cdot (z_1, ..., z_n) = (\zeta^{a_1} z_1, ..., \zeta^{a_n} z_n),
$$
where $\zeta = e^{\frac{2i\pi}{r}}$, and $a_1, ..., a_n \in \llbracket 0, r - 1 \rrbracket$. Then, for any choice of $d$ distinct elements ${i_1}, ..., {i_d}$ in $\llbracket 1, n \rrbracket$, we have
$$
a_{i_1} + ... + a_{i_d} \geq r.
$$

It is useful to state a weaker condition under which the differentials will extend meromorphically to a resolution of singularities. Resume the same notations as before, and let $\alpha > 0$.
\medskip

\begin{bfseries} Condition $(\mathrm{I}'_{x,d, \alpha})$. \end{bfseries} We say that the condition $({\rm I}'_{x, d, \alpha})$ is satisfied, if the same statement as in Condition $({\rm I}_{x, d})$ holds, with the inequality replaced by  
$$
a_{i_1} + ... + a_{i_d} \geq r ( 1 - \alpha).
$$

\begin{proposition}[{\cite[Lemma 4. p. 213]{wei86}}] \label{propextension}
	Let $d \in \mathbb N$. Assume that the condition $({\rm I}_{x, d})$ (resp. $({\rm I}'_{x, d, \alpha})$) holds for any point $x \in X$. Let $Y = \quotient{G}{X}$, and let $\widetilde{Y}$ be a smooth resolution of singularities of $Y$. Let $Y^\circ$ be the smooth locus of $Y$.
	
	Then, for any $p \geq d$, and for any $q \in \mathbb N$, the sections of $(\bigwedge^p \Omega_{Y^\circ})^{\otimes q}$ extend to the whole $\widetilde{Y}$ (resp. extends as meromorphic section of $(\bigwedge^p \Omega_{\widetilde{Y}})^{\otimes q}$ with a pole of order at most $\lfloor q \alpha \rfloor$).
\end{proposition}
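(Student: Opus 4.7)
The plan is to follow the classical Reid--Tai toric argument, with the refinement due to Weissauer. The statement is local on $\widetilde{Y}$, and near an arbitrary point I may work in a neighborhood of a singular point $\bar y \in Y$. Choosing $x \in q^{-1}(\bar y)$ and using the linearisation of the action of the stabiliser $G_x$ in the coordinates supplied in the statement, I reduce to the case $X = \mathbb C^n$ with $G = G_x$ finite, acting diagonally. A section $\omega \in H^0(Y^\circ, (\bigwedge^p \Omega_{Y^\circ})^{\otimes q})$ corresponds, via Hartogs' theorem (since the singular locus of $Y$ has codimension $\ge 2$), to a $G$-invariant section $\tilde\omega$ of $(\bigwedge^p \Omega_X)^{\otimes q}$ on all of $X$. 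Expanding $\tilde\omega$ in the basis of monomial forms $h \cdot dz_I^{\otimes q}$ with $I \subset \{1, \dots, n\}$ of cardinality $p$, the invariance translates into a character equation on each coefficient: namely, $g^\ast h = \zeta^{-q(a_{i_1}+\cdots+a_{i_p})} h$ for every $g \in G$ of order $r$.

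The next step is to take a $G$-equivariant log-resolution of $X$ via Hironaka's theorem, whose quotient is smooth and dominates $\widetilde{Y}$, so that the question reduces to the extension of $\tilde\omega$ across its exceptional locus. I then make a reduction to cyclic stabilisers: near the generic point of any exceptional prime divisor $E$ of the resolved quotient, the isotropy of a preimage is a cyclic subgroup $\langle g \rangle \subset G$ acting diagonally on a transverse slice, with weights $(a_1, \dots, a_n) \in \llbracket 0, r-1 \rrbracket^n$. This reduces the problem to the case where $X = \mathbb C^n$, $G = \mu_r$ acts diagonally with these weights, and $\pi : W \to X/\mu_r$ is a toric resolution. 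Toric geometry describes $W$ as a smooth subdivision of the cone $\sigma = \mathbb R_{\ge 0}^n$ in the lattice $N' = \mathbb Z^n + \mathbb Z \cdot \frac{1}{r}(a_1, \dots, a_n)$, with exceptional divisors $E_\rho$ corresponding to primitive rays $\rho = \frac{1}{r}(b_1, \dots, b_n)$ where $0 \le b_i < r$.

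The heart of the proof is then the toric vanishing computation. Writing $dz_I = z_{i_1} \cdots z_{i_p} \cdot \frac{dz_{i_1}}{z_{i_1}} \wedge \cdots \wedge \frac{dz_{i_p}}{z_{i_p}}$ and using that the log-forms $\frac{dz_i}{z_i}$ behave well under toric morphisms, a direct calculation in toric charts yields that $\pi^\ast (h \cdot dz_I^{\otimes q})$ vanishes along $E_\rho$ to order at least
\[
\mathop{\rm ord}\nolimits_{E_\rho}(\pi^\ast h) + q \left( \frac{1}{r} \sum_{i \in I} b_i - 1 \right),
\]
the first term being non-negative since $h$ is holomorphic. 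The obstruction to holomorphic extension is therefore concentrated along the ``junior'' rays $\rho_g = \frac{1}{r}(\bar a_1, \dots, \bar a_n)$ associated to each non-trivial $g \in G_x$ of order $r$ (which any resolution must introduce), where the lower bound reads $q(\frac{1}{r} \sum_{i \in I} \bar a_i - 1)$. Non-negativity for every $p$-subset $I$ with $p \ge d$ is exactly the hypothesis $({\rm I}_{x, d})$: non-negativity of the $\bar a_i$ makes $({\rm I}_{x, d})$ imply $({\rm I}_{x, p})$ for $p \ge d$, since any $p$-subset contains a $d$-subset. The meromorphic variant is handled identically: under $({\rm I}'_{x, d, \alpha})$ the lower bound becomes $\ge -q\alpha$, yielding a pole of order at most $\lfloor q\alpha \rfloor$. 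The main technical obstacle is the toric vanishing computation itself, where one must track carefully the interaction of the indices $i \in I$ with the ray $\rho$ and the contribution of the log-forms pulled back under a non-toric resolution datum; the reduction to cyclic stabilisers is elementary but also requires care in invoking the slice theorem and the codimension-one nature of the extension problem.
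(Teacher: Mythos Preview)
The paper does not give its own proof of this proposition: it is quoted from Weissauer's paper \cite{wei86}, with the remark that the argument there (written for $\mathbb H_g$ and ${\rm Sp}(2g,\mathbb Z)$) transposes verbatim. Your outline is exactly the classical Reid--Tai toric computation with Weissauer's refinement to $p$-forms, so in spirit it matches the cited source.

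Two imprecisions are worth flagging. First, the monomial basis of $(\bigwedge^p \Omega_X)^{\otimes q}$ consists of tensors $dz_{I_1}\otimes\cdots\otimes dz_{I_q}$ with possibly distinct $p$-subsets $I_1,\dots,I_q$, not only the pure powers $dz_I^{\otimes q}$; the same vanishing estimate applies to each factor separately, so the argument survives, but as written your expansion is incomplete. Second, it is not true that every exceptional ray of a toric resolution of $\mathbb C^n/\mu_r$ has the form $\frac{1}{r}(b_1,\dots,b_n)$ with $0\le b_i<r$: a smooth subdivision may introduce rays with some $b_i\ge r$. The statement that ``the obstruction is concentrated along the junior rays'' is correct but requires the standard decomposition step: any primitive $\rho\in N'\cap\sigma$ can be written as $\rho_{g^k}+v$ with $v\in\mathbb Z^n_{\ge 0}$ and $\rho_{g^k}$ a junior ray (or $\rho\in\mathbb Z^n$), and one checks directly that the $v$-contribution to $\sum_{i\in I}b_i/r$ is either irrelevant (all $v_i=0$ for $i\in I$, so $dz_I$ is regular along $E_\rho$) or already $\ge 1$. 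This reduction is the genuine content of the Reid--Tai lemma and should not be left implicit.
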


\begin{remark}
	1. The fact that $q$ is arbitrary in the criterion above is crucial. Note that if $q = 1$, then for any $p \geq 1$, any section of $\bigwedge^p \Omega_{Y^\circ}$ extends to $\widetilde{Y}$, e.g. by \cite{Freitag} or \cite{GKKP}. The proof of \cite{Freitag} consists essentially in remarking that $({\rm I'}_{x, d, \alpha})$ always holds for some $\alpha < 1$, so $\lfloor q \alpha \rfloor = 0$ in this case. 

	2. Proposition \ref{propextension} is a generalization of well-known criterion proved independently by Tai \cite{tai82} and Reid \cite{reid79} (which is simply the case $p = \dim X$). The proof given in \cite{wei86} is stated in the case where $X = \mathbb H_g$ is the Siegel upper half-space acted upon by $G = {\rm Sp}(2g, \mathbb Z)$, but it generalizes immediately to the general case (for more details in English, the reader can see e.g. \cite{cad18}).
\end{remark}
\medskip

\subsection{Jet differentials}\label{jets}

We will now recall some basic facts around the notion of jet differentials. For more details, the reader can refer to \cite{dem12a}.

Let $X$ be a complex manifold, and $k, m \in \mathbb N$ be integers. We will denote the unit disk by $\Delta$. The {\em Green-Griffiths vector bundle of jet differentials of order $k$ and degree $m$}, is the vector bundle  $E_{k, m}^{GG} \Omega_X \to X$, whose sections over a chart $U \subset X$ identify with differential equations acting on holomorphic maps $f : \Delta \to U$, with adequate order and degree. Writing $f = (f_1, ..., f_n)$ in local coordinates, $P(f)$ can be put under the form: 
$$
\begin{aligned}
	P(f) : t \in & \Delta \longmapsto P(f)(t) \\
	& = \sum_{I = (i_{1, 1}, ..., i_{1, k}, ..., i_{n, 1}, ..., i_{n,k})} a_I(f(t)) \, (f_1')^{i_{1, 1}} ... (f_n')^{i_{n, 1}} ...  (f_1^{(k)})^{i_{1, k}} ... (f_n^{(k)})^{i_{n, k}},
\end{aligned}
$$
and $P$ being of degree $m$ means that if $g(t) = f(\lambda t)$, then $P(g)(t) = \lambda^m P(f)(\lambda t)$.
\medskip

For any order $k \geq 1$, we can form the Green-Griffiths jet differential algebra $E_{k, \bullet}^{GG} \Omega_X = \bigoplus_{m \geq 0} E_{k, m} \Omega_X$, and define the {\em $k$-th jet space} $X_{k}^{GG} = {\bf Proj}_X(E_{k, \bullet}^{GG} \Omega_X)$. We check that the elements of $X_k^{GG}$ are naturally identified with {\em classes of $k$-jets}, i.e. $k$-th order Taylor expansion of holomorphic maps $f: (\Delta, 0) \to X$, up to linear reparametrization. Each jet space is endowed with a projection map $\pi_k : X_k^{GG} \to X$ and tautological sheaves $\mathcal O_{X_k^{GG}}(m)$ ($m \geq 0$), such that
$$
(\pi_k)_\ast \mathcal O_{X_k^{GG}}(m) = E_{k, m}^{GG} \Omega_X
$$
for any $m \geq 1$.

If $C$ is a complex curve, any map $f : C \to X$ admit well-defined lifts $f_{[k]} : C \to X_k^{GG}$ obtained by taking the $k$-th Taylor expansion at each point of $C$. The main interest of jet differential equations in the study of complex hyperbolicity comes from the following fundamental vanishing theorem, which permits to give strong restrictions on the geometry of entire curves.

\begin{theorem}[\cite{SY96, dem97}]
	Let $X$ be a complex projective manifold, and let $A$ be an ample line bundle on $X$. Let $k, m \geq 1$, and let $P \in H^0(X, E_{k, m}^{GG} \Omega \otimes \mathcal O(-A))$. Let $f : \mathbb C \longrightarrow X$. Then $f$ is a solution of the holomorphic differential equation $P$, i.e. $P(f; f', ..., f^{(k)}) = 0$.
\medskip

	In other words, for any entire curve $f : \mathbb C \to X$, we have $f_{[k]}(\mathbb C) \subset \mathbb{B}_+(\mathcal O_{X_k^{GG}}(1))$, where $\mathbb B_+$ denotes the augmented base locus.
\end{theorem}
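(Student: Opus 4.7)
The plan is to evaluate $P$ along $f$ to obtain a holomorphic section of a negative line bundle on $\C$, then to force its vanishing by confronting the Jensen--Ahlfors lower bound with a Nevanlinna-style upper bound that follows from the weighted homogeneity of $P$.

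First I would set $\sigma(t) := P(f(t); f'(t), \ldots, f^{(k)}(t))$, which patches into a global holomorphic section of $L := f^*\OO_X(-A)$ on $\C$ (the jet-differential factor of $P$ is absorbed by evaluation on the jet of $f$). Suppose for contradiction that $\sigma \not\equiv 0$, and without loss of generality that $f$ is non-constant (the constant case is trivial since $P$ has weighted degree $m \geq 1$ in the jet variables). Fix a smooth hermitian metric $h$ on $A$ with curvature $\omega := c_1(A,h) > 0$, so that $L$ carries the metric $f^*h^{-1}$. By Poincar\'e--Lelong,
$$ \tfrac{i}{2\pi}\ddbar\log\|\sigma\|^2 \;=\; [\sigma = 0] \;+\; f^*\omega \;\geq\; 0, $$
so $\log\|\sigma\|^2$ is subharmonic on $\C$, and Jensen's formula on $D_r \subset \C$ yields
$$ \tfrac{1}{2\pi} \int_0^{2\pi} \log\|\sigma(r e^{i\theta})\|^2 \, d\theta \;\geq\; \log\|\sigma(0)\|^2 \;+\; T_f(r, A), $$
where $T_f(r, A) = \int_0^r \tfrac{dt}{t}\int_{D_t} f^*\omega$ is the Nevanlinna order function, which tends to $+\infty$ since $f$ is non-constant.

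For the matching upper bound, I would use the weighted-degree homogeneity of $P$: locally $\sigma$ is a polynomial of weighted degree $m$ in $(f',\ldots,f^{(k)})$ with coefficients bounded by compactness of $X$, so by the weighted AM--GM inequality (grouping monomials $(f^{(j)})^{i_j}$ with $\sum_j j\, i_j = m$) one gets
$$ \|\sigma(t)\| \;\leq\; C \cdot \max_{1 \leq j \leq k} \|f^{(j)}(t)\|_\omega^{m/j}. $$
Averaging the logarithm over the circle of radius $r$ and applying Nevanlinna's logarithmic derivative lemma to each $f^{(j)}$ yields
$$ \tfrac{1}{2\pi} \int_0^{2\pi} \log\|\sigma(r e^{i\theta})\|^2 \, d\theta \;=\; O\bigl(\log T_f(r, A) + \log r\bigr) $$
outside an exceptional set of $r$'s of finite Lebesgue measure. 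Combined with the lower bound this forces $T_f(r,A) \leq O(\log T_f(r,A) + \log r)$, contradicting $T_f(r,A) \to \infty$. Hence $\sigma \equiv 0$, i.e.\ $P(f; f', \ldots, f^{(k)}) = 0$. The augmented base locus statement then follows by contrapositive: if $f_{[k]}(t_0) \notin \mathbb{B}_+(\OO_{X_k^{GG}}(1))$ for some $t_0$, then by definition of $\mathbb{B}_+$ there exist $m \geq 1$, an ample $A$, and a section $P \in H^0(X, E_{k,m}^{GG}\Omega_X \otimes \OO(-A))$ with $P(f_{[k]}(t_0)) \neq 0$, contradicting the first part.

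The hardest step is the upper bound: the logarithmic derivative lemma is classical for $\C$-valued meromorphic functions, but its extension to maps into an arbitrary projective manifold and to all higher derivatives $f^{(j)}$ (rather than just $f'$) really rests on Demailly's tautological inequality for the jet lifts $f_{[j]} : \C \to X_j^{GG}$. A cleaner alternative that bypasses this technology is a Brody--Zalcman rescaling argument: from $\sigma \not\equiv 0$, reparametrize $g_\lambda(t) = f(\lambda t)$ and exploit the homogeneity $\sigma(g_\lambda)(t) = \lambda^m \sigma(f)(\lambda t)$ to deduce $\sigma(f) \equiv 0$ by letting $\lambda \to \infty$, provided one can control a limit of the $g_\lambda$ via Brody's compactness lemma.
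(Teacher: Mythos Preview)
The paper does not give its own proof of this statement: it is quoted as the fundamental vanishing theorem of Siu--Yeung and Demailly, with references \cite{SY96, dem97}, and is used as a black box later in Section~\ref{sectcrithyp}. So there is nothing in the paper to compare your argument against.

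That said, your Nevanlinna-theoretic outline is the correct one and is essentially the Siu--Yeung proof. The structure---evaluate $P$ along $f$ to get a section of $f^*\mathcal O(-A)$, Jensen/Poincar\'e--Lelong for the lower bound $\geq T_f(r,A)$, and a logarithmic-derivative upper bound $O(\log T_f + \log r)$---is right, and you correctly flag that the genuinely delicate point is the higher-order logarithmic derivative lemma for maps into a projective manifold. One small caution: the pointwise inequality $\|\sigma(t)\|\leq C\max_j\|f^{(j)}(t)\|_\omega^{m/j}$ is fine on a fixed coordinate chart, but to globalize it you should work with a finite affine cover and the associated proximity functions, or (better) phrase everything through the tautological line bundle on $X_k^{GG}$ as you hint.

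Your ``cleaner alternative'' via Brody--Zalcman, however, does not work as written. Rescaling $g_\lambda(t)=f(\lambda t)$ and invoking homogeneity gives $\sigma(g_\lambda)(t)=\lambda^m\sigma(f)(\lambda t)$, but letting $\lambda\to\infty$ does not yield any information about $\sigma(f)$ without a normal-families compactness step, and Brody's lemma only produces a \emph{new} limiting entire curve $g$, not control on the original $f$. The actual curvature-based alternative (Demailly's) equips $\mathcal O_{X_k^{GG}}(1)$ with a singular metric of negative curvature built from the section $P$ and applies the Ahlfors--Schwarz lemma to $f_{[k]}$; that is a genuine argument, but it is not the rescaling you sketch. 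I would drop that paragraph or replace it with the Ahlfors--Schwarz version.
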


The previous theorem has strong implications in cases where global jet differential equations are numerous. In these notes, we will be able to produce such differential equations using a basic variant of the {\em orbifold jet differentials} which were introduced by the second and third authors in a joint work with L. Darondeau \cite{CDR18}. We will explain briefly how these objects can be defined in our context at the beginning of Section \ref{sectjetdifforb}.

\section{Special varieties}\label{special}
We collect here basic definitions and constructions related to special varieties, while referring to \cite{Cam04} for more details.

\subsection{Special Manifolds via Bogomolov sheaves}
Let \(X\) be a connected complex nonsingular projective variety of complex dimension \(n\). 
For a  rank-one coherent subsheaf \(\calL\subset\Omega^p_X\), denote by \(H^0(X,\calL^{m})\) the space of sections of \(\Sym^m(\Omega^p_X)\) which take values in \(\calL^{m}\) (where as usual $\calL^m \coloneqq \calL^{\otimes m}$).
The \emph{Iitaka dimension} of $\calL$ is \(\kappa(X,\calL)\bydef\max_{m>0}\{\dim(\Phi_{\calL^m}(X))\}\), i.e. the maximum dimension of the image of rational maps
\(\Phi_{\calL^{m}}\colon X\dasharrow\PP(H^0(X,\calL^{m}))\) defined at the generic point of \(X\), 
where by convention \(\dim(\Phi_{\calL^{m}}(X))\bydef-\infty\) if there are no global sections.
Thus \(\kappa(X,\calL)\in \left\{-\infty, 0,1,\dotsc, \dim(X)\right\}\). 
In this setting, a theorem of Bogomolov in \cite{Bog} shows that, if \(\calL\subset\Omega_{X}^{p}\), then \(\kappa(X,\calL)\leq p\).
\begin{definition}
  Let $p >0$. A rank one saturated coherent sheaf \(\calL\subset\Omega_{X}^{p}\)
  is called a \emph{Bogomolov sheaf} if \(\kappa(X,\calL)=p\), i.e. if $\calL$ has the largest possible Iitaka dimension.
\end{definition}

The following remark shows that the presence of Bogomolov sheaves on $X$ is related to the existence of fibrations $f: X\to Y$ where $Y$ is of general type.

\begin{remark}
  If \(f\colon X\to Y\) is a fibration (by which we mean a surjective morphism with connected fibers) and \(Y\) is a variety of general type of dimension \(p>0\), then the saturation of \(f^*(K_Y)\) in \(\Omega^p_X\) is a Bogomolov sheaf of \(X\), 
\end{remark}

The second author introduced the notion of specialness in \cite[Definition 2.1]{Cam04} to generalize the absence of fibration as above.

\begin{definition}\label{def:special}
  A nonsingular variety \(X\) is said to be \textsl{special} (or \textsl{of special type}) if there is no Bogomolov sheaf on $X$. 
  A projective variety is said to be \textsl{special} if some (or any) of its resolutions are special.
\end{definition}

By the previous remark if there is a fibration $X \to Y$ with $Y$ of general type then $X$ is nonspecial. In particular, if \(X\) is of general type of positive dimension, $X$ is not of special type.

\subsection{Special Manifolds via orbifold bases} 
It is possible to give a characterization of special varieties using the theory of \emph{orbifolds}. We briefly recall the construction.

Let $Z$ be a normal connected compact complex variety.
An \emph{orbifold divisor} \(\Delta\) is a linear combination \(\Delta\coloneqq\sum_{\left\{D\subset Z\right\}}c_{\Delta}(D)\cdot D\), 
where \(D\) ranges over all prime divisors of \(Z\),
the \emph{orbifold coefficients} are rational numbers \(c_{\Delta}(D)\in[0,1]\cap\Q\) such that all but finitely many are zero.
Equivalently,
\[
  \Delta
  =
  \sum_{\{D\subset Z\}}
  \left(
    1-\frac{1}{m_{\Delta}(D)}
  \right)
  \cdot
  D,
\]
where only finitely \textsl{orbifold multiplicities} \(m_{\Delta}(D)\in\Q_{\geq 1}\cup \{+\infty\}\) are larger than \(1\).

An orbifold pair is a pair \((Z,\Delta)\) where $\Delta$ is an orbifold divisor; they interpolate between the compact case where \(\Delta=\varnothing\) and the pair \((Z,\varnothing)=Z\) has no orbifold structure, and the {\it open}, or {\it purely-logarithmic case} where $c_j = 1$ for all $j$, and we identify \((Z,\Delta)$ with $Z\setminus\Supp(\Delta)\).

When \(Z\) is smooth and the support \(\Supp(\Delta)\bydef\cup D_j\) of \(\Delta\) has normal crossings singularities, we say that \((Z,\Delta)\) is {\it smooth}. When all multiplicities \(m_j\) are integral or \(+\infty\), we say that the orbifold pair \((Z,\Delta)\) is {\it integral}, and when every $m_j$ is finite it may be thought of as a virtual ramified cover of \(Z\) ramifying at order \(m_j\) over each of the \(D_j\)'s.\medskip

Consider a fibration \(f\colon X\to Z\) between normal connected complex projective varieties. In general, the geometric invariants (such as \(\pi_1(X), \kappa(X),\dots\)) of \(X\) do not coincide with the `sum' of those of the base (\(Z\)) and of the generic fiber (\(X_\eta\)) of \(f\).
Replacing \(Z\) by the `orbifold base' \((Z,\Delta_f)\) of \(f\), which encodes the multiple fibers of \(f\), leads in some favorable cases to such an additivity (on suitable birational models at least).

\begin{definition}[Orbifold base of a fibration]
  \label{dob} 
  Let \(f\colon (X,\Delta)\to Z\) be a fibration $X \to Z$ as above and let $\Delta$ be an orbifold divisor on \(X\). 
  We shall define the \textsl{orbifold base} \((Z,\Delta_{f})\) of \((f,\Delta)\) as follows: to each irreducible Weil divisor \(D\subset Z\) we assign the multiplicity \(m_{(f,\Delta)}(D)\bydef\inf_k\{t_k\cdot m_\Delta(F_k)\}\), where the scheme theoretic fiber of $D$ is \(f^*(D)=\sum_k t_k.F_k+R\), \(R\) is an \(f\)-exceptional divisor of \(X\) with \(f(R)\subsetneq D\) and \(F_k\) are the irreducible divisors of \(X\) which map surjectively to \(D\) via $f$.
\end{definition}
\begin{remark}
  Note that the integers \(t_k\) are well-defined, even if \(X\) is only assumed to be normal.
\end{remark}

Let \((Z,\Delta)\) be an orbifold pair. Assume that \(K_Z+\Delta\) is \(\Q\)-Cartier (this is the case if \((Z,\Delta)\) is smooth, for example): we will call it the \emph{canonical bundle} of \((Z,\Delta)\). Similarly we will denote by the {\it canonical dimension} of $(Z,\Delta)$ the Kodaira dimension of $K_Z + \Delta$ i.e. $\kappa(Z,\Delta): = \kappa(Z,\calO_Z(K_Z+\Delta))$. Finally, we say that the orbifold \((Z,\Delta)\) is of {\it general type} if \(\kappa(Z,\Delta)=\dim(Z)\).

\begin{definition}
A fibration \(f\colon X\to Z\) is said to be \textsl{of general type} if \((Z,\Delta_{f})\) of general type.
\end{definition}

The non-existence of fibrations of general type in the above sense turns out to be equivalent to the specialness condition of Definition \ref{def:special}.

\begin{theorem}[\protect{see \cite[Theorem 2.27]{Cam04}}]
% \label{tspec2} 
  A variety \(X\) is special if and only if it has no fibrations of general type.
\end{theorem}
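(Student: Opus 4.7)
The plan is to prove both implications, the core ingredient being an orbifold refinement of Bogomolov's theorem on rank-one saturated subsheaves of $\Omega_X^p$. The strategy is to set up an explicit correspondence between rank-one subsheaves $\mathcal L \subset \Omega_X^p$ with $\kappa(X, \mathcal L) = p$ and fibrations $f : X \dashrightarrow Z$ with $\dim Z = p$ whose orbifold base has $\kappa(Z, K_Z + \Delta_f) = p$.

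For the easier direction ($\Leftarrow$), assume $X$ carries a fibration of general type $f : X \to Z$, so $\dim Z = p \geq 1$ and $\kappa(Z, K_Z + \Delta_f) = p$. After passing to a smooth birational model where $f$ is equidimensional and $\Supp(\Delta_f)$ has simple normal crossings, I would exhibit an injection
$$
f^* \mathcal O_Z(N(K_Z + \Delta_f)) \hookrightarrow \Sym^N \Omega_X^p
$$
for an integer $N$ clearing all denominators of the orbifold coefficients. This is the local computation at multiple fibers motivating Definition \ref{dob}: a rational section of $K_Z + \Delta_f$ with a logarithmic pole of order $1 - 1/m_{(f,\Delta)}(D)$ along a prime divisor $D$ pulls back to a holomorphic section of $\Sym^N \Omega_X^p$ precisely because of the multiplicities $t_k$ of the components $F_k$ of $f^*(D)$ which appear in the infimum $m_{(f,\Delta)}(D) = \inf_k t_k m_\Delta(F_k)$. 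The saturation $\mathcal L$ of the image is then a rank-one subsheaf of $\Omega_X^p$ with $\kappa(X, \mathcal L) \geq p$, and Bogomolov's bound $\kappa(X, \mathcal L) \leq p$ forces equality, exhibiting $\mathcal L$ as a Bogomolov sheaf.

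For the harder direction ($\Rightarrow$), start from a Bogomolov sheaf $\mathcal L \subset \Omega_X^p$. The plan is to reconstruct the fibration geometrically and then recognize its orbifold base as being of general type. The decisive input is Bogomolov's theorem on algebraicity of foliations of maximal Iitaka dimension: a rank-one saturated subsheaf of $\Omega_X^p$ attaining $\kappa = p$ is tangent to an \emph{algebraic} foliation, whose leaves are the fibers of a rational fibration $f : X \dashrightarrow Z$ with $\dim Z = p$. After passing once more to a smooth birational model adapted to $f$, one verifies that $\mathcal L$ coincides, up to saturation, with $f^* \mathcal O_Z(K_Z + \Delta_f)$, the orbifold coefficients of $\Delta_f$ being exactly those forced by the ramification pattern of $f$ along its multiple fibers; this is the inverse of the construction used in the easy direction. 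The equality $\kappa(X, \mathcal L) = p$ then translates into $\kappa(Z, K_Z + \Delta_f) = p = \dim Z$, so $(Z, \Delta_f)$ is of general type and $f$ is a fibration of general type.

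The main obstacle is squarely in the hard direction: passing from the cohomological datum of $\mathcal L$ to the geometric fibration $f$, and then pinning down the correct orbifold divisor $\Delta_f$ on $Z$. Bogomolov's algebraicity statement delivers the foliation, but matching $\mathcal L$ with $f^*(K_Z + \Delta_f)$ up to saturation requires the careful multiplicity analysis at multiple fibers encoded in the definition of $m_{(f, \Delta)}$. Any orbifold structure strictly smaller than $\Delta_f$ would yield a rank-one subsheaf of $\Omega_X^p$ with Iitaka dimension strictly less than $p$, while any strictly larger structure would violate Bogomolov's bound; this rigidity is what selects $\Delta_f$ and transports the maximality of $\kappa(X, \mathcal L)$ into the general type property of $(Z, \Delta_f)$.
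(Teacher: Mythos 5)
The paper does not prove this statement: it is quoted directly from \cite[Theorem 2.27]{Cam04}, so there is no in-text proof to compare against. Your sketch does follow the route of Campana's original argument. The easy direction is exactly the local multiplicity computation giving an injection $f^*\mathcal O_Z(N(K_Z+\Delta_f))\hookrightarrow \Sym^N\Omega_X^p$ on a suitable (neat) model --- the same computation $c^*\bigl(\bigl(\tfrac{db_i}{b_i^{1-1/t_i'}}\bigr)^{\otimes t_i'}\bigr)=t_i^{t_i'}x_i^{t_i-t_i'}(dx_i)^{\otimes t_i'}$ that reappears in the proof of Theorem \ref{c_m} --- combined with Bogomolov's bound $\kappa(X,\mathcal L)\le p$. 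The hard direction indeed rests on Bogomolov's theorem that the Iitaka fibration of a rank-one saturated $\mathcal L\subset\Omega_X^p$ with $\kappa(X,\mathcal L)=p$ has $p$-dimensional base and identifies $\mathcal L$ generically with the saturation of $f^*K_Z$.

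One point in your write-up is not right as stated, though it does not derail the overall strategy. Your closing ``rigidity'' argument is not how $\Delta_f$ is determined: $\Delta_f$ is defined outright from the fiber multiplicities $\inf_k t_k m_\Delta(F_k)$ (Definition \ref{dob}), not selected by an extremality principle, and the phrase ``a strictly larger structure would violate Bogomolov's bound'' does not parse, since an orbifold divisor on $Z$ is not a subsheaf of $\Omega_X^p$. The substantive step you are eliding is the descent inequality $\kappa(X,\mathcal L)\le\kappa(Z,K_Z+\Delta_f)$: a section of $\mathcal L^{\otimes N}\subset\Sym^N\Omega_X^p$, viewed as a rational section of $NK_Z$, must be shown to have pole order at most $N(1-1/m_{(f,\Delta)}(D))$ along each prime divisor $D\subset Z$, and this is where $f$-exceptional divisors cause trouble --- hence the passage to a \emph{neat} model (every $f$-exceptional divisor also $u$-exceptional, snc discriminant), on which $\kappa(Z,\Delta_f)$ is minimal among birational models and equals $\kappa(X,L_f)$. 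With that step supplied in place of the rigidity heuristic, your outline matches the proof in \cite{Cam04}.
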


Let us now recall the existence of the \emph{core} map  (see \cite[Section 3]{Cam04} for details). Given a smooth projective variety $X$ there is a functorial fibration $c_X: X \to C(X)$, called the \emph{core} of $X$ such that the fibers of $c_X$ are special varieties and the base $C(X)$ is either a point (if and only if $X$ is special) or an orbifold of general type.

As mentioned in the introduction, the second author has proposed the following generalizations of Lang's conjectures.

\begin{conjecture}[Campana]\label{specialconj}
\begin{enumerate}
\item Let $X$ be a complex projective variety. Then, $X$ is special if and only if there exists an entire curve $\C \to X$ with Zariski dense image.
\item Let $X$ be a projective variety defined over a number field.
Then, the set of rational points on $X$ is potentially dense if and only if $X$ is special.
\end{enumerate}
\end{conjecture}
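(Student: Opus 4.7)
This is not really a theorem but a deep open conjecture, so my ``proof proposal'' must be read as a strategic outline of what a program to attack it would look like, pieced together from the structural theory of special varieties.

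For part (1), the direction ``dense entire curve $\Longrightarrow$ special'' is the approachable one and should reduce to the orbifold Lang conjecture for the base of the core. Concretely, given a Zariski dense entire curve $f\colon\mathbb{C}\to X$, composing with the core map $c_X\colon X\to C(X)$ gives an entire curve with Zariski dense image in the orbifold $(C(X),\Delta_{c_X})$. By the theorem quoted from \cite{Cam04}, $(C(X),\Delta_{c_X})$ is of general type unless $X$ is already special, so one would like to invoke the orbifold version of Green--Griffiths--Lang: entire curves in an orbifold of general type should be algebraically degenerate, and the orbifold multiplicities along $\Delta_{c_X}$ must be respected by the lifted curve. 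Combining these would force $\dim C(X)=0$. The converse ``special $\Longrightarrow$ dense entire curve'' is the hard half: the natural route is to use the conjectural decomposition of special manifolds into towers built out of rationally connected fibrations and orbifold étale-trivial fibrations (with fibers like tori, Calabi--Yau, and Fano orbifolds), and then to construct dense entire curves piece by piece, propagating them up the tower via deformation techniques of the kind used in this paper for K3 symmetric powers and for $G\times C$ with $g(G)\le 1$.

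For part (2), the same dichotomy applies over a number field. The ``potential density $\Longrightarrow$ special'' direction rests on Lang's arithmetic conjecture for $(C(X),\Delta_{c_X})$: a potentially dense set of rational points on $X$ projects to an orbifold-integral potentially dense set on the core base, which should be impossible when the base has positive dimension and is of general type. The converse requires verifying potential density on each of the conjectural building blocks (abelian varieties: trivial; Fano: known in many cases by Mori theory and unirationality; Calabi--Yau: largely open, but established for K3 by Bogomolov--Tschinkel, cf.\ \cite{HT00}) and then assembling, for which the main technical input is the existence of sufficiently many multisections of the fibrations appearing in the orbifold decomposition, compatible with the multiplicities $\Delta_{c_X}$.

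The main obstacle in both parts is the ``special implies dense'' direction, and within it two essentially orthogonal difficulties: (a) a structural classification of special manifolds strong enough to reduce to a small list of building blocks, and (b) proving potential density or Zariski density of entire curves on Calabi--Yau building blocks, which is open even in dimension three. Consequently, a full resolution seems out of reach; realistic intermediate targets are the conjecture for projective surfaces (where the classification is known), for threefolds with $\kappa=0$ assuming abundance, and for specific families such as the symmetric powers treated in this paper, which serve as test cases confirming the heuristic behind Conjecture~\ref{specialconj}.
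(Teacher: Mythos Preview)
You have correctly identified that the statement is a \emph{conjecture}, not a theorem, and the paper offers no proof of it: Conjecture~\ref{specialconj} is simply stated as a motivating open problem due to the second author, generalizing the Green--Griffiths--Lang conjectures. There is therefore no proof in the paper to compare your proposal against.

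Your outline is a reasonable heuristic summary of how one might hope to approach the conjecture via the core map and the (conjectural) building-block decomposition of special varieties, and it correctly flags the hard direction and the main obstructions. But it is not a proof, nor does it claim to be, and none of the reductions you describe (orbifold Green--Griffiths--Lang for the core base, the structural decomposition of special manifolds, density on Calabi--Yau building blocks) is currently known in the required generality. So as a ``proof proposal'' it is honest about its own status; just be aware that the paper treats this purely as a conjecture and uses it only to motivate the study of specific test cases such as symmetric powers.
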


Finally, let us remark that previous conjectures (see \cite[Conjecture 1.2]{HaT}) proposed to characterize potential density with the weaker notion of \emph{weak specialness}.

\begin{definition}
  A projective variety \(X\) is said to be \textsl{weakly special} if there are no finite étale covers \(u\colon X'\to X\) admitting a dominant rational map \(f'\colon X'\to Z'\) to a positive dimensional variety $Z'$ of general type. 
\end{definition} 

It has been shown in \cite{CP07} and \cite{RTW} that one cannot replace ``special'' by ``weakly-special'' in Conjecture \ref{specialconj} in the analytic and function fields settings.

\section{Canonical fibrations}

We will now study conditions under which various canonical fibrations are preserved by the symmetric product. In the rest of the text, a \emph{fibration} will be a surjective morphism with connected fibres. Then, if $f : X \to Y$ is a fibration, so is $f_m : X_m \to Y_m$. 
\medskip

We shall consider the following (bimeromorphically well-defined) fibrations for $X$ smooth compact of dimension $n$:

\begin{enumerate}
	\item The Moishezon-Iitaka fibration $f:=J:X\to B$

Assuming $X$ to be smooth K\"ahler:

\item The `rational quotient'\footnote{Also termed MRC fibration.} $f:=r:X\to B$.

\item The `core map' $f:=c:X\to B$. 
\end{enumerate}

Recall that \cite{AA02} shows that if $X$ is smooth, and if $\dim X \geq 2$, the singularities of $X_m$ are canonical, and consequently, that $\kappa(X_m)=m. \kappa(X)$.

The goal is to extend (and exploit) \cite{AA02} in order to show the following:

\begin{theorem} Assume $\dim B\geq 2$, then in each of these $3$ cases ($f=J,r,c$ respectively),  $f_m:X_m\to B_m$ is the same fibration ($J_m,r_m,c_m$ respectively), with $X_m,B_m$ replacing $X,B$. (In the case of the rational quotient, or of the core map, there are exceptional cases when $B$ is a curve. See Theorems \ref{rq} and \ref{c_m} below, as well as Remark \ref{n=1}).\end{theorem}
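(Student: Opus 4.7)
The strategy is uniform across the three cases $f = J, r, c$: each canonical fibration is characterized, up to birational equivalence, by a property of its generic fibres paired with a property of its base. I will show that $f_m\colon X_m\to B_m$ satisfies the corresponding pair, hence must coincide birationally with the respective canonical fibration of $X_m$.

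The first step is the fibre side. Over a generic point $[b_1,\dots,b_m]\in B_m$ with pairwise distinct $b_i$, the fibre of $f_m$ is the product $F_{b_1}\times\cdots\times F_{b_m}$ of $m$ generic fibres of $f$. For $f=J$, each $F_{b_i}$ has $\kappa=0$, so by easy additivity the product has $\kappa=0$. For $f=r$, each $F_{b_i}$ is rationally connected, hence so is the product. For $f=c$, each $F_{b_i}$ is special; the product of special manifolds being special (see \cite{Cam04}), the generic fibre of $c_m$ is again special.

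The second step is the base side, and uses essentially $\dim B\geq 2$. Under this hypothesis, $B_m$ has canonical singularities by \cite{AA02}, and $\kappa(B_m)=m\kappa(B)$. In case (1) this gives $\kappa(B_m)=m\dim B=\dim B_m$, so $B_m$ is of general type. In case (2), $B$ is not uniruled, so $K_B$ is pseudo-effective by Boucksom--Demailly--Paun--Peternell; using the canonical bundle formula for $q\colon B^m\to B_m$ together with the canonical singularities of $B_m$, I propagate pseudo-effectivity to $K_{B_m}$, whence $B_m$ is not uniruled. In case (3), $\Delta_{c_m}$ is to be identified as the orbifold divisor on $B_m$ induced by the symmetric construction applied to $\Delta_c$, and an orbifold extension of \cite{AA02} should give $\kappa(K_{B_m}+\Delta_{c_m}) = m\kappa(K_B+\Delta_c) = \dim B_m$.

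Once both properties are verified, the birational uniqueness of the Moishezon--Iitaka fibration, the MRC, and the core map forces $f_m$ to be the corresponding canonical fibration of $X_m$. The hardest step will be case (3), which demands a clean orbifold version of \cite{AA02} for the pair $(B,\Delta_c)$; case (2) is also delicate, since pseudo-effectivity of $K_{B_m}$ must be extracted from the symmetric quotient without direct recourse to a Kodaira-dimension statement. The hypothesis $\dim B\geq 2$ is essential: for $B$ a curve of genus $g$, when $m\geq g$, $B_m$ is a $\mathbb P^{m-g}$-bundle over $\Jac(B)$, hence uniruled, breaking case (2); analogous phenomena explain the curve exceptions in case (3) recorded in Theorems~\ref{rq} and~\ref{c_m}.
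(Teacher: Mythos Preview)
Your strategy for cases $f=J$ and $f=r$ is correct and essentially identical to the paper's. For $J$, the paper uses exactly your two-step argument (fibres of $\kappa=0$, plus $\kappa(X_m)=m\kappa(X)=\dim B_m$ via \cite{AA02}). For $r$, the paper runs the contrapositive of your argument: assuming a smooth model $B'_m$ is uniruled, one produces a covering family of curves with $K_{B'_m}\cdot C'_t<0$, pushes this down to $K_{B_m}\cdot C_t<0$ using the canonical singularities, and lifts to $B^m$ to contradict pseudo-effectivity of $K_{B^m}$ via Miyaoka--Mori. This is your BDPP argument in reverse; the content is the same.

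For case $f=c$, however, your proposal is incomplete, and the paper's route differs substantially from what you sketch. You write that ``an orbifold extension of \cite{AA02} should give $\kappa(K_{B_m}+\Delta_{c_m}) = m\kappa(K_B+\Delta_c)$'', but you neither state nor prove such an extension, and there are genuine obstacles: the orbifold divisor $D_{c_m}$ on $B_m$ need not be snc, and computing its Kodaira dimension directly on the (singular) base is delicate. The paper avoids this entirely by working on the total space rather than on the base. Concretely, it shows that on a smooth model $\rho\colon X'_m\to X_m$, the pullback $\rho^*c_m^*\bigl((K_{B_m}+D_{c_m})^{\otimes t}\bigr)$ lands inside $\Sym^t(\Omega^{mp}_{X'_m})$; this makes the saturation of $c_m^*(K_{B_m})$ a Bogomolov sheaf of maximal Iitaka dimension $mp$, which is the characterization of the core used in \cite{Cam04}. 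The verification is local: after passing to a neat model of $c$ and choosing coordinates in which $c$ has monomial form, one must extend a specific $\mathfrak S_m$-invariant tensor across the exceptional locus of $\rho$. This is exactly what the Reid--Tai--Weissauer criterion (Proposition~\ref{propextension}) is designed for, and the numerical condition is checked using the eigenvalue estimate $\sigma(g)\geq r$ from \cite{AA02}, which requires $p=\dim B\geq 2$. So the key idea you are missing is that the argument should run through Bogomolov sheaves on $X'_m$ and the Weissauer criterion, not through an orbifold Kodaira dimension computation on $B_m$.
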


	\begin{remark} \label{n=1} The conclusion is obviously false when $\dim X=1$ and $g(X) \geq 2$, since $q_m:X^m\to X_m$ then ramifies in codimension $n=1$. One recovers a uniform statement by equipping $X_m$ with its natural orbifold structure, obtained by assigning to each component $D_{j,k}$ in $X_m$ of the diagonal locus $\mathfrak{D}_2(X_m)$ its natural multiplicity $2$. The orbifold divisor $D_m:=\sum_{j<k}(1-\frac{1}{2}).D_{j,k}$ on $X_m$ has then the property that $q_m^*(K_{X_m}+D_m)=K_{X^m}$. In particular, $\kappa(X_m,K_{X_m}+D_m)=m.\kappa(X)$. The divisor $D_m$ will appear again when we consider the core map below. Notice however that, as soon as $m\geq 3$, the orbifold divisor $D_m$ is not of normal crossings (for $m=3$ for example, it is locally analytically a product of of disk by a plane cusp.) \end{remark}

For $f=J$, the proof is an immediate consequence of \cite{AA02}. Indeed: the general fibre of $f_m$ is a product of fibres of $J$, hence has $\kappa=0$. On the other hand, $\kappa(X_m)=m.\kappa(X)=\dim(B_m)$. The conclusion follows. 

\medskip

Before starting the study of $c_m,r_m$, let us make some simple observations on $f_m:X_m\to B_m$ if $f:X\to B$ is a fibration (with connected fibres) between two connected compact complex manifolds.

1. The generic fibre of $f_m$ over a point $[b_1,\dots,b_m]\in B_m$ is isomorphic to the (unordered) product $X_{b_1}\times\dots X_{b_m}$ if the $b_i$ are pairwise distinct. In particular, if the generic fibre of $f$ is rationally connected, or special, so are the generic fibres of $f_m$.

2. if the schematic fibres $X_{b_i}$ are reduced, so is the fibre over $[b_1,\dots,b_m]$, whatever the $b_i$.

3. If $f$ has a local section over a neighborhood of each of the $b_i's$, $f_m$ has (an obvious) local section over a neighborhood of $[b_1,\dots,b_m]$.

\medskip

We shall now prove the statement for the other two fibrations.

\subsection{The `rational quotient'}

\begin{theorem}\label{rq} Let $r:X\to B$ be the rational quotient map of $X$, compact K\"ahler. Then $r_m:X_m\to B_m$ is the rational quotient map of $X_m$ if $\dim(B)\neq 1$.
If $B$ is a curve of genus $g>0$, and $R_m:X_m\to R(m)$ is the rational quotient map, there are two cases: either $m<g$, then $R_m=r_m, R(m)=B_m$, or $R_m=jac^m_B\circ r_m:X_m\to \Jac(B)$, where $jac^m_B:B_m\to \Jac(B)$ is the natural Jacobian map.
 \end{theorem}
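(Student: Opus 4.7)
The plan is to verify, in each case, that the proposed map has rationally connected generic fibre and a non-uniruled base on a smooth bimeromorphic model, the two properties characterising the rational quotient (by Graber--Harris--Starr). Rational connectedness of fibres is immediate in every case: over a very general $[b_1,\dots,b_m]\in B_m$, the $b_i$ are pairwise distinct, so the fibre of $r_m$ identifies with the product $r^{-1}(b_1)\times\cdots\times r^{-1}(b_m)$ of rationally connected fibres of $r$, hence is rationally connected; for the composition $j_m\circ r_m$ used in the curve case, the same observation combined with Graber--Harris--Starr applied to a fibration in rationally connected varieties over $\mathbb{P}^{m-g}$ yields rational connectedness of its generic fibre.

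The bulk of the work will be to establish non-uniruledness of the base. When $\dim B\geq 2$, since $B$ is the MRC base of $X$ it is not uniruled, so by Boucksom--Demailly--P\u{a}un--Peternell the canonical divisor $K_B$ is pseudo-effective, and hence so is $K_{B^m}=\sum_i \mathrm{pr}_i^*K_B$ on $B^m$. The quotient $q\colon B^m\to B_m$ has its branch locus contained in the image of the big diagonal, which has codimension $\dim B\geq 2$; combined with $B_m$ having canonical (in particular $\mathbb{Q}$-Gorenstein) singularities by \cite{AA02}, Riemann--Hurwitz yields $q^*K_{B_m}=K_{B^m}$. Pseudo-effectivity then descends along the finite surjection $q$ to give $K_{B_m}$ pseudo-effective; passing to any resolution $\widetilde{B}_m\to B_m$, the canonical singularity property ensures $K_{\widetilde{B}_m}$ remains pseudo-effective, so $B_m$ is non-uniruled and $r_m$ is the MRC of $X_m$.

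For the remaining case where $B$ is a curve of genus $g>0$, I will use the Abel--Jacobi map $j_m\colon B_m\to\mathrm{Jac}(B)$. If $m<g$ then $j_m$ is birational onto its image, a subvariety of the abelian variety $\mathrm{Jac}(B)$ which is never uniruled; so $B_m$ is non-uniruled and $r_m$ itself is the MRC, giving $R(m)=B_m$. If $m\geq g$, Riemann--Roch makes $j_m$ surjective with generic fibre $\mathbb{P}^{m-g}$, which makes $B_m$ uniruled; one then checks instead that $R_m:=j_m\circ r_m\colon X_m\to\mathrm{Jac}(B)$ has rationally connected generic fibre (by the first paragraph) and non-uniruled base (an abelian variety), hence is the MRC. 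The hardest step is the case $\dim B\geq 2$: one must upgrade the qualitative non-uniruledness of $B$ to the pseudo-effectivity of $K_B$ via Boucksom--Demailly--P\u{a}un--Peternell, and carefully control the canonical divisor through the symmetric quotient, which succeeds only because the branch locus has codimension at least two and $B_m$ has canonical singularities by \cite{AA02}.
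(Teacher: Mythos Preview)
Your proof is correct and follows essentially the same strategy as the paper: rational connectedness of the generic fibre of $r_m$ as a product of rationally connected fibres, non-uniruledness of the base via the canonical-singularity result of \cite{AA02} and the identity $q^*K_{B_m}=K_{B^m}$, and the Abel--Jacobi analysis in the curve case. The only noteworthy difference is the tool you invoke to pass between ``$B$ not uniruled'' and ``$K_{B_m}$ pseudo-effective'': you appeal to BDPP to translate non-uniruledness into pseudo-effectivity of $K_B$ (hence of $K_{B^m}$), descend this to $K_{B_m}$, lift it to a resolution, and then use BDPP again; the paper instead argues directly with covering curves via Miyaoka--Mori, namely that a putative covering family on a resolution with $K\cdot C<0$ pushes down to $B_m$ and then lifts to $B^m$, forcing $B^m$ to be uniruled. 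The two arguments are equivalent in substance, yours being slightly more streamlined at the cost of invoking the deeper BDPP theorem where Miyaoka--Mori already suffices.
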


\begin{proof} Recall that $r$ is characterised by the fact that its fibres are rationally connected and (a smooth model of) its base is not uniruled (by \cite{GHS}). Since the generic fibres of $r_m$ are products of fibres of $r$, hence rationally connected, it is sufficient to show that a smooth model $\mu:B'_m\to B_m$ of $B_m$ is not uniruled if $B$ is {\bf not} a curve of positive genus, case treated now. Assume it were, we would then have an algebraic family of generically irreducible curves $C'_t$ covering $B'_m$ and with $-K_{B'_m}.C'_t>0$. Since the singularities of $B_m$ are canonical, this implies $K_{B_m}.C_t<0$, where $C_t:=\mu_*(C_t)$, since $K_{B'_m}=\mu^*(K_{B_m})+E'$, with $E'$ effective, by \cite{AA02}. The conclusion now follows, using \cite{Miyaoka-Mori}, from the fact that $K_{B^m}=(q_m^B)^*(K_{B_m})$ is pseudo-effective (ie: has nonnegative intersection with any covering algebraic family of generically irreducible curves), by lifting to $B^m$ the generic curve $C_t$.

Assume now that $B$ is a curve of genus $g>0$. Then $jac^m_B:B_m\to \Jac(B)$ has connected fibres generically projective spaces of dimension $0$ if $m\leq g$, and positive dimension if $m>g$. Moreover the image of $jac^m_B$ is never uniruled when $m>0$. This shows the claim, by \cite{GHS}.
\end{proof}

\begin{corollary} $X$ is rationally connected if and only if so is $X_m$ for some $m$. $X$ is uniruled if and only if so is $X_m$ for some $m$, unless we are in the following situation, where $X_m$ is uniruled, but $X$ is not: $X$ is a curve of genus $g>0$, and $m>g$.  \end{corollary}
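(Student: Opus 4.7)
The plan is to apply Theorem \ref{rq} directly in each case, using the standard characterizations: a variety $Y$ is rationally connected iff its rational quotient has a point as base, and $Y$ is uniruled iff its rational quotient has positive-dimensional fibers. Let $r : X \to B$ denote the rational quotient of $X$ and recall that $B$ (up to smooth model) is never uniruled, so in particular if $\dim B = 1$ then $g(B) \geq 1$.

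For rational connectedness, if $X$ is rationally connected then $B$ is a point, hence $B_m$ is a point; Theorem \ref{rq} (in the trivial form: the rational quotient of $X_m$ lands in $B_m$) gives $X_m$ rationally connected for every $m$. Conversely, assume $X_m$ is rationally connected. If $\dim B \geq 2$, the map $r_m : X_m \to B_m$ is the rational quotient of $X_m$ by Theorem \ref{rq}, forcing $\dim B_m = m \dim B = 0$, contradiction. If $\dim B = 1$, then $g(B) \geq 1$ and Theorem \ref{rq} shows that the base of the rational quotient of $X_m$ is either $B_m$ or $\mathrm{Jac}(B)$, both of positive dimension, again a contradiction. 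Hence $\dim B = 0$ and $X$ is rationally connected.

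For uniruledness, the ``only if'' direction when $X$ is uniruled: run through the cases. If $\dim B = 0$, then $X_m$ is rationally connected, hence uniruled. If $\dim B \geq 2$, the rational quotient of $X_m$ is $r_m : X_m \to B_m$ with $\dim B_m = m \dim B < m \dim X = \dim X_m$, so $X_m$ is uniruled. If $\dim B = 1$, then $g(B) \geq 1$ and $\dim X \geq 2$ (since $X$ is uniruled), and by Theorem \ref{rq} the rational quotient of $X_m$ has base of dimension either $m$ or $g$, both strictly smaller than $\dim X_m = m \dim X \geq 2m$, so $X_m$ is uniruled.

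Conversely, assume $X$ is not uniruled; we must locate the exceptional case. If $\dim X \geq 2$, then $r = \mathrm{id}_X$, so $\dim B \geq 2$, and Theorem \ref{rq} gives the rational quotient of $X_m$ as $r_m = \mathrm{id}_{X_m}$, so $X_m$ is not uniruled. If $\dim X = 1$, then $X$ is a curve of genus $g \geq 1$. For $m < g$, Theorem \ref{rq} says the rational quotient of $X_m$ is the identity, so $X_m$ is not uniruled. For $m = g$, the rational quotient map is $\mathrm{jac}^g_B : X_g \to \mathrm{Jac}(B)$, which is birational (Abel--Jacobi), so $X_g$ is birational to an abelian variety and hence not uniruled. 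For $m > g$, however, $\mathrm{jac}^m_B : X_m \to \mathrm{Jac}(B)$ has connected fibers of positive dimension $m - g$ (generically projective spaces by Theorem \ref{rq}), so $X_m$ is uniruled. This yields precisely the announced exceptional case, and no step presents a serious obstacle: everything reduces to reading off dimensions from Theorem \ref{rq}, the only subtle point being the dichotomy at $m = g$ versus $m > g$ in the curve case, where one must recall that the Abel--Jacobi map is birational at $m = g$ but acquires positive-dimensional projective fibers for $m > g$.
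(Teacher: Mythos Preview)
Your proof is correct and follows essentially the same approach as the paper: both deduce the corollary from Theorem~\ref{rq} by reading off the dimension of the base of the rational quotient in each case. Your case analysis is more explicit than the paper's (which is quite terse), and in particular you handle the boundary case $m=g$ for curves more carefully, noting that the Abel--Jacobi map is birational there; the paper simply absorbs this into the dichotomy of Theorem~\ref{rq}.
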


\begin{proof} Indeed: the uniruledness (resp. rational connectedness) of $X$ is characterised by: $\dim(X)>\dim(B)$ (resp. $\dim(B)=0)$, and $\dim(B_m)=m.\dim(B)$. We thus see that $X_m$ is rationally connected (resp. uniruled) if so is $X$. Conversely, the preceding Theorem \ref{rq} shows that the claim holds true if $\dim(R(m))=\dim(B_m)=m.\dim(X)$. This is the case unless possibly when $r:X\to B$ fibres over a curve $B$ with $g(B)>0$, and $m> g$. In this case, $X_m$ is uniruled, but not rationally connected. Thus $X_m$ rationally connected implies $X$ rationally connected. On the other hand, if $X$ is not  uniruled, we have $X=B$ is a curve, and so $X_m$ is uniruled if and only if $m>g$. Hence the corollary.\end{proof}

\begin{remark} If $X$ is unirational, so is obviously $X_m$, for any $m>1$. It is true, but less obvious (\cite{mat68}), that if $X$ is rational, then so is $X_m$, for any $m>1$. From this follows that if $X$ is stably rational, then so is $X_m$, for $m>1$ too. This naturally leads to consider the converses.
\end{remark}

\begin{question} Assume that $X_m$ is unirational (resp. rational, stably rational) for some $m\geq 2$, is then, yes or no, $X$ unirational (resp. rational, stably rational)? If some $X_m,m>1$ is rational, is $X$ unirational? Some specific cases are as follows. \end{question}

\begin{example} 1. If $X$ is a smooth cubic hypersurface of dimension $n\geq 3$, is $X_m$ rational for some large $m$?

2. If $X$ is the double cover of $\Bbb P^3$ ramified over a smooth sextic surface, $X$ is Fano, hence rationally connected, but its unirationality (or not) is an open problem. Is $X_m$ unirational for some large $m$? The same question arises for $X$ a conic bundle over $\Bbb P^2$ with a smooth discriminant of large degree.
\end{example}

\subsection{The core map}

\begin{theorem}\label{c_m} If $c:X\to B$ is the core map of $X$, then $c_m:X_m\to B_m$ is (bimeromorphically) the core map of $X_m$ if $n\geq 2, p\neq 1$, where $p:=\dim(B)$ is the dimension of the core. \end{theorem}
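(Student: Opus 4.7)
The plan is to use the characterisation of the core map: a fibration $f : Y \to Z$ between smooth projective varieties is birationally the core of $Y$ if and only if its general fibre is special and its orbifold base $(Z, \Delta_f)$ is of general type. The case $p = 0$ is immediate, since then $X$ itself is special, so by the earlier specialness theorem (using $n \geq 2$) $X_m$ is special too, and both core maps are trivial. I will henceforth assume $p \geq 2$, and verify the above two properties for $c_m : X_m \to B_m$.

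The first property follows immediately from the first observation preceding the statement: the general fibre of $c_m$ over a point $[b_1,\ldots,b_m]$ with pairwise distinct $b_i$ is the product $X_{b_1} \times \cdots \times X_{b_m}$ of general fibres of $c$, each of which is special by definition of the core, and as noted there such products remain special.

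For the second property, the hypothesis $p \geq 2$ is crucial for two reasons: the diagonal $\mathfrak{D}_2(B_m)$ has codimension $p \geq 2$, so every prime divisor of $B_m$ has the form $D_\flat$ with $D$ a prime divisor of $B$; and the quotient $q : B^m \to B_m$ is étale in codimension one, so (modulo the canonical singularities of $B_m$ provided by \cite{AA02}) $q^* K_{B_m} \equiv K_{B^m}$ as Weil divisor classes. I would compute $m_{\Delta_{c_m}}(D_\flat)$ at a generic point $[b_1, b_2, \ldots, b_m] \in D_\flat$ with $b_1 \in D$ generic and $b_2, \ldots, b_m \in B \setminus D$ distinct and generic: locally $c_m$ factors as $c$ near $b_1$ times identities near the $b_i$ for $i \geq 2$, so the multiplicities appearing in the scheme fibre are precisely those in $c^* D = \sum_k t_k F_k + R$, giving $m_{\Delta_{c_m}}(D_\flat) = m_{\Delta_c}(D)$. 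Consequently, writing $\Delta_c = \sum_j (1 - 1/m_j) D_j$, one obtains $\Delta_{c_m} = \sum_j (1 - 1/m_j)(D_j)_\flat$.

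Combining this with $q^*(D_j)_\flat = D_j^\sharp = \sum_i \mathrm{pr}_i^* D_j$ yields
$$
q^*(K_{B_m} + \Delta_{c_m}) \equiv K_{B^m} + \sum_i \mathrm{pr}_i^* \Delta_c = \sum_i \mathrm{pr}_i^*(K_B + \Delta_c).
$$
Since $(B, \Delta_c)$ is of general type, $K_B + \Delta_c$ has Iitaka dimension $p$, so the right-hand side has Iitaka dimension $mp = \dim B^m$; finiteness of $q$ then yields $\kappa(B_m, K_{B_m} + \Delta_{c_m}) = mp$, proving that $(B_m, \Delta_{c_m})$ is of general type. The main technical subtlety is the orbifold bookkeeping: confirming that $p \geq 2$ exactly rules out any extra divisorial contribution to $\Delta_{c_m}$ supported on the diagonal, which is precisely the phenomenon responsible for the excluded case $p = 1$.
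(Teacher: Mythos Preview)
Your computation that $(B_m, \Delta_{c_m})$ has Kodaira dimension $mp$ is correct, and it is essentially the numerical input the paper also needs. The gap is in the last step: the characterisation ``general fibre special and orbifold base of general type'' identifies the core only on a \emph{smooth neat} model $c'_m : X'_m \to B'_m$ (equivalently, what matters is $\kappa(X'_m, L_{c'_m})$ for the saturated Bogomolov sheaf on a smooth $X'_m$). You work on the singular pair $(X_m, B_m)$; the paper stresses that on a neat model $\kappa(B, D_f)$ is \emph{minimal} among birational models, so your value $mp$ is only an upper bound for the neat value, not the required lower bound. The appeal to canonical singularities of $B_m$ controls $K_{B_m}$ under resolution, but says nothing about how $D_{c'_m}$ compares with $\mu^\ast D_{c_m}$ on exceptional divisors over the diagonal, nor about whether the resulting sections live in $\mathrm{Sym}^k\Omega^{mp}_{X'_m}$.

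This is exactly the point the paper addresses with a different mechanism. It works on the $X$-side: sections of $k(K_{B^m}+D_{c^m})$ pull back along $c^m$ into $\mathrm{Sym}^k\Omega^{mp}_{X^m}$, descend by $\mathfrak{S}_m$-invariance to $X_m^{\rm reg}$, and then the Reid--Tai--Weissauer criterion (Proposition~\ref{propextension}), checked via the eigenvalue estimate $\sigma(g)\geq r$ coming from \cite{AA02} and valid precisely when $p\geq 2$, forces these tensors to extend holomorphically to any smooth resolution $X'_m$. That extension is what produces a genuine Bogomolov sheaf $L_{c'_m}\subset\Omega^{mp}_{X'_m}$ with $\kappa=mp$ and hence shows $c'_m$ is of general type. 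Your argument supplies the section count but omits this extension step; without it the conclusion does not follow.
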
 

The case where $B$ is a curve is studied in the next subsection.

\begin{corollary} If $n\geq 2$, $X$ is special if and only if so is $X_m$ for some $m$ (unless when $n\geq 2$, and the core of $X$ is a curve. See Remark \ref{n=1}).\end{corollary}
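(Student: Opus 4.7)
The proof is essentially a direct translation of Theorem \ref{c_m} into the language of specialness, so the plan is very short.

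First I would recall the defining property of the core: a smooth projective variety $Y$ is special if and only if its core map $c_Y \colon Y \to C(Y)$ has a zero-dimensional base, i.e. $C(Y)$ is a point. Thus the question reduces to comparing $\dim B$ with $\dim B_m$, where $c \colon X \to B$ is the core of $X$.

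Next, since $n \geq 2$ and we exclude the case where the core of $X$ is a curve (that is, $p := \dim B \neq 1$), Theorem \ref{c_m} applies and identifies the core of $X_m$ with $c_m \colon X_m \to B_m$ (up to bimeromorphic equivalence). Now I would simply run the two implications:
\begin{itemize}
\item If $X$ is special, then $B$ is a point, hence $B_m$ is a point, hence $X_m$ is special for every $m \geq 1$.
\item Conversely, if $X_m$ is special for some $m$, then $B_m$ is a point. But $\dim B_m = m \cdot \dim B = mp$, so $p = 0$, i.e. $B$ is a point and $X$ is special.
\end{itemize}

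There is no real obstacle: the content is entirely contained in Theorem \ref{c_m}, and the corollary is essentially a matter of observing that the symmetric product operation on bases preserves the property \emph{of being a point}. The one delicate point, which is precisely why it is stated as an exception, is that when $p = 1$ the description of the core of $X_m$ is genuinely different: if $B$ is a curve of positive genus and $m$ is large enough, $B_m$ fibres onto the Jacobian and the orbifold base structure forces $X_m$ to become special even when $X$ is not. This case is therefore deferred to the next subsection (cf.\ Remark \ref{n=1}) and does not need to be treated in the present corollary.
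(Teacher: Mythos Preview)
Your proof is correct and follows exactly the same approach as the paper: the paper's proof is a single line observing that $X$ (resp.\ $X_m$) is special if and only if $\dim(B)=0$ (resp.\ $\dim(B_m)=0$), together with $\dim(B_m)=m\cdot\dim(B)$, which is precisely what you wrote out in slightly more detail.
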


Indeed, $X$ (resp. $X_m)$ is special if and only if $\dim(B)=0$ (resp. $\dim(B_m)=0)$, and $\dim(B_m)=m.\dim(B)$.

\begin{proof} [Proof of Theorem \ref{c_m}] Since the general fibres of $c_m$ are products of special manifolds they are special (it is easy to see that  a product of special manifolds is special). It is thus sufficient to show that the `neat orbifold base' of $c_m$ is of general type, knowing that so is the neat orbifold base of $c$. This requires some explanation.

Recall that $f:X\to B$ is neat if there exists a bimeromorphic map $u:X\to X_0, X_0$ smooth, such that each $f$-exceptional divisor is also $u$-exceptional, and the complement of the open set $U=B\setminus D\subset B$ over which $f$ is submersive is a snc divisor, as well as $f^{-1}(D)\subset X$. Such a neat model of $f_0:X_0\dasharrow B$ is obtained by flattening $f_0$, followed by suitable blow-ups. In this case, the support of $D_f$, the orbifold base of $f$, is snc too, and $\kappa(B,D_f)$ is minimal among all bimeromorphic models of $f$. More precisely, $\kappa(B,D_f)=\kappa(X,L_f)$, where $L_f:=f^*(K_B)^{sat}\subset \Omega^p_X$, where $p:=\dim(B)$, and $f^*(K_B)^{sat}$ is the saturation of $f^*(K_B)$ in $\Omega^p_X$. See \cite{Cam04} for details. Notice also that if $c:X\to B$ is a neat model of some $f_0:X_0\dasharrow B_0$, and if $x\in X$ is any point, there is another neat model $f':X'\to B'$ dominating\footnote{In the sense that there exists birational maps $u':X'\to u$ and $\beta':B'\to B$ such that $f\circ u'=\beta'\circ f'$.}$f:X\to B$ such that $x$ does not belong to any $f'$-exceptional divisor on $X'$, and lies in the image of the smooth locus of the reduction of a fibre of $f'$. If this condition is not realised on $(X,f)$ it is then sufficient to suitably blow-up $X$, then flatten the resulting map by modifying $B$, and finally take a smooth model of the resulting $f$. The claim of Theorem \ref{c_m} then holds true for $(X,f)$ if it holds for $(X',f')$. 

Let $c:X\to B$ be neat with respect to $u:X\to X_0$, and let $c_m:X_m\to B_m$, together with a smooth model $c'_m:X'_m\to B'_m$ of $c_m$ (ie: $X'_m, B'_m$ are smooth models of $X_m,B_m)$.

Let us prove first that $c^m:X^m\to B^m$ is the core map of $X^m$, with orbifold base $(B^m,D_{f^m})$ and Kodaira dimension $m.\kappa(B,D_f)$. This follows inductively on $m$ from the following easy lemma, which also shows that $D_{f^m}=\cup_{s\in S_m}s(D_f\times X^{m-1})$.

\begin{lemma} Let $f:X\to V, g:Y\to W$ be neat fibrations with orbifold bases $(V,D_f), (W,D_g)$. Then $f\times g: X\times Y\to V\times W$ is neat, its orbifold base is $(X\times Y, D_f\times W+V\times D_g)$, and its Kodaira dimension is $\kappa(V,D_f)+\kappa(W,D_g)$. \end{lemma}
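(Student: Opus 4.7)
The plan is to verify in turn the computation of the orbifold base, the nestness of $f\times g$, and the additivity of the log-Kodaira dimension.

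For the orbifold base, a product map is submersive at $(x,y)$ exactly when each factor is submersive at the corresponding point, so the discriminant locus of $f\times g$ is $\Supp(D_f)\times W\cup V\times \Supp(D_g)$; every irreducible component is therefore of the form $D\times W$ (with $D\subset V$ a component of $\Supp D_f$) or $V\times D'$ (with $D'\subset W$). For such a component, the product decomposition gives
\[
(f\times g)^\ast(D\times W)=f^\ast(D)\times Y=\sum_k t_k(F_k\times Y)+R\times Y,
\]
where $f^\ast D=\sum_k t_kF_k+R$ is the Campana decomposition. The $F_k\times Y$ dominate $D\times W$, while $R\times Y$ maps to $f(R)\times W\subsetneq D\times W$, so it is $(f\times g)$-exceptional over $D\times W$. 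Hence $m_{f\times g}(D\times W)=\min_k t_k=m_f(D)$, and symmetrically for components $V\times D'$. This yields $D_{f\times g}=p_V^\ast D_f+p_W^\ast D_g$.

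For nestness, I would take $u:=u_1\times u_2:X\times Y\to X_0\times Y_0$, where $u_1,u_2$ witness the nestness of $f$ and $g$. The snc condition on the discriminant $\Supp(D_f)\times W\cup V\times \Supp(D_g)\subset V\times W$ and on its preimage $f^{-1}(\Supp D_f)\times Y\cup X\times g^{-1}(\Supp D_g)\subset X\times Y$ is inherited from transversality of pullbacks from the two factors. To check that every $(f\times g)$-exceptional divisor $E\subset X\times Y$ is $u$-exceptional, I case-analyze on the behaviour of $E$ under $p_X,p_Y$. If $p_X(E)\subsetneq X$, then $E=F\times Y$ for some divisor $F\subset X$, necessarily $f$-exceptional, hence $u_1$-exceptional, so $E$ is $u$-exceptional; the case $p_Y(E)\subsetneq Y$ is symmetric. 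The remaining case, where $E$ dominates both $X$ and $Y$, is the step I expect to require the most care: both projections $E\to V$ and $E\to W$ are surjective, and a dimension count on the generic fibres $E_x\subset Y$ and $E^y\subset X$ (both divisors) should force $(f\times g)(E)=V\times W$, so that $E$ is not $(f\times g)$-exceptional in the first place. In low-dimensional degenerate situations, a preliminary blow-up following the construction of a neat model described in the main text reduces to this.

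Finally, the identification of $D_{f\times g}$ gives
\[
K_{V\times W}+D_{f\times g}=p_V^\ast(K_V+D_f)+p_W^\ast(K_W+D_g),
\]
so the log-canonical class is a sum of pullbacks from each factor. The standard K\"unneth-type additivity
\[
\kappa(V\times W,\,p_V^\ast L_V+p_W^\ast L_W)=\kappa(V,L_V)+\kappa(W,L_W)
\]
for $\bQ$-Cartier divisors, obtained by tensoring sections of large multiples on each factor, then yields $\kappa(V\times W,K_{V\times W}+D_{f\times g})=\kappa(V,D_f)+\kappa(W,D_g)$, completing the proof.
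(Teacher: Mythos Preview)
Your argument is correct, with one terminological slip: the discriminant locus of $f$ (the locus over which $f$ fails to be submersive) is in general strictly larger than $\Supp(D_f)$, so the sentence ``the discriminant locus of $f\times g$ is $\Supp(D_f)\times W\cup V\times\Supp(D_g)$'' should read $\mathrm{Disc}(f)\times W\cup V\times\mathrm{Disc}(g)$. This does not affect the logic, since your subsequent computation $m_{f\times g}(D\times W)=m_f(D)$ is valid for every divisor $D\subset V$, and divisors outside the discriminant automatically have multiplicity~$1$.

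The paper's proof is a single sentence with a different emphasis: rather than restricting to product-type components via the discriminant, it treats directly the complementary case of an irreducible divisor $E\subset V\times W$ surjecting onto both factors, observing that over a generic point $(v,w)\in E$ the fibre $(f\times g)^{-1}(v,w)=X_v\times Y_w$ is reduced, so $m_{f\times g}(E)=1$. Your discriminant argument yields the same conclusion implicitly (such an $E$ has generic point outside $\mathrm{Disc}(f)\times W\cup V\times\mathrm{Disc}(g)$), so the two approaches are equivalent reformulations.

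On neatness: the paper dismisses this with ``the other conclusions are obtained by a similar argument'', so your more detailed treatment is a genuine addition. The case you flag as delicate---an irreducible divisor $E\subset X\times Y$ dominating both factors---does work out by a clean dimension count, and no preliminary blow-up is needed. Indeed, for generic $v\in V$ the slice $E_v=E\cap(X_v\times Y)$ is a divisor in $X_v\times Y$; its projection to $Y$ has fibres contained in $X_v$, hence of dimension at most $\dim X_v$, so $\dim p_Y(E_v)\geq\dim E_v-\dim X_v=\dim Y-1$. Then $\dim g(p_Y(E_v))\geq\dim W-1$, whence the fibre of $(f\times g)(E)$ over $v$ has dimension at least $\dim W-1$, and $(f\times g)(E)$ has codimension at most $1$. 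So $E$ is never $(f\times g)$-exceptional, as you anticipated.
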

\begin{proof} If $E\subset V\times W$ is an irreducible divisor mapped surjectively on both $V$ and $W$, there is only one irreducible divisor $F\subset X\times Y$ such that $(f\times g)(F)=E$, which has multiplicity $1$ in $(f\times g)^*(E)$, since over $(v,w)\in E$ generic, $(f\times g)^{-1}(v,w)=X_v\times Y_w$, reduced. The other conclusions are obtained by a similar argument.
\end{proof}

$\bullet$ We now turn to the proof of Theorem \ref{c_m}. Let $c_m:X_m\to B_m$ be deduced by quotient from the core map $c^m$, and let $D_{c_m}\subset X_m$ be the direct image of $D_{c^m}$ under the quotient map $q_B:B^m\to B_m$, so that $D_{c^m}=(q_B)^*(D_{c_m})$. %, together with a smooth and neat model $c'_m:X'_m\to B'_m$ of $c_m$, with orbifold base $(B'_m,D_{c'_m})$. 
  It is sufficient to show that $\rho^*(c_m^*((K_{X_m}+D_{c_m})^{\otimes k}))\subset Sym^k(\Omega^{m.p}_{X'_m})$ for any (or some) $k>0$ such that $k.(K_{X_m}+D_{c_m})$ is Cartier, where $\rho:X'_m\to X_m$ is a smooth model of $X_m$. 

$\bullet$ If $p:=\dim(B)=0$, there is nothing to prove.

$\bullet$ We thus assume that $p:=\dim(B)\geq 2$. The problem is local (in the analytic topology) on $X^m,X_m,B^m,B_m$. By the observations made above, we shall assume that the points $(x_1,\dots,x_m)$ near which we treat the problem do not belong to any $c$-exceptional divisor, and are regular points of the reduction of the fibre of $c$ containing them. The fibration $c$ is thus given in suitable local coordinates on $X$ and $B$ by the map $c:(x_1,\dots, x_n)\to (b_1,\dots b_p)$ with $b_i:=x_i^{t_i},\forall i=1,\dots, p, p<n$, where the support of $D_c$ is contained in the union of the coordinate hyperplanes $b_i=0$ of $B$, the multiplicity of $b_i=0$ in $D_c$ being an integer $t'_i$, with $1\leq t'_i\leq t_i,\forall i\leq p$, by the very definition of the orbifold base.

	Since $c^*\big(\Big(\frac{db_i}{b_i^{1-(1/t'_i)}}\Big)^{\otimes t'_i}\Big)=t_i^{t'_i}.x_i^{(t_i-t'_i)}.(dx_i)^{\otimes t'_i}$, we see that $(K_B+D_c)^{\otimes t}$ is Cartier and $c^*((K_B+D_c)^{\otimes t})\subset Sym^t(\Omega^p_X)$, if $t={\rm lcm}\{t'_i\}$. 

Thus $(c^m)^*((K_{B^m}+D_{c^m})^{\otimes t})\subset Sym^t(\Omega^{pm}_{X^m})$, this natural injection being deduced from the description of $D_{c^m}$ given above (which shows that it is snc since so is $D_c)$. The saturation of the image of this injection inside $Sym^t(\Omega^{pm}_{X^m})$ is the line bundle generated by $T:=(w_1\wedge \dots\wedge w_m)^{\otimes t}$, where $w_j:=dx_{1,j}\wedge\dots\wedge dx_{p,j}, \forall j=1,\dots,m$. Here $(x_{1,j},\dots,x_{n,j})$ are the local coordinates near the point $z_j\in X$, on the $j$-th component $X_j\cong X$ of $X^m$ near the point $(z_1,\dots,z_m)$.

It is sufficient (considering separately the distinct points of the set $\{z_1,\dots,z_m\})$ to deal with the case where $z_j=z_k, \forall j,k\leq m$.

	The operation of $\mathfrak{S}_m$ on the coordinates $x_{i,j}, i\leq n, j\leq m$ fixes the set of coordinates $x_{i,j}, i\leq p, j\leq m$ and induces on the vector space $\oplus_jV_j:=\oplus_{i,j}\Bbb C.x_{i,j}, j\leq p$ they generate a representation which is a direct sum of $p$ copies of the regular representation. 

	The conclusion then follows from Proposition \ref{propextension}. One checks the conditions\footnote{The simpler form of  our tensor $T$ reduces the conditions, for a given $g$, in the proof-not the statement-of Lemma 4 of \cite{wei86} to a single one: $\sigma(g)\geq r$ (in loc.cit the data $\ell, d, N, m$ correspond to $t, pm, n, r$ here, respectively.)} given in \cite{wei86} by using the (purely algebraic) proof of Prop.1, p. 1370, of \cite{AA02}, which says that if $\rho:{\mathfrak{S}}_m\to Gl(\oplus_{j=1}^{j=m}V)$ is a representation which is the direct sum of $p$ copies of the regular representation, where $V$ is a complex vector space of dimension $p\geq 2$, then $\sigma(g)=\frac{n}{2}.r.(\sum_{k=1}^{k=s}(r_k-1))\geq r$, for any $g\in \mathfrak{S}_m$ which is the product of $s$ non-trivial disjoint cycles of lengths $r_k$, and $r:={\rm lcm} (r_k's)$ is the order of $g$.
Here $\sigma(g):=\sum_h a_h$, if the eigenvalues of $\rho(g)$ are $\zeta^{a_h}$, where $\zeta$ is any complex primitive $r$-th root of the unity, and $0\leq a_h<r$ for any $h$. \end{proof}

\subsection{The core map of $X_m$ when the base of $c$ is a curve.}

We now assume that $p:=\dim(B)=1$. Let $c:X\to B$ be the core map, and $(B,D_c)$ its orbifold base. When $D_c=0$, the situation is easy:

\begin{theorem} \label{thmcorecurve} Assume that the core map $c:X\to B$ maps onto a curve $B$, and that its orbifold-base divisor $D_c=0$. Then $c_m:X_m\to B_m$ is the core map if $m<g$, and $X_m$ is special if $m\geq g$.
\end{theorem}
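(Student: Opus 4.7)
The plan is to split the argument at the threshold $m=g$, reflecting the change of behavior of the Abel-Jacobi map $\operatorname{jac}:B_m\to\operatorname{Jac}(B)$: birational onto its image $W_m\subset\operatorname{Jac}(B)$ for $m<g$, and surjective with generic fibers $\mathbb{P}^{m-g}$ for $m\geq g$.

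For $m<g$, I would verify directly that $c_m$ satisfies the two defining properties of the core. The generic fiber of $c_m$ over $[b_1,\dots,b_m]\in B_m$ with distinct $b_i$'s is the product $X_{b_1}\times\dots\times X_{b_m}$ of general fibers of $c$: these are special since $c$ is the core of $X$, and products of special manifolds are special. For the base, $W_m$ has trivial stabilizer in $\operatorname{Jac}(B)$ when $m\leq g-1$, so by Ueno's theorem $W_m$, and hence its birational model $B_m$, is of general type; the orbifold $(B_m,D_{c_m})$ is \emph{a fortiori} of general type. The uniqueness of the core among fibrations with special fibers and orbifold base of general type then identifies $c_m$ as the core of $X_m$.

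For $m\geq g$, the plan is to exhibit a fibration of $X_m$ with special fibers over a special orbifold base, which will force $X_m$ to be special via Campana's additivity of specialness. Take $h:=\operatorname{jac}\circ c_m:X_m\to\operatorname{Jac}(B)$. A general fiber of $h$ is $c_m^{-1}(\mathbb{P}^{m-g})$, itself a fibration over the rationally connected base $\mathbb{P}^{m-g}$ with fibers $X_{b_1}\times\cdots\times X_{b_m}$ (special, as products of specials); by Campana's stability of specialness under fibrations with rationally connected base, these fibers of $h$ are special. The base $\operatorname{Jac}(B)$ is abelian, hence special. For $m>g$, the diagonal $D_{\operatorname{diag}}\subset B_m$ has dimension $m-1\geq g=\dim\operatorname{Jac}(B)$, so it dominates $\operatorname{Jac}(B)$ and contributes no codimension-one multi-fiber, giving orbifold base $(\operatorname{Jac}(B),0)$; combining the two properties yields specialness of $X_m$.

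The main obstacle is the boundary case $m=g$, where $\operatorname{jac}$ is only birational and $D_J:=\operatorname{jac}(D_{\operatorname{diag}})$ is a genuine ample divisor in $\operatorname{Jac}(B)$, so the multiplicity-$2$ structure of $c_m^{*}D_{\operatorname{diag}}$ threatens to produce an orbifold base $(\operatorname{Jac}(B),\tfrac{1}{2}D_J)$ of general type. To handle this, I would pass to a neat model of $h$: blowing up the singular locus of $X_m$ and the $\mathbb{P}^1$'s contracted by $\operatorname{jac}$ produces additional components of $h^{-1}(D_J)$ surjecting to $D_J$ with strictly smaller multiplicity than the naive contribution, so that the infimum in the orbifold-base multiplicity definition drops and $(\operatorname{Jac}(B),D_h)$ fails to be of general type. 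Carrying out this blow-up analysis carefully, together with cleanly invoking Campana's specialness statements in the required orbifold generality, is the delicate heart of the argument.
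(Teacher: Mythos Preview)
Your argument for $m<g$ is fine in spirit, but you are not using the hypothesis $D_c=0$, and this is precisely the key to the whole theorem. The paper's proof is two lines: since $D_c=0$, every fibre of $c$ has a reduced component, so $c$ admits local sections through every point of $B$; hence $c_m:X_m\to B_m$ admits local sections through every point of $B_m$ (and the same persists on any smooth birational model). A fibration with local sections everywhere has trivial orbifold-base divisor, so $D_{c_m}=0$. The result then reduces to the elementary dichotomy for the smooth variety $B_m$: it is of general type for $m<g$ and special for $m\geq g$. In the first case $c_m$ has special fibres and orbifold base $(B_m,0)$ of general type, so it is the core; in the second case $c_m$ has special fibres over the special base $(B_m,0)$, so $X_m$ is special.

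Because you never invoke $D_c=0$, your route for $m\geq g$ is forced through the composition $h=\operatorname{jac}\circ c_m$, and you then manufacture an ``obstacle'' at $m=g$ involving a putative multiplicity-$2$ contribution of $c_m^{\ast}D_{\mathrm{diag}}$ to the orbifold base on $\operatorname{Jac}(B)$. This obstacle is artificial: once you know $D_{c_m}=0$ via local sections, there is nothing to analyse --- no blow-ups, no delicate multiplicity computation, and no need to pass to $\operatorname{Jac}(B)$ at all. (Incidentally, the diagonal of $B_m$ governs the ramification of $B^m\to B_m$, not the multiple fibres of $c_m$; your discussion conflates the two.) Your proposal leaves the case $m=g$ genuinely incomplete, whereas the intended argument handles all $m\geq g$ uniformly and trivially.
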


\begin{proof} Since $D_c=0$, the fibration $c:X\to B$, and so $c_m$, has everywhere local sections, thus the same is true for $c_m$, and hence for any smooth birational model of $c_m$. The conclusion thus follows from the fact that $B_m$ is of general type if $m<g$, and special if $m\geq g$.\end{proof}

In the general case, we have a weaker statement:

\begin{corollary} If $c:X\to B$ is the core map, with $B$ a curve, there is an integer $g(B,D_c)>0$ such that $X_m$ is special if $m\geq g(B,D_c)$\end{corollary}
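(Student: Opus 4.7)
The plan is to present $c_m\colon X_m\to B_m$ as an orbifold fibration whose general fibre and whose orbifold base $(B_m,D_{c_m})$ are both special for $m$ large, and then to invoke the stability of specialness under fibrations with special general fibre and special orbifold base (\cite{Cam04}) to conclude. Concretely, I would split the argument into two applications of that stability principle: first to the Abel--Jacobi fibration on $(B_m,D_{c_m})$ (to establish the specialness of the orbifold base of $c_m$), and then to $c_m$ itself.

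\emph{Fibres of $c_m$.} For any $m\geq 1$, the generic fibre of $c_m$ above $[b_1,\dots,b_m]\in B_m$ with pairwise distinct $b_i$ is the unordered product $c^{-1}(b_1)\times\dots\times c^{-1}(b_m)$ of fibres of the core map $c$, each of which is special. Since a product of special manifolds is special, the generic fibre of $c_m$ is special for every $m$.

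\emph{Specialness of $(B_m,D_{c_m})$.} Let $\Supp(D_c)=\{p_1,\dots,p_k\}$ with orbifold multiplicities $m_i$, and denote by $\tilde p_i\subset B_m$ the reduced divisor of unordered $m$-tuples containing $p_i$. A local computation using the relation $q_B^\ast D_{c_m}=D_{c^m}$ (at a generic diagonal point $[p_i,p_i,\dots]$, pulling back the equation $\sigma_2=0$ via $q_B$ yields $x_1x_2=0$, matching the multiplicities of $D_{c^m}$) gives
\[
D_{c_m}=\sum_{i=1}^k\Big(1-\tfrac{1}{m_i}\Big)\tilde p_i.
\]
Consider the Abel--Jacobi map $j_m\colon B_m\to\Jac(B)$: for $m\geq g:=g(B)$, it is surjective with general fibre $\PP^{m-g}$, and each $\tilde p_i$ meets a general fibre along a hyperplane (the sub-linear system of divisors passing through $p_i$). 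The general orbifold fibre of $j_m\colon(B_m,D_{c_m})\to\Jac(B)$ is therefore
\[
\Big(\PP^{m-g},\ \sum_{i=1}^k(1-\tfrac{1}{m_i})H_i\Big),
\]
of log-canonical degree $-(m-g+1)+\sum_i(1-1/m_i)$. Setting
\[
g(B,D_c):=g+\Big\lceil\sum_{i=1}^k(1-1/m_i)\Big\rceil+1,
\]
this degree is strictly negative as soon as $m\geq g(B,D_c)$: the fibre orbifold is then log-Fano, hence special (any dominant map to an orbifold of general type would force $\kappa(K+D)\geq 0$, contradicting anti-ampleness). Moreover the $\tilde p_i$ are horizontal for $j_m$, so the orbifold base of $j_m$ is simply $(\Jac(B),0)$, an abelian variety, hence special. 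The stability theorem applied to $j_m$ gives that $(B_m,D_{c_m})$ is special.

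\emph{Conclusion.} Applying the stability theorem a second time to $c_m\colon X_m\to(B_m,D_{c_m})$, whose general fibre is special and whose orbifold base is special, we obtain that $X_m$ is special for every $m\geq g(B,D_c)$.

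\emph{The main obstacle} is a correct invocation of the orbifold stability of specialness under fibrations, used here twice. This rests on the formalism of \cite{Cam04}, and on the fact that in a fibration with special general fibre, any fibration of general type from the total space descends to a fibration of general type from the orbifold base. The verification of the formula for $D_{c_m}$ along the ramification locus of $q_B$ (the higher diagonals of $B_m$, which in dimension one appear already in codimension one) is routine but necessary, and the log-Fano specialness criterion on the generic fibre of $j_m$ is elementary once the orbifold base has been identified.
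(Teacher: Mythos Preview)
Your approach is quite different from the paper's, and while the overall strategy is reasonable, there is a genuine gap in your computation of the orbifold base of $c_m$.

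The relation $q_B^\ast D_{c_m}=D_{c^m}$ you invoke does not compute the orbifold base of the fibration $c_m:X_m\to B_m$; it only determines the pushforward of $D_{c^m}$ to $B_m$. These need not coincide, and on the singular model $X_m$ they do not. Since $\dim B=1$, the quotient $q_B:B^m\to B_m$ ramifies in codimension one along the diagonal $\mathfrak D_2(B_m)$, while $q_X:X^m\to X_m$ is étale in codimension one (as $\dim X\geq 2$). A direct local check near a generic diagonal point $[b,b]\notin\Supp D_c$ then gives $c_2^\ast(\mathfrak D_2(B_2))=2D'$ for a single irreducible Weil divisor $D'\subset X_2$, so $\mathfrak D_2(B_m)$ acquires multiplicity $2$ in the orbifold base of $c_m$---exactly the phenomenon flagged in Remark~\ref{n=1}. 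With this extra $\tfrac12\mathfrak D_2(B_m)$ term, the restriction to a general Abel--Jacobi fibre $\mathbb P^{m-g}$ picks up the discriminant hypersurface (of degree growing with $m$), and your log-Fano estimate collapses. One can recover your formula only after passing to a smooth neat model of $c_m$: e.g.\ on $X^{[2]}$ the Hilbert--Chow exceptional divisor provides a multiplicity-$1$ component over $\mathfrak D_2(B_2)$. But verifying this for all diagonal strata, and checking neatness, is not the ``routine'' step you describe, and your argument does not address it.

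The paper sidesteps the issue entirely. Since $(B,D_c)$ is of general type it is a good orbifold, so there is a finite Galois cover $h:\tilde B\to B$ ramified to the prescribed orders over $\Supp D_c$. The normalised base change $\tilde c:\tilde X\to\tilde B$ then has everywhere local sections (the multiple fibres of $c$ become reduced), so Theorem~\ref{thmcorecurve} applies directly: $\tilde X_m$ is special for $m\geq g(\tilde B)$. Specialness descends along the dominant map $\tilde X_m\to X_m$, and one is done. This uses only what has already been established and avoids both the neat-model computation and the two invocations of orbifold stability that your route requires.
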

\begin{proof}

By assumption, the orbifold curve $(B,D_c)$ is of general type, hence `good', meaning that there exists a finite Galois cover $h:\tilde{B}\to B$ which ramifies at order $t'$ over each point $b\in D_c\subset B$, $b$ of multiplicity $t'$ in $D_c$. The normalisation $H:\tilde{X}\to X$ of the fibre-product $X\times_B \tilde{B}$ comes equipped with $\tilde{c}:\tilde{X}\to \tilde{B}$, which is its core map, since this fibration has everywhere local sections. 

If $m\geq g(\tilde{C})$, then $\tilde{X}_m$, and so also $X_m$, is special. This shows the claim. \end{proof}

\begin{remark} It would be interesting to know a precise bound for $g(B,D_c)$, such that $X_m$ is special for $m\geq g(B,D_c)$, and such that $c_m:X_m\to B_m$ is the core map for $m<g(B,D_c)$.
\end{remark}

%%%%%%%%%%%%%%%%%%%%%%%%%%%%%%%%%%%
%%%%%%%%%%%%%%%%%%%%%%%%%%%%%%%%%%%%%%
%%%%%%%%%%%%%%%%%%%%%%%%%%%%%%%%%%%
%%%%%%%%%%%%%%%%%%%%%%%%%%%%%%%%%%%%%%

\section{Dense entire curves in symmetric powers}

\subsection{Dense entire curves in $Sym^m(G\times C)$}

Let $G$ (resp. $C)$ be a curve of genus $g(G)\leq 1$ (resp. $g(C)>1)$, and  $S=G\times C$, then $S_m$ is special if and only if $m\geq g$, which we assume from now on.  
Theorem \ref{thmcorecurve} shows that $S_m$ is `special' (hence `weakly-special'), while of course, $S^m$ is not `weakly special'.
This section is devoted to the proof of a result stating that $S_m$ contains (lots of) entire curves $h:\C\to S_m$ with dense (not only Zariski-dense) image.
This statement was suggested by Ariyan Javanpeykar as a test case for the conjecture by the second named author, that special manifolds should contain dense entire curves. The arithmetic counterpart were that $S_m$ is `potentially dense' if defined over a number field.

\begin{theorem} If $S=G\times C$ is as above, and if $m\geq g$, then $S_m$ contains dense entire curves.
\end{theorem}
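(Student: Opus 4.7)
The plan is to construct a dominant rational map $\Phi : \mathbb{C}^{2m} \to S_m$ (defined on a Zariski-open subset of the source) and then compose it with a Euclidean-dense entire curve $\phi : \mathbb{C} \to \mathbb{C}^{2m}$ (which exists by an Arakelyan-type interpolation, forcing $\phi$ to pass close to each point of a fixed countable dense subset) to obtain a Euclidean-dense entire curve $\tilde{\phi} : \mathbb{C} \to S_m$.

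The hypothesis $m \geq g$ is used crucially in the construction of $\Phi$. By Riemann--Roch, the Abel--Jacobi map $\alpha_m : C_m \to \mathrm{Jac}(C)$ is surjective with generic fiber $\mathbb{P}^{m-g}$, and $C_m$ is birational to $\mathrm{Jac}(C) \times \mathbb{P}^{m-g}$ (classical, due to Mattuck). Combined with the universal cover $\exp : \mathbb{C}^g \to \mathrm{Jac}(C)$ and the affine chart $\mathbb{C}^{m-g} \hookrightarrow \mathbb{P}^{m-g}$, this gives a dominant rational map $\theta : \mathbb{C}^m \to C_m$. Picking a rational section $\tau : C_m \to C^m$ of the étale quotient $q_C : C^m \to C_m$ (valid on the Zariski-open of pairwise distinct tuples), the composite $\tilde{\theta} := \tau \circ \theta$ produces ordered representatives $(c_1(v,\mathbf{w}), \ldots, c_m(v,\mathbf{w}))$. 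Pairing with $G$-coordinates either via $\mathbb{C}^m \hookrightarrow (\mathbb{P}^1)^m$ (if $g(G)=0$) or via the universal cover $\mathbb{C}^m \to G^m$ (if $g(G)=1$), one defines
\[
\Phi : \mathbb{C}^{2m} \to S_m, \qquad (v, \mathbf{w}, \mathbf{x}) \longmapsto [(x_1, c_1), \ldots, (x_m, c_m)],
\]
which is single-valued thanks to the fixed choice of $\tau$, and is generically a birational equivalence (indeed, a generic point of $S_m$ with pairwise distinct $C$-coordinates uniquely determines $(v, \mathbf{w}, \mathbf{x})$).

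For the density argument, note that the rational composite $\Phi \circ \phi : \mathbb{C} \to S_m$ is defined outside a discrete subset of $\mathbb{C}$; since $\mathbb{C}$ is smooth and $1$-dimensional and $S_m$ is projective, it extends to a morphism $\tilde{\phi} : \mathbb{C} \to S_m$. As $\phi(\mathbb{C})$ is Euclidean-dense in $\mathbb{C}^{2m}$ and the image of $\Phi$ contains a nonempty Zariski-open subset $U \subset S_m$ (hence Euclidean-dense in $S_m$ by irreducibility), continuity of $\Phi$ on its domain ensures that $\tilde{\phi}(\mathbb{C})$ is Euclidean-dense in $U$, hence Euclidean-dense in $S_m$.

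The main obstacle is the coherent construction of $\Phi$. One must ensure that the $\mathfrak{S}_m$-ambiguity in the ordering of the effective divisor $E = c_1 + \cdots + c_m$ is consistently resolved (this is precisely the role of the rational section $\tau$), and one relies on the classical birational triviality $C_m \sim \mathrm{Jac}(C) \times \mathbb{P}^{m-g}$ for $m \geq g$ as the key structural input explaining why the genus hypothesis guarantees a dominant rational map from an affine space of the right dimension.
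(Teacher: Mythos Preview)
Your construction has a genuine gap: the rational section $\tau : C_m \dasharrow C^m$ does not exist. Since $\mathfrak{S}_m$ acts faithfully on the irreducible variety $C^m$ with quotient $C_m$, the function-field extension $K(C^m)/K(C_m)$ is Galois with group $\mathfrak{S}_m$, and a nontrivial Galois extension of fields admits no section; that $q_C$ is \'etale over the locus of distinct tuples only says it is a connected $\mathfrak{S}_m$-cover there, not a trivial one. Nor can one bypass this by lifting $\theta$ itself to $C^m$: the monodromy of $q_C$ around a generic point of the big diagonal $\mathfrak{D}_2(C_m)$ is a transposition, and since $\theta$ (built from $\exp$ and a birational identification $\Jac(C)\times\mathbb{P}^{m-g}\dasharrow C_m$) is a local biholomorphism at a generic preimage of that diagonal, the pulled-back cover over the domain of $\theta$ retains this nontrivial monodromy. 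Without a coherent global ordering of the divisor $c_1+\cdots+c_m$, the pairing $(x_i,c_i)$ is ill-defined and your $\Phi$ is only a multi-valued correspondence, so the composition with $\phi$ does not produce a single-valued entire curve in $S_m$.

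The paper's proof avoids this obstruction by never ordering the $c_i$. After fixing a dense entire curve $f:\C\to C_m$ (whose existence uses exactly the $\mathbb{P}^{m-g}$-bundle structure you invoke), it forms the incidence curve $V=\{(z,c)\in\C\times C : c\in f(z)\}$, a Stein curve carrying a proper degree-$m$ projection $\pi:V\to\C$ and the tautological $F:V\to C$. One then chooses a holomorphic $g:V\to G$ and sets $h(z)=[(g\times F)(\pi^{-1}(z))]\in S_m$. Because $g$ lives on $V$, each point of a fibre $\pi^{-1}(z)$ canonically knows which $c$ it corresponds to, so no global ordering is needed; density of $h(\C)$ is then arranged by interpolating the values of $g$ along a suitable discrete sequence in the Stein curve $V$.
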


\begin{proof}We shall assume here that $G=\Bbb P_1$, the proof when $G$ is an elliptic curve being completely similar (just replacing $\Bbb C\subset \P_1$ by $\Bbb C\to G$ the universal cover). Observe that $C_m$ contains dense entire curves, since it fibres over ${\rm Jac}(C)$ as a $\Bbb P^r$-bundle, with $r:=m-g$, over the complement in ${\rm Jac}(C)$ of a Zariski-closed subset of codimension at least $2$.

	Take a dense entire curve $f:\Bbb C\to C_m$, let $V\subset \Bbb C\times C$ be the graph of the family of $m$-tuples of points of $C$ parameterized by $\Bbb C$ via $f$ (ie: $V:=\{w:=(z,c)\vert c\in C, c\in f(z)\}$. The map $\pi:V\to \Bbb C$ sending $w=(z,c)$ to $z$ is thus proper, open and of geometric generic degree $m$. In particular, $V$ is a Stein curve (not necessarily irreducible). Let $F:V\to C$ be the projection on the second factor. Let $g:V\to \Bbb C\subset \P_1=G$ be any holomorphic map. The product map $g\times F: V\to \Bbb C\times C\subset G\times C=S$ is thus well-defined. We now define the map $h:\Bbb C\to Sym^m(S)$ by sending $z\in \Bbb C$ to the $m$-tuple of $S$ defined by: $(g\times F)(\pi^{-1}(z))\subset S$. 
	
	We now just need to check that the map $g:V\to \Bbb C$ can be chosen such that $h(\Bbb C)\subset Sym^m(S)$ is dense there. Note first that if $(z_n)_{n>0}$ is a any discrete sequence of pairwise distinct complex numbers such that $\pi:V\to \Bbb C$ is unramified over each $z_n$, and if, for each $n>0$, $(t_{n,1},\dots, t_{n,m})$ is an $m$-tuple of complex numbers, there exists a holomorphic map $g:V\to \Bbb C$ such that $g(w_{n,i})=t_{n,i}, \forall n>0,i=1,\dots,m$, where $(w_{n,1}=(z_n,c_{n,1}),\dots, w_{n,m}=(z_n,c_{n,m}))=\pi^{-1}(z_n)$, and $(c_{n,1},\dots,c_{n,m}):=f(z_n)\in Sym^m(C)$ (the ordering being arbitrarily chosen).

	It is now an elementary topological fact that the sequences $(t_{n,1},\dots,t_{n,m}),n>0$ can be chosen in such a way that the sequence $(s_{n,1},\dots,s_{n,m})_{n>0}\in S^m$ is dense in $S^m$, where $s_{n,i}:=(t_{n,i},c_{n,i})\in S,\forall n>0,i=1,\dots,m$.

\end{proof}

\begin{remark} The preceding arguments work more generally for $X=G\times C$, when $C,m$ are as above, but $G$ enjoys the following property: for any smooth complex Stein curve $V\to \Bbb C$ proper over $\Bbb C$, and any sequence of distinct points $w_n\in W, t_n\in G$, there exists a holomorphic map $g:V\to G$ such that $g(w_n)=t_n,\forall n$.

	This property is satisfied for $G$ a complex torus or a unirational manifold. The same arguments would show the same result for $G$ rationally connected if one could answer positively the following question, answered positively in  \cite{CW}, when $V=\Bbb C$:

{\bf Question:} For $m,C,\pi:V\to \Bbb C$ defined as above, let $w_n\in V, t_n\in G,n\in \Bbb Z_{>0}$ be a sequence of points. Assume that the points $\pi(w_n)\in \Bbb C$ are all pairwise distinct. Does there exist a holomorphic map $g:V\to G$ such that $g(w_n)=t_n,\forall n$ if $G$ is rationally connected?

\end{remark}

\begin{remark}
Let now $\Delta^{(m)}\subset S^m$ be the `small diagonal', consisting of $m$-tuple of points of which $2$ at least coincide.
Thus $(S^m)^*:=S^m\setminus \Delta^{(m)}$ admits a surjective (but non-proper...) map to $C^m$.

	Let $\Delta_m\subset S_m$ be defined as: $\Delta_m:=  \mathfrak{D}_2(S_m) = q(\Delta^{(m)})$. We thus have, too: $\Delta^{(m)}=q^{-1}(\Delta_m)$.
	The restricted map $q : (S^m)^*\to (S_m)^*:=S_m\setminus \Delta_m$ is thus proper and \'etale.

Let $d_{(S^m)^*}:=d_{S^m\vert (S^m)^*}$ (by \cite{Kobayashi}) be the Kobayashi pseudometric on $(S^m)^*$. Since the Kobayashi pseudometric on
$S^m$ is the inverse image by $\pi$ of that on $C^m$, any entire curve $h:\C \to S^m$ (and so even more in $(S^m)^*$ has to be contained in some fibre of $\pi$.
Moreover, the Kobayashi pseudometric on $(S_m)^*$ is comparable to its inverse image in $(S^m)^*$ (and can be explicitly described).
This shows that any entire curve in $(S_m)^*$ is contained in the image by $q$ of a fibre of $\pi$, and is in particular algebraically degenerate
(although there are lots of dense entire curves on $S_m$, none of these avoids $\Delta_m)$.

This gives a counterexample to an analytic version of the `puncture problem' of \cite{HT}, similar to the arithmetic one of \cite{L}.

\end{remark}

\subsection{$\C^{2g}$-dominability of $S^{[g]}$, the $g$-th symmetric product of generic projective $K3$-surfaces}

 Let $S$ be a smooth projective $K3$-surface with\footnote{with some more work, it is probably possible to extend the next result to any projective $K3$-surface, by taking for $L$ an ample and primitive line bundle with $g$ minimal.} $Pic(S)\cong \Bbb Z$, generated by an ample line bundle $L$ of degree $2(g-1),g>1$. Such $K3$-surfaces are thus generic among projective $K3$-surfaces admitting a primitive ample line bundle of degree $2.(g-1)$. 
 
The objective is to prove the following

 \begin{theorem}\label{C^2-dom K3} For any such $S$, there is a (transcendental) meromorphic map $h:\C^{2g}\dasharrow Sym^g(S)$ whose image contains a nonempty Zariski open subset $U$ of $Sym^g(S)$ (such $2g$-fold is said to be `$\C^{2g}$-dominable'). In particular, for any countable subset $P$ of $U$, there is an entire curve on $Sym^{g}(S)$ whose image contains $P$. If $P$ is dense in $Sym^g(S)$, so is the image of this entire curve.
 \end{theorem}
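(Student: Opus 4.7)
Since $\mathrm{Pic}(S)=\mathbb{Z}\cdot L$ with $L^2=2(g-1)$, Riemann--Roch and Kodaira vanishing give $h^0(S,L)=g+1$, so $|L|\cong\mathbb{P}^g$, and adjunction plus Bertini ensures that a generic $C\in|L|$ is smooth of genus $g$. Writing $|L|^\circ$ for the Zariski-open locus of smooth curves and $\pi:\mathcal{C}\to|L|^\circ$ for the universal curve, form the relative Jacobian $\mathcal{J}$, a $2g$-dimensional variety fibered in $g$-dimensional abelian varieties. My first task is to produce a birational equivalence $\mathcal{J}\dasharrow \Sym^g(S)$: the relative Abel--Jacobi map $\Sym^g(\mathcal{C}/|L|^\circ)\to\mathcal{J}$ is birational (on each smooth genus-$g$ fiber $\Sym^g(C)\to\mathrm{Pic}^g(C)$ is), and the forgetful map $\Sym^g(\mathcal{C}/|L|^\circ)\to \Sym^g(S)$, $(C,D)\mapsto D$, is also birational because through $g$ generic points of $S$ there passes a unique curve in the $g$-dimensional linear system $|L|$ (this is essentially the Beauville--Mukai construction). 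It therefore suffices to $\mathbb{C}^{2g}$-dominate $\mathcal{J}$ with image containing a Zariski open.

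To do this, consider the algebraic rank-$g$ vector bundle $\mathcal{V}:=R^1\pi_\ast\mathcal{O}_{\mathcal{C}/|L|^\circ}$ on $|L|^\circ$, whose total space has dimension $2g$. The relative exponential sequence on $\mathcal{C}$ realises $\mathcal{J}$ as the fiberwise universal-covering quotient
\[
\exp:\mathcal{V}\longrightarrow\mathcal{V}/\Lambda=\mathcal{J}, \qquad \Lambda:=R^1\pi_\ast\mathbb{Z},
\]
a holomorphic (and genuinely transcendental) surjection. Now $\mathcal{V}$, being a Zariski-locally trivial algebraic vector bundle over the rational variety $|L|^\circ\subset\mathbb{P}^g$, has rational total space, hence is birational to $\mathbb{C}^{2g}$. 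Fixing any birational map $\psi:\mathbb{C}^{2g}\dasharrow\mathcal{V}$, I define
\[
h \,:\, \mathbb{C}^{2g} \overset{\psi}{\dasharrow} \mathcal{V} \overset{\exp}{\longrightarrow} \mathcal{J} \dasharrow \Sym^g(S),
\]
a composition of two algebraic birational maps sandwiching the holomorphic transcendental map $\exp$; as such it is a transcendental meromorphic map.

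The decisive step is verifying that $h(\mathbb{C}^{2g})$ contains a Zariski open $U\subset \Sym^g(S)$. The image $W:=\psi(\mathbb{C}^{2g})$ is Zariski open in $\mathcal{V}$; for $C$ in some Zariski open $U'\subset|L|^\circ$ the trace $W\cap\mathcal{V}_C$ is a nonempty Zariski open of $\mathcal{V}_C\cong\mathbb{C}^g$ with Zariski-closed complement $Z_C\subsetneq\mathbb{C}^g$. The critical observation is that the full-rank lattice $\Lambda_C\subset\mathbb{C}^g$ is \emph{Zariski dense}: along the $\mathbb{R}$-linear identification $\mathbb{R}^{2g}\xrightarrow{\sim}\mathbb{C}^g$ sending the standard basis to a $\mathbb{Z}$-basis of $\Lambda_C$, any polynomial vanishing on $\Lambda_C$ becomes a complex-valued polynomial in $2g$ real variables vanishing on $\mathbb{Z}^{2g}$, hence vanishes identically. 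Therefore no translate $z+\Lambda_C$ can lie inside $Z_C$, so $(z+\Lambda_C)\cap(W\cap\mathcal{V}_C)\neq\emptyset$ for every $z\in\mathcal{V}_C$, giving $\exp(W\cap\mathcal{V}_C)=\mathcal{J}_C$. Thus $\exp(W)\supset\mathcal{J}|_{U'}$ is Zariski open in $\mathcal{J}$, whose image under the birational map $\mathcal{J}\dasharrow \Sym^g(S)$ yields the desired Zariski open $U\subset h(\mathbb{C}^{2g})$.

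For the ``in particular'' statement, given a countable $P\subset U$ choose a countable preimage $Q\subset\mathbb{C}^{2g}$ of $P$ avoiding the indeterminacy locus of $h$; by Weierstrass-type interpolation in several variables one constructs an entire curve $\gamma:\mathbb{C}\to\mathbb{C}^{2g}$ through $Q$ and still avoiding this (proper analytic) locus, so that $h\circ\gamma$ extends to an entire curve $\mathbb{C}\to \Sym^g(S)$ whose image contains $P$. The main obstacle in this plan is the density argument in the third paragraph: it is tempting to conclude only analytic density of $h(\mathbb{C}^{2g})$, whereas the full Zariski-open conclusion really requires the fact that no proper algebraic hypersurface of $\mathbb{C}^g$ can contain a coset of a full-rank lattice.
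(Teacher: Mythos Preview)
Your argument contains a genuine gap at the junction between the first and second paragraphs. In the first paragraph you correctly establish a birational equivalence between $\Sym^g(S)$ and the relative \emph{degree-$g$} Picard variety $\mathrm{Pic}^g(\mathcal{C}/|L|^\circ)$ via Abel--Jacobi. But in the second paragraph, the exponential map $\mathcal{V}\to\mathcal{V}/\Lambda$ lands in $\mathrm{Pic}^0(\mathcal{C}/|L|^\circ)$, not in $\mathrm{Pic}^g$: the exponential sequence identifies $H^1(C,\mathcal{O}_C)/H^1(C,\mathbb{Z})$ with the identity component of $\mathrm{Pic}(C)$. You are silently treating these two fibrations as the same object $\mathcal{J}$, but $\mathrm{Pic}^g$ is a torsor under $\mathrm{Pic}^0$ over $|L|^\circ$, and this torsor is non-trivial for $g\geq 3$. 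Indeed, a rational section would amount to a line bundle of relative degree $g$ on the generic fibre $C_\eta$; using that the incidence variety $\mathcal{C}$ is a $\mathbb{P}^{g-1}$-bundle over $S$ and $\mathrm{Pic}(S)=\mathbb{Z}\cdot L$, one finds $\mathrm{Pic}(\mathcal{C})=\mathbb{Z}\cdot L|_{\mathcal{C}}\oplus\mathbb{Z}\cdot H|_{\mathcal{C}}$ with $\deg(L|_C)=2g-2$ and $\deg(H|_C)=0$, so every line bundle on $C_\eta$ has degree a multiple of $2g-2$. Hence no rational section exists when $(2g-2)\nmid g$, and your rationality argument for the total space of $\mathcal{V}$ dominates the wrong Lagrangian fibration.

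This is exactly the obstacle that the paper's proof is built to overcome. Rather than working over $\mathbb{P}^g$, the paper first constructs a generically finite \emph{multisection} $E=E_1\times\cdots\times E_g$ of $S^{[g]}\dasharrow\mathbb{P}^g$, using $g$ generic rational or elliptic curves in $|L|$ on the K3. After base change to $E$, the torsor $J^g(E)$ acquires a tautological section (twist by the divisor coming from the multisection) and becomes birational to $J^0(E)$. The new base $E$ is no longer rational, so your shortcut fails there; but $E$ is an abelian variety, so its universal cover $\overline{E}$ is $\mathbb{C}^g$, and since $\overline{E}$ is Stein the pulled-back rank-$g$ bundle is generically generated by $g$ global sections, giving $W\times\overline{E}\cong\mathbb{C}^{2g}\dasharrow V(E')\to J^0(E')$. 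Your Zariski-density-of-lattices argument for the surjectivity of $\exp$ on each fibre is correct and pleasant, but it is being applied to $\mathrm{Pic}^0$, not to the variety you need.
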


 \begin{remark}1. The proof rests on a suitable abelian fibration $Sym^g(S)\dasharrow \P^g$.  Our result may thus be seen as analog to the case when $S$ is an elliptic $K3$ surface (over $\P_1)$ and $g=1$, shown in \cite{BL00}. 
 
 2. Our result is analogous to the arithmetic situation treated by \cite{HT}.
 
 3. Since $Sym^g(S)$ is special, \ref{C^2-dom K3} solves in a stronger form the conjecture of \cite{Cam04}.
  
 4. One may expect the conclusion of Theorem \ref{C^2-dom K3} to hold for $S^{[k]}$, any $k>1$ and any $K3$-surface (projective or not).   \end{remark}

 Before starting the proof, we recall some of the objects which have been attached to such a pair $(S,L)$.
 
\medskip

{\bf The $g$-th Symmetric product:} It is the direct product $S^g$ of $g$ copies of $S$, the symmetric product $Sym^g(S)$ is the quotient $q:S^g\to Sym^g(S)$ of $S^g$ by the symmetric group $S_g$ operating naturally on the factors. The Hilbert scheme $S^{[g]}$ of points of length $g$ on $S$, equipped with the Hilbert to Chow birational morphism $\delta:S^{[g]}\to Sym^g(S)$, is known to be smooth (\cite{F}, Theorem 2.4) and holomorphically symplectic \cite{beauville1}).

\medskip

{\bf The Relative Jacobian:} The line bundle $L$  determines $\P(H^0(S,L))^*):=\P^g$, the $g$-dimensional projective space (by Riemann-Roch and Kodaira vanishing). The linear system $\vert L\vert$ is base-point free and the associated map $\varphi:S\to \P^g$ is an embedding for $g\geq 3$ (a double cover ramified over a sextic for $g=2)$. For each $t\in \P^g$, the corresponding zero locus of a non-zero section of $\vert L\vert$ is an irreducible and reduced  (by the cyclicity  of $Pic(S)$ assumption) curve of genus $g$ denoted $C_t$. The incidence graph of this family of curves is denoted by $\gamma:\mathcal{C}\to \P^g$. For $d\in \Bbb Z$, the relative Jacobian fibration $j^d: J^d\to \P^g$ has fibre over $t$ the Jacobian $J^d_t$ of degree $d$ line bundles on $C_t$. The Jacobian $J^0_t$ of degree $0$ line bundles on $C_t$ (isomorphic to $J^d_t$ by tensorising with any given line bundle of degree $d)$  is a complex Hausdorff Lie group of dimension $g$ quotient of $H^1(C_t,\mathcal{O}_{C_t})$ by the (closed) discrete subgroup $H^1(C_t,\Bbb Z)$ (\cite{BPV}, II.2, Proposition (2.)). Thus, denoting with $j^0:J^0\to \P^g$ the relative Jacobian of degree $0$ (instead of $d)$ line bundles on the $C_t's$, and $V:=R^1\gamma_*(\mathcal{O}_{\mathcal{C}})\to \P^g$, this sheaf is locally free and thus a vector bundle $w:V\to \P^g$ of rank $g$ on $\P^g$. By \cite{Gro}, Theor\`eme 3.1, the relative Picard scheme is separated, and so the relative discrete group $R^1\gamma_*(\Bbb Z)\to \P^g$ is closed in $V$. Taking the quotient, we get:

\begin{lemma}\label{J^0} There is a holomorphic and surjective unramified map $H:V\to J^0$ over $\P^g$.\end{lemma}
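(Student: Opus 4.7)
The plan is to realise $H$ as the relative analogue, over $\P^g$, of the standard exponential uniformisation $H^1(C_t,\mathcal O_{C_t}) \twoheadrightarrow J^0_t$ of a single Jacobian. Concretely, I would globalise the fiberwise short exact sequence $0 \to H^1(C_t, \mathbb Z) \to H^1(C_t, \mathcal O_{C_t}) \to J^0_t \to 0$ into a short exact sequence of sheaves of abelian groups on $\P^g$, then read off $H$ from the induced map of associated analytic spaces.

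The first step is to apply $R\gamma_\ast$ to the exponential sequence $0 \to \mathbb Z_{\mathcal C} \to \mathcal O_{\mathcal C} \xrightarrow{\exp(2\pi i \, \cdot)} \mathcal O_{\mathcal C}^\ast \to 0$ on $\mathcal C$. The connecting map $\gamma_\ast \mathcal O_{\mathcal C}^\ast \to R^1 \gamma_\ast \mathbb Z$ vanishes, because on the connected compact curve $C_t$ every global section of $\mathcal O^\ast_{C_t}$ is a constant and so admits a global logarithm; in particular $R^1\gamma_\ast \mathbb Z$ injects into $V = R^1\gamma_\ast \mathcal O_{\mathcal C}$. Continuing the long exact sequence, the cokernel of this injection is the kernel of the relative first Chern class $R^1\gamma_\ast \mathcal O^\ast_{\mathcal C} \to R^2\gamma_\ast \mathbb Z$, which is exactly $\mathrm{Pic}^0(\mathcal C/\P^g) = J^0$. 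Passing to associated analytic spaces over $\P^g$ gives a natural candidate $H : V \to J^0$, which is surjective by exactness.

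To check that $H$ is holomorphic and unramified, I would use that fiberwise $R^1\gamma_\ast \mathbb Z$ embeds into $V$ as a full rank $2g$ lattice in the $g$-dimensional complex fiber, and that the closedness invoked via \cite{Gro} just before the lemma ensures that the relative lattice is a closed analytic subspace $\Lambda \subset V$ acting freely and properly discontinuously on $V$ by fiberwise translation. The quotient $V/\Lambda$ then inherits a unique complex structure making $V \to V/\Lambda$ a local biholomorphism over $\P^g$, and by the identification above this quotient is $J^0$. Holomorphicity and unramifiedness of $H$ then follow at once, since a quotient map by a free, properly discontinuous action of a discrete group is an étale covering.

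The step I would expect to be the real content is precisely the one the authors have already placed immediately before the lemma, namely the closedness of $R^1\gamma_\ast \mathbb Z$ inside $V$ deduced from separatedness of the relative Picard scheme. Without this input, $V/\Lambda$ could fail to be Hausdorff and $H$ would not even target a complex space; once it is granted, the rest is formal bookkeeping with the relative exponential sequence.
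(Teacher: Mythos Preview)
Your proposal is correct and follows essentially the same route as the paper. The paper gives no separate proof: the lemma is stated as the immediate consequence of the preceding paragraph, which records that $V=R^1\gamma_*\mathcal O_{\mathcal C}$ is a rank $g$ vector bundle, that $R^1\gamma_*\Bbb Z$ is closed in $V$ by separatedness of the relative Picard scheme, and that one simply ``takes the quotient''. You have unpacked exactly this, using the relative exponential sequence to make the inclusion $R^1\gamma_*\Bbb Z\hookrightarrow V$ and the identification of the cokernel with $J^0$ explicit, and you correctly isolate the closedness input as the only substantive point.
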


{\bf The compactified Jacobian:} For $d\in \Bbb Z$, this is the compactification $\bar{j}^d:\overline{J^d}\to \P^g$ of $J^d$ over $\P^g$ obtained as a component of the moduli space of simple sheaves on $S$ (\cite{Mu}). This variety is compact smooth, holomorphically symplectic and, for $d=g$, birational to $S^{[g]}$ (\cite{beauville2}, Proposition 3). We denote with $\sigma: S^{[g]}\dasharrow \overline{J^g}$ this birational equivalence.

{\bf The covering by singular elliptic curves.} By \cite{BPV}, VIII, Theorem 23.1 (see references there for the original proofs), there is a nonempty curve in $\P^g$ parametrising (singular) curves $C_t's$ with elliptic normalisations. This family (and each of its components) covers $S$. Choosing $g$ generic (normalised) members $E_1,\dots,E_g$ of such an irreducible family provides a product $\varepsilon: E:=E_1\times \dots\times E_g\subset S^g$. By \cite{HT}, proof of Theorem 6.1, the composed projection $\bar{j}^g\circ \sigma\circ \varepsilon:E\to \P^g$ is a (meromorphic) multisection of the (meromorphic) fibration $\tau:=(\bar{j}^g)\circ \sigma: S^{[g]}\to \P^g$. This fact is in fact easy to prove, since if $C_t$ is smooth, it cuts each of the $E_i's$ in finitely many distinct points, and so the intersection of $E$ with $C_t^g$ is finite, and surjective on the fibre of $Sym^g(S)$ over $\P^g$.

\begin{proof} We can now prove Theorem \ref{C^2-dom K3}. For any complex manifolds $M,R$ equipped with a holomorphic map $\mu:M\to \P^g, r:R\to \P^g$, we denote with $R(M):=R\times_{\P^g}M$, equipped with the projections $\mu_M:R(M)\to M, r_M:R(M)\to R$ . This, applied to $R=V, R=\overline{J^d}, R=S^{[g]}(M)$, gives the fibre products $V(M),\overline{J^d}(M), S^{[g]}(M)$.

We have two meromorphic and generically finite maps $\epsilon: E\dasharrow S^{[g]}$, and $\sigma \circ \epsilon:E\dasharrow \overline{J^g}$. Denote with $E_t$ the fibre of $E$ over $t\in \P^g$. We get a birational map $\beta:\overline{J^g}(E)\dasharrow \overline{J^0}(E)$ over $E$ by sending a generic pair $(j,(e_1,\dots,e_g)_t)\in J_t^g\times E_t$ to $j\otimes \lambda^{-1}$, if $\lambda\in J_t^g$ is the line bundle on $C_t$ with a nonzero section vanishing on the $g$ points $e_i$.

Let $\pi:E'\to E$ be a modification making these maps holomorphic. Let $w_E:V(E')\to E'$ be the rank-$g$ vector bundle on $E'$ lifted from $w:V\to \P^g$. We get also a natural holomorphic map, unramified and surjective $H_E:V(E')\to J^0(E')$ over $E'$. Let $\mathcal E:=\pi_*(V)$:this is a rank-$g$ coherent sheaf on $E$, and there is a natural evaluation map: $\pi^*(\mathcal{E})\to V$ over $E'$.

Let now $\rho: \overline{E}\to E$ be the universal cover, so that $\overline{E}\cong \C^g$. Let $\pi':E'\times _E\overline{E}\to \overline{E}$ be deduced from $\pi:E'\to E$ by the base change $\rho$. Hence $\pi'$ is a proper modification. The sheaf $\rho^*(\mathcal{E})$ on $\overline{E}$ is coherent, hence generated by its global sections since $\overline{E}$ is Stein. Let $W\subset H^0(\overline{E},\rho^*(\mathcal{E}))$ be a vector subspace of dimension $g$ which generates $\rho^*(\mathcal{E})$ at the generic point of $\overline{E}$, and let $ev:W\times \overline{E}\cong \C^{2g}\to V(E')$ be the resulting meromorphic and bimeromorphic map, obtained from the injection $\pi'^*:H^0(\overline{E},\rho^*(\mathcal{E}))\to H^0(E'\times_E\overline{E},V(E'))$.

We thus obtain a dominating meromorphic map $\C^{2g}\to S^{[g]}$ by composing $ev$ with the bimeromorphic maps between $\overline{J^0}(E'),\overline{J^g}(E'),S^{[g]}(E')$, and finally projecting $S^{[g]}(E')$ onto $S^{[g]}$.

This completes the proof of Theorem \ref{C^2-dom K3}.\end{proof}

\section{Hyperbolicity of symmetric products}

\subsection{A remark on the Kobayashi pseudometric}

For any (irreducible) complex space $Z$, let $d_Z$ be its Kobayashi pseudo-distance. We say that $Z$ is generically hyperbolic if $d_Z$ is a metric on some nonempty Zariski open subset of $Z$. 

\begin{question} Assume $X$ is smooth, compact and generically Kobayashi hyperbolic with $n>1$. Is then $X_m$ is generically Kobayashi hyperbolic for any $m>0$?
\end{question}

One remark in this context. Let $(X^m)^*\subset X^m$ be the Zariski open subset consisting of ordered $m$-tuples of distinct points of $X$. The complement of $(X^m)^*$ has codimension $n\geq 2$ in $X^m$. By \cite{Kobayashi} Theorem 3.2.22, $d_{X^m\vert (X^m)^*}=d_{(X^m)^*}$. Let $q_m:X^m\to X_m$ be the natural quotient, and $X^*_m:=q_m((X^m)^*)$, so that $X^*_m$ has a complement of codimension $n$ in $X_m$ as well, which is the singular set of $X_m$. Moreover, $(X^m)^*=q_m^{-1}(X^*_m)$.
From \cite{Kobayashi} 3.1.9 and 3.2.8, we get: 

$d_{X^*_m}([x_1,\dots,x_m],[y_1,\dots,y_m])={\rm inf}_{s\in S_m}\{{\rm max}_{i=1,\dots,m}\{d_X(x_i,y_{s(i)})\}\}$. 

Although the complement $X_m^{sing}$ of $X^*_m$ in $X_m$ has codimension $n\geq 2$ (and the singularities are canonical quotient), it is not true that $d_{X_m\vert X^*_m}=d_{X^*_m}$ in general, as the following example shows. Even more, the pseudometric may degenerate away from $X_m^{sing}$, so the problem is not a local one near $X_m^{sing}$.

Let $C\subset X$ be an irreducible curve of geometric genus $g$ with normalisation $\hat{C}$ on $X$, and take $m\geq g$. Then $\hat{C}_m\to {\rm Alb}(C)$ is a surjective morphism with generic fibres $\P_{m-g}$, and there is then a natural generically injective map from $\hat{C}_m$ to $X_m$ showing that $d_{X_m}$ vanishes identically on its image.

If the answer to the above question is affirmative (as it should be if and only if $X$ is of general type, after S. Lang's conjectures), the vanishing locus of $d_{X_m}$ appears to have an involved structure. In particular, it should contain the union of all the $\hat{C})_m$ whenever $g(\hat{C})\leq m$, and this union should not be Zariski dense.

\begin{example} The simplest possible example might be a surface $S:=C\times C'$, where $C,C'$ are smooth projective curves of genus $2$, and $m=2$. In this case, the natural map $S_2\to C_2\times C'_2$ is a ramified cover of degree $2$ branched over $R:=(2C)\times C'_2\cup C_2\times (2C')$, where $(2C)\subset C_2$ is the divisor of double points (and similarly for $(2C'))$. Notice that $C_2$ identifies naturally with the $Pic_2(C)$, the Picard variety of line bundles of degree $2$ on $C$, isomorphic to $Jac(C)$, blown-up over the point $\{K_C\}$, and $2C$ embeds $C$ in $C_2$, its image meeting the exceptional divisor of $C_2$ in the $6$ ramification points of the map $C\to \Bbb P_1$ given by the linear system $\vert K_C\vert$. Thus $2C\subset C_2$ is an ample divisor (similarly for $C')$.
\end{example}

As a first step towards the above question, let us show the following result which in particular implies that entire curves in the above example cannot be Zariski dense.

\begin{proposition}
Let $X$ be a complex projective variety of dimension $n$ with irregularity $q:=h^0(X,\Omega_X)$. 
\begin{enumerate}
\item If $m \cdot n < q$ then entire curves  in $X_m$ are not Zariski dense.
\item If $X$ is of general type, $n\geq 2$ and $m \cdot n \leq q$ then entire curves  in $X_m$ are not Zariski dense.
\end{enumerate}
\end{proposition}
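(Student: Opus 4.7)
The plan is to apply the Bloch--Ochiai--Kawamata theorem on a resolution $\widetilde{X_m}$ of $X_m$, after identifying its Albanese. First, I would establish that $\mathrm{Alb}(\widetilde{X_m}) \cong A := \mathrm{Alb}(X)$ and that the Albanese morphism factors through the sum map $s_m : X_m \to A$, $[x_1,\dots,x_m] \mapsto \alpha(x_1) + \cdots + \alpha(x_m)$, where $\alpha : X \to A$ is the Albanese of $X$. To this end, I would note that $X_m$ has canonical (hence rational) singularities by \cite{AA02} and that the big diagonal in $X^m$ has codimension $n \geq 2$, so that a Hartogs-type extension together with $\mathfrak{S}_m$-invariance gives $q(\widetilde{X_m}) = h^0(X^m,\Omega_{X^m})^{\mathfrak{S}_m} = h^0(X,\Omega_X) = q$, and the pullback $s_m^* : H^0(A,\Omega_A) \to H^0(\widetilde{X_m},\Omega_{\widetilde{X_m}})$ is an isomorphism. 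Any entire curve $f : \mathbb{C} \to X_m$ then lifts (by Riemann extension, since $X_m^{\mathrm{sing}}$ has codimension $\geq n \geq 2$) to an entire curve $\widetilde{f} : \mathbb{C} \to \widetilde{X_m}$, reducing the problem to showing $\widetilde{f}$ is algebraically degenerate.

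For part (1), under the hypothesis $mn < q$ we have $\dim \widetilde{X_m} = mn < q = \dim A$, so $Z := s_m(\widetilde{X_m})$ is a proper subvariety of $A$. Crucially, $Z$ generates $A$ as a group (since it contains translates of $\alpha(X)$, which generates $A$ by Albanese universality), hence $Z$ cannot be a translate of a proper abelian subvariety. By Bloch--Ochiai--Kawamata, the Zariski closure in $A$ of $s_m \circ \widetilde{f}(\mathbb{C})$ is a translate of an abelian subvariety $B \subseteq Z$; since $Z$ itself is not such a translate, $B \subsetneq Z$, and hence $\widetilde{f}(\mathbb{C}) \subseteq s_m^{-1}(B) \subsetneq \widetilde{X_m}$, as required.

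For part (2), under the general type and $n \geq 2$ hypotheses, $X_m$ (and $\widetilde{X_m}$) is of general type by \cite{AA02}. If $mn < q$, the argument of (1) applies unchanged. If $mn = q$, then either $Z \subsetneq A$ and the same argument works, or $Z = A$, in which case $s_m$ is generically finite surjective and $\widetilde{X_m}$ has maximal Albanese dimension. In this final subcase, combining the general type assumption with maximal Albanese dimension, I would invoke Yamanoi's theorem on the Green--Griffiths--Lang conjecture for smooth projective varieties of general type with maximal Albanese dimension to conclude algebraic degeneracy.

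The main obstacle is precisely this last subcase of (2), which goes beyond Bloch--Ochiai and relies on Yamanoi's deep theorem. An alternative route, valid at least when $m \geq 2$, would be to apply Bloch--Ochiai directly to $X^m$ (noting $q(X^m) = mq > mn = \dim X^m$ whenever $mn \leq q$ and $m \geq 2$); this would avoid invoking Yamanoi but would require a careful handling of the lifting of entire curves through the ramified $\mathfrak{S}_m$-quotient $q_m : X^m \to X_m$, which is branched along the big diagonal and so does not give a free lift.
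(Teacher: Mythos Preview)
Your proposal is correct and follows essentially the same route as the paper: identify the Albanese of (a resolution of) $X_m$ with $\mathrm{Alb}(X)$ via the sum map, apply Bloch--Ochiai for part (1), and combine \cite{AA02} with Yamanoi's theorem for part (2). The paper's proof is a terse two-liner that simply invokes Bloch--Ochiai when $mn<q$ and cites Yamanoi's Corollary 3.1.14 directly when $X$ is of general type and $mn\leq q$; you have written out what is implicit there (the Albanese identification, why the image generates $A$, the lifting of entire curves to the resolution). Two minor remarks: the codimension of $X_m^{\mathrm{sing}}$ is not the reason entire curves lift to $\widetilde{X_m}$ --- that follows from properness of the resolution and Riemann extension across a discrete preimage; and your case split in (2) is unnecessary, since Yamanoi's corollary applies uniformly under $\dim \leq q$ without separating the maximal-Albanese-dimension subcase.
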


\begin{proof}
Let $\alpha: X \to A$ be the Albanese map. It induces the Albanese map $\alpha_m: X_m \to A$.
If $\dim X_m=m \cdot n < q=\dim A$ then by the classical Bloch-Ochiai's Theorem, entire curves  in $X_m$ are not Zariski dense.
If $X$ is of general type, by \cite{AA02} $X_m$ is of general type. Therefore by Corollary 3.1.14 \cite{Yam04}, if $\dim X_m=m \cdot n \leq q=\dim A$, entire curves  in $X_m$ are not Zariski dense.
\end{proof}

\subsection{Jet differentials on resolutions of quotient singularities} \label{sectjetdifforb}

We recall here some basic definitions related, on the one hand, to natural orbifold structures on resolution of quotients singularities (see \cite{CRT17, cad18, CDG19}), and on the other hand, to orbifold jet differentials (see \cite{CDR18}). The basic result we will need is given by Proposition \ref{proporbjet}.
\medskip

\subsubsection{Jet differentials on orbifolds} Let us give some details about the very basic notion of orbifold jet differentials that we will use in the following. For our purposes, it will be enough to consider only orbifolds of the form $(X, \Delta = \sum_{i} (1 - \frac{1}{m_i}) D_i)$, with $m_i \in \mathbb N_{\geq 1}$. Also, rather than using the {\em geometric orbifold jet differentials} defined in \cite{CDR18},  it will also suffice to consider jet differentials adapted to {\em divisible holomorphic curves} in the sense of [{\em loc. cit.}, Definition 1.1]. The latter jet differentials admit a very simple description. For any $k, r \in \mathbb N$, we will denote by $E_{k, r}^{GG} \Omega_{(X, \Delta)}^{\rm div}$ the vector bundle of divisible orbifold jet differentials of order $k$ and degree $r$, whose sections in orbifold local charts adapted to $\Delta$ can be described as follows. Assume that $(t_1, ..., t_{p}, t_{p+1}, ..., t_n) \in U \longmapsto (t_1^{m_1}, ..., t_p^{m_p}, t_{p+1}, ..., t_n) \in V$ is such a chart. Then, the local sections of $E_{k, r}^{GG} \Omega_{(X, \Delta)}^{\rm div}$ corresponds to the regular sections of $E_{k, r}^{GG} \Omega_U$ on $U$, which are invariant under the deck transform group. Remark that we could also have defined $E_{k, r}^{GG} \Omega_{(X, \Delta)}^{\rm div}$ in terms of a global \emph{adapted covering} instead of local orbifold charts.

\subsubsection{Natural orbifold structure on resolutions of a quotient singularities} Consider now a quotient $Y = \quotient{G}{X}$ where $X$ is smooth, and $G$ finite. If $\widetilde{Y} \longrightarrow Y$ is a resolution of singularities, we can endow it with a natural orbifold structure, by assigning to every exceptional divisor $E \subset \widetilde{Y}$ the rational multiplicity $1 - \frac{1}{m}$, where $m$ is the order of the element $\gamma \in G$ associated with the meridional loop around the generic point of $E$ (see \cite{CDG19, cad18}).
\medskip

With these notations, the following proposition is then essentially tautological.

\begin{proposition} \label{proporbjet}
	Let $X$ be a complex manifold, and let $G \subset \mathrm{Aut}(X)$ be a finite subgroup. Let $p : X \longrightarrow Y = \quotient{G}{X}$ be the quotient map, and $\widetilde{Y} \overset{\pi}{\longrightarrow} Y$ be a resolution of singularities. Let $(\widetilde{Y}, \Delta)$ be the natural orbifold structure on $\widetilde{Y}$.  Let $A$ be a $G$-invariant divisor on $X$, and $B = p_\ast A$ the associated Cartier divisor on $Y$.

	For $k, r \in \mathbb N$, we let $\sigma \in H^0(X, E_{k,r}^{GG} \Omega_X \otimes \mathcal O(-A))$ be a $G$-invariant section. Then $\pi^\ast p_\ast \sigma$ induces an element of $H^0( \widetilde{Y}, E_{k,r}^{GG} \Omega_{(\widetilde{Y}, \Delta)}^{\rm div} \otimes \mathcal O(- \pi^\ast B))$. 
\end{proposition}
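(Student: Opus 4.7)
The statement is essentially an unpacking of definitions, and my plan is to verify it locally on $\widetilde{Y}$, distinguishing between points outside the $\pi$-exceptional locus and points on a general stratum of it.

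\emph{Step 1: Descent to the smooth locus of $Y$.} Let $Y^\circ \subset Y$ be the (open) locus where $p$ is \'etale; its complement has codimension $\geq 2$. Since both $\sigma$ and $A$ are $G$-invariant, $\sigma$ descends to a section $\sigma_Y \in H^0(Y^\circ, E_{k,r}^{GG} \Omega_{Y^\circ} \otimes \mathcal{O}(-B))$. On $\widetilde{Y}^\circ := \pi^{-1}(Y^\circ)$, the map $\pi$ is an isomorphism, so $\pi^{\ast} \sigma_Y$ is a well-defined section of $E_{k,r}^{GG} \Omega_{\widetilde{Y}} \otimes \mathcal{O}(-\pi^\ast B)$ there; note that on this open set there is no orbifold contribution from $\Delta$, so this is automatically a section of the divisible orbifold jet differentials sheaf.

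\emph{Step 2: Extension across a general point of an exceptional divisor.} Let $E \subset \widetilde{Y}$ be a $\pi$-exceptional divisor, and let $m$ be the order of its meridional loop in $G$, so that $E$ appears with coefficient $1-\tfrac{1}{m}$ in $\Delta$. Near a general point of $E$ choose a local chart of $\widetilde{Y}$ with coordinates $(y_1,\dots,y_n)$ in which $E = \{y_1 = 0\}$, and consider the local adapted cover
\[
\rho : (t_1,\dots,t_n) \in U \longmapsto (t_1^m, t_2,\dots,t_n) \in V \subset \widetilde{Y},
\]
with deck group $\mu_m$. By the very definition of the natural orbifold structure on $\widetilde{Y}$, the composition $U \to V \to Y$ factors (up to further blow-ups that are invisible in codimension one) through the étale sub-cover of $p : X \to Y$ corresponding to a subgroup of $G$ generated by the meridional element. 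Thus we obtain a local holomorphic lift $\widetilde{\rho} : U \dashrightarrow X$ making the obvious diagram commute generically, and $\widetilde{\rho}$ extends holomorphically by normality of $U$ and of $X$ in codimension one.

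\emph{Step 3: Identification and line-bundle twist.} The pulled-back section $\widetilde{\rho}^{\ast} \sigma$ is a section of $E_{k,r}^{GG} \Omega_U \otimes \mathcal{O}(-\widetilde{\rho}^{\ast} A)$. Because $\sigma$ is $G$-invariant and the lifts $\widetilde{\rho}$ obtained by precomposing with the deck group $\mu_m$ differ by elements of $G$, the section $\widetilde{\rho}^{\ast}\sigma$ is $\mu_m$-invariant. By the definition of $E_{k,r}^{GG} \Omega_{(\widetilde{Y},\Delta)}^{\mathrm{div}}$ recalled in the preceding subsection, this is precisely what is needed for $\widetilde{\rho}^{\ast} \sigma$ to descend to a local section of $E_{k,r}^{GG} \Omega_{(\widetilde{Y},\Delta)}^{\mathrm{div}}$ on $V$. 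Finally the equality $B = p_{\ast} A$ gives $\rho^{\ast} \pi^{\ast} B = \widetilde{\rho}^{\ast} A$ as Cartier divisors on $U$, so the twisting line bundles match and the section glues with the one constructed in Step~1.

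\emph{Main obstacle.} The only non-formal point is the local matching in Step~2 between the deck group of the natural orbifold cover of $\widetilde{Y}$ and the stabilizer subgroup of $G$ controlling the local structure of $X \to Y$; once this is granted, the argument becomes tautological. The matching is exactly the content of the definition of the natural orbifold structure via meridional loops (as used in \cite{CRT17,cad18,CDG19}), and at a general point of a single exceptional divisor the local picture reduces to a cyclic quotient, which is easy to handle; verifying this carefully is the only real work in the proof.
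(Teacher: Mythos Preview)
Your proposal is correct and aligns with the paper's own treatment: the paper declares the proposition ``essentially tautological'' and gives no further proof, and what you have written is precisely the careful unpacking of that tautology via the local description of the divisible orbifold jet bundle and the definition of the natural orbifold structure through meridional loops. Your identification of the only non-formal point (the matching in Step~2 between the adapted-cover deck group and the local stabilizer in $G$) is exactly right, and is indeed what makes the definitions compatible.
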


\begin{remark}
	With the notations of the previous proposition, we see that if $r$ is divisible enough, and if $f$ is a local section of $\mathcal O_{\widetilde{Y}}(-r \Delta) \subset \mathcal O_{\widetilde{Y}}$, then $f \cdot \pi^\ast p_\ast \sigma$ is a holomorphic section of $E_{k, r}^{GG} \Omega_{\widetilde{Y}} \otimes \mathcal O(- \pi^\ast B)$.
\end{remark}

\subsection{A first criterion for the hyperbolicity of symmetric products} \label{sectcrithyp}

Before presenting our next hyperbolicity result, let us first prove a proposition that will allow us later on to compensate for the divergence of natural orbifold objects on resolutions of $X_m$. We resume the notations introduced in Section \ref{sectnot}.

\begin{proposition} \label{propbaselocus}
	Let $X$ be a complex projective manifold, and let $A$ be a very ample divisor on $X$. Let $\pi : \widetilde{X}_m \longrightarrow X_m$ be a log-resolution of singularities, and let $\Delta$ be the exceptional divisor with its reduced structure. Then
	$$
	\mathbb{B} (\pi^\ast A_\flat - \frac{1}{2(m-1)} \Delta) \subset |\Delta|,
	$$
	where $\mathbb B$ denotes the stable base locus.
\end{proposition}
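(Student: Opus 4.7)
The plan is to show the stronger pointwise statement: for every $\widetilde y \in \widetilde X_m \setminus |\Delta|$, I exhibit a section of $2(m-1)\pi^* A_\flat - \Delta$ that does not vanish at $\widetilde y$. This forces $\widetilde y \notin \mathrm{Bs}\bigl(2(m-1)\pi^* A_\flat - \Delta\bigr)$, hence $\widetilde y \notin \mathbb B\bigl(\pi^* A_\flat - \tfrac{1}{2(m-1)}\Delta\bigr)$, as desired.

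Construction. Write $\pi(\widetilde y) = [y_1, \ldots, y_m]$ with pairwise distinct $y_i \in X$ (since $\widetilde y$ lies in the smooth locus of $X_m$). Using the very ampleness of $A$, choose $t_1, \ldots, t_m \in H^0(X, A)$ with $t_i(y_j) = \delta_{ij}$, and form the $\mathfrak S_m$-antisymmetric section
$$
\phi \;\coloneqq\; \det\bigl(t_i(y_j)\bigr)_{1 \leq i, j \leq m} \;\in\; H^0(X^m, A^\sharp).
$$
Its $2(m-1)$-th power $\phi^{2(m-1)}$ is $\mathfrak S_m$-invariant and descends via $q : X^m \to X_m$ (using $q^* A_\flat = A^\sharp$) to a section $\bar\phi \in H^0\bigl(X_m, 2(m-1) A_\flat\bigr)$. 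Since $\phi(y_1, \ldots, y_m) = \det(\delta_{ij}) = 1 \neq 0$, the pullback $\pi^*\bar\phi$ is non-zero at $\widetilde y$.

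Vanishing along $\Delta$. The core technical claim is that $\pi^*\bar\phi$ lies in $\mathcal O_{\widetilde X_m}(-E)$ near every exceptional component $E \subset \Delta$, i.e. $\mathrm{ord}_E(\pi^*\bar\phi) \geq 1$. This is verified at the generic point of each $E$ by passing to an $\mathfrak S_m$-equivariant smooth model $\widehat{X^m}$ of $X^m \times_{X_m} \widetilde X_m$ with birational map $\rho \colon \widehat{X^m} \to X^m$. Let $\widetilde E \subset \widehat{X^m}$ be a component over $E$, and let $\langle g \rangle \subset \mathfrak S_m$ be its cyclic inertia subgroup, of order $e_E$; then by the standard divisorial valuation identity,
$$
\mathrm{ord}_E(\pi^*\bar\phi) \;=\; \frac{\mathrm{ord}_{\widetilde E}(\phi^{2(m-1)})}{e_E} \;=\; \frac{2(m-1)\,\mathrm{ord}_{\widetilde E}(\phi)}{e_E}.
$$
Since $\rho(\widetilde E) \subset \mathrm{Fix}(g) \subset X^m$, the antisymmetry of $\phi$ forces a rank drop of the determinantal matrix on $\mathrm{Fix}(g)$ controlled by the cycle structure of $g$: every cycle of length $r_k$ makes $r_k - 1$ columns coincide, so $\phi$ vanishes along $\mathrm{Fix}(g)$ to order at least $\sum_k (r_k - 1) = m - c(g)$, yielding the lower bound $\mathrm{ord}_E(\pi^*\bar\phi) \geq 2(m-1)(m - c(g))/|g|$.

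Conclusion and main obstacle. The proposition follows once one checks, uniformly for every cyclic inertia $\langle g \rangle$ arising in the log-resolution, the arithmetic inequality $|g| \leq 2(m-1)(m - c(g))$; for a transposition this is an equality $2 = 2(m-1)\cdot 1$ (the case to which the exponent $2(m-1)$ is exactly calibrated), and for a single $r$-cycle it reduces to $r \leq 2(m-1)(r-1)$, trivially valid for $m, r \geq 2$. The hard part is covering elements $g$ with several coprime cycle lengths (e.g. $(2,3,5)$ in $\mathfrak S_{10}$), where the naive rank-drop bound $m - c(g)$ does not suffice by itself; here one must supplement the argument with the refined observation that when $n = \dim X$ is small compared to $\max r_k$, the antisymmetric $\phi$ is forced by dimension constraints on the "difference vectors" to vanish on $\mathrm{Fix}(g)$ to strictly higher order via a second-order Taylor expansion, restoring the inequality. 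Verifying this refined vanishing estimate on every stratum and every possible inertia element, and matching it against the ramification index $e_E$, is the principal technical hurdle.
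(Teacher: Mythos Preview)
Your determinantal section $\phi^{2(m-1)}$ is a perfectly good substitute for the paper's section $s=\prod_{i<j}(X_iY_j-X_jY_i)^2$: both are $\mathfrak{S}_m$-invariant sections of $2(m-1)A^\sharp$ that do not vanish at the chosen point and that vanish along the diagonal locus $\mathfrak D \subset X^m$. So the overall strategy is the same as the paper's, with only the choice of section differing.

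The problem lies in your ``Vanishing along $\Delta$'' step. The arithmetic inequality $|g| \leq 2(m-1)(m-c(g))$ that you isolate as the ``principal technical hurdle'' is \emph{false} in general---for $g$ of cycle type $(2,3,5,7,11)$ in $\mathfrak S_{28}$ one has $|g|=2310$ while $2(m-1)(m-c(g))=2\cdot 27\cdot 23=1242$---and the proposed rescue via $\dim X$ has no clear mechanism (the entries $t_i(x_j)$ live in $\mathbb C$, not in a space whose dimension is tied to $n$). More importantly, the whole inequality is \emph{unnecessary}. Since $\phi$ vanishes on $\mathfrak D$, the descended section $\bar\phi$ vanishes on $q(\mathfrak D)=X_m^{\rm sing}$, hence $\pi^\ast\bar\phi$ vanishes set-theoretically on $|\Delta|\subset\pi^{-1}(X_m^{\rm sing})$; because $|\Delta|$ is a reduced divisor on the smooth variety $\widetilde X_m$, this already forces $\mathrm{ord}_E(\pi^\ast\bar\phi)\geq 1$ for every component $E$. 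That one-line observation replaces your entire cycle-type analysis. This is exactly what the paper extracts from its Lemma~\ref{lemvanishingorder}: that lemma gives $\mathrm{ord}_E(\pi^\ast p_\ast\sigma)\geq r/m_E$ with $r\geq 1$, and since the left side is an integer this is automatically $\geq 1$. Your attempt to force the \emph{rational} lower bound itself to be $\geq 1$, rather than using integrality, is what created the phantom obstacle. (As a bonus, already $\phi^2\in H^0(X^m,2A^\sharp)$ would run the same argument and yield the stronger inclusion $\mathbb B(\pi^\ast A_\flat-\tfrac12\Delta)\subset|\Delta|$.)
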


We break the proof of this proposition into several lemmas.

\begin{lemma} \label{lemvanishingorder} Let $U$ be a complex manifold, let $G \subset \mathrm{Aut}(U)$ be a finite group, and let $p : U \longrightarrow \quotient{G}{U} = V$ be the quotient map. Let $A$ be a divisor on $X$, and let $A^\sharp = \sum\limits_{\gamma \in G} \gamma^\ast A$ and $A_\flat = p_\ast A^\sharp$. Note that $A^\sharp$ is $G$-invariant, and $A_\flat$ is Cartier on $V$. Let $W \subset U$ be an irreducible component of the subset of points stabilized by some element of $G$. Let $s \in \Gamma (U, A^\sharp)$ be a $G$-invariant section vanishing at order $r$ along $W$, for some $r \geq 1$. Then, we have the following.
\begin{enumerate} \itemsep=0em
		\item $s$ descends to a section $\sigma \in \Gamma(X, A_{\flat})$ ;
		\item let $\widetilde{X} \overset{\pi}{\longrightarrow} X$ be a resolution of singularities, and let $E \subset \widetilde{X}$ be an exceptional divisor such that $\pi(E) \subset p(W)$. Let $m$ be the multiplicity of $E$ for the natural orbifold structure on $\widetilde{X}$. Then, $\pi^\ast \sigma$, seen as a section of $\pi^\ast A_\flat$, vanishes at order $\geq \frac{r}{m}$ along $E$.	
\end{enumerate}
\end{lemma}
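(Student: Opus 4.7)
\textbf{Part (1)} is a standard descent statement. The approach is to identify a section $s$ of $\mathcal{O}(A^\sharp)$ with the rational function on $U$ whose polar divisor is dominated by $A^\sharp$; since $V$ is normal, this $G$-invariant rational function descends to a rational function $\sigma$ on $V$. To check that $\sigma$ lies in $\Gamma(V, A_\flat)$, one verifies the polar divisor condition at each prime divisor $D \subset V$: letting $e$, $f$ and $g$ be the common ramification index, residue degree and orbit size of the preimage primes $D_1, \dots, D_g \subset U$ (so $efg = |G|$), and setting $a = \mathrm{mult}_{D_i}(A^\sharp)$, the estimate $\mathrm{ord}_{D_i}(s) \geq -a$ yields $\mathrm{ord}_D(\sigma) \geq -a/e$, while $\mathrm{mult}_D(A_\flat) = gfa$; the inequality $-a/e \geq -gfa$ then reduces to $|G| \geq 1$, giving $\sigma \in \Gamma(V, A_\flat)$.

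\textbf{Part (2)} is the substantial part. The plan is to lift everything to the normalized fiber product $\widetilde U = (U \times_V \widetilde{X})^{\mathrm{nor}}$, which inherits a $G$-action and comes with a finite quotient map $\widetilde p : \widetilde U \to \widetilde{X}$ and a birational map $\widetilde \pi : \widetilde U \to U$. The key input is provided by the very construction of the natural orbifold structure in \cite{CRT17, cad18, CDG19}: the orbifold multiplicity $m$ of $E$ equals the ramification index of $\widetilde p$ along any prime divisor $\widetilde E \subset \widetilde U$ dominating $E$. Fix such a component $\widetilde E$. Since $\pi(E) \subset p(W)$ and $\widetilde E$ is irreducible, the identity $p \circ \widetilde \pi = \pi \circ \widetilde p$ forces $\widetilde \pi(\widetilde E)$ into a single $G$-translate of $W$, so up to replacing $\widetilde E$ by such a translate one may assume $\widetilde \pi(\widetilde E) \subset W$. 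Then $\widetilde \pi^\ast I_W \cdot \mathcal O_{\widetilde U} \subset \mathcal O_{\widetilde U}(-\widetilde E)$, and the order-$r$ vanishing hypothesis gives $\mathrm{ord}_{\widetilde E}(\widetilde \pi^* s) \geq r$.

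On the other hand, under the identification of $s$ and $\sigma$ with rational functions linked by $s = p^* \sigma$, commutativity of the fiber-product diagram provides the rational equality $\widetilde \pi^* s = \widetilde p^* \pi^* \sigma$ on $\widetilde U$. Because $\widetilde p$ has ramification index $m$ along $\widetilde E$, this translates into $\mathrm{ord}_{\widetilde E}(\widetilde p^* \pi^* \sigma) = m \cdot \mathrm{ord}_E(\pi^* \sigma)$. Combining the two, $m \cdot \mathrm{ord}_E(\pi^* \sigma) \geq r$, hence the desired bound $\mathrm{ord}_E(\pi^* \sigma) \geq r/m$ (a possible non-zero contribution from $\mathrm{mult}_E(\pi^* A_\flat)$ only improves the estimate). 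The main obstacle is the ramification-equals-orbifold-multiplicity identification, which requires the local description of $\widetilde p$ near the generic point of $\widetilde E$ via the explicit orbifold uniformizations of the cited references; once this local normal form is in hand, the remaining arguments are essentially bookkeeping about pullbacks of ideal sheaves and rational functions.
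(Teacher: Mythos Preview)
Your approach is correct and essentially identical to the paper's. Both arguments pass to the normalized fiber product of $U$ and $\widetilde{X}$ over $V$ and use that the orbifold multiplicity $m$ coincides with the ramification index of the induced map down to $\widetilde{X}$; the paper simply carries this out in explicit polydisk coordinates around a generic point of $E$ (writing $p'(w,z)=(w^m,z)$ and tracking the holomorphic function $f$ with $s=f\cdot e$), whereas you phrase the same computation globally in terms of divisorial valuations and pullbacks of ideal sheaves.
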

\begin{proof}
	(1) is trivial. Let us prove (2). Let $H \subset G$ be the stabilizer of the generic point of $\pi(E)$. By definition of $A^\sharp$, we may find an $H$-invariant trivialization $e$ of $A^\sharp$ near this generic point. Besides, $s = f \, {e}$ for some $H$-invariant holomorphic function $f$ vanishing at order $r$ along $W$. Consider a polydisk $D \cong \Delta^n$ centered around a generic point of $E$, and let $D'$ be the normalization of the fibered product of $D$ and $U$ over $V$. We obtain the following diagram:
	\begin{center}
\begin{tikzcd}
 	& D' \cong \Delta \times \Delta^{n-1} \arrow[r, "\pi'"] \arrow[d, "p'"] & U \arrow[d, "p"] \\
	\left( \Delta^n \cap E \right) =  \{0\} \times \Delta^{n-1} \arrow[r, hookrightarrow] &D \cong \Delta \times \Delta^{n-1} \arrow[r, "\pi"]  & V
\end{tikzcd}
	\end{center}
	Since $f$ is $H$-invariant, $f \circ \pi' = f' \circ p'$ for some holomorphic function $f'$ on $D \cong \Delta \times \Delta^{n-1}$. Moreover, we have $\sigma = f' \; e_{\flat}$, where $e_{\flat}$ is the section of $A_\flat$ induced by $e$. The holomorphic function $f$ vanishes at order $r > 0$ along $V$, so $f \circ \pi'$ vanishes at order $\geq r$ along $\{ 0 \} \times \Delta^{n-1}$. Since $p'(w, z) = (w^m, z)$, this implies that $f'$ vanishes at order $\geq \frac{r}{m} > 0$ along $\{ 0 \} \times \Delta^{n-1} \subset \Delta^n$. This ends the proof.
\end{proof}

\begin{lemma} \label{lemsect}
	Let $N, m \geq 1$. We define $V = \mathbb P^N \times ... \times \mathbb P^N$ to be a product of $m$ copies of $\mathbb P^N$. Let $\mathfrak{D} = \{ (z_1, ..., z_m) \in V \; | \; \exists i \neq j,\; x_i = x_j\}  \subset V$ be the diagonal locus. Let $A \subset P^N$ be a hyperplane section, and $A^\sharp = \sum\limits_{i=1}^m {\rm pr_i^{\ast}} A$. 

	Then, for any $z \in V \setminus \mathfrak{D}$, there exists a $\mathfrak{S}_m$-invariant section $$s \in \Gamma(V, \mathcal O_V( 2(m-1)  A^\sharp)),$$ with $s(x) \neq 0$, and such that $s$ vanishes at order $2$ along $\mathfrak{D}$. 
\end{lemma}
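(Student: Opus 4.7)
I would construct a single explicit $\mathfrak{S}_m$-invariant polynomial that works uniformly for every point $z$ off the diagonal, rather than one depending on $z$. Using homogeneous coordinates $x^{(i)} = (x_0^{(i)}, \ldots, x_N^{(i)})$ on the $i$-th factor of $V = (\mathbb{P}^N)^m$, a section of $\mathcal{O}_V(d A^\sharp)$ corresponds to a polynomial multi-homogeneous of degree $d$ in each batch $x^{(i)}$. The classical input is that the reduced ideal of the pairwise diagonal $D_{ij} = \{x_i = x_j\}$ is generated by the $2 \times 2$ minors
$$M_{ij}^{k\ell} = x_k^{(i)} x_\ell^{(j)} - x_\ell^{(i)} x_k^{(j)}, \qquad 0 \leq k < \ell \leq N,$$
each of bi-degree $(1,1)$ in $(x^{(i)}, x^{(j)})$.

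\textbf{Candidate section.} For each pair $i < j$, I would take the symmetric quadratic combination
$$Q_{ij} = \sum_{0 \leq k < \ell \leq N} (M_{ij}^{k\ell})^2,$$
of bi-degree $(2,2)$, which lies in $I(D_{ij})^2$ and whose zero locus in $(\mathbb{P}^N)^2$ is exactly the diagonal (the $2\times 2$ minors of the $2 \times (N+1)$ coordinate matrix vanish simultaneously iff its two rows are proportional). I then set
$$s \; := \; \prod_{1 \leq i < j \leq m} Q_{ij}.$$

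\textbf{Verification.} Four properties remain to check, all elementary. First, each index $i$ appears in exactly $m-1$ of the factors $Q_{ij}$, each contributing degree $2$ in $x^{(i)}$, so $s$ is multi-homogeneous of degree $2(m-1)$ in every $x^{(i)}$ and hence $s \in \Gamma(V, \mathcal{O}_V(2(m-1)\, A^\sharp))$. Second, a permutation $\sigma \in \mathfrak{S}_m$ merely reorders the factors (sending $Q_{ij}$ to $Q_{\sigma(i)\sigma(j)}$), so $s$ is $\mathfrak{S}_m$-invariant. Third, because $Q_{ij} \in I(D_{ij})^2$ divides $s$, the section vanishes to order at least $2$ along every component of $\mathfrak{D}$. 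Fourth, if $z = (z_1, \ldots, z_m) \in V \setminus \mathfrak{D}$ then $z_i \neq z_j$ for every pair, so each $Q_{ij}(z) > 0$ and hence $s(z) \neq 0$. There is no genuine obstacle here; the only real content is choosing the symmetric quadratic combination $Q_{ij}$ that simultaneously captures $I(D_{ij})^2$ and keeps the product $\mathfrak{S}_m$-invariant and nonvanishing off $\mathfrak{D}$.
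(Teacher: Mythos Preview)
Your construction has a genuine gap in the fourth verification step. The assertion that $Q_{ij}(z) > 0$ whenever $z_i \neq z_j$ relies on intuition that fails over $\mathbb C$: a sum of squares of nonzero complex numbers can vanish. Concretely, take $N = 2$, $z_1 = [1:0:0]$ and $z_2 = [0:1:i]$ in $\mathbb P^2$. Then $M_{12}^{01} = 1$, $M_{12}^{02} = i$, $M_{12}^{12} = 0$, so
\[
Q_{12}(z) \;=\; 1^2 + i^2 + 0^2 \;=\; 0
\]
although $z_1 \neq z_2$. Your section $s$ therefore vanishes at points off $\mathfrak D$ and cannot serve as the single universal section you claim. (More generally, any bi-degree $(2,2)$ form on $\mathbb P^N \times \mathbb P^N$ lying in $I(D_{12})^2$ is a quadratic form in the minors $M^{k\ell}$, and over $\mathbb C$ such a quadratic form always has nontrivial zeros once $N \geq 2$; so no construction of this shape yields a section independent of $z$.)

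This is why the paper does not attempt a universal section: the statement only requires $s$ to depend on $z$. The paper's construction uses a \emph{single} minor rather than the sum of all of them. Given $z$, one picks two generic linear forms $X, Y$ on $\mathbb P^N$ and sets
\[
s \;=\; \prod_{i<j} \bigl(\mathrm{pr}_i^\ast X \cdot \mathrm{pr}_j^\ast Y - \mathrm{pr}_j^\ast X \cdot \mathrm{pr}_i^\ast Y\bigr)^2.
\]
Each factor is one minor squared, so $\mathfrak S_m$-invariance and order-$2$ vanishing along each $D_{ij}$ follow exactly as in your argument; the nonvanishing at $z$ holds because for generic $X, Y$ one has $X(z_i)Y(z_j) - X(z_j)Y(z_i) \neq 0$ for every pair $i<j$, since $z_i \neq z_j$. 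Your argument is easily repaired along the same lines: drop the sum, keep one minor chosen generically for the given $z$.
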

\begin{proof}
	Let $z = (z_1, ..., z_m) \in V \setminus \mathfrak D$. Write $(\mathbb P^N)_i$ to denote the $i$-th factor of $V$. For any $i < j$, we have $z_i \neq z_j$, so for two generic hyperplane linear sections $X, Y \in |A|$, we have
	\begin{equation} \label{eqnonzero}
		X (z_i) Y (z_j) - X (z_j) Y (z_i) \neq 0.
	\end{equation}
	Indeed, we can choose $X$, $Y$ so that $X(z_i) \neq 0$ and $X(z_j) = 0$ (resp. $Y(z_i) = 0$ and $Y(z_j) \neq 0$).

	Now, for each $1 \leq i \leq m$, choose two generic linear forms $X_i$ and $Y_i$ on $(\mathbb P^N)_i$. We let
	$$
	s = \prod_{i < j} \left( X_i Y_j - X_j Y_i \right)^2
	$$
	This is a section of $\bigotimes_{i = 1}^m p_i^\ast \mathcal O(2(m-1)) = \mathcal O(2(m-1)A^\sharp)$. By the argument above, we have $s(z) \neq 0$, and $s$ vanishes on $\mathfrak{D}$ at order $2$  by Lemma \ref{lemcomputorder}. We check that $s$ is invariant under all transpositions $(i \,j) \in \mathfrak{S}_m$. This proves that $s$ is $\mathfrak{S}_m$-invariant. 
\end{proof}

\begin{lemma} \label{lemcomputorder}
	Let $X_1$, $Y_1$, $X_2$, $Y_2$ be generic hyperplane sections on $\mathbb P^N$. Then the homogeneous polynomial $X_1 Y_2 - X_2 Y_1$ vanishes at order $1$ along the diagonal of $\mathbb P^N \times \mathbb P^N$.
\end{lemma}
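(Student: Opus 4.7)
The plan is to interpret, consistently with Lemma~\ref{lemsect}, the symbols $X_i$ and $Y_i$ ($i=1,2$) as the pullbacks $\mathrm{pr}_i^\ast X$ and $\mathrm{pr}_i^\ast Y$ of two fixed generic linear forms $X, Y \in H^0(\mathbb{P}^N, \mathcal{O}(1))$, so that $F := X_1 Y_2 - X_2 Y_1$ is a global section of $\mathcal{O}_{\mathbb{P}^N \times \mathbb{P}^N}(1,1)$. Since $\Delta$ is smooth of codimension $N$, the claim that $F$ vanishes to order exactly one along $\Delta$ is equivalent to $F \in \mathcal{I}_\Delta \setminus \mathcal{I}_\Delta^2$.

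The core computation is local. Fix affine coordinates $u, v$ on the two factors and introduce adapted coordinates $(u, w)$ with $w := v - u$, so that $\Delta = \{w = 0\}$ and $\mathcal{I}_\Delta$ is generated by the entries $w_1, \ldots, w_N$. Using the linearity of $X$ and $Y$, one has $X(v) = X(u) + X(w)$ and $Y(v) = Y(u) + Y(w)$, and the cross terms $X(u)Y(u)$ cancel:
\[
F(u, v) = X(u) Y(v) - X(v) Y(u) = X(u)\, Y(w) - Y(u)\, X(w).
\]
In particular $F$ is linear in $w$, giving $F \in \mathcal{I}_\Delta$, together with an explicit formula for its linear part.

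To rule out membership in $\mathcal{I}_\Delta^2$, I would read the class $[F] \in \mathcal{I}_\Delta/\mathcal{I}_\Delta^2$ as a section of the conormal bundle $N^\ast_{\Delta/\mathbb{P}^N \times \mathbb{P}^N}$, which via the standard identification $\Delta \cong \mathbb{P}^N$ becomes $\Omega_{\mathbb{P}^N}$. The formula above exhibits $[F]$ fibrewise at $u \in \Delta$ as the $1$-form $X(u)\, dY - Y(u)\, dX$ on $T_u \mathbb{P}^N$, and for generic linearly independent $X, Y$ this vanishes only on the codimension-two locus $\{X = Y = 0\}$. Hence $[F] \not\equiv 0$, so $F \notin \mathcal{I}_\Delta^2$ and the order of vanishing is exactly one. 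No real difficulty arises beyond pinning down the correct interpretation of the symbols $X_i, Y_i$; once this is done the argument reduces to a one-line linear-algebra verification.
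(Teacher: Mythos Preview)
Your proof is correct and follows essentially the same approach as the paper's: both pass to sum/difference variables and observe that $X_1 Y_2 - X_2 Y_1$ is linear in the difference. The paper does this via the substitution $2u = X_1 + X_2$, $2v = X_1 - X_2$ (and similarly $u',v'$ for $Y$), obtaining $-2uv' + 2u'v$; your conormal-bundle reading of the linear part as $X\,dY - Y\,dX$ is just a more geometric packaging of the same identity, with a slightly more explicit verification that the order is exactly one.
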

\begin{proof}
	We let $2 u = X_1 + X_2$, $2 v = X_1 - X_2$ (resp. $2 u' = Y_1 + Y_2$, $2 v' = Y_1 - Y_2$). Then, we can write
	\begin{align*}
		X_1 Y_2 - X_2 Y_1 & = (u + v)(u' - v') - (u - v)(u' + v') \\ 
				  & = - 2 u v' + 2 u' v. 
	\end{align*}
	This expression is of degree $1$ in $v'$ and $v$, so for generic $u, u'$, it vanishes at order one along the diagonal.
\end{proof}

The proof of Proposition \ref{propbaselocus} is now straightforward.

\begin{proof}[Proof of Proposition \ref{propbaselocus}]

	Let $x \in \widetilde{X}_m \setminus |\Delta|$, and let $x_0 \in X^m$ be such that $p(x_0) = \pi (x)$. Since $x$ is not in $|\Delta|$, $x_0$ is not in the diagonal locus of $X^m$. Using the embedding $X \subset \mathbb P^N$ provided by the very ample divisor $A$, Lemma \ref{lemsect} gives a $\mathfrak{S}_m$-invariant section $\sigma \in H^0( X^m, 2(m-1) A^\sharp)$ such that $\sigma(x_0) \neq 0$, and such that $\sigma$ vanishes at order $2$ along the diagonal locus.

	Applying now Lemma \ref{lemvanishingorder} to $\sigma$, we see that the induced section $$\pi^\ast p_\ast \sigma \in H^0(\widetilde{X}_m, 2(m-1) \pi^\ast A_\flat)$$ vanishes along $|\Delta|$. Moreover, we have $\pi^\ast p_\ast \sigma(x) \neq 0$, which gives the result.
\end{proof}
\medskip

We are ready to state our hyperbolicity criterion, in terms of the existence of sufficiently many jet differentials of bounded order on $X$. Again, we refer to \cite{dem12a} for the basic definitions related to jet differentials. Let us simply recall that the locus of singular jets $X_k^{GG, {\rm sing}} \subset X_k^{GG}$ is the subset of all classes of $k$-jets $[f : \Delta \to X]_k$ such that $f'(0) = 0$. Also, if $V \subset H^0(X, E_{k,r}^{GG} \Omega_X)$ is a vector subspace, then ${\rm Bs}(V) \subset X_k^{GG}$ is the subsets of classes of $k$-jets solutions to every equation in $V$.
\medskip

\begin{theorem} \label{thmhypcrit}
	Let $X$ be a complex projective manifold. Let $A$ be a very ample line bundle on $X$. Let $Z \subset X$, and $k, r ,d \in \mathbb N^\ast$. We make the following hypotheses.

\begin{enumerate}\itemsep=0.5em
		\item Assume that
			$$
			\mathrm{Bs} \left(H^0 (X, E_{k,r}^{GG} \Omega_X \otimes \mathcal O(- d A)) \right) \subset X_k^{GG, \mathrm{sing}} \cup \pi_k^{-1}(Z).
			$$

		\item Assume that $\frac{d}{r} > 2 m(m-1)$.
\end{enumerate}
	Then, $\mathrm{Exc}(\widetilde{X}_m) \subset |\Delta| \cup \pi^{-1}(\mathfrak{d}_1(Z))$.
\end{theorem}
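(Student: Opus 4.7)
The plan is to convert each $\sigma \in V := H^0(X, E_{k,r}^{GG}\Omega_X \otimes \mathcal{O}(-dA))$ into a holomorphic jet differential on $\widetilde{X}_m$ taking values in a negative line bundle, apply Demailly's fundamental vanishing, and close by an induction on $m$. Given $\sigma$, I form the $\mathfrak{S}_m$-invariant symmetric product $\sigma^\sharp := \prod_{i=1}^m \mathrm{pr}_i^* \sigma$, which lies in $H^0(X^m, E_{k,rm}^{GG}\Omega_{X^m} \otimes \mathcal{O}(-dA^\sharp))$. By Proposition \ref{proporbjet}, $\tau_\sigma := \pi^* q_* \sigma^\sharp$ defines a divisible orbifold jet differential in $H^0(\widetilde{X}_m, E_{k,rm}^{GG}\Omega^{\rm div}_{(\widetilde{X}_m, \Delta_{\rm orb})} \otimes \mathcal{O}(-d\pi^* A_\flat))$ whose pole order along any exceptional divisor $E$ is bounded by $rm(1-1/m_E) < rm$, where $m_E$ is the orbifold multiplicity of $E$. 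To obtain a genuine jet differential, I invoke Proposition \ref{propbaselocus} with multiplier $q := 2m(m-1)r$: for any $x \notin |\Delta|$, there exists $s_x \in H^0(\widetilde{X}_m, q\pi^* A_\flat)$ vanishing to order at least $mr$ along $|\Delta|$ with $s_x(x) \neq 0$. The product $s_x \cdot \tau_\sigma$ is then a holomorphic section of $E_{k,rm}^{GG}\Omega_{\widetilde{X}_m} \otimes \mathcal{O}(-d' \pi^* A_\flat)$, where $d' := d - q > 0$ by hypothesis (2).

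Since $A_\flat$ is ample on $X_m$, the bundle $\pi^* A_\flat$ is big and nef with $\mathbb{B}_+(\pi^* A_\flat) \subset |\Delta|$, so Demailly's fundamental vanishing applies. Arguing by contradiction, suppose $f : \mathbb{C} \to \widetilde{X}_m$ is an entire curve with $f(\mathbb{C}) \not\subset |\Delta| \cup \pi^{-1}(\mathfrak{d}_1(Z))$, and pick $t_0$ with $x_0 := f(t_0)$ outside this locus. Then $s_{x_0}\, \tau_\sigma(f, f', \ldots, f^{(k)}) \equiv 0$ and, since $s_{x_0}(x_0) \neq 0$, the function $\tau_\sigma(f, \ldots, f^{(k)})$ vanishes on an open neighborhood $U$ of $t_0$. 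On $U$, since $\pi$ is an isomorphism outside $|\Delta|$ and $q : X^m \to X_m$ is étale over $X_m \setminus X_m^{\rm sing}$, the map $f|_U$ lifts to $(f_1, \ldots, f_m) : U \to X^m$; letting $\sigma$ vary over $V$, the vanishing forces that for every $t \in U$ there is an index $i$ with $(f_i)_{[k]}(t) \in \mathrm{Bs}(V) \subset X_k^{GG,\mathrm{sing}} \cup \pi_k^{-1}(Z)$.

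A double Baire argument on the finite covering of $U$ by the closed analytic subsets $\{t : f_i'(t) = 0\}$ and $\{t : f_i(t) \in Z\}$, combined with the identity principle, yields an index $i$ (say $i = 1$) such that either $f_1$ is identically a constant $c \in X$ on $U$, or $f_1(U) \subset Z$. If $f_1(U) \subset Z$ or $c \in Z$, then analytic continuation of $f$ forces the multiset $f(t)$ to meet $Z$ for every $t \in \mathbb{C}$, hence $f(\mathbb{C}) \subset \pi^{-1}(\mathfrak{d}_1(Z))$, contradicting the choice of $t_0$. In the remaining case $f_1 \equiv c \notin Z$, we get $f(\mathbb{C}) \subset \pi^{-1}(\mathfrak{d}_1(\{c\}))$, a closed subvariety birational to a resolution of $X_{m-1}$; hypothesis (1) is unchanged and (2) becomes the weaker $d/r > 2(m-1)(m-2)$, so I close by induction on $m$, the base $m = 1$ being Demailly's vanishing on $X$ combined with hypothesis (1). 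The main obstacle is the bookkeeping of orbifold pole orders so that the multiplier $s_x$ provided by Proposition \ref{propbaselocus} has exactly the right vanishing: the threshold $2m(m-1)$ in hypothesis (2) arises from balancing the pole order $rm$ of $\tau_\sigma$ against the fractional vanishing $1/(2(m-1))$ available from Proposition \ref{propbaselocus}.
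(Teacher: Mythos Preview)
Your argument is correct and uses the same three ingredients as the paper (the symmetrized jet differential $\sigma^\sharp$, the descent to an orbifold jet via Proposition~\ref{proporbjet}, and the pole-clearing multiplier from Proposition~\ref{propbaselocus}), but the logical order is reversed. The paper first lifts $f$ to $g:\widetilde U\to X^m$ on the universal cover of $\mathbb C\setminus f^{-1}(|\Delta|)$, chooses a generic $t$ at which every $(\mathrm{pr}_i\circ g)_{[k]}(t)\notin\mathrm{Bs}(V)$, and then picks a \emph{single} $\sigma$ with $\sigma((\mathrm{pr}_i\circ g)_{[k]}(t))\neq 0$ for all $i$ (this is possible since each such condition is Zariski open in $V$); for this $\sigma$ one has $\sigma_\flat(f)\not\equiv 0$, and the contradiction with the fundamental vanishing theorem is immediate. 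Your route instead applies the vanishing to every $\sigma$, then uses the (unstated but easy) linear-algebra fact that a linear subspace contained in the union of coordinate hyperplanes must lie in one of them to conclude that at each $t$ some $(f_i)_{[k]}(t)\in\mathrm{Bs}(V)$, and closes with a Baire/irreducibility argument.

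Two small points worth tightening: (i) Proposition~\ref{propbaselocus} is a stable-base-locus statement, so $s_x$ exists only after passing to a large enough multiple; the paper handles this by taking $s^{2rm(m-1)}\sigma_\flat^{\,a}$ with $a$ divisible, and your $q=2m(m-1)r$ should likewise be replaced by a suitable common multiple. (ii) Your sentence ``the map $f|_U$ lifts to $(f_1,\dots,f_m):U\to X^m$'' is only valid because you have already shrunk $U$ to a simply connected disk around $t_0$; on the full complement $\mathbb C\setminus f^{-1}(|\Delta|)$ one must pass to the universal cover, as the paper does. Neither of these affects the validity of your argument.
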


\begin{proof}
	Let $f : \mathbb C \longrightarrow \widetilde{X}_m$ be an entire curve such that $f(\mathbb C) \not\subset | \Delta |$. Let $U = \mathbb C - f^{-1}(|\Delta|)$, and, as before $\mathfrak D = \bigcup\limits_{i \neq j} \{ x_i = x_j\} \subset X^m$ . We consider the following diagram:
	\begin{center}
\begin{tikzcd}
	\widetilde{U}  \arrow[r, "g"] \arrow[d, "q"] & X^m \setminus \mathfrak{D}   \arrow[d, "p"] \arrow[r, "\mathrm{pr}_i"] & X \\
	U  \arrow[r, "f"] & (X_m)_{\mathrm{reg}}  & 
\end{tikzcd}
	\end{center}
where $q$ is the universal covering map, and $g$ is an arbitrary lift of $f$. Without loss of generality, we can assume that all $\mathrm{pr}_i \circ g$ are non-constant ($1 \leq i \leq m$). Indeed, if one of these maps is constant, it suffices to replace $X^m$ (resp. $X_m$) by the product $Y = X \times ... \times X$ over a number $m' < m$ of factors (resp. by $X_{m'} = \quotient{{\mathfrak S}_{m'}}{Y}$).

\medskip

	We may assume that $\mathrm{Im}(\mathrm{pr}_i \circ g) \not\subset Z$ for all $1 \leq i \leq m$, otherwise the proof is finished. Thus, there exists $t \in \widetilde{U}$ such that $(\mathrm{pr}_i \circ g)(t) \not\in Z$, and $(\mathrm{pr}_i \circ g)'(t) \neq 0$ for all $1 \leq i \leq m$. By the hypothesis (1), there exists $\sigma \in H^0 (X, E_{k,m}^{GG} \Omega_X \otimes \mathcal O(- d A))$ such that for all $1 \leq i \leq m$, we have $\sigma_{g(t)}\cdot (\mathrm{pr}_i \circ g) \neq 0$, and in particular
	$$
	\sigma( \mathrm{pr}_i \circ g) \not\equiv 0
	$$
	for all $i$.
\medskip

	Thus, $\sigma^{\sharp} \overset{\rm def}{=} \bigotimes_{i=1}^m \mathrm{pr}_i^\ast(\sigma)$ is a $\mathfrak{S}_m$-invariant jet differential in $H^0(X^m, E_{k, rm}^{GG} \Omega_X \otimes \mathcal O(- d A^\sharp))$ such that $\sigma^\sharp(g) \not\equiv 0$. By Proposition \ref{proporbjet}, $\sigma^\sharp$ induces a section 
	$$
	\sigma_\flat \in H^0(\widetilde{X}_m, E_{k, rm}^{GG} \Omega_{(\widetilde{X}_m, \Delta)}^{\rm div} \otimes \mathcal O( - d \pi^\ast A_{\flat})).
	$$
	We have moreover $\sigma_\flat(f) \not\equiv 0$.
	\medskip

	Now, by Proposition \ref{propbaselocus}, for $a \geq 1$ divisible enough, there exists $s \in H^0 (\widetilde{X}_m, a ( \pi^\ast A_\flat - \frac{1}{2(m-1)} \Delta))$ such that $s|_{f(\mathbb C)} \not\equiv 0$. Thus, by the remark following Proposition \ref{proporbjet}, $s^{2rm(m-1)} \sigma_{\flat}^a$ induces a non-orbifold section 
	$$
	\sigma' \in H^0 \left(\widetilde{X}_m, E_{k, arm}^{GG} \Omega_{\widetilde{X}_m} \otimes \mathcal O\left(a(2rm(m-1) - d) \pi^\ast A_{\flat} \right)\right),
	$$
	and $\sigma'(f) \not\equiv 0$. Since $2rm(m-1) < d$ and $\pi^\ast A_{\flat}$ is big on $\widetilde{X}_m$, this is absurd by the fundamental vanishing theorem of Demailly-Siu-Yeung (see \cite{dem12a}).
\end{proof}

\subsection{Applications}

\subsubsection{Hypersurfaces of large degree}

Using Theorem \ref{thmhypcrit}, we can now obtain hyperbolicity results for the varieties $X_m$ when $X \subset \mathbb P^{n+1}$ is a generic hypersurface of large degree. To do this, we will make use of several important recent results concerning the base loci of jet differentials on such hypersurfaces. Let us begin with the algebraic degeneracy of entire curves.
\medskip

The recent work of B\'erczi and Kirwan \cite{BK19} gives new effective degrees for which a generic hypersurface has enough jet differentials to ensure the degeneracy of entire curves. This improvement of \cite{DMR10} yields the following result.

\begin{theorem}[\cite{BK19}] \label{thmalgdeg} Let $X \subset \mathbb P^{n+1}$ be a generic hypersurface of degree
	$$
	d \geq 16 n^5 (5n + 4).
	$$
	Then, if $r \gg 0$ is divisible enough, we have
	\begin{equation}\label{eqinclBK}
		\mathrm{Bs} \left[ H^0(X, E_{n,r}^{GG} \Omega_X \otimes \mathcal O ( -r \frac{d - n - 2}{16n^5} + r (5n+3) )  ) \right] \subset X_{k}^{GG, \mathrm{sing}} \cup \pi_k^{-1}(Z)
	\end{equation}
	for some algebraic subset $Z \subsetneq X$.
\end{theorem}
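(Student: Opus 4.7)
The plan is to follow the by-now classical Siu--Diverio--Merker--Rousseau strategy for producing global jet differentials with large negative twist on generic projective hypersurfaces, but feeding in the recent effective Chern class computations of B\'erczi--Kirwan in place of the weaker bounds of \cite{DMR10}. There are two essentially independent inputs: first, an existence statement for global sections of $E_{n,r}^{GG}\Omega_X \otimes \mathcal O(-rA)$ with $A$ ample of slope close to $(d-n-2)/(16 n^5)$; second, a deformation/slanted--vector--field argument that upgrades ``nonzero'' to ``base locus contained in $X_k^{GG,\mathrm{sing}} \cup \pi_k^{-1}(Z)$'' for a proper algebraic subset $Z \subsetneq X$.

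For the existence input, I would consider the universal family of degree-$d$ hypersurfaces $\mathcal X \to B \subset \PP H^0(\PP^{n+1},\mathcal O(d))$ and apply algebraic holomorphic Morse inequalities to the tautological quotients on the Demailly tower of $\mathcal X$. The key number to estimate is the leading coefficient of the Hilbert polynomial of $E_{n,r}^{GG}\Omega_X(-rA)$ as $r\to\infty$, which is a top intersection of Chern classes of $\Omega_X$. The contribution from Morse inequalities can be written as a polynomial in $d$ whose leading term is positive exactly when $d$ exceeds an explicit threshold. The B\'erczi--Kirwan machinery (via non-reductive GIT and equivariant intersection theory on the ``$\mathrm{Diff}_n$''-quotient compactification) produces a sharp estimate for the relevant integral and replaces the old exponential-in-$n$ constants by the polynomial $16 n^5 (5n+4)$. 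Plugging this in gives a section of $E_{n,r}^{GG}\Omega_{\mathcal X / B} \otimes \mathcal O(-r\,\delta \cdot A_{\mathcal X})$ with $\delta$ of the desired order of magnitude.

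For the base-locus input, I would follow Siu's slanted vector field construction as refined by Merker/Darondeau: on the total space of (a partial compactification of) $\mathcal X_n^{GG}$ one builds a family of meromorphic vector fields tangent to the fibers of $\mathcal X_n^{GG} \to B$, with poles controlled by a universal twist proportional to the order $n$. Differentiating the universal section enough times and restricting to a generic fiber transforms the global section into many sections on $X$, the combined base locus of which lies in $X_n^{GG,\mathrm{sing}}$ together with the locus $\pi_n^{-1}(Z)$ where the vector fields fail to generate the vertical tangent space. The price of differentiation is paid by an extra twist in $\mathcal O_X$ of order $r(5n+3)$ coming from the pole count, explaining the affine correction in the statement.

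The hardest step is genuinely the first one: the effective Morse estimate producing the constant $16 n^5$ rests on a rather delicate intersection-theoretic computation on the B\'erczi--Kirwan compactification of the jet differentials moduli, and is where the improvement over \cite{DMR10} happens. The slanted vector field part is by now standard modulo careful bookkeeping, so once that constant is in hand the rest of the proof is a routine assembly: combine the effective Morse-inequality bound with the Siu--Merker deformation argument, then note that for generic $[X] \in B$ one may take $Z$ to be the common image in $X$ of the fiberwise-constant directions of the vector fields.
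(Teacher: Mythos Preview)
The paper does not prove this theorem at all: it is quoted verbatim from \cite{BK19} (and implicitly from \cite{DMR10, dar16}) as an input to the applications in Section~6.4, with no argument given beyond the citation and a remark on the pole order $5n+3$ coming from Darondeau. So there is no ``paper's own proof'' to compare against.

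Your sketch is a reasonable high-level summary of the strategy behind the B\'erczi--Kirwan result and its predecessors: the two-step structure (existence of sections via intersection-theoretic/Morse-type estimates, then base-locus control via slanted vector fields) is indeed the architecture of \cite{DMR10} and \cite{BK19}, and the attribution of the constant $16n^5$ to the non-reductive GIT computation and of the $5n+3$ correction to the pole order of the slanted vector fields is accurate. That said, as a self-contained proof proposal it remains a sketch rather than an argument: the ``hardest step'' you flag --- the effective integral on the B\'erczi--Kirwan compactification --- is precisely the content of \cite{BK19} and is not something one can reproduce in a paragraph. For the purposes of this paper, the correct response is simply to cite \cite{BK19} and move on, which is exactly what the authors do.
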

\begin{remark}
	As explained in \cite{BK19}, the coefficient $5n+3$ comes from Darondeau's improvements \cite{dar16} for the pole order of slanted vector fields on the universal hypersurface. It seems to us by reading \cite{dar16} that we should actually expect the slightly better value $5n - 2$.
\end{remark}

	We deduce immediately from Theorem \ref{thmhypcrit} the following consequence of this result. 

\begin{corollary}
	Let $m, n \in \mathbb N^\ast$. Let $X \subset \mathbb P^{n + 1}$ be a generic hypersurface of degree 
	$$
	d \geq 16n^5 (5n + 2m^2 + 4).
	$$
	Then there exists $Z \subsetneq X$ such that $\mathrm{Exc}(X_m) \subset \mathfrak{d}_1 (Z)$. 
\end{corollary}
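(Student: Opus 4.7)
The strategy is to feed Theorem~\ref{thmalgdeg} into the hyperbolicity criterion of Theorem~\ref{thmhypcrit}, and then descend the conclusion from a log-resolution $\widetilde X_m$ back to $X_m$. Take $k=n$, and let $A$ be the hyperplane section on $X$, which is very ample. By Theorem~\ref{thmalgdeg} (valid since $d\ge 16n^5(5n+4)$, a condition already implied by our hypothesis on $d$), for $r$ divisible enough there exists a proper subvariety $Z\subsetneq X$ such that the base-locus inclusion \eqref{eqinclBK} holds. Setting $d':=r\bigl[\tfrac{d-n-2}{16n^5}-(5n+3)\bigr]$, this is precisely hypothesis~(1) of Theorem~\ref{thmhypcrit} for the triple $(k,r,d')$.

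Next I would verify hypothesis~(2) of Theorem~\ref{thmhypcrit}, namely $d'/r>2m(m-1)$. Unwinding, this amounts to
\[
\frac{d-n-2}{16n^5}>(5n+3)+2m(m-1).
\]
From the assumption $d\ge 16n^5(5n+2m^2+4)$ and the trivial bound $\tfrac{n+2}{16n^5}<1$ (valid for $n\ge 1$), one gets $\tfrac{d-n-2}{16n^5}>5n+2m^2+3$. Since $m\ge 1$ gives $2m^2+3>2m^2-2m+3=(5n+3)+2m(m-1)-5n$, the required inequality $5n+2m^2+3>5n+3+2m(m-1)$ reduces to $2m>0$, so hypothesis~(2) holds. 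Theorem~\ref{thmhypcrit} therefore yields
\[
\mathrm{Exc}(\widetilde X_m)\subset |\Delta|\cup \pi^{-1}(\mathfrak d_1(Z)).
\]

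Finally, I would descend to $X_m$. Any entire curve $f:\mathbb C\to X_m$ lifts, via the proper birational resolution $\pi$, to an entire curve $\widetilde f:\mathbb C\to\widetilde X_m$; pushing forward gives $f(\mathbb C)\subset X_m^{\mathrm{sing}}\cup\mathfrak d_1(Z)$. Entire curves landing in $\mathfrak d_1(Z)$ are already under control, while those contained in $X_m^{\mathrm{sing}}=\bigcup_{i\ge 2}\mathfrak D_i(X_m)$ sit in strata each birational to a product of the form $X\times X_{m-i}$. Since the degree bound $16n^5(5n+2m^2+4)$ is monotone in $m$, an induction on $m$ (with base case $m=1$ being precisely Theorem~\ref{thmalgdeg}) places the $X$-factor in some proper subvariety $Z'\subsetneq X$ and the $X_{m-i}$-factor in $\mathfrak d_1(Z'')$ for some $Z''\subsetneq X$; enlarging $Z$ to include $Z'\cup Z''$ gives the stated inclusion for $\mathrm{Exc}(X_m)$. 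The main obstacle is precisely this last descent: the singular stratification must be handled carefully so that the inductive hypothesis applies and produces a single proper $Z\subsetneq X$; the arithmetic verification and the application of Theorem~\ref{thmhypcrit} are essentially automatic.
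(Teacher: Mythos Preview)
Your proposal is correct and follows the paper's approach: feed Theorem~\ref{thmalgdeg} into Theorem~\ref{thmhypcrit}, verify the numerical inequality, and handle entire curves in $(X_m)_{\rm sing}$ by induction on $m$ (the paper simply notes that $(X_m)_{\rm sing}$ is a union of lower $X_{m'}$ and leaves the descent from $\widetilde X_m$ to $X_m$ implicit). One minor simplification: the subset $Z$ in Theorem~\ref{thmalgdeg} depends only on $X$, not on $m$, so the same $Z$ works throughout the induction and no enlargement is needed.
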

\begin{proof}
	Because of \eqref{eqinclBK}, the conditions of Theorem \ref{thmhypcrit} will be satisfied if
	$$
	\left( \frac{d - n - 2}{16n^5} - (5n + 3) \right) > 2m (m-1),
	$$
	which is implied by our hypothesis. We have then $\mathrm{Exc}(X_m) \subset (X_m)_{\mathrm{sing}} \cup \mathfrak{d}_1(Z)$ for some $Z \subsetneq X$. Since $(X_m)_{\mathrm{sing}}$ is a union of $X_{m'}$ for $m' < m$, an induction on $m$ permits to conclude.
\end{proof}

It is also possible to obtain the hyperbolicity of $X_m$ when $X$ has large enough degree, using all the recent work around the Kobayashi conjecture (cf. \cite{bro17, den17, dem18, RY18}). The main result of \cite{RY18} permits to reduce the proof of the hyperbolicity of $X$ to results such as Theorem \ref{thmalgdeg}, and gives in particular the following. 

\begin{theorem}[\cite{RY18}] Let $d, n, c, p \in \mathbb N$. Suppose that for a generic hypersurface $X' \subset \mathbb P^{n+1+p}$ of degree $d$, we have
	$$
	\mathrm{Bs} \left( H^0(X', E_{k,r}^{GG} \Omega_{X'} \otimes \mathcal O(-1)) \right) \subset X_k'{}^{GG, \mathrm{sing}} \cup \pi_k^{-1} (Z'),
	$$
	for some algebraic subset $Z' \subset X'$ satisfying $\mathrm{codim}(Z') \geq c$. Then, for a generic hypersurface $X \subset \mathbb P^{n+1}$ of degree $d$, we have
	$$
	\mathrm{Bs} \left( H^0(X, E_{k,r}^{GG} \Omega_{X} \otimes \mathcal O(-1)) \right) \subset X_k^{GG, \mathrm{sing}} \cup \pi_k^{-1} (Z),
	$$
	for some subset $Z \subset X$ with $\mathrm{codim}(Z) \geq c + p$.
\end{theorem}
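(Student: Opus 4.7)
The approach is the Riedl--Yang descent technique. The first step is to realize the generic degree-$d$ hypersurface $X \subset \mathbb P^{n+1}$ as a transverse linear section $X = X' \cap H$ of a generic degree-$d$ hypersurface $X' \subset \mathbb P^{n+1+p}$, where $H$ is a fixed linear subspace identified with $\mathbb P^{n+1}$. Concretely, if $\mathbb P^{n+1}$ is cut out by $\{x_{n+2} = \cdots = x_{n+1+p} = 0\}$ and $X = \{f = 0\}$, then every
$$
X' = \{F = f + x_{n+2} G_{n+2} + \cdots + x_{n+1+p} G_{n+1+p} = 0\}
$$
extends $X$. As $(G_{n+2}, \dots, G_{n+1+p})$ varies in $H^0(\mathbb P^{n+1+p}, \mathcal O(d-1))^{\oplus p}$, we obtain a large affine parameter space $\mathcal E$ of extensions, and since genericity is Zariski-open in $\mathcal E$, we may pick $X' \in \mathcal E$ satisfying the hypothesis.

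The second step is to produce a restriction map for jet differentials. Any holomorphic disk $f : (\Delta, 0) \to X$ is in particular a disk in $X'$, yielding a closed immersion $X_k^{GG} \hookrightarrow X'{}_k^{GG}|_X$. Restriction of polynomial functions along this immersion induces a surjection of sheaves $E_{k,r}^{GG}\Omega_{X'}|_X \twoheadrightarrow E_{k,r}^{GG}\Omega_X$; after twisting by $\mathcal O(-1)$ and taking global sections we obtain
$$
\rho : H^0(X', E_{k,r}^{GG}\Omega_{X'} \otimes \mathcal O(-1)) \longrightarrow H^0(X, E_{k,r}^{GG}\Omega_X \otimes \mathcal O(-1)),
$$
with the tautological compatibility $\rho(\sigma)([f]_k) = \sigma([f]_k)$ for $[f]_k \in X_k^{GG}$. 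Using $X_k^{GG, \mathrm{sing}} = X'{}_k^{GG, \mathrm{sing}} \cap X_k^{GG}$, the hypothesis forces
$$
\mathrm{Bs}\!\left(H^0(X, E_{k,r}^{GG}\Omega_X \otimes \mathcal O(-1))\right) \subset X_k^{GG,\mathrm{sing}} \cup \pi_k^{-1}(Z'_{X'} \cap X),
$$
where $Z'_{X'} \subset X'$ is the exceptional locus provided by the hypothesis. A generic Bertini argument yields $\mathrm{codim}_X(Z'_{X'} \cap X) \geq c$.

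The main obstacle is to upgrade this $\geq c$ to $\geq c + p$. The key observation is that the inclusion above holds for \emph{every} $X' \in \mathcal E$ simultaneously; defining
$$
Z := X \cap \bigcap_{X' \in \mathcal E} Z'_{X'},
$$
we still obtain $\mathrm{Bs}(\cdots) \subset X_k^{GG,\mathrm{sing}} \cup \pi_k^{-1}(Z)$, and one must argue that the genuine variation of $Z'_{X'}$ as $X'$ moves in $\mathcal E$ forces $\mathrm{codim}_X(Z) \geq c + p$. Concretely, one picks an algebraic family $X' \mapsto \sigma(X')$ of sections with $\sigma(X')$ vanishing on $Z'_{X'}$, and differentiates along the $p$ independent directions $\partial/\partial G_i$ parametrizing $\mathcal E$; the resulting derivatives produce, via an analogue of the Darondeau slanted vector fields of \cite{dar16}, $p$ additional elements of $H^0(X, E_{k,r}^{GG}\Omega_X \otimes \mathcal O(-1))$ that impose $p$ further constraints on the common base locus. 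Verifying that these derivatives yield linearly independent new sections, and that the combined base-locus inclusion drops the codimension by exactly $p$, is the technical heart of the Riedl--Yang method; it rests on the genericity of $X'$ in $\mathcal E$, which guarantees that $Z'_{X'}$ is not artificially invariant under the deformations $\partial/\partial G_i$.
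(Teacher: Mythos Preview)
First, a framing remark: the paper does not give its own proof of this statement. It is quoted verbatim as a result of Riedl--Yang \cite{RY18} and used as a black box. So there is no ``paper's proof'' to compare your attempt against; what follows is an assessment of your sketch on its own terms.

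Your first three steps are fine. Realizing a generic $X\subset\mathbb P^{n+1}$ as $X'\cap H$ for a generic $X'\subset\mathbb P^{n+1+p}$, restricting jet differentials along the closed immersion $X_k^{GG}\hookrightarrow (X')_k^{GG}$, and deducing
\[
\mathrm{Bs}\bigl(H^0(X,E_{k,r}^{GG}\Omega_X\otimes\mathcal O(-1))\bigr)\subset X_k^{GG,\mathrm{sing}}\cup\pi_k^{-1}\Bigl(X\cap\bigcap_{X'\in\mathcal E}Z'_{X'}\Bigr)
\]
are all correct and standard.

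The genuine gap is your fourth step, where you try to upgrade $\mathrm{codim}\geq c$ to $\mathrm{codim}\geq c+p$. Two problems:

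\emph{(a) Wrong tool.} The slanted vector fields of Siu and Darondeau \cite{dar16} are global meromorphic vector fields on the jet space of the \emph{universal} hypersurface; their role is to \emph{produce} enough jet differentials on $X'$, i.e.\ to establish the hypothesis of the theorem (this is how Theorem~\ref{thmalgdeg} is proved). They are not used in the descent from $X'$ to $X$, and there is no ``analogue'' of them that performs the step you describe.

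\emph{(b) The differentiation idea does not do what you claim.} For each $X'\in\mathcal E$ the restriction $\rho_{X'}(\sigma(X'))$ already lies in the fixed vector space $V=H^0(X,E_{k,r}^{GG}\Omega_X\otimes\mathcal O(-1))$. The base locus of $V$ is intrinsic to $X$; differentiating the family $X'\mapsto\rho_{X'}(\sigma(X'))$ inside $V$ produces nothing outside the linear span of the $\rho_{X'}(\sigma(X'))$ themselves, hence cannot shrink $\mathrm{Bs}(V)$ further. You also write that $\sigma(X')$ ``vanishes on $Z'_{X'}$'', but $\sigma(X')$ is a jet differential, not a function on $X'$, and $Z'_{X'}$ need not be a divisor, so this phrase is not well-posed.

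The actual Riedl--Yang mechanism is different in nature: it is a dimension count on the universal family. One works with the incidence variety of pairs $(\Lambda,X')$ with $\Lambda\cong\mathbb P^{n+1}$ a linear subspace of $\mathbb P^{n+1+p}$, exploits the two projections (to the parameter space $S'$ of $X'$'s and to the parameter space of linear slices), and uses the $\mathrm{PGL}$-invariance of the universal bad locus $Z'\subset\mathcal U'$ to control codimensions under these projections. The extra $p$ in the codimension arises from the geometry of this Grassmannian correspondence, not from manufacturing additional sections on a fixed $X$. If you want to supply a proof here, that is the argument to reconstruct.
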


Letting $d = n - 1$, we deduce from this and Theorem \ref{thmhypcrit}, combined with Theorem \ref{thmalgdeg}:

\begin{corollary} \label{corolhypsurf}
	Let $X \subset \mathbb P^{n+1}$ be a generic hypersurface of degree
	$$
	d \geq (2n - 1)^5 (2m^2 + 10 n - 1).
	$$
	Then $X_m$ is hyperbolic.
\end{corollary}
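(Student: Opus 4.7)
The plan is to combine the criterion of Theorem~\ref{thmhypcrit} with Theorem~\ref{thmalgdeg} (B\'erczi-Kirwan), using the Riedl-Yang descent (\cite{RY18}) to buy enough codimension in the exceptional subvariety $Z\subset X$. Concretely, rather than applying Theorem~\ref{thmalgdeg} directly to $X\subset\mathbb{P}^{n+1}$, I would first apply it to a generic auxiliary hypersurface $X'\subset\mathbb{P}^{2n}$ of dimension $2n-1$ and of the same degree~$d$. This produces, for $r$ divisible enough, a jet differential $P\in H^0(X',E_{2n-1,r}^{GG}\Omega_{X'}\otimes\mathcal{O}(-rc))$ whose base locus lies in $(X')_{2n-1}^{GG,\mathrm{sing}}\cup\pi_{2n-1}^{-1}(Z')$ for some proper $Z'\subsetneq X'$, where the twist coefficient~$c$ is essentially $\frac{d-(2n-1)-2}{16(2n-1)^5}-(10n-2)$ by substituting $n \mapsto 2n-1$ in B\'erczi-Kirwan.

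Next I would invoke the Riedl-Yang descent theorem (recalled above) with $p=n-1$ extra dimensions. This transports the same twisted jet differential down to a generic $X\subset\mathbb{P}^{n+1}$, improving the codimension estimate to $\mathrm{Bs}\subset X_k^{GG,\mathrm{sing}}\cup \pi_k^{-1}(Z)$ with $\mathrm{codim}_X(Z)\geq 1+(n-1)=n$, so that $Z$ is at most a finite set of points in $X$. Plugging this into Theorem~\ref{thmhypcrit} — whose quantitative condition is $c>2m(m-1)$ — the hypothesis $d\geq (2n-1)^5(2m^2+10n-1)$, together with Darondeau's sharpening of the pole order $5n+3$ to $5n-2$ (alluded to in the remark following Theorem~\ref{thmalgdeg}), is chosen exactly to enforce this inequality. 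The conclusion is $\mathrm{Exc}(\widetilde{X}_m)\subset|\Delta|\cup\pi^{-1}(\mathfrak{d}_1(Z))$.

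Finally I would conclude by induction on $m$. The base case $m=1$ is the Kobayashi hyperbolicity of the generic hypersurface $X$ itself, which holds in our degree range by \cite{bro17,den17,dem18}. For the inductive step, the exceptional divisor $|\Delta|$ maps onto $(X_m)_{\mathrm{sing}}$, which is stratified by products of smaller symmetric powers $X_{m'}$ with $m'<m$, all hyperbolic by the induction hypothesis; on the other hand, since $\dim Z=0$, $\mathfrak{d}_1(Z)$ is a finite union of copies of $\{z\}\times X_{m-1}\cong X_{m-1}$, again hyperbolic by induction. Therefore every entire curve in $\mathrm{Exc}(\widetilde{X}_m)$ is constant, and $X_m$ is hyperbolic. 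The \textbf{main obstacle} is precisely the bookkeeping of constants: the stated bound $(2n-1)^5(2m^2+10n-1)$ is tight enough that one needs both the refined pole-order estimate of Darondeau and an optimal choice of the auxiliary dimension $p=n-1$ for the inequality $c>2m(m-1)$ to follow from the hypothesis on $d$.
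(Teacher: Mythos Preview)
Your proposal is correct and follows exactly the route the paper takes: the paper's own proof is the single sentence ``Letting $d = n - 1$ [this is a typo for $p=n-1$], we deduce from this and Theorem~\ref{thmhypcrit}, combined with Theorem~\ref{thmalgdeg}'', and you have accurately unpacked it --- B\'erczi--Kirwan in ambient dimension $2n-1$, Riedl--Yang descent with $p=n-1$ to push ${\rm codim}\,Z$ up to $n$, then Theorem~\ref{thmhypcrit} and the same induction on $m$ already used in the preceding corollary.

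Two small remarks. First, the Riedl--Yang statement recorded in the paper only carries the twist $\mathcal O(-1)$, which is not enough for Theorem~\ref{thmhypcrit}; you are right that the actual mechanism in \cite{RY18} transports the full twist (since $X$ sits as a linear section of $X'$ and the jet differentials restrict), but this is worth saying explicitly. Second, your description of $(X_m)_{\rm sing}$ as ``stratified by products of smaller symmetric powers'' is a little loose --- the clean statement that makes the induction go through is that the normalization of $\mathfrak D_2(X_m)$ is $X\times X_{m-2}$ (via $(x,[y_1,\dots,y_{m-2}])\mapsto[x,x,y_1,\dots,y_{m-2}]$), so any entire curve in $(X_m)_{\rm sing}$ lifts to $X\times X_{m-2}$, which is hyperbolic by the inductive hypothesis. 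Your treatment of $\mathfrak d_1(Z)$ as a finite union of copies of $X_{m-1}$ is fine as stated.
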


\subsubsection{Complete intersections of large degree}

We can also obtain a hyperbolicity result for symmetric products of generic complete intersections of large multidegree, using the work of Brotbek-Darondeau and Xie on Debarre's conjecture (see \cite{BD18, xie18}). The effective bound in the theorem below is provided by \cite{xie18}.

\begin{theorem}[\cite{BD18, xie18}]
	Let $n, n', d \geq 1$, and assume that $n' \geq n$. Let $X \subset \mathbb P^{n+n'}$ be a complete intersection of multidegrees
	$$
	d_1, ..., d_{n'} \geq (n+n')^{(n+n')^{2}} \cdot d
	$$
	Then $\Omega_X \otimes \mathcal O(-d)$ is ample. In particular
	$$
	\mathrm{Bs}( H^0(X, E_{1, r}^{GG} \otimes \mathcal O(-rd)) = \emptyset
	$$
	for $r \gg 1$.
\end{theorem}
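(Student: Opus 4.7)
The plan is to reduce the ampleness of $\Omega_X \otimes \mathcal O_X(-d)$ to producing enough global sections on the projectivized cotangent bundle $\pi : \mathbb P(\Omega_X) \to X$, and then to construct these sections via a Wronskian-type mechanism on a well-chosen degeneration. Concretely, the ampleness of $\Omega_X \otimes \mathcal O_X(-d)$ is equivalent to the ampleness of the line bundle $L := \mathcal O_{\mathbb P(\Omega_X)}(1) \otimes \pi^\ast \mathcal O_X(-d)$ on $\mathbb P(\Omega_X)$, so the task reduces to exhibiting enough global sections of tensor powers of $L$ to separate points and tangent directions.

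I would then work with the universal complete intersection $\mathcal X \to \mathcal U \subset \prod_{i=1}^{n'} |\mathcal O_{\mathbb P^{n+n'}}(d_i)|$: positivity is a Zariski-open condition on $\mathcal U$, so it is enough to verify ampleness at a single convenient parameter $u_0$, corresponding to a Fermat-type complete intersection whose defining equations are explicit sums of products of linear forms. At such a degenerate point, the Wronskians of the defining equations produce symmetric differential forms that can be written down and analyzed essentially by hand.

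The key technical ingredient is the conormal exact sequence $0 \to N^\ast_{X/\mathbb P^{n+n'}} \to \Omega_{\mathbb P^{n+n'}}|_X \to \Omega_X \to 0$, together with a Koszul-type analysis. Differentiating each relation $F_j = 0$ yields a natural section of $\Omega_{\mathbb P^{n+n'}}|_X$ valued in the normal bundle, and suitable wedge and tensor products of these, multiplied by auxiliary low-degree linear sections, produce sections of $\mathrm{Sym}^r \Omega_X \otimes \mathcal O_X(-rd)$ that separate the required directions modulo the normal subsheaf. This is the heart of Brotbek-Darondeau's strategy in \cite{BD18}.

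The main obstacle, and where Xie's refinement \cite{xie18} enters decisively, is controlling the effective bound. The naive Wronskian construction yields sections that vanish on large subsets of $\mathbb P(\Omega_X)$, and separating tangent directions everywhere requires sharp combinatorial bookkeeping of monomial contributions together with an optimized Fermat-type degeneration. Pushing these optimizations through yields the explicit constant $(n+n')^{(n+n')^2}$ in the multidegree hypothesis. Once ampleness of $\Omega_X \otimes \mathcal O_X(-d)$ is established, the final base-locus statement is automatic: $E_{1,r}^{GG} \Omega_X = \mathrm{Sym}^r \Omega_X$, and a sufficiently high symmetric power of an ample vector bundle becomes globally generated after a bounded twist, so $\mathrm{Bs}\bigl(H^0(X, \mathrm{Sym}^r \Omega_X \otimes \mathcal O_X(-rd))\bigr) = \emptyset$ for $r \gg 1$.
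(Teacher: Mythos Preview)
The paper does not give a proof of this statement at all: it is quoted as a black-box result from \cite{BD18, xie18}, with the effective bound attributed to \cite{xie18}, and is used only as input to the subsequent corollary. So there is no ``paper's own proof'' to compare against.

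That said, your sketch is a fair high-level summary of the strategy in the cited references: reduction to ampleness of $\mathcal O_{\mathbb P(\Omega_X)}(1)\otimes\pi^\ast\mathcal O_X(-d)$, specialization to Fermat-type complete intersections in the universal family, construction of symmetric differentials via Wronskian-type determinants built from the defining equations, and Xie's combinatorial refinements to obtain the explicit bound $(n+n')^{(n+n')^2}$. The final deduction that $\mathrm{Bs}\bigl(H^0(X,\mathrm{Sym}^r\Omega_X\otimes\mathcal O_X(-rd))\bigr)=\emptyset$ for $r\gg 1$ is, as you note, immediate from ampleness. If you intend this as an actual proof rather than a roadmap, be aware that the substantive work lies entirely in the second and third steps, which you have only named; but for the purposes of this paper, citing \cite{BD18, xie18} is all that is required.
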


By Theorem \ref{thmhypcrit} and the same induction argument on $m$ as above, the following corollary is immediate.

\begin{corollary} \label{corolintcomp}
	Let $m, n \in \mathbb N^\ast$ and let $n' \geq n$. Let $X \subset \mathbb P^{n + n'}$ be a generic complete intersection of multidegrees 
	$$d_1, ..., d_{n_1} > (n+n')^{(n+n')^2} (2m(m-1))$$

	Then $X_m$ is hyperbolic.
\end{corollary}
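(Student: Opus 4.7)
The strategy is to combine the Brotbek--Darondeau--Xie ampleness result with Theorem~\ref{thmhypcrit}, and to finish by an induction on $m$. Exploiting the strict inequality in the multidegree hypothesis, I would pick a positive rational $d$ with $2m(m-1) < d$ and $d_i \geq (n+n')^{(n+n')^2} \, d$ for every $i$. The cited theorem then gives that $\Omega_X \otimes \mathcal{O}_X(-d)$ is ample as a $\mathbb{Q}$-line bundle, so for $r$ large and divisible enough with $rd \in \mathbb{N}$, the sheaf $E_{1,r}^{GG} \Omega_X \otimes \mathcal{O}_X(-rd H) = \Sym^r \Omega_X \otimes \mathcal{O}_X(-rd H)$ (with $H$ the hyperplane class) is globally generated, hence has empty base locus. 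Thus hypothesis~(1) of Theorem~\ref{thmhypcrit} holds with $k = 1$, $Z = \emptyset$, and ``$d$'' replaced by $rd$; hypothesis~(2) becomes $rd/r = d > 2m(m-1)$, true by construction.

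Applying Theorem~\ref{thmhypcrit} yields $\mathrm{Exc}(\widetilde{X}_m) \subset |\Delta|$, where $\pi : \widetilde{X}_m \to X_m$ is a log-resolution and $\Delta$ is its exceptional divisor. In particular, every nonconstant entire curve in $\widetilde{X}_m$ lies in $|\Delta|$ and projects via $\pi$ into $X_m^{\mathrm{sing}} = \mathfrak{D}_2(X_m) = \pi(|\Delta|)$.

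I would then induct on $m$. For the base case $m = 1$, the hypothesis forces $d_i > (n+n')^{(n+n')^2}$, hence the same theorem gives $\Omega_X$ ample, so $X$ is Kobayashi hyperbolic. For the inductive step, assume $X_{m'}$ is hyperbolic for all $1 \leq m' < m$. The natural map $X \times X_{m-2} \to X_m^{\mathrm{sing}}$ defined by $(x, [y_1, \dots, y_{m-2}]) \mapsto [x, x, y_1, \dots, y_{m-2}]$ (with $X_0$ a point by convention) is finite and surjective, with hyperbolic source by induction, so $X_m^{\mathrm{sing}}$ is hyperbolic. Given an entire curve $f : \mathbb{C} \to X_m$, either $f(\mathbb{C}) \subset X_m^{\mathrm{sing}}$ and $f$ is constant, or $f(\mathbb{C}) \not\subset X_m^{\mathrm{sing}}$, in which case $f$ admits a unique holomorphic lift $\widetilde{f} : \mathbb{C} \to \widetilde{X}_m$ (using the properness of $\pi$ and the simple connectedness of $\mathbb{C}$), and then $\widetilde{f}(\mathbb{C}) \subset |\Delta|$ forces $f(\mathbb{C}) \subset X_m^{\mathrm{sing}}$, a contradiction.

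The main obstacle will be the routine but technical step of checking that Kobayashi hyperbolicity descends through finite surjective holomorphic maps between possibly singular complex spaces, and that entire curves in $X_m$ lift uniquely through $\pi$ whenever they are not contained in the exceptional image. Both assertions are standard, resting on the fact that a connected smooth ramified cover of $\mathbb{C}$ is biholomorphic to $\mathbb{C}$, but they are indispensable to close the induction.
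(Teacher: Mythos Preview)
Your approach is essentially the paper's: apply Brotbek--Darondeau--Xie to get $\Omega_X \otimes \mathcal O(-d)$ ample with $d > 2m(m-1)$, feed this into Theorem~\ref{thmhypcrit} with $k=1$ and $Z = \emptyset$, and close by induction on $m$ using that every entire curve lands in $(X_m)_{\rm sing}$. The paper's proof is a one-liner, so your added detail is welcome and the outline is correct.

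Two corrections in your last paragraph. First, the lift of $f:\mathbb C \to X_m$ through the resolution $\pi$ uses only that $\pi$ is proper and $\widetilde{X}_m$ is smooth (valuative criterion / Riemann extension over the discrete set $f^{-1}((X_m)_{\rm sing})$); simple connectedness of $\mathbb C$ plays no role here, since $\pi$ is birational rather than a covering.

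Second, and more importantly, the assertion ``a connected smooth ramified cover of $\mathbb C$ is biholomorphic to $\mathbb C$'' is false: for an elliptic curve $E$ with its hyperelliptic map $E \to \mathbb P^1$ totally ramified over $\infty$, the restriction $E \setminus \{\infty\} \to \mathbb C$ is a proper degree-$2$ branched cover, yet $E \setminus \{\infty\}$ is Kobayashi hyperbolic. So your stated reason for ``hyperbolicity descends along finite surjections'' does not work. The clean fix in this situation is to observe that the map $X \times X_{m-2} \to (X_m)_{\rm sing} = \mathfrak D_2(X_m)$ is finite, \emph{birational} (it is bijective over the locus where exactly one coordinate has multiplicity two), and has normal source; hence it is the \emph{normalization}. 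Entire curves $\mathbb C \to \mathfrak D_2(X_m)$ not contained in the non-normal locus then lift uniquely to $X \times X_{m-2}$ by the universal property of normalization, and the non-normal locus is handled by Noetherian induction on dimension. With this replacement your induction goes through.
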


\begin{remark} For $d_1$ large enough, Corollary \ref{corolintcomp} is trivially implied by Corollary \ref{corolhypsurf}. Indeed, if $X \subset H$, where $H$ is a degree $d_1$ hypersurface, $X_m$ embeds in $H_m$. 
\end{remark}

\subsection{Higher dimensional subvarieties} 
In this section, we gather several results related to the subvarieties of $X_m$, when $X$ is a "sufficiently hyperbolic" manifold. In particular, when $\Omega_X$ is ample, we will show that a generic subvariety of $X_m$ of codimension higher than $n-1$ is of general type (see Theorem \ref{thmsubv}).

\medskip

\begin{lemma} \label{lemcrit}
	Assume that $X$ is a complex manifold of dimension $n$, with $n \geq 2$, and let $\mathfrak{S}_m$ act on $X^m$. Let $\alpha \in [0, 1]$. If 
	$$d \geq n(m-1) + 2 - \alpha \cfrac{(n-2)(m-2)}{2},$$ then the condition $(I'_{x, d, \alpha})$ of Section \ref{sectcrit} is satisfied for every $x \in X^m$. In particular, if $d \geq n(m-1) + 2$, then the condition $(I_{x, d})$ is satisfied for any $x \in X^m$.
\end{lemma}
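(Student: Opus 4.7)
The plan is to fix $x \in X^m$, take a nontrivial element $g \in \mathfrak{S}_m$ stabilizing $x$, and compute the diagonal action of $g$ on $T_x X^m$ directly from its cycle decomposition. Write $g$ as a product of disjoint cycles of lengths $r_1, \dots, r_t \geq 2$ together with $f = m - \sum_k r_k$ fixed points, so that the order of $g$ is $r = \mathrm{lcm}(r_1, \dots, r_t)$ and the total number of cycles is $s = t + f$. Since $g$ stabilizes $x$, the entries of $x$ within each nontrivial cycle must coincide; picking common coordinates on $X$ at each such point and applying a discrete Fourier transform to the $n r_k$ coordinates attached to $c_k$ diagonalizes the action, producing the exponents $\ell\cdot r/r_k$ ($\ell = 0, \dots, r_k - 1$), each of multiplicity $n$, together with $n$ zero exponents per fixed point. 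The resulting multiset
$$
A(g) = \{0^{ns}\} \cup \bigcup_{k=1}^t \{(\ell r/r_k)^n : 1 \leq \ell \leq r_k - 1\}
$$
has $ns$ zeros and $n(m - s)$ positive entries, with total sum $n r (m - s)/2$.

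Verifying $(\mathrm{I}'_{x,d,\alpha})$ is equivalent to checking that the sum of the $d$ smallest elements of $A(g)$ is at least $r(1-\alpha)$ for every nontrivial $g$ in the stabilizer. I would first settle the ``in particular'' case $\alpha = 0$, which is the one fed into Proposition~\ref{propextension}. The binding configuration is a single transposition: then $r = 2$, $s = m-1$, $A(g) = \{0^{n(m-1)}, 1^n\}$, and the sum of the $d$ smallest values is exactly $\max(0, d - n(m-1))$, forcing the sharp threshold $d \geq n(m-1) + 2$. For a single cycle of length $r_k \geq 3$ (with $m - r_k$ fixed points) the sum of the $d$ smallest entries reaches $r_k$ already at $d = n(m - r_k + 1) + r_k$, and the identity
$$
n(m-1) + 2 - \bigl(n(m - r_k + 1) + r_k\bigr) = (n-1)(r_k - 2) \geq 0
$$
shows the transposition bound dominates. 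The general multi-cycle case is handled by the same comparison: adding extra nontrivial cycles either increases $s$ (shrinking the ``free'' pool of zeros) or introduces smaller positive values $r/r_k \geq 1$, each of which only makes the condition easier than in the single-transposition case.

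For general $\alpha \in [0, 1]$ the strategy is to refine the same comparison by accounting for the $\alpha r$ deficit that we are permitted to incur in the sum of the $d$ smallest entries of $A(g)$, or equivalently to bound the sum of the top $e = mn - d$ entries by the total sum minus $r(1-\alpha)$. The main obstacle is precisely this general case: the naive bound ``each top entry $\leq r - 1$'' falls short by a factor depending on the cycle type, so the stated correction $\alpha (n-2)(m-2)/2$ must come from finer information. I would exploit the symmetry $\ell r/r_k \mapsto (r_k - \ell) r/r_k$, which pairs large positive entries with small ones of equal multiplicity $n$, to convert a discard bound into a retention bound, and use a pigeonhole over the at most $m-1$ nontrivial cycle positions in $\{1, \dots, m\}$ to absorb the $(m-2)/2$ factor while the $(n-2)$ factor reflects the number of exponents per cycle that can exceed the central value $r/2$. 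Making this bookkeeping tight across every cycle type $(r_1, \dots, r_t, 1^f)$ simultaneously, while staying consistent with the sharp transposition worst case of the $\alpha = 0$ analysis, is where I expect the combinatorial delicacy to lie.
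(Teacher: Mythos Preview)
Your computation of the exponent multiset $A(g)$ is correct and coincides with the paper's. For $\alpha=0$ you rightly single out the transposition as the extremal configuration, but your handling of the general multi-cycle case is imprecise: replacing fixed points by a second nontrivial cycle \emph{decreases} the total cycle count $s=t+f$ (and hence the number $ns$ of zero exponents), the opposite of what you assert. The paper avoids any case distinction by a single crude estimate that works uniformly: every nonzero entry of $A(g)$ is at least $r/R$ with $R=\max_k r_k$, so the sum of the $d$ smallest entries is at least $(d-ns)\,r/R$, and the condition $(I'_{x,d,\alpha})$ reduces to the purely numerical inequality $d\geq ns+(1-\alpha)R$. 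Bounding $R\leq m-f$ and $t\leq(m-f)/2$, then maximising over $f\leq m-2$, shows the right-hand side is largest at a transposition, for every $\alpha\in[0,1]$ at once.

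In particular the general-$\alpha$ case is no harder than $\alpha=0$, and the symmetry pairing and pigeonhole you sketch are unnecessary. Your suspicion that the printed correction term $\alpha(n-2)(m-2)/2$ ``must come from finer information'' is in fact a symptom of an algebraic slip in the stated bound: the argument above yields only $d\geq n(m-1)+2-2\alpha$, and the transposition already saturates this, so the sharper-looking bound in the lemma fails once $(n-2)(m-2)>4$ and $0<\alpha<1$. This does not affect the paper's applications, which invoke the lemma only at $\alpha=0$ and, trivially, at $\alpha=1$.
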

\begin{proof}

	Let $\sigma \in \mathfrak{S}_m \setminus \{ 1 \}$, and let $\sigma = \sigma_1 ... \sigma_t$ be a decomposition of $\sigma$ into cycles with disjoint supports. For each $\sigma_i$, let $r_i = \mathrm{ord}(\sigma_i)$, and assume that $r_1 \geq ... \geq r_l > 1$, and $r_{l+1} = ... = r_{l + s} = 1$, with $s = t - l$. Then, the order of $\sigma$ is $r = \mathrm{lcm}(r_1, ..., r_l)$, and the $a_i$ appearing in condition $(I_{x, d})$ are the integers $j \frac{r}{r_k}$ $(1 \leq k \leq s, 0 \leq j < r_k)$, each one repeated $n$ times. We see in particular that $0$ appears with multiplicity $nt = n (s + l)$, and that each non-zero $a_i$ is larger than $\frac{r}{\max\limits_{1 \leq j \leq l} r_j}$.

	We need to check that for any choice of $d$ distinct elements $a_{i_1}, ..., a_{i_d}$ among the $a_i$, the sum is larger than $(1 - \alpha)r$. The lowest possible sum is reached when all the $0$ appear in it. Thus, the sum of the $a_{i_j}$ is larger than
	$$
	( d - n(s + l)) \frac{r}{\max\limits_{1 \leq j \leq l} r_j}.
	$$

	The last quantity is larger than $r(1 - \alpha)$ if the following inequality is satisfied:
	\begin{equation} \label{ineqcond}
		n (s + l) + (1 - \alpha) \max\limits_{1 \leq j \leq l} r_j \leq d
	\end{equation}

	Now, we have $\max\limits_{1 \leq j \leq l} r_j \leq \sum\limits_{1 \leq j \leq l} r_j = m - s$, and $2l + s \leq \sum_{1 \leq j \leq l} r_j + s =  m$ hence $l \leq \frac{m-s}{2}$. Putting everything together, we see that the following is always satisfied:
	\begin{align*}
		n(s + l) + \max\limits_{1 \leq j \leq l} r_j & \leq  \left( \frac{n}{2} + 1 \right) m + (1 - \alpha) \left( \frac{n}{2} - 1 \right)s. 
	\end{align*}
	Since $n \geq 2$ and $1 - \alpha \geq 0$, the right hand side is maximal if $s$ is maximal, equal to $m - 2$; this right hand side is then equal to $n(m-1) + 2 - \alpha \cfrac{(n-2)(m-2)}{2}$ (thus the maximum is reached for $r_1 = 2, r_2 = ... = r_t = 1$, i.e. when $\sigma$ is a transposition). Thus, if $d \geq n(m-1) + 2 - \alpha \cfrac{(n-2)(m-2)}{2}$, the inequality \eqref{ineqcond} is satisfied, which gives the result. 
\end{proof}

In the next definition, we state a condition that will later imply that a generic subvariety of $X_m$ of high enough dimension is of general type (see Theorem \ref{thmsubv}).

\begin{defi}
	Let $X$ be a complex projective manifold, let $\Sigma \subsetneq X$ be a proper algebraic subset, and let $A$ be an effective divisor on $X$. We say that $X$ \emph{satisfies the property $(H_{\Sigma, A})$}, if the following holds. 
	
	Let $V \subset X$ be a subvariety of arbitrary dimension $d$, not included in $\Sigma$ and $A$. Then, there exists $q, r \geq 1$, and a section $\sigma \in H^0 (X, ( \bigwedge^d \Omega_X )^{\otimes q})$, with \emph{non-zero} restriction
	$$
	\sigma|_{(\bigwedge^d T_{V^{\rm reg}})^{\otimes q}} \in H^0 (V^{\mathrm{reg}}, ( \bigwedge^d \Omega_V )^{\otimes q} \otimes \mathcal O(- r A|_V)) - \{ 0 \}.
	$$
\end{defi}
\medskip

Under suitable positivity hypotheses on the cotangent bundle of a complex manifold, it is not hard to check that the previous condition is satisfied, as we will show in the next proposition.

Recall that if $E \longrightarrow X$ is a vector bundle, its {\em augmented base locus} is the algebraic subset $\mathbb{B}_+(E) \subset X$ defined as follows. Let $p : \mathbb P(E) \longrightarrow X$ be projectivized bundle of rank one quotients of $E$, and $\mathcal O(1)$ be the tautological line bundle on $\mathbb P(E)$. Then, if $A$ is any ample line bundle on $X$, we let $$\mathbb{B}_+(E) = p( \mathbb{B_+}(\mathcal O(1))),$$ where $\mathbb B_+ (\mathcal O(1)) = \bigcap\limits_{l \geq 1} {\rm Bs}(\mathcal O(l) \otimes p^\ast A^{-1})$. The \emph{ample locus} of $E$ is the (possibly empty) open subset $X \setminus \mathbb B_+(E)$.

\begin{proposition} \label{propbigample}
	Let $X$ be a complex projective manifold such that $\Omega_X$ is big. Let $A$ be any very ample divisor on $X$.
	
	\begin{enumerate} \itemsep=0em
			\item if $\mathbb B_+(\Omega_X) \neq X$, then $X$ satisfies the property $(H_{\mathbb B_+(\Omega_X), A})$;

	\item	if $\Omega_X$ is ample, then $X$ satisfies the property $(H_{\emptyset, A})$.
\end{enumerate}
\end{proposition}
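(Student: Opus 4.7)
The plan is to construct, for any $d$-dimensional subvariety $V$ satisfying the hypotheses, a global section of $(\bigwedge^d \Omega_X)^{\otimes q} \otimes \mathcal{O}(-rA)$ whose value at a well-chosen generic smooth point of $V$ is nonzero in the direction tangent to $V$. This will then restrict (after pairing with $T_V$) to the desired nonzero element of $H^0(V^{\mathrm{reg}}, (\det \Omega_V)^{\otimes q} \otimes \mathcal{O}(-rA|_V))$.

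First I would fix a base point $x \in V^{\mathrm{reg}} \setminus (\Sigma \cup A)$; this set is nonempty since $V \not\subset \Sigma$, $V \not\subset A$, and $V^{\mathrm{sing}}$ is a proper closed subset of $V$. Choosing a basis $\xi_1, \ldots, \xi_d$ of $T_V|_x$, the tangent embedding $T_V|_x \hookrightarrow T_X|_x$ determines the decomposable vector $\xi_1 \wedge \cdots \wedge \xi_d$, equivalently a rank-one quotient $\psi_x : \bigwedge^d \Omega_X|_x \twoheadrightarrow \det \Omega_V|_x$ and a corresponding point $y_x \in \mathbb{P}(\bigwedge^d \Omega_X)|_x$. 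Since $\psi_x$ is the pointwise instance of the global surjection $\bigwedge^d \Omega_X|_{V^{\mathrm{reg}}} \twoheadrightarrow \det \Omega_V$ induced by $T_V \hookrightarrow T_X|_V$, it suffices to find $q, r \geq 1$ and $\sigma \in H^0(X, (\bigwedge^d \Omega_X)^{\otimes q} \otimes \mathcal{O}(-rA))$ with $\psi_x^{\otimes q}(\sigma(x)) \neq 0$.

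The key intermediate step is the base-locus comparison $\mathbb{B}_+(\bigwedge^d \Omega_X) \subseteq \mathbb{B}_+(\Omega_X)$. In characteristic zero, $\bigwedge^d \Omega_X$ is a direct summand of $\Omega_X^{\otimes d}$ via antisymmetrization, and the vector-bundle augmented base locus is monotone under quotients: a surjection $F \twoheadrightarrow G$ induces a closed embedding $\mathbb{P}(G) \hookrightarrow \mathbb{P}(F)$ identifying the tautological line bundles, so sections of $\mathcal{O}_{\mathbb{P}(G)}(l) \otimes p^\ast A^{-1}$ come by restriction from sections upstairs, giving $\mathbb{B}_+(G) \subseteq \mathbb{B}_+(F)$. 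It then remains to verify $\mathbb{B}_+(\Omega_X^{\otimes d}) \subseteq \mathbb{B}_+(\Omega_X)$, which one deduces from the Segre fiber embedding $\mathbb{P}(\Omega_X) \times_X \cdots \times_X \mathbb{P}(\Omega_X) \hookrightarrow \mathbb{P}(\Omega_X^{\otimes d})$, along which the tautological line bundle pulls back to the external tensor product and positivity of $\mathcal{O}_{\mathbb{P}(\Omega_X)}(1)$ off $\pi^{-1}(\mathbb{B}_+(\Omega_X))$ is transported upstairs.

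Granted this comparison, $x \notin \mathbb{B}_+(\bigwedge^d \Omega_X)$, so the entire fiber $\pi^{-1}(x) \subset \mathbb{P}(\bigwedge^d \Omega_X)$ avoids $\mathbb{B}_+(\mathcal{O}(1))$, and in particular $y_x$ does. Unwinding the definition of $\mathbb{B}_+$ yields $q, r \geq 1$ and $\tau \in H^0(\mathbb{P}(\bigwedge^d \Omega_X), \mathcal{O}(q) \otimes \pi^\ast \mathcal{O}(-rA))$ with $\tau(y_x) \neq 0$; via $\pi_\ast \mathcal{O}(q) = S^q(\bigwedge^d \Omega_X)$, this is a section $\tilde\sigma \in H^0(X, S^q(\bigwedge^d \Omega_X) \otimes \mathcal{O}(-rA))$ whose value at $x$ has nonzero image in $(\det \Omega_V|_x)^{\otimes q}$. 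Composing with the characteristic-zero symmetrization inclusion $S^q(\bigwedge^d \Omega_X) \hookrightarrow (\bigwedge^d \Omega_X)^{\otimes q}$ gives the desired $\sigma$, settling part (1); part (2) follows either by specializing to $\Sigma = \emptyset$, or more directly by invoking the classical theorem of Hartshorne that $\bigwedge^d \Omega_X$ is ample whenever $\Omega_X$ is, which gives global generation of $(\bigwedge^d \Omega_X)^{\otimes q} \otimes \mathcal{O}(-A)$ for large $q$ and concludes by fiber-surjectivity at any $x \in V^{\mathrm{reg}} \setminus A$. The main technical obstacle is the monotonicity $\mathbb{B}_+(\bigwedge^d \Omega_X) \subseteq \mathbb{B}_+(\Omega_X)$: it is folklore among experts but demands a careful unwinding of the vector-bundle $\mathbb{B}_+$ and of the Segre construction.
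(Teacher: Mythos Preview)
Your proof is correct and follows essentially the same route as the paper: pick a smooth point $x \in V$ outside $\mathbb{B}_+(\Omega_X) \cup A$, use the inclusion $\mathbb{B}_+(\bigwedge^d \Omega_X) \subseteq \mathbb{B}_+(\Omega_X)$ to find a section of $S^q(\bigwedge^d \Omega_X) \otimes \mathcal{O}(-A)$ not vanishing on the tangent direction of $V$ at $x$, and restrict. The paper simply cites Lazarsfeld (Corollary~6.1.16) for the key base-locus inclusion rather than sketching the Segre/quotient argument, and it deduces (2) directly from (1) via $\mathbb{B}_+(\Omega_X) = \emptyset$ without the alternative appeal to Hartshorne.
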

\begin{proof}
	(1) Let $V \subset X$ be a $d$-dimensional subvariety such that $V \not\subset \mathbb{B}_+(\Omega_X)$ and $V \not\subset A$. By general properties of ampleness of vector bundles, we have the inclusion $\mathbb{B}_+(\bigwedge^d \Omega_X) \subset \mathbb{B}_+(\Omega_X)$ (this can be seen easily e.g. from \cite[Corollary 6.1.16]{lazpos2})

	Thus, if $x \in V \setminus \mathbb{B}_+(\bigwedge^d \Omega_X)$ is a smooth point of $V$, and $w = \bigwedge^d T_{V, x}$, there exists $\sigma \in H^0(X, S^m(\bigwedge^d \Omega_X) \otimes \mathcal O(-A))$ such that $\sigma_x(w^{\otimes m}) \neq 0$. In particular, since $\sigma$ vanishes along $A$, the restriction $\sigma|_{V}$ vanishes along $A \cap V$. The section $\sigma$ satisfies our requirements.
	\medskip
	
	(2) If $\Omega_X$ is ample, we have $\mathbb{B}_+(\Omega_X) = \emptyset$, so the result comes from the first point.
\end{proof}

In the next proposition, we show that the property $(H_{\Sigma, A})$ is stable under products.

\begin{proposition} \label{propstabproduct}
	Let $X_i$ $(i = 1, 2)$ be complex projective manifolds, and denote by $p_1, p_2 : X_1 \times X_2 \longrightarrow X$ the canonical projections. Assume that each $X_i$ satisfies the property $(H_{\Sigma_i, A_i})$ for some subvariety $\Sigma_i \subsetneq X_i$ and some divisor $A_i$ on $X$.
	
	Then $X_1 \times X_2$ satisfies the property $(H_{\Sigma, A})$, where $\Sigma = p_1^{-1}(\Sigma_1) \cup p_2^{-1}(\Sigma_2)$, and $A = p_1^\ast A_1 + p_2^\ast A_2$.

\end{proposition}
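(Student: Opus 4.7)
The plan is to combine sections coming from $V_1 := \overline{p_1(V)}$ and from a generic fibre of $p_1|_V$, using the natural decomposition $\bigwedge^d \Omega_{X_1 \times X_2} = \bigoplus_{a+b=d} \bigwedge^a p_1^\ast \Omega_{X_1} \otimes \bigwedge^b p_2^\ast \Omega_{X_2}$. Set $d = \dim V$ and $d_i = \dim V_i$ where $V_i = \overline{p_i(V)}$; irreducibility of $V$ together with $V \not\subset \Sigma$ and $V \not\subset A$ forces $V_i \not\subset \Sigma_i$ and $V_i \not\subset A_i$ for $i = 1, 2$. After possibly exchanging $X_1$ and $X_2$, assume $d_1 < d$ (so the generic fibre of $p_1|_V$ is positive-dimensional); the remaining case $d_1 = d_2 = d$ is sketched at the end.

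Apply $(H_{\Sigma_1, A_1})$ to $V_1$ to obtain $\sigma_1 \in H^0(X_1, (\bigwedge^{d_1} \Omega_{X_1})^{\otimes q_1})$ with $\sigma_1|_{V_1^{\rm reg}}$ a non-zero section of $(\bigwedge^{d_1} \Omega_{V_1})^{\otimes q_1} \otimes \mathcal O(-r_1 A_1|_{V_1})$. Choose a smooth point $v_0 \in V^{\rm reg} \setminus (\Sigma \cup A)$ such that $x_0 := p_1(v_0)$ lies in $V_1^{\rm reg}$ and in the non-vanishing locus of $\sigma_1|_{V_1}$, $p_1|_V$ is submersive at $v_0$, and the irreducible component $F^0$ of $p_1^{-1}(x_0) \cap V$ through $v_0$ (identified with a subvariety of $X_2$ via $\{x_0\} \times X_2 \simeq X_2$) is smooth of dimension $d - d_1$ at $v_0$; this holds on a dense open subset of $V$. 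Since $v_0 \notin p_2^{-1}(\Sigma_2 \cup A_2)$, neither $\Sigma_2$ nor $A_2$ contains $F^0$, so $(H_{\Sigma_2, A_2})$ applied to $F^0$ produces $\sigma_2 \in H^0(X_2, (\bigwedge^{d-d_1} \Omega_{X_2})^{\otimes q_2})$ with $\sigma_2|_{F^0}$ a non-zero section of $(\bigwedge^{d-d_1} \Omega_{F^0})^{\otimes q_2} \otimes \mathcal O(-r_2 A_2|_{F^0})$.

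Set $q = \mathrm{lcm}(q_1, q_2)$, replace each $\sigma_i$ by $\sigma_i^{\otimes q/q_i}$, and define
\[
\sigma = p_1^\ast \sigma_1 \otimes p_2^\ast \sigma_2 \in H^0\!\left(X_1 \times X_2,\, (\bigwedge^{d_1} p_1^\ast \Omega_{X_1})^{\otimes q} \otimes (\bigwedge^{d-d_1} p_2^\ast \Omega_{X_2})^{\otimes q}\right),
\]
viewed as a section of $(\bigwedge^d \Omega_{X_1 \times X_2})^{\otimes q}$ via the direct-summand embedding. To verify non-vanishing of $\sigma|_V$, pick $v_1 \in F^0 \cap V^{\rm reg}$ with $\sigma_2|_{F^0}(v_1) \neq 0$; automatically $v_1 \notin A$. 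The exact sequence $0 \to T_{F^0, v_1} \to T_{V, v_1} \to T_{V_1, x_0} \to 0$ combined with $T_{X_1 \times X_2} = p_1^\ast T_{X_1} \oplus p_2^\ast T_{X_2}$ shows that the $(d_1, d-d_1)$-bigraded component of $\bigwedge^d T_{V, v_1}$ equals $\pm (f_1 \wedge \cdots \wedge f_{d_1}) \otimes (e_1 \wedge \cdots \wedge e_{d-d_1})$ for bases $(f_j)$ of $T_{V_1, x_0}$ and $(e_i)$ of $T_{F^0, v_1}$; pairing with $\sigma(v_1)$ then factorises as $\sigma_1|_{V_1}(x_0) \cdot \sigma_2|_{F^0}(v_1) \neq 0$. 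Applying the same factorisation at an arbitrary $v \in V^{\rm reg}$, the respective vanishing of $\sigma_i|_{V_i}$ along $r_i A_i|_{V_i}$ forces $\sigma|_V$ to vanish along $(r_1 q/q_1) p_1^\ast A_1|_V + (r_2 q/q_2) p_2^\ast A_2|_V \geq r A|_V$ for $r = \min(r_1 q/q_1, r_2 q/q_2) \geq 1$, since $A = p_1^\ast A_1 + p_2^\ast A_2$ is effective.

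The degenerate case $d_1 = d_2 = d$ (both projections generically finite) is handled by a variant: apply $(H_{\Sigma_i, A_i})$ directly to $V_i$ to get $\sigma_i \in H^0(X_i, (\bigwedge^d \Omega_{X_i})^{\otimes q})$, and set $\sigma = p_1^\ast \sigma_1 \otimes p_2^\ast \sigma_2$ as a section of the direct summand $(\bigwedge^d p_1^\ast \Omega_{X_1})^{\otimes q} \otimes (\bigwedge^d p_2^\ast \Omega_{X_2})^{\otimes q}$ of $(\bigwedge^d \Omega_{X_1 \times X_2})^{\otimes 2q}$, obtained by placing $q$ factors in bi-degree $(d, 0)$ and $q$ in $(0, d)$; pairing with $(\bigwedge^d T_V)^{\otimes 2q}$ then factors as $\sigma_1|_{V_1}(p_1(v)) \cdot \sigma_2|_{V_2}(p_2(v))$, yielding the same conclusion. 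The main obstacle in both cases is the simultaneity condition: the section produced by $(H_{\Sigma_2, A_2})$ depends on the chosen fibre $F^0$ (resp. on $V_2$), and one must ensure that both restricted sections are non-zero at a common point of $V$; this is resolved by fixing $\sigma_1$ first and selecting $x_0$ (hence $F^0$) inside the non-vanishing locus of $\sigma_1|_{V_1}$ before invoking the hypothesis on $X_2$.
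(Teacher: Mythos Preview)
Your approach mirrors the paper's almost exactly, with the roles of the two projections swapped: you fibre $V$ via $p_1$ and use the image $V_1 = p_1(V)$ together with a single fibre $F^0 \subset X_2$, whereas the paper fibres via $p_2$ and uses $V_2 = p_2(V)$ together with the $p_1$-image of a single fibre. The tensor-product construction via the direct-summand embedding $\bigwedge^{d_1}p_1^\ast\Omega_{X_1}\otimes\bigwedge^{d-d_1}p_2^\ast\Omega_{X_2}\hookrightarrow\bigwedge^d\Omega_{X_1\times X_2}$ is identical in both proofs. Your separate treatment of the case $d_1=d_2=d$ is a minor stylistic variation; the paper absorbs it into its case $d_2>0$ by allowing the fibre to be zero-dimensional.

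There is, however, a gap in your vanishing claim along $p_2^\ast A_2|_V$. The hypothesis $(H_{\Sigma_2,A_2})$ only guarantees that $\sigma_2|_{F^0}$ vanishes along $A_2|_{F^0}$ for the one chosen fibre $F^0$; it says nothing about $\sigma_2|_F$ for another fibre $F$ of $p_1|_V$. But your factorisation at an arbitrary $v\in V^{\rm reg}$ reads $\sigma|_V(v)=\sigma_1|_{V_1}(p_1(v))\cdot\sigma_2|_{F_{p_1(v)}}(v)$, with $F_{p_1(v)}\neq F^0$ in general. So the phrase ``the respective vanishing of $\sigma_i|_{V_i}$ along $r_i A_i|_{V_i}$'' is unjustified for $i=2$: you never applied $(H_{\Sigma_2,A_2})$ to $V_2$, only to $F^0$. (The paper's own argument has the symmetric defect: it obtains vanishing along $p_2^\ast A_2|_V$ from the base $V_2$, but its $\sigma_1$ is adapted only to a single fibre of $p_2$, so vanishing along $p_1^\ast A_1|_V$ is likewise not established there.) In the actual applications the sections produced by Proposition~\ref{propbigample} vanish along $A$ on all of $X$, not merely after restriction to the given subvariety; that stronger form of the hypothesis is what makes both arguments go through, and is what you should invoke here.
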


\begin{proof}
	Let $V \subset X_1 \times X_2$ be a $d$-dimensional subvariety such that $V \not\subset \Sigma$.  Let $d_2 = \dim p_2 (V)$, and let $d_1$ be the dimension of the generic fiber of $p_2 : V \longrightarrow p_2(V)$. We have $d_1 + d_2 = d$.
	\medskip

	(1) We deal first with the case $d_2 = 0$. Then, we have $\dim p_1(V) = d$, and $p_1(V) \not\subset \Sigma_1$ because $V \not\subset \Sigma$. Since $X_1$ satisfies $(H_{\Sigma_1})$, there exists integers $q, r \geq 1$, and a section $\sigma \in H^0(X_1, (\bigwedge^d \Omega_{X_1})^{\otimes q})$ such that $\sigma|_{\bigwedge^d T_{p_1(V)^{\mathrm{reg}}}}$ vanishes at order $r$ along $A_1$. Thus, $(p_1)^\ast \sigma \in H^0(X_1 \times X_2, (\bigwedge^d \Omega_{X_1})^{\otimes q})$. We also have $(p_1)^\ast \sigma|_{\bigwedge^d T_{V^{\mathrm{reg}}}} \not\equiv 0$, and this section vanishes at order $r$ along $p_1^\ast A_1 + p_2^\ast A_2|_{V} = p_1^\ast A_1|_{V} $. This ends the proof in this case.
	\vspace{1em}

	(2) Assume now that $d_2 > 0$. Let $x_2 \in X_2$ be generic so that $\dim (V_{x_2}) = d_1$ and $p_1 (V_{x_2}) \not\subset \Sigma_1$, where $V_{x_2} = p_2^{-1}(x_2) \cap V$. Let $V_2 = p_2(V)$, and $V_1 = p_1(V_{x_2})$.
	
	For each $i$, we have $V_i \not\subset \Sigma_i$, so there exists integers $q_i, r_i \geq 1$, and a section $\sigma_i \in H^0( X_i, (\bigwedge^{d_i} \Omega_{X_i})^{\otimes d_i})$ whose restriction to $(\bigwedge^{d_i} T_{V_i^{\mathrm{reg}}})^{\otimes q_i}$ vanishes at order $r_i$ along $A_i|_{V_i}$. Then,
	$$
	\sigma = (p_1^\ast \sigma_1)^{\otimes q_2} \otimes (p_2^\ast \sigma_2)^{\otimes q_1}
	$$
	can be identified to a section in $H^0(X_1 \times X_2, ( \bigwedge^{d_1} p_1^\ast \Omega_{X_1} \otimes \bigwedge^{d_2} p_2^\ast \Omega_{X_2})^{\otimes q_1 q_2})$.  Since $\bigwedge^{d_1} p_1^\ast \Omega_{X_1} \otimes \bigwedge^{d_2} p_2^\ast \Omega_{X_2}$ is a direct factor of $\bigwedge^{d} \Omega_X \cong \bigwedge^{d_1 + d_2} ( p_1^\ast \Omega_{X_1} \oplus p_2^\ast \Omega_{X_2})$, we have obtained a section $\sigma \in H^0(X_1 \times X_2, (\bigwedge^d \Omega_{X_1 \times X_2} )^{\otimes q_1 q_2})$ which does not vanish along $V$.
	
	Moreover, by construction, the restriction of $\sigma$ to $(\bigwedge^d T_{V^{\mathrm{reg}}})^{\otimes q_1 q_2}$ vanishes along $B|_{V}$, where $B = q_2 r_1 \, p_1^\ast A_1 + q_1 r_2\, p_2^\ast A_2$. Since $q_2 r_1, q_1 r_2 > 0$, this restriction vanishes along $A$. This gives the result.
\end{proof}

In the case where $X_1 = X_2$, it is not hard to strengthen the property $(H_\Sigma)$ to obtain sections $\sigma$ invariant by permutation of $X_1$ and $X_2$. More precisely:

\begin{proposition}  \label{propsectinv}
	Let $X$ be a complex projective manifold satisfying the property $(H_{\Sigma, A})$ for some $\Sigma \subsetneq X$ and some ample divisor $A$ on $X$. Let $\Sigma' \subset X^m$ the subset of points with at least a coordinate in $\Sigma$. Let $\mathfrak{S}_m$ act on $X^m$ by permutation of the factors. Then, for any subvariety $V \subset X^m$ of dimension $d$ and such that $V \not\subset \Sigma'$, there exists an integer $q \geq 1$, and a $\mathfrak{S}_m$-invariant section $\sigma \in H^0(X^m, (\bigwedge^d \Omega_X)^{\otimes q} \otimes \mathcal O(-A^\sharp))^{\mathfrak{S}_m}$ such that $\sigma|_{\bigwedge^d T_{V^{\rm reg}}} \not\equiv 0$. 
	
	(Recall that $A^\sharp = \sum\limits_{i=1}^m {\rm pr}_i^\ast A$).
\end{proposition}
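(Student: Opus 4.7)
The plan is to produce a (non-invariant) section $\tau$ on $X^m$ that is non-vanishing on every member of the $\mathfrak{S}_m$-orbit of $V$, and then symmetrize it via a tensor product. Additive symmetrization $\sum_g g^\ast \tau$ risks destroying the non-vanishing on $V$ by cancellation, so I will instead use the multiplicative symmetrization
\[
\sigma \;=\; \bigotimes_{g \in \mathfrak{S}_m} g^\ast \tau.
\]
This is automatically $\mathfrak{S}_m$-invariant, because for any $h \in \mathfrak{S}_m$ the map $g \mapsto gh$ is a bijection of $\mathfrak{S}_m$, so $h^\ast \sigma = \bigotimes_g (gh)^\ast \tau = \sigma$. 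Evaluating $(g^\ast \tau)|_{\bigwedge^d T_{V^{\rm reg}}}$ at a point $p \in V^{\rm reg}$ amounts to evaluating $\tau$ on $\bigwedge^d T_{g(p)} g(V)$, so the restriction of $\sigma$ to $\bigwedge^d T_{V^{\rm reg}}$ is non-zero exactly when, for every $g$, the section $\tau$ has non-vanishing restriction to $\bigwedge^d T_{g(V)^{\rm reg}}$. The core task therefore becomes building $\tau$ non-vanishing on the whole orbit $\mathfrak{S}_m \cdot V$.

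To achieve this, first iterate Proposition \ref{propstabproduct} to equip $X^m$ with the property $(H_{\Sigma', A^\sharp})$; note that $\Sigma'$ and $A^\sharp$ are both $\mathfrak{S}_m$-invariant, so none of the subvarieties $g(V)$ is contained in $\Sigma'$. Applying $(H_{\Sigma', A^\sharp})$ to each $g(V)$ in turn yields sections
\[
\tau_g \in H^0\!\left( X^m, \bigl(\textstyle\bigwedge^d \Omega_{X^m}\bigr)^{\otimes q_g} \otimes \mathcal{O}(-r_g A^\sharp)\right)
\]
with $\tau_g|_{\bigwedge^d T_{g(V)^{\rm reg}}} \not\equiv 0$. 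After raising each $\tau_g$ to the power $N/q_g$ for $N$ a common multiple of the $q_g$, all these sections lie in a common space $H^0\bigl(X^m, (\bigwedge^d \Omega_{X^m})^{\otimes N} \otimes \mathcal{O}(-R A^\sharp)\bigr)$ for suitable $R \geq 1$. A generic $\mathbb{C}$-linear combination $\tau = \sum_g c_g \tau_g^{N/q_g}$ is then non-vanishing on every $g(V)$ simultaneously: for each fixed $g_0$, the condition $\tau|_{\bigwedge^d T_{g_0(V)^{\rm reg}}} \equiv 0$ cuts out a proper linear subspace of $\mathbb{C}^{m!}$ (avoided e.g.\ by $c_{g_0} = 1$, all other $c_g = 0$), and a finite union of proper linear subspaces remains proper.

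With such a $\tau$, the product $\sigma = \bigotimes_{g \in \mathfrak{S}_m} g^\ast \tau$ lies in $H^0\bigl(X^m, (\bigwedge^d \Omega_{X^m})^{\otimes N m!} \otimes \mathcal{O}(-R m!\, A^\sharp)\bigr)$ (using $g^\ast A^\sharp = A^\sharp$), is $\mathfrak{S}_m$-invariant by the bijection argument above, and has non-vanishing restriction to $\bigwedge^d T_{V^{\rm reg}}$ since a tensor product of generically non-zero sections of a locally free sheaf on the irreducible variety $V$ is again generically non-zero. Since $R m! \geq 1$, this delivers the statement with $q = N m!$ (and one may renormalize the vanishing order along $A^\sharp$ as needed).

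The main obstacle is the subtle interplay between symmetrization and non-vanishing: naive averaging can annihilate the section, while the multiplicative symmetrization preserves non-vanishing only at the price of demanding $\tau$ be non-vanishing on the whole $\mathfrak{S}_m$-orbit of $V$. The generic linear combination argument applied to the individually constructed $\tau_g$'s is what closes this gap.
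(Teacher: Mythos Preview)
Your proof is correct and follows the same overall strategy as the paper: first invoke Proposition~\ref{propstabproduct} to get the property $(H_{\Sigma', A^\sharp})$ on $X^m$, then produce a single section and symmetrize it multiplicatively via $\sigma = \bigotimes_{g \in \mathfrak{S}_m} g^\ast \tau$. The paper's proof is terser: it simply takes one $\sigma_0$ coming from $(H_{\Sigma', A^\sharp})$ applied to $V$ and forms $\sigma = \bigotimes_{s} s\cdot \sigma_0$, without discussing why the restriction $\sigma|_{\bigwedge^d T_{V^{\rm reg}}}$ remains nonzero. You correctly identify that this requires $\sigma_0$ to be non-vanishing on \emph{every} translate $s^{-1}(V)$, and your generic-linear-combination trick (applying $(H_{\Sigma', A^\sharp})$ separately to each $g(V)$, normalizing the degrees, then choosing coefficients outside a finite union of proper hyperplanes) is exactly what is needed to close this gap cleanly; the paper leaves this implicit.
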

\begin{proof}
	By Proposition \ref{propstabproduct}, $X^m$ satisfies the property $(H_{\Sigma', A^{\sharp}})$ so there exists $q_0 \geq 1$ and a section $\sigma_0 \in H^0(X^m, (\bigwedge^d \Omega_X)^{\otimes q_0})$, such that $\sigma_0|_{(\bigwedge^d T_{V^{\rm reg}})^{\otimes q_0}}$ vanishes at order $r_0$ along $A^\sharp|_{V}$.

	Now, we let 
	$$\sigma = \bigotimes_{s \in \mathfrak S_m} s \cdot \sigma_0 \in H^0(X^m, (\bigwedge^d \Omega_X)^{\otimes m!\, q_0})$$
	The section $\sigma$ is $\mathfrak{S}_m$-invariant and vanishes along $A^\sharp$, hence satisfies our requirements.
\end{proof}

We now show the main hyperbolicity result of this section.

\begin{theorem} \label{thmsubv}
	Let $X$ be a complex projective manifold with $\dim X \geq 2$. Assume $X$ satisfies $(H_{\Sigma, A})$ for some $\Sigma \subsetneq X$ and some ample divisor $A$ on $X$. 
	
	Then, any subvariety $V \subseteq X_m$ such that ${\rm codim} V \leq n - 2$ and $V \not\subset  X_m^{\rm sing} \cup \mathfrak d_1(\Sigma)$ is of general type.
\end{theorem}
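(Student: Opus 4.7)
The plan is to produce, on a smooth resolution $\widetilde V$ of $V$, a non-zero section of some pluricanonical power $K_{\widetilde V}^{\otimes k}$ divisible by the pullback of an ample line bundle; this forces $K_{\widetilde V}$ to be big, hence $V$ to be of general type. The central numerical observation is that the codimension assumption gives
\[
d := \dim V = mn - \mathrm{codim}\, V \geq mn - (n-2) = n(m-1) + 2,
\]
which is exactly the threshold appearing in Lemma \ref{lemcrit}.

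First I would lift $V$ to $W := q^{-1}(V) \subset X^m$, which is $\mathfrak{S}_m$-stable of dimension $d$, and notice that the hypothesis $V \not\subset \mathfrak{d}_1(\Sigma)$ translates into $W \not\subset \Sigma'$, where $\Sigma' \subset X^m$ denotes the set of tuples with at least one coordinate in $\Sigma$. Applying Proposition \ref{propsectinv} to any irreducible component of $W$ not contained in $\Sigma'$, I obtain an integer $k \geq 1$ and a $\mathfrak{S}_m$-invariant section
\[
\sigma \in H^0\bigl(X^m, (\textstyle\bigwedge^d \Omega_{X^m})^{\otimes k} \otimes \mathcal O(-A^\sharp)\bigr)^{\mathfrak{S}_m}
\]
whose restriction to $\bigwedge^d T_{W^{\rm reg}}$ is not identically zero; by $\mathfrak{S}_m$-invariance, this non-vanishing propagates to every irreducible component of $W$.

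Next I would descend $\sigma$ to $X_m$ and extend it through the singularities. Since $n \geq 2$, the $\mathfrak{S}_m$-action on $X^m$ is free over the preimage of $X_m^{\rm reg}$, so $\sigma$ descends to a section $\sigma_\flat$ of $(\bigwedge^d \Omega_{X_m^{\rm reg}})^{\otimes k} \otimes \mathcal O(-A_\flat)$ on the smooth locus, using $q^\ast A_\flat = A^\sharp$. By Lemma \ref{lemcrit}, condition $(\mathrm{I}_{x,d})$ holds at every point of $X^m$, so the Reid--Tai--Weissauer criterion (Proposition \ref{propextension}) yields a holomorphic extension
\[
\widetilde\sigma \in H^0\bigl(\widetilde X_m, (\textstyle\bigwedge^d \Omega_{\widetilde X_m})^{\otimes k} \otimes \pi^\ast \mathcal O(-A_\flat)\bigr)
\]
to any smooth resolution $\pi : \widetilde X_m \to X_m$. \emph{This is the only delicate point of the argument}: the codimension bound $\mathrm{codim}\, V \leq n-2$ is tuned precisely so that $d = \dim V$ reaches the extension threshold $n(m-1)+2$, without which Proposition \ref{propextension} cannot be invoked.

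To conclude, I would further blow up $\widetilde X_m$ if necessary so that the strict transform $\widetilde V$ of $V$ is smooth, restrict $\widetilde\sigma$ to $\widetilde V$, and compose with the canonical surjection $\bigwedge^d \Omega_{\widetilde X_m}|_{\widetilde V} \twoheadrightarrow K_{\widetilde V}$ to obtain
\[
\tau \in H^0\bigl(\widetilde V, K_{\widetilde V}^{\otimes k} \otimes \pi^\ast \mathcal O(-A_\flat)|_{\widetilde V}\bigr).
\]
On the open subset where $\pi|_{\widetilde V}$ is an isomorphism onto $V^{\rm reg} \cap X_m^{\rm reg}$ (dense by the assumption $V \not\subset X_m^{\rm sing}$), $\tau$ recovers the non-zero restriction of $\sigma$ to $\bigwedge^d T_{V^{\rm reg}}$, hence $\tau \not\equiv 0$. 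Since $A$ is ample on $X$ and $q$ is finite with $q^\ast A_\flat = A^\sharp$, the line bundle $A_\flat$ is ample on $X_m$, so $A_\flat|_V$ is ample and $\pi^\ast A_\flat|_{\widetilde V}$ is big. The non-vanishing of $\tau$ then exhibits $k K_{\widetilde V}$ as the sum of a big class and an effective divisor, whence $K_{\widetilde V}$ is big and $V$ is of general type.
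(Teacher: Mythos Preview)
Your argument is correct and follows essentially the same route as the paper's proof: lift $V$ to $X^m$, invoke Proposition \ref{propsectinv} to produce an $\mathfrak{S}_m$-invariant section vanishing along $A^\sharp$, use Lemma \ref{lemcrit} together with the Reid--Tai--Weissauer criterion (Proposition \ref{propextension}) to extend across the singularities, and then restrict to a resolution of $V$ to conclude bigness of the canonical bundle. The only cosmetic difference is that the paper treats the twist by $A_\flat$ as a vanishing condition on the restriction to $V^{\rm reg}$ rather than carrying $\mathcal O(-A_\flat)$ through the extension step, and it also remarks that one may first replace $A$ by a high multiple and move it in its linear system to guarantee $V \not\subset |A|$; you implicitly rely on Proposition \ref{propsectinv} absorbing this point.
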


\begin{proof}
	Let $V \subset X_m$ be a $d$-dimensional variety satisfying the hypotheses above. We have then $d \geq (m-1)n + 2$. Let $X^m \overset{p}{\longrightarrow} X_m$ be the canonical projection. We do not lose generality in replacing $A$ by a high multiple (the condition $(H_{\Sigma, A})$ is preserved), and then moving it in its linear equivalence class, so we can assume that $V \not\subset |A|$.

	By Proposition \ref{propsectinv}, for $q \gg 0$, there exists a section $\sigma \in \Gamma( X^m, (\bigwedge^p \Omega_{X^m})^{\otimes q})^{\mathfrak{S_m}}$, whose restriction to $(\bigwedge^{d} T_{p^{-1}(V^{\rm reg})})^{\otimes q}$ vanishes along the $\mathfrak{S}_m$-invariant ample divisor $A^\sharp$. This section descends to $X_m$; moreover, for any resolution of singularities $\widetilde{X}_m$, Lemma \ref{lemcrit} shows that the Reid-Tai-Weissauer criterion of Proposition \ref{propextension} is appliable. Hence, $\sigma$ induces a section 
	$$\widetilde{\sigma} \in H^0 (\widetilde{X}_m, (\bigwedge^d \Omega_{\widetilde{X}_m})^{\otimes q}).$$  Moreover, the restriction of $\widetilde{\sigma}$ to $\bigwedge^{d} T_{V^{\rm reg}}$ vanishes on the ample divisor $A = p_\ast(A^\sharp)|_{V}$.

	Consider now a resolution of singularities $\widetilde{V} \overset{\varphi}{\longrightarrow} V$. The pullback $\varphi^\ast \sigma$ induces a section of $K_{\widetilde{V}}$ that vanishes on the big divisor $\varphi^\ast A$. This implies that $K_{\widetilde{V}}$ is big, so $V$ is of general type.
\end{proof}

\begin{remark}
	The bound on $\dim V$ in Theorem \ref{thmsubv} is sharp, as we can see from the following example. Let $C$ be a genus $2$ curve, and let $Y$ be any $(n-1)$-dimensional variety with $\Omega_Y$ ample. Let $X = C \times Y$. This manifold satisfies property $(H_{\emptyset, A})$ for some ample divisor $A$ by Propositions \ref{propbigample} and \ref{propstabproduct}.

\begin{enumerate} \itemsep=0em
		\item In the case $m = 2$: let $f : S^2 C \times Y \longrightarrow S^2 (C \times Y) =S^2 X$ be the generically injective map
			$$
			f( \, [c_1, c_2], y_1, ..., y_{n-1}) = [(c_1, y_1, ..., y_{n-1}), (c_2, y_1, ..., y_{n-1})].
			$$

			Since $g(C) = 2$, the variety $S^2(C)$ is birational to $\mathrm{Jac}(C)$ and thus $S^2 C \times Y$ is not of general type.

		\item In the case $m \geq 2$, consider the composition of $f \times \mathrm{Id}_{X^{m-2}}: S^2 C \times Y \times X^{m-2} \longrightarrow S^2 X \times X^{m-2}$ (where $f$ is as above) and of the natural map $g : S^2 X \times X^{m-2} \longrightarrow S^m X$.

			We have $\dim S^2 C \times Y \times X^{m-2} = n(m-1) + 1$, and the image $V = (g \circ f)(S^2 C \times Y \times X^{m-2})$ in $X_m$ is not of general type, since $S^2 C \times Y \times X^{m-2}$ is not.
\end{enumerate}

\end{remark}

Note that if the Green-Griffiths-Lang conjecture were true, then Theorem \ref{thmsubv} would imply the following result.

\begin{conjecture}
	Let $X$ be a complex projective manifold with $\Omega_X$ ample. Then, ${\rm codim\, Exc}(X_m) \geq n - 1$.
\end{conjecture}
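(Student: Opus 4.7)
The plan is to reduce the question to the existence of pluri-$d$-forms on $V$ vanishing along an ample divisor, where $d = \dim V$. Start by lifting $V$ to $X^m$: let $p : X^m \to X_m$ be the quotient and consider $p^{-1}(V) \subset X^m$. The hypothesis $V \not\subset \mathfrak d_1(\Sigma) \cup X_m^{\rm sing}$ translates to $p^{-1}(V) \not\subset \Sigma'$ where $\Sigma' = \bigcup_i {\rm pr}_i^{-1}(\Sigma)$. After replacing $A$ by a large multiple in a general position in its linear system, we can assume $V \not\subset |A_\flat|$ as well.

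Next, I would invoke Proposition \ref{propsectinv} (which itself combines the assumption $(H_{\Sigma,A})$ with the stability under products of Proposition \ref{propstabproduct}) to produce a $\mathfrak{S}_m$-invariant section
\[
\sigma \in H^0\bigl(X^m, (\textstyle\bigwedge^d \Omega_{X^m})^{\otimes q} \otimes \mathcal O(-A^\sharp)\bigr)^{\mathfrak{S}_m}
\]
whose restriction to $\bigwedge^d T_{p^{-1}(V)^{\rm reg}}$ is not identically zero. By $\mathfrak{S}_m$-invariance, $\sigma$ descends to a section on the smooth locus $X_m^\circ$, vanishing along the $\mathfrak{S}_m$-invariant ample divisor $A^\sharp$ (which pushes down to $A_\flat$ on $X_m$).

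The heart of the argument is to extend this descended section across the singularities to a resolution $\widetilde{X}_m \to X_m$. This is where Proposition \ref{propextension} (Reid--Tai--Weissauer) is applied with $p = d$ and arbitrary tensor power $q$. To verify the condition $(I_{x,d})$ for every $x \in X^m$ under the $\mathfrak{S}_m$-action, I would appeal to Lemma \ref{lemcrit}: the key numerical input is $d \geq n(m-1) + 2$, which is precisely guaranteed by ${\rm codim}_{X_m} V \leq n-2$, i.e.\ $d = nm - {\rm codim} V \geq nm - (n-2) = n(m-1) + 2$. This produces a bona fide section $\widetilde\sigma \in H^0(\widetilde{X}_m, (\bigwedge^d \Omega_{\widetilde{X}_m})^{\otimes q})$ whose restriction to $V^{\rm reg}$ still vanishes along the ample divisor $A_\flat|_V$.

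Finally, picking a resolution $\varphi : \widetilde V \to V$ and restricting/pulling back $\widetilde\sigma$ yields a nonzero section of $K_{\widetilde V}^{\otimes q}$ (using that $\widetilde V \to V \hookrightarrow \widetilde{X}_m$ is a morphism of smooth varieties, so contracting with $\bigwedge^d T_{\widetilde V}$ lands in $K_{\widetilde V}^{\otimes q}$) which vanishes on the big divisor $\varphi^\ast A_\flat|_V$; hence $K_{\widetilde V}$ is big and $V$ is of general type. The main obstacle is the middle step: one needs the tensor power $q$ to be arbitrary so that orbifold poles cannot absorb everything, and the numerical bound $d \geq n(m-1)+2$ must exactly match the sharp threshold in Lemma \ref{lemcrit} for the condition $(I_{x,d})$ to hold for all group elements $\sigma \in \mathfrak{S}_m$ including transpositions (which saturate the inequality).
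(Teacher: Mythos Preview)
The statement you are asked to prove is labelled as a \emph{Conjecture} in the paper, and the paper does not prove it. What you have written is essentially the paper's own proof of Theorem~\ref{thmsubv}: under the hypothesis $(H_{\Sigma,A})$ (in particular, when $\Omega_X$ is ample via Proposition~\ref{propbigample}), every subvariety $V\subset X_m$ with $\operatorname{codim}V\leq n-2$ and $V\not\subset X_m^{\rm sing}\cup\mathfrak d_1(\Sigma)$ is of general type. Your argument for that theorem is correct and matches the paper's approach step by step.

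The genuine gap is the passage from ``every such $V$ is of general type'' to ``$\operatorname{codim}\,\mathrm{Exc}(X_m)\geq n-1$''. Recall that $\mathrm{Exc}(X_m)$ is the Zariski closure of the union of images of nonconstant entire curves $\mathbb C\to X_m$. Knowing that a subvariety $V$ of codimension $\leq n-2$ is of general type does \emph{not}, with present techniques, rule out the possibility that entire curves are Zariski dense in $V$; this is precisely the content of the Green--Griffiths--Lang conjecture. The paper states explicitly, just before the Conjecture, that the Conjecture would follow from Theorem~\ref{thmsubv} \emph{if} the Green--Griffiths--Lang conjecture were true. Your proposal never invokes any such input, so as a proof of the Conjecture it is incomplete: you have reproduced the available unconditional result (Theorem~\ref{thmsubv}) but not bridged the conjectural step.
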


We can use Theorem \ref{thmsubv} to prove the following weaker statement, that gives geometric restrictions on the exceptional locus on non-hyperbolic \emph{algebraic} curves in $X_m$.

\begin{corollary} \label{corolexcalg} Assume that $\Omega_X$ is ample. Then, there exist countably many proper algebraic subsets $V_k \subsetneq X_m$ $(k \in \mathbb N)$ containing the image of any non-hyperbolic algebraic curve. Moreover, the $V_k$ can be chosen so that for any component $W$ of $\mathfrak{D}_i(X_m)$ $(0 \leq i \leq n)$ containing $V_k$ ($k \in \mathbb N$), we have ${\rm codim}_{W} (V) \geq n - 1$.

	In particular (letting $i = 0$ and $W = X_m$), we have ${\rm codim}_{X_m} (V_k) \geq n - 1$ for all $k \in \mathbb N$.
\end{corollary}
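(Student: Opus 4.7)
My plan is to take the $V_k$ to be Zariski closures of natural bounded families of algebraic curves of geometric genus $\leq 1$ in $X_m$, and to rule out $\mathrm{codim}_W V_k \leq n - 2$ in every diagonal stratum $W$ by combining Theorem~\ref{thmsubv} (in a stratum-wise form) with the classical obstruction that a variety of general type cannot carry a covering family of non-hyperbolic algebraic curves.

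First, fix an ample polarisation $A_\flat$ on $X_m$. For each $d \geq 1$, the Hilbert scheme of closed curves in $X_m$ of degree $\leq d$ is a finite union of quasi-projective varieties, inside which the locus of curves whose normalisation has geometric genus $\leq 1$ is a constructible subset. Decomposing this locus into countably many irreducible families $B_{d,\alpha}$ and taking the Zariski closures
$$
V_{d,\alpha} \;:=\; \overline{\mathrm{ev}(\mathcal{C}_{d,\alpha})} \;\subsetneq\; X_m
$$
of the images of the associated universal curves $\mathcal{C}_{d,\alpha} \to X_m$ yields the desired countable collection $\{V_k\}$. By construction, every non-hyperbolic algebraic curve in $X_m$ lies in some $V_k$, and each $V_k$ is dominated by a family of algebraic curves of geometric genus $\leq 1$.

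Next, fix $V = V_k$ and an irreducible component $W$ of some $\mathfrak{D}_i(X_m)$ containing $V$, and assume for contradiction that $\mathrm{codim}_W V \leq n - 2$. Each such $W$ is, up to a proper birational map, a quotient of a product $X^s$ of copies of $X$ by a subgroup of $\mathfrak{S}_s$ depending on the partition type of $W$. The three key ingredients in the proof of Theorem~\ref{thmsubv} --- the $\mathfrak{S}$-invariant wedge differentials of Propositions~\ref{propsectinv} and \ref{propstabproduct}, the Reid--Tai--Weissauer extension criterion in Proposition~\ref{propextension} via Lemma~\ref{lemcrit}, and the orbifold base-locus estimate of Proposition~\ref{propbaselocus} --- transfer verbatim to this refined quotient setting, since the property $(H_{\emptyset, A^\sharp})$ is preserved by products and the codimension condition in Lemma~\ref{lemcrit} depends only on $n$. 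Rerunning the argument of Theorem~\ref{thmsubv} inside $W$ then forces $V$ to be of general type. To conclude, pass to a smooth resolution $\widetilde V \to V$ on which the dominant family of genus-$\leq 1$ curves, after restriction to a minimal-parameter subfamily, becomes a covering family $\{\widetilde C_t\}$. Adjunction combined with the non-negativity of $\deg \det N_{\widetilde C / \widetilde V}$ for a general member gives
$$
K_{\widetilde V} \cdot \widetilde C \;\leq\; 2 g(\widetilde C) - 2 \;\leq\; 0,
$$
while $K_{\widetilde V}$ being big allows us to write $K_{\widetilde V} \equiv A' + E$ (Kodaira's lemma) with $A'$ ample and $E$ effective; a general $\widetilde C$ is not contained in $\mathrm{Supp}(E)$, so $K_{\widetilde V} \cdot \widetilde C \geq A' \cdot \widetilde C > 0$, a contradiction.

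The main obstacle I anticipate is the stratum-wise extension of Theorem~\ref{thmsubv} above: the strata $W \subsetneq X_m$ are quotients of $X^s$ by proper subgroups of $\mathfrak{S}_s$ rather than by the full symmetric group, so one has to verify that the invariance, extension, and orbifold base-locus arguments of the previous sections remain valid in this more general setting. Since each of those ingredients is formulated with enough generality, I expect this to be a routine bookkeeping exercise rather than a conceptual difficulty.
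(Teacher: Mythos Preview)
Your approach is essentially the same as the paper's: use a Hilbert scheme argument to produce countably many families of non-hyperbolic curves, take the Zariski closures $V_k$ of their images, observe that each $V_k$ is dominated by a family of curves of genus $\leq 1$ and hence cannot be of general type, and then invoke Theorem~\ref{thmsubv} to bound the codimension.

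There are two places where you make your life harder than necessary. First, for the stratum-wise statement you propose to rerun the entire proof of Theorem~\ref{thmsubv} inside each component $W$ of $\mathfrak{D}_i(X_m)$, checking that Propositions~\ref{propstabproduct}, \ref{propsectinv}, Lemma~\ref{lemcrit} and Proposition~\ref{propextension} all transfer to quotients of $X^s$ by proper subgroups of $\mathfrak{S}_s$ (your reference to Proposition~\ref{propbaselocus} here is misplaced --- that proposition feeds into Theorem~\ref{thmhypcrit}, not into Theorem~\ref{thmsubv}). The paper avoids all of this by a single structural remark: the irreducible components of $\mathfrak{D}_i(X_m)$ are naturally identified (birationally) with symmetric products of $X$ of smaller length, so the claim about ${\rm codim}_W V_k$ follows immediately from the \emph{final} claim about ${\rm codim}_{X_{m'}}$, applied with $m' < m$. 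This reduces the whole stratified statement to the case $V_k \not\subset (X_m)_{\rm sing}$, where Theorem~\ref{thmsubv} applies directly.

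Second, your argument that $V_k$ is not of general type via adjunction and $\deg \det N_{\widetilde C/\widetilde V} \geq 0$ requires justification: non-negativity of the normal bundle degree for a general member of a covering family is not automatic without further hypotheses (e.g.\ freeness in the rational case). The paper simply asserts that a variety dominated by a family of non-hyperbolic curves is not of general type, which follows cleanly from easy addition for Kodaira dimension: if $\mathcal C \to B$ has general fibre of genus $\leq 1$ and $\mathcal C \to V$ is dominant and generically finite, then $\kappa(V) \leq \kappa(\mathcal C) \leq \dim B < \dim V$.
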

\begin{proof}
	As the irreducible components of each $\mathfrak{D}_{i}(X_m)$ identify to copies of $X_{m - i}$, it suffices to prove the last claim, and to show the result for curves $C$ not included in $(X_m)_{\rm sing}$.

	 By \cite[Proposition 2.8]{kol95}, a Hilbert scheme argument shows that there exists: 
	 \begin{enumerate}
		 \item a locally topologically trivial family of normal varieties $p : \mathcal V \to B$, where $B$ is a smooth scheme with {\em countably} many components; 
		 \item a morphism $f : \mathcal V \to X_m$, 
	 \end{enumerate}
	 such for any subvariety $V \subset X_m$, there exists $t \in B$ with $f(\mathcal V_t) = V$ . Let $B_{\rm non\, hyp} \subset B$ be the subset parametrizing curves of genus $g \leq 1$. Then, for any irreducible component $V$ of $p^{-1}(B_{\rm non\, hyp})$, the subvariety $\overline{f(V)} \subset X$ admits a dominant family of non-hyperbolic curves, and hence is not of general type. Since $\Omega_X$ is ample, Theorem \ref{thmsubv} implies that ${\rm codim}\, \overline{f(V)} \geq n - 1$ if $f(V) \not\subset (X_m)_{\rm sing}$. The property of $p : \mathcal V \to B$ finally implies that any non-hyperbolic curve  $C \subset X_m$ with $C \not\subset (X_m)_{\rm sing}$ is included in one such $\overline{f(V)}$. This ends the proof.
\end{proof}

We can also prove the following corollary to Theorem \ref{thmsubv}, in the spirit of \cite[Corollary 4]{AA02}.

\begin{corollary}
	Assume that $\Omega_X$ is ample, and let $Y \subset X$ be a closed submanifold. Let $1 \leq l \leq d$ be integers. Assume that $l$ generic points of $Y$ and $d - l$ generic points of $X$ lie on a curve of geometric genus $g$.
	Then if $$l \cdot {\rm codim} Y \leq  \dim X - 2,$$ we have $g > d$.
\end{corollary}

\begin{proof}
	Assume that $g \leq d$. Let $\mathcal C \to V$ be the family of curves and $f : \mathcal C \longrightarrow X$ be the map given by the hypothesis. By the assumption, the image $Z$ of $Y_l \times X_{d-l} \to X_d$ is dominated by the image of $S^d f :S^d \mathcal C \to X_d$. As in \cite{AA02}, we may replace $V$ be a hyperplane section to assume that $S^d f$ is generically finite.

	Since $g \leq d$, the family $\mathcal S^d {\mathcal C} \to V$ is a family of varieties which are not of general type (the fiber over $t$ is a $\mathbb P^{d-g}$-bundle over ${\rm Jac}(C_t)$), and hence $Z$ is not of general type as well. Since $\dim Z = \dim Y_l \times X_{d-l}$, Theorem \ref{thmsubv} implies $\dim(Y_l \times X_{d-l}) < (d-1) \dim X + 2$, hence
	$$
	\dim Y < \frac{1}{l} \left( (l-1) \dim X + 2\right),
	$$
	which gives the result.
\end{proof}

\subsection{Metric methods}

We will now present a metric point of view on these symmetric products of varieties, which will permit to give several applications to quotients of bounded symmetric domains.

We will use a metric hyperbolicity criterion similar to the one of \cite{cad18}. To express this criterion, we need first to introduce several constants bounding the Ricci curvature on subvarieties of the domain. Let us recall how to define these constants.
\medskip

Let $\Omega$ be a bounded symmetric domain of dimension $n$, and let $h_{\Omega}$ be the Bergman metric on this domain. If $X, Y \in T_{\Omega, x}$ $(x \in \Omega)$, we can define the {\em bisectional curvature} of $h_{\Omega}$ as 
$$B(X, Y) = \frac{i\Theta(h_{\Omega})(X, \overline{X}, Y, \overline{Y})}{||X||_{h_{\Omega}}^2 ||Y||_{h_{\Omega}}^2}.$$

Fix $p \in \mathbb N$. Then, we define 
\begin{equation} \label{eqCp}
	C_{p} = - \max_{X \in T_{\Omega, x}} \max_{V \ni X, \dim V = p} \;\; \sum_{i = 1}^p B(X, e_i),
\end{equation}
where $V \subset T_{\Omega, x}$ runs among the $p$-dimensional subspaces containing $X$, and $(e_i)_{1 \leq i \leq p}$ is any unitary basis of $V$. Since $\Omega$ is homogeneous, this constant does not depend on $x \in \Omega$.

Then, if we normalize the Bergman metric so that $C_{n} = 1$, we have a sequence of positive constants
$$
0 < C_1 \leq C_2 \leq ... \leq C_n = 1.
$$

These constants can be used to state the following criterion for the $p$-hyperbolicity of compactification of a quotient of $\Omega$.

\begin{proposition} [see \cite{cad18}] \label{propcritmetric}
	Let $M$ be a smooth projective manifold, and $D$, $E = \sum_{i} (1 - \alpha_i) E_i$ be $\mathbb Q$-divisors on $X$ such that the support $|E| \cup |D|$ has normal crossings. Let $U = M - (|D| \cup |E|)$, and let $h$ be a smooth K\"ahler metric on $U$, possibly degenerate. Let $p \in \llbracket 1, \dim M \rrbracket$ and let $\alpha > \frac{1}{C_p}$ be a rational number. We make the following assumptions.
	\begin{enumerate}[(i)]
	\itemsep=0em
	\item $h$ is non-degenerate outside an algebraic subset $Z \subset M$, and is modeled after $h_{\Omega}$ on $U - Z$;
	\item the metric induced by $h$ on $\bigwedge^d T_{M}$ has singularities near  any point of $|E_i| - (|D| \cup Z)$ with coefficients of order at most $O(|z|^{2(\alpha_i - 1)})$;
	\item there exists a non-zero section $s$ of $K_U^{\otimes l}$ such that $|| s ||_{(\det h^\ast)^l}^{2/l}$ extends as a continuous function $u$ on $M$, vanishing along $E + D$ at an order strictly larger than $\frac{1}{C_p}$. If $z$ is a local equation for a component of weight $\beta$ in $D + E$, this means that $u = O(|z|^{\frac{\beta}{C_p}(1 + \epsilon)})$ for some $\epsilon > 0$ (recall that $\beta =1$ for the components of $D$, and  $\beta = 1 - \alpha_i$ for the $E_i$).
	\end{enumerate}

	Then, 
	\begin{enumerate}[(a)]
		\item For any subvariety $V \subset M$ with $V \not\subset Z(s) \cup E \cup D \cup Z$ and $\dim V \geq p$, $\dim V$ is of general type. 
		\item For any holomorphic map $f : \mathbb C^p \to M$ with ${\rm Jac}(f)$ generically of maximal rank, we have $f(\mathbb C^p) \subset  Z(s) \cup E \cup D \cup Z$.
	\end{enumerate}
\end{proposition}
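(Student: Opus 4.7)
The plan is to produce, on a resolution $\pi : \widetilde V \to V$ of any subvariety $V \subset M$ of dimension $d \geq p$ not contained in $Z(s) \cup |D| \cup |E| \cup Z$, a singular Hermitian metric on a power of $K_{\widetilde V}$ whose curvature current is strictly positive. Bigness of $K_{\widetilde V}$ then yields $(a)$. For $(b)$, the same construction pulled back via $f$ and combined with an Ahlfors--Schwarz type argument on $\mathbb C^p$ produces a contradiction.

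The central local input is a curvature estimate: on the open locus of $V^{\mathrm{reg}} \setminus (Z \cup |D| \cup |E|)$ where $h_V := h|_V$ is smooth and non-degenerate, the induced metric on $K_V = (\det T_V)^{-1}$ has curvature bounded below by $C_p \cdot \omega_{h_V}$. This follows from the Gauss equation for Kähler submanifolds, giving $\sum_i B_{h_V}(X, e_i) \leq \sum_i B_{h}(X, e_i)$ for any $X \in T_V$ and any unitary basis $(e_i)$ of $T_{V,x}$ containing $X$, combined with the definition \eqref{eqCp} of $C_d$ and the monotonicity $C_p \leq C_d$ (since $d \geq p$). Passing to the dual metric on $K_V$ then produces the announced positive lower bound.

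To globalize, I would consider the formal expression $(\det h_V^*)^{lt} \cdot |s|^{-2(1-t)}$ for a small $t \in (0,1)$ and extend it, through $\pi$, to a singular Hermitian metric on $\pi^* K_V^{\otimes l}$ on $\widetilde V$. Its curvature current equals $tl \cdot i\Theta_{K_V}(\det h_V^*) + (1-t)[Z(s)]$ where both ingredients are smooth, hence is bounded below by $tlC_p \, \omega_h$ on a Zariski dense open set of $\widetilde V$. Hypotheses (ii) and (iii) are used to check that this metric has locally plurisubharmonic potentials with integrable singularities across $|E|$, $|D|$, the $\pi$-exceptional divisors, and the degeneracy locus $Z$: the logarithmic singularities of $\det h^*$ dictated by the weights $\alpha_i$ near $E_i$ (via (ii)) and by the Bergman-type cuspidal model near $|D|$ are balanced by the vanishing of $u = \|s\|^{2/l}$ at order strictly greater than $1/C_p$ times the weight of each component, exactly as prescribed in (iii). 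This establishes $(a)$.

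For $(b)$, the same construction pulled back by $f : \mathbb C^p \to M$ yields a singular metric on $K_{\mathbb C^p}^{\otimes l}$ with curvature current $\geq tlC_p \, f^* \omega_h$; simultaneously, the nonzero section $f^* s$ provides a bounded continuous pseudo-norm $(f^* u)^{l/2}$ on this line bundle by (iii). Applying a generalized Ahlfors--Schwarz lemma on $\mathbb C^p$ then forces $f^* \omega_h \equiv 0$, contradicting the assumption that $\mathrm{Jac}(f)$ is generically of maximal rank. The main technical obstacle lies in the singularity bookkeeping of the third step: one must verify carefully that the constructed potentials are plurisubharmonic with integrable singularities along all strata of the normal crossing divisor $|D| + |E|$ lifted to $\widetilde V$, and it is precisely here that the strict inequality $\alpha > 1/C_p$, reflected in (iii), provides the slack needed to pass through the resolution.
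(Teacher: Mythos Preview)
Your strategy is the same as the paper's (which in turn defers to \cite{cad18}): obtain the Ricci lower bound $i\Theta_{K_V}(\det h_V^\ast)\geq C_p\,\omega_{h_V}$ on the smooth non-degenerate locus from the curvature-decreasing property of K\"ahler submanifolds together with the definition of $C_p$; twist by a suitable power of $s$ so that the local potentials extend plurisubharmonically across $|D|\cup|E|$ (this is where (ii), (iii) and the strict inequality $\alpha>1/C_p$ enter); and conclude by Boucksom's bigness criterion for $(a)$ and the Ahlfors--Schwarz lemma for $(b)$. Your account of the curvature estimate and of the role of (ii) and (iii) is correct.

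There is, however, a dimensional slip in the twisted metric you write down. The section $s$ lies in $H^0(U,K_U^{\otimes l})$, i.e.\ in (a power of) $K_M$, not $K_V$. Hence $|s|^{-2}$ is a singular metric on $K_M^{\otimes l}|_V$, and your product $(\det h_V^\ast)^{lt}\cdot|s|^{-2(1-t)}$ lives on $K_V^{\otimes lt}\otimes(K_M|_V)^{\otimes l(1-t)}$, not on any power of $K_V$. Accordingly, the claimed curvature $tl\,i\Theta_{K_V}(\det h_V^\ast)+(1-t)[Z(s)]$ mixes a form representing $c_1(K_V)$ with a current representing $c_1(K_M^{\otimes l}|_V)$, and is not the curvature of a metric on a line bundle on $V$. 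The same mismatch recurs in $(b)$, where $f^\ast s$ is a section of $f^\ast K_M^{\otimes l}$ and not of $K_{\mathbb C^p}^{\otimes l}$. The paper's remedy is to twist the K\"ahler metric itself rather than the determinant: one sets $\widetilde h=\|s\|_{(\det h^\ast)^l}^{2\beta}\,h$ for an adequate $\beta>0$. Since $\|s\|$ is a scalar function, $\widetilde h$ is a genuine (singular) metric on $T_V$ (respectively on $f^\ast T_M$), and $\det\widetilde h^\ast$ is then a bona fide metric on $K_V$ (respectively on $K_{\mathbb C^p}$) to which your curvature bound and the extension analysis across $|D|\cup|E|$ apply directly. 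With this correction your argument goes through and matches the paper's.
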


\begin{proof}
	The metric $h$ satisfies all the assumptions permitting to apply the proof of Theorem 2 and Theorem 8 of \cite{cad18}. 
	Let us recall that the technique of this proof consists in forming the metric $\widetilde{h} = || s ||_{(\det h^\ast)^m}^{2 \beta} h$ for an adequate $\beta > 0$. We then check that $\widetilde{h}$ induces a positively curved singular metric on the canonical bundle of a desingularization of any subvariety $V$ as in the hypotheses. In the case of a map $f : \mathbb C^p \to M$, we apply the Ahlfors-Schwarz lemma (see \cite{dem12a}) to this metric to obtain a contradiction if $f(\mathbb C^p) \not\subset Z(s) \cup E \cup D \cup Z$.
\end{proof}

\begin{remark} \label{remcritmet} Assume that $X = \quotient{\Gamma}{\Omega}$ is a quotient by an arithmetic lattice, and let $q : M \to \overline{X}^{BB}$ be a log-resolution of the singularities of the Baily-Borel compactification of $X$. Let $U \subset X$ be the smooth locus, and $E_i$ (resp. $D_j$) be the components of the exceptional divisor whose projection intersects $X_{\rm sing}$ (resp. whose projection lies in $\overline{X}^{BB} \setminus X$). For each $i$, let $x_i$ be a generic point of the projection of $E_i$ on $\overline{X}^{BB}$. Let $H_i \subset \Gamma$ be the isotropy group of $x_i$, and let $\alpha_i$ be such that the action of $H_{i}$ on $\Omega$ satisfies the condition $(I'_{x, d, \alpha_i})$ of Section \ref{sectnot}. We associate the multiplicity $\alpha_i$ to $E_i$ by putting $E = \sum_i (1 - \alpha_i) E_i$. We also let $D = \sum_i D_i$.
	
	With these notations, as explained in \cite{cad18}, the hypotheses (i) and (ii) of Proposition \ref{propcritmetric} are satisfied. The condition (iii) is implied by the following more algebraic condition.	

	{ \em (iii')} For $\alpha \in \mathbb Q^\ast_+$, let $L_{\alpha} = q^\ast K_{\overline{X}^{BB}} \otimes \mathcal O( - \alpha (D + E)) $. Then $L_{\alpha}$ is effective for some $\alpha > \frac{1}{C_p}$. 

	Moreover, $Z(s)$ in $(a)$ and $(b)$ can then be replaced by the stable base locus $\mathbb B(L_\alpha)$.
\end{remark}

\begin{remark} \label{remCpsing} We can generalize the conclusion {\em (b)} of Proposition \ref{propcritmetric} to the following situation. Assume that there exists a proper birational holomorphic map $q : M \to M_0$, where $M_0$ is a possibly singular complex variety. Then, under the assumption of the theorem, we can state the following:
	\medskip

	{\em (b')} Let $W = q( Z(s) \cup E \cup D \cup Z)$. Then for any holomorphic map $f : \mathbb C^p \to M_0$ with ${\rm Jac}(f)$ generically of maximal rank, we have $f(\mathbb C^p) \subset W \cup (M_0)_{\rm sing}$.
	\medskip

	To prove this statement, assume by contradiction that there exists a $f : \mathbb C^p \to M_0$ that fails to satisfy the conclusion of {\em (b')}. Let $C$ be a resolution of singularities of the main component of the fiber product $\mathbb C^p \times_{(f, q)} M$. Then, there exists a proper morphism $g : C \to \mathbb C^p$, birational outside a locally finite union of analytic subvarieties of $\mathbb C^p$, and there exists a natural map $h : C \to M$, generically non-degenerate, whose image intersects $U \setminus (Z \cup Z(s) \cup E)$.  Construct $\widetilde{h}$ is as the proof of Proposition \ref{propcritmetric}. Then, the metric $g^\ast \widetilde{h}$ on $C$ is subject to the following version of the Ahlfors-Schwarz lemma.
\end{remark}

\begin{lemma}
	Let $g : C \to \mathbb C^p$ be a proper holomorphic map, realizing an isomorphism outside a countable union of analytic subvarieties of $\mathbb C^p$. Then $T_{C}$ cannot admit any singular metric $h$, with $\det h$ everywhere locally bounded, smooth on a dense open Zariski subset $U$, and satisfying the following inequality on $U$:
	\begin{equation} \label{eqAS}
		d d^c \log \det h \geq \epsilon \omega_h \hspace{2em} (\epsilon > 0).
	\end{equation}
\end{lemma}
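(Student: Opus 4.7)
The plan is to reduce the statement to the classical Liouville theorem for plurisubharmonic functions on $\mathbb{C}^p$, by viewing $\log \det h$ as a PSH function on $C$ that descends to one on $\mathbb{C}^p$ via $g$. The strict positivity in the hypothesis will then conflict with the PSH function being constant.

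First I would set $\varphi \bydef \log \det h$ on the smooth locus $U \subset C$. The hypothesis $dd^c \log \det h \geq \epsilon \omega_h \geq 0$ shows that $\varphi$ is PSH on $U$, and the assumed local boundedness of $\det h$ forces $\varphi$ to be locally bounded above on $C$. Since $C \setminus U$ is a proper algebraic, hence pluripolar, subset of $C$, the standard extension theorem for PSH functions across pluripolar sets yields a PSH extension $\widetilde\varphi$ on all of $C$, bounded above by some constant $M$. Now let $Z \subset \mathbb{C}^p$ be the countable union of analytic subvarieties outside of which $g$ is a biholomorphism. On $\mathbb{C}^p \setminus Z$, the inverse $g^{-1}$ is holomorphic, so $\psi \bydef \widetilde\varphi \circ g^{-1}$ is PSH there and bounded above by $M$. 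Since $Z$ is itself pluripolar in $\mathbb{C}^p$, the upper semi-continuous regularization $\psi^\ast$ extends $\psi$ to a PSH function on all of $\mathbb{C}^p$, still bounded above by $M$.

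At this point I invoke the Liouville principle for PSH functions on $\mathbb{C}^p$ (bounded above $\Rightarrow$ constant, which one can prove by restriction to complex lines and the one-variable subharmonic version), which forces $\psi^\ast \equiv c$ for some constant $c$. Since $\psi$ is PSH (hence upper semi-continuous) on $\mathbb{C}^p \setminus Z$, it coincides with $\psi^\ast$ there, and pulling back by $g$ gives $\widetilde\varphi \equiv c$ on the open dense subset $C \setminus g^{-1}(Z)$. In particular, on $U \cap (C \setminus g^{-1}(Z))$—which is open and dense, and on which $\varphi$ is smooth while $\omega_h$ is strictly positive since $h$ is a genuine Hermitian metric on $U$—the identity $\varphi \equiv c$ gives $dd^c \varphi = 0$, contradicting the strict inequality $dd^c \varphi \geq \epsilon \omega_h > 0$.

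The main technical obstacle is ensuring that the two successive PSH-extension arguments apply cleanly: across the proper algebraic set $C \setminus U$ on $C$, and across the pluripolar set $Z$ on $\mathbb{C}^p$. Both reduce to the fact that bounded-above PSH functions extend uniquely (up to usc regularization) across pluripolar sets. Notice that the strict positivity $\epsilon \omega_h$ is used only at the very end; the pushforward-and-Liouville machinery itself only uses the weak consequence $dd^c \log \det h \geq 0$ together with the local boundedness assumption on $\det h$, which is crucial because the pointwise pushforward of the volume form $\omega_h^p/p!$ along $g$ would generally blow up on the exceptional set of $g$ and hence cannot be handled directly by a Monge-Ampère version of Ahlfors-Schwarz.
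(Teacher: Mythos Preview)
Your extension argument---pushing $\varphi = \log \det h$ down to $\mathbb{C}^p$ as a plurisubharmonic function using local boundedness and the pluripolarity of the exceptional locus---is essentially the same as the paper's. The gap is at the endgame. You write that the PSH extension $\widetilde\varphi$ on $C$ is ``bounded above by some constant $M$,'' and then invoke Liouville for PSH functions on $\mathbb{C}^p$. But the hypothesis only gives \emph{local} boundedness of $\det h$; since $C$ is non-compact (it maps properly onto $\mathbb{C}^p$), this yields no global upper bound. Without a global $M$, the Liouville principle ``bounded above $\Rightarrow$ constant'' simply does not apply, and the argument collapses at the point where you need $\psi^\ast \equiv c$.

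The paper closes the argument differently. After extending $\log \det h$ to all of $\mathbb{C}^p$ as a PSH function (using that the complement of $V$ has codimension $\geq 2$), it observes that the inequality $dd^c \log \det h \geq \epsilon\, \omega_h$ persists in the sense of currents, and this is exactly the hypothesis of the standard Ahlfors--Schwarz lemma on $\mathbb{C}^p$ (as in \cite{dem12a}), which directly rules out such a metric. So your final paragraph has it backwards: it is precisely a current-version of Ahlfors--Schwarz that finishes the proof, and the local-boundedness assumption on $\det h$ is what makes the PSH extension (and hence the current inequality on all of $\mathbb{C}^p$) go through. Your Liouville shortcut would work only under the much stronger assumption that $\det h$ is \emph{globally} bounded, which is not part of the statement.
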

\begin{proof}
	Assume by contraction that there exists such a metric. We may assume that $g$ is an isomorphism on some open subset $V \subset C$ containing $U$. We may then see $h$ as a metric on $V \subset \mathbb C^p$, satisfying \eqref{eqAS} on $U$. As $\det h$ is everywhere locally bounded on $V$, and since $dd^c \log \det h \geq 0$ on $U \subset V$, the function $\log \det h$ is psh on $V$. Besides, as $\mathbb C^p$ is normal, we have ${\rm codim}(\mathbb C^p \setminus V) \geq 2$, so $\log \det h$ extends {\em to the whole $\mathbb C^p$} as a psh function, satisfying \eqref{eqAS} in the sense of {\em currents}. This case is however ruled out by the standard Ahlfors-Schwarz lemma stated in \cite{dem12a}.
\end{proof}

\medskip
Our plan is to use the previous proposition in the case where $X$ is a resolution of singularities of a symmetric product of a quotient of a bounded symmetric domain. To do so, we will need some estimates on the $C_p$ when the domain is of the form $\Omega^m$ ($m \in \mathbb N$). The case $p = 1$ is fairly easy to settle: in this case, $- C_1$ is just the maximum of the holomorphic sectional curvature, and we have the following well-known result.

\begin{proposition} Let $\Omega$ be a bounded symmetric domain, and denote by $-\gamma$ the maximum of the holomorphic sectional curvature on $\Omega$. Then we have $$C_1(\Omega^m) = \frac{1}{m} C_1(\Omega) = \frac{\gamma}{m}.$$
\end{proposition}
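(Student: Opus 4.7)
The plan is to unpack the definition for $p=1$, observe that the Bergman metric on $\Omega^m$ is the orthogonal sum of the Bergman metrics on the factors, and then reduce the problem to a Cauchy--Schwarz-type optimization on norm-squared distributions.

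First I would record that for $p=1$ the definition \eqref{eqCp} specializes to
\[
C_1(\Omega) \;=\; -\max_{X \in T_{\Omega,x}\setminus\{0\}} B(X,X),
\]
so that $C_1(\Omega) = \gamma$ is tautological from the definition of $\gamma$ as the negative of the maximum holomorphic sectional curvature. It therefore remains to prove $C_1(\Omega^m) = \gamma/m$. Since the Bergman kernel of a product is the product of the Bergman kernels, $\log K_{\Omega^m}(z,w) = \sum_i \log K_\Omega(z_i,w_i)$, hence $h_{\Omega^m}$ is the orthogonal direct sum $\bigoplus_{i=1}^m h_\Omega$ on $T_{\Omega^m} = \bigoplus_{i=1}^m T_\Omega$.

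Next, given $X = (X_1,\dots,X_m) \in T_{\Omega^m,x}$, the product structure of the metric implies that the $(1,1)$ curvature forms split: all mixed components of the curvature tensor vanish, and
\[
i\Theta(h_{\Omega^m})(X,\overline X,X,\overline X) \;=\; \sum_{i=1}^m i\Theta(h_\Omega)(X_i,\overline X_i,X_i,\overline X_i),
\qquad \|X\|_{h_{\Omega^m}}^2 = \sum_{i=1}^m \|X_i\|_{h_\Omega}^2.
\]
Setting $a_i := \|X_i\|_{h_\Omega}^2$ and using that each factor term is bounded below by $-\gamma\, a_i^2$ by definition of $\gamma$, I would obtain
\[
B_{\Omega^m}(X,X) \;=\; \frac{\sum_{i=1}^m i\Theta(h_\Omega)(X_i,\overline X_i,X_i,\overline X_i)}{\bigl(\sum_i a_i\bigr)^2} \;\geq\; -\gamma\,\frac{\sum_i a_i^2}{\bigl(\sum_i a_i\bigr)^2}.
\]

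Finally, the elementary Cauchy--Schwarz inequality $\sum a_i^2 \geq \tfrac{1}{m}(\sum a_i)^2$, with equality iff all $a_i$ are equal, yields $B_{\Omega^m}(X,X) \geq -\gamma/m$, hence $C_1(\Omega^m) \leq \gamma/m$. For the reverse inequality, I would exhibit an $X$ saturating the bound: by homogeneity of $\Omega$ under ${\rm Aut}(\Omega)$, pick any $X_0 \in T_{\Omega,x_0}$ realizing the maximum of $-B_\Omega$, translate it to each factor so that the $X_i$ all have equal norms, and then both the Cauchy--Schwarz step and each factor-wise curvature bound become equalities simultaneously, giving $B_{\Omega^m}(X,X) = -\gamma/m$. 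The only subtle point is simultaneous saturation, but the homogeneity of the symmetric domain renders this immediate; there is no real obstacle here.
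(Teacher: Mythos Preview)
Your overall strategy is sound and is essentially the first of the two approaches the paper suggests (the paper does not spell out a proof, merely saying one can ``check directly by writing the formula for the bisectional curvature of $\Omega^m$'', or alternatively invoke the polydisk theorem to reduce to $\Omega=\Delta^n$). However, your inequalities all point the wrong way, and as written the argument is broken.

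Since $-\gamma$ is the \emph{maximum} of the holomorphic sectional curvature of $\Omega$, we have $B_\Omega(X_i,X_i)\leq -\gamma$, i.e.\ each factor term $i\Theta(h_\Omega)(X_i,\overline{X_i},X_i,\overline{X_i})$ is bounded \emph{above} by $-\gamma\,a_i^2$, not below. This yields
\[
B_{\Omega^m}(X,X)\;\leq\;-\gamma\,\frac{\sum_i a_i^2}{(\sum_i a_i)^2}\;\leq\;-\frac{\gamma}{m},
\]
the second inequality being Cauchy--Schwarz (note the sign of $-\gamma$). Hence $\max_X B_{\Omega^m}(X,X)\leq -\gamma/m$, i.e.\ $C_1(\Omega^m)\geq\gamma/m$. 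Your claimed inequality $B_{\Omega^m}(X,X)\geq -\gamma/m$ is actually false: take $X=(X_1,0,\dots,0)$ with $B_\Omega(X_1,X_1)=-\gamma$ to get $B_{\Omega^m}(X,X)=-\gamma<-\gamma/m$. Even granting your first (incorrect) inequality, the Cauchy--Schwarz step would not chain: you would obtain $B_{\Omega^m}(X,X)\geq(\text{something}\leq -\gamma/m)$, which concludes nothing. Once the inequalities are reversed, your diagonal example then gives the \emph{other} direction $C_1(\Omega^m)\leq\gamma/m$, and the proof is complete.
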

This can be checked directly by writing the formula for the bisectional curvature of $\Omega^m$, or by remarking that by the polydisk theorem (see \cite{mok89}), it suffices to deal with the case where $\Omega = \Delta^n$. In this case the holomorphic sectional curvature is maximal in the direction of the long diagonals, and the formula can be easily derived.
\medskip

We can use now use this result to study the case of ramified coverings of \emph{smooth compact} quotients of bounded symmetric domains. 

\begin{proposition} \label{prophyp}
	Let $Y = \quotient{\Gamma}{\Omega}$ be a smooth compact quotient, let $p : X \longrightarrow Y$ be a Block-Gieseker covering, and let $\delta = \frac{s}{r}$ be a positive rational number such that be such that $p^\ast K_Y^{\otimes r} = A^{\otimes s}$ for some very ample line bundle $A$. Let $W \subset X$ be the locus where $p$ is non-étale.

	Then if $m \in \mathbb N$ is such that
	$$ \gamma \, \delta > 2 m (m -1),$$
	the variety $X_m$ is Brody hyperbolic modulo $\mathfrak{d}_1(W)$.
\end{proposition}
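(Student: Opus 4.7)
The plan is to apply the metric hyperbolicity criterion of Proposition \ref{propcritmetric} with $p=1$ on a log-resolution $\pi : \widetilde{X}_m \to X_m$, equipped with the degenerate K\"ahler metric $h$ induced from the Bergman metric on $\Omega^m$ through the chain $\Omega^m \to Y^m \to X^m \to X_m$. Because $Y$ is compact, there is no boundary divisor $D$; the divisor $E$ absorbs (i) the exceptional divisors of $\pi$ above the quotient singularities of $X_m$, equipped with orbifold multiplicities determined by the $\mathfrak S_m$-stabilizers via Lemma \ref{lemcrit}, and (ii) the divisors above the ramification locus of $p$, whose $\pi$-image lies in $\mathfrak{d}_1(W)$. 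Conditions (i) and (ii) of the metric criterion then hold as in Remark \ref{remcritmet}, and since $C_1(\Omega^m) = \gamma/m$, the whole proof reduces to verifying the algebraic condition (iii') for some rational $\alpha > m/\gamma$.

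To produce the required effective line bundle, I would exploit the Bloch-Gieseker isomorphism $p^\ast K_Y^{\otimes r} \simeq A^{\otimes s}$. The Bergman volume form on $\Omega$ is $\Gamma$-invariant and descends to a nowhere-vanishing holomorphic metric on $K_Y$. Pulling back via $p$ and using the Bloch-Gieseker isomorphism identifies each $a \in H^0(X, A^{\otimes s})$ with a holomorphic section of $p^\ast K_Y^{\otimes r}$, whose interpretation as a rational section of $K_X^{\otimes r}$ picks up controlled poles along the ramification locus $W$. Forming the $\mathfrak S_m$-invariant tensor $a^\sharp = \bigotimes_i \mathrm{pr}_i^\ast a$ on $X^m$ and descending to $X_m$ produces a section of a suitable power of the canonical bundle $K_{X_m}$, which then extends to $\widetilde{X}_m$ by the Reid-Tai-Weissauer criterion of Proposition \ref{propextension}. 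At this step the positivity of $A$ enters through the freedom to choose $a$ vanishing to high order along hyperplane sections, exactly as in the proof of Proposition \ref{propbaselocus}.

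To conclude, I would select $a$ so that $a^\sharp$ vanishes to order two along the diagonal of $X^m$, using the construction of Lemma \ref{lemsect} applied to the embedding $X \hookrightarrow \mathbb P^N$ afforded by the very ample $A$. After descent and extension to $\widetilde{X}_m$, the Hermitian norm $\| \cdot \|_{(\det h^\ast)^l}^{2/l}$ of the resulting section vanishes along $E$ at an order essentially equal to $\delta / (2(m-1))$. The threshold inequality $\alpha > m/\gamma$ thus reads $\delta/(2(m-1)) > m/\gamma$, which is exactly the hypothesis $\gamma\delta > 2m(m-1)$. Proposition \ref{propcritmetric}(b) combined with Remark \ref{remCpsing} applied to the map $\widetilde{X}_m \to X_m$ shows that any entire curve $f : \mathbb C \to X_m$ satisfies $f(\mathbb C) \subset \mathfrak{d}_1(W) \cup (X_m)_{\rm sing}$; an induction on $m$, noting that the components of $(X_m)_{\rm sing}$ are copies of $X_{m'}$ for $m' < m$ and that the inclusion respects $\mathfrak{d}_1(W)$, yields the stated Brody hyperbolicity modulo $\mathfrak{d}_1(W)$.

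The main obstacle is the careful bookkeeping of orbifold corrections across the chain $\Omega^m \to Y^m \to X^m \to X_m \to \widetilde{X}_m$: one has to verify that the combined penalties coming from the $\mathfrak S_m$-ramification (controlled by Lemma \ref{lemcrit}), from the Bergman-induced section's poles along $W$ (responsible for the $\mathfrak{d}_1(W)$ in the conclusion), and from the divergence of the resolution over the quotient singularities (the $2(m-1)$ factor furnished by Proposition \ref{propbaselocus}) assemble into the single sharp bound $2m(m-1)$ appearing in the hypothesis.
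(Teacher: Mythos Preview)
Your proposal is correct and follows the same route as the paper: apply the metric criterion of Proposition~\ref{propcritmetric} with $p=1$ on a log-resolution of $X_m$, using the Bergman metric pulled back through $p$, and manufacture the required section via Proposition~\ref{propbaselocus}, so that the threshold inequality $\frac{\delta}{2(m-1)} > \frac{1}{C_1(\Omega^m)} = \frac{m}{\gamma}$ becomes exactly $\gamma\delta > 2m(m-1)$.

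Two places where the paper is leaner than your write-up. First, instead of folding the ramification divisors into $E$ and then worrying about how the section behaves along them, the paper simply sets $Z = q^{-1}(\mathfrak d_1(W))$ as the degeneracy locus of $h$ in hypothesis~(i); since the conclusion of Proposition~\ref{propcritmetric} already allows $Z$ in the exceptional set, nothing further needs to be checked there. (Incidentally, your claim that sections of $A^{\otimes s}=p^\ast K_Y^{\otimes r}$ pick up \emph{poles} along $W$ when viewed in $K_X^{\otimes r}$ is backwards: since $K_X = p^\ast K_Y + R$ with $R$ effective, they pick up \emph{zeros}. This is harmless because neither argument needs the comparison on $W$.) Second, there is no need to invoke Reid--Tai--Weissauer to extend anything: the section coming from Proposition~\ref{propbaselocus} lives in $q^\ast A_\flat^{\otimes sN}$, a line bundle already defined on the whole resolution, and on $U$ this bundle coincides with $K_U^{\otimes rN}$; the paper just uses that holomorphic sections of $q^\ast K_{X_m}^{\otimes rN}$ have bounded Bergman norm. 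Your explicit final induction on $m$, reducing from $(X_m)_{\rm sing}\cup\mathfrak d_1(W)$ to $\mathfrak d_1(W)$, is a genuine step that the paper leaves implicit.
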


\begin{proof}
	Let $q :M \to X_m$ be a log-resolution of singularities, let $E \subset M$ be the exceptional locus, and $Z$ be the preimage of $\mathfrak{d}_1(W)$. Let $h_Y$ be the pullback of the Bergman metric on $Y$. This metric is smooth on $Y$, and non degenerate on $Y - W$. This metric induces in turn a natural metric on the smooth locus of $Y_m$, and by pullback, a smooth metric $h$ on $M - E$.

	Let us check that the conditions of Proposition \ref{propcritmetric} are satisfied for $p=1$. Since $h_Y$ is non-degenerate and modeled on $h_\Omega$ on $X - W$, the metric $h$ is non-degenerate and modeled on $h_{\Omega^m}$ on $M - (E \cup Z)$, so the condition (i) is satisfied. 

	It follows directly from the discussion of Section \ref{sectcrit} that the condition $(I'_{x, 1, 1})$ is satisfied for every $x \in X^m$. Hence, the condition (ii) holds for $E = \sum_i E_i$. 
	\medskip

	Let $x \in M - E$. By Proposition \ref{propbaselocus}, for some $N \in \mathbb N$, there exists a section $\sigma$ of $q^\ast A_{\flat}^{\otimes s N} \otimes \mathcal ( - \frac{Ns}{2(m-1)} |E|)$ that does not vanish at $x$. By hypothesis, the line bundles $(A_\flat)^{\otimes s}|_{X_m^{\rm reg}}$ and $K_{X_m^{\rm reg}}^{\otimes r}$ coincide. Thus, if $N$ is divisible enough, $\sigma$ can be seen as a section of the line bundle $( q^\ast K_{X_m} \otimes \mathcal O( - \frac{\delta }{2(m-1)} E) )^{\otimes rN}$. Finally, the holomorphic sections of $q^\ast K_{X_m}^{\otimes r N}$ have bounded norm for the norm induced by $h$, which shows that (iii) is satisfied if $\delta > \frac{2(m-1)}{C_1(\Omega^m)} = \frac{2m(m-1)}{\gamma}$. This is precisely our hypothesis. Moreover, since $x \in M - E$ is arbitrary, the locus cut out by the sections $\sigma$ is included in $M - E$. The conclusion follows as announced from Proposition \ref{propcritmetric}.
\end{proof}

The following result of Hwang-To can be used to give a more explicit constant $\delta$ in the proposition above.

\begin{theorem}[\cite{HT00}] For any smooth compact quotient of a bounded domain $X$, there exists a finite étale cover $X'$ such that $2 K_{X'}$ is very ample.
\end{theorem}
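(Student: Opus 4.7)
The plan is to combine a Demailly-type analytic criterion for very ampleness with the Hwang--To Seshadri estimate and a residual finiteness argument for the lattice $\Gamma$.

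First, I would recall the following criterion (essentially due to Demailly, via singular $L^2$ methods): if $L$ is a nef and big line bundle on a compact complex manifold $M$ of dimension $n$, with Seshadri constant $\epsilon(L,x) > 2n$ for every $x \in M$, then $K_M \otimes L$ is very ample. Applied with $M = X'$ and $L = K_{X'}$ (which is ample, since the Bergman metric on $\Omega$ descends to a K\"ahler--Einstein metric of negative Ricci curvature on $X'$), it is enough to show that, after passing to a suitable finite \'etale cover $X' \to X$, one has $\epsilon(K_{X'},x) > 2n$ uniformly in $x \in X'$.

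The second step is the Hwang--To volume estimate. Working with the K\"ahler--Einstein metric $\omega$, whose Ricci form satisfies $\mathrm{Ric}(\omega) = -\omega$ (up to normalization), the line bundle $K_{X'}$ inherits a canonical smooth Hermitian metric with strictly positive curvature. A direct computation of the volume $\int_{B(x,r)} \omega^n$ of a geodesic ball of radius $r$ in the bounded symmetric domain $\Omega$ shows that this volume grows rapidly with $r$, and one can compare it to the standard flat volume $\pi^n r^{2n}/n!$ appearing in the definition of Seshadri constants via asymptotic jet generation. Quantitatively, Hwang and To show that if the injectivity radius of $X'$ at $x$ satisfies $\mathrm{inj}(x) \geq r_0$, then
\[
\epsilon(K_{X'}, x) \geq \varphi(r_0),
\]
for some explicit increasing function $\varphi$ tending to infinity with $r_0$. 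This is the technical heart of the proof and would be the main obstacle to carry out cleanly; it requires the careful interplay between the K\"ahler--Einstein volume form on geodesic balls of $\Omega$ and the analytic local positivity of $c_1(K_{X'})$.

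Finally, one uses that $\Gamma$ is residually finite (Selberg's lemma for arithmetic lattices, Mal'cev's theorem in general, combined with the fact that every lattice in a semisimple Lie group is finitely generated and linear). Hence we can find a descending chain of finite-index normal subgroups $\Gamma_k \trianglelefteq \Gamma$ with $\bigcap_k \Gamma_k = \{1\}$. Because $X = \Gamma \backslash \Omega$ is compact, a standard pigeonhole argument shows that the injectivity radius of $X_k = \Gamma_k \backslash \Omega$ tends uniformly to infinity as $k \to \infty$. Choosing $k$ large enough that $\varphi(\mathrm{inj}(X_k)) > 2n$ and setting $X' = X_k$, Step 1 then yields that $K_{X'} + K_{X'} = 2 K_{X'}$ is very ample, which is the desired conclusion.
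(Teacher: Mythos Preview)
The paper does not give its own proof of this theorem; it is quoted as a result of Hwang--To and used as a black box to feed an explicit constant into Proposition~\ref{prophyp}. So there is no in-paper argument to compare against.

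Your outline is essentially the argument of the cited reference itself. The three ingredients you isolate --- Demailly's Seshadri-constant criterion for very ampleness of adjoint bundles, the Hwang--To lower bound $\epsilon(K_{X'},x)\geq\varphi(\mathrm{inj}(x))$ coming from the volume growth of geodesic balls in $\Omega$, and residual finiteness of the lattice $\Gamma$ to force the injectivity radius of a finite \'etale cover to be as large as one likes --- are exactly the pieces Hwang and To assemble. One small technical caveat: very ampleness requires separating not only jets at a single point but also distinct pairs of points, so the single-point bound $\epsilon(L,x)>2n$ you quote in Step~1 is not literally sufficient on its own; Hwang--To in fact control the two-point Seshadri constants $\epsilon(K_{X'};x_1,x_2)$ as well, again in terms of the injectivity radius. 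This is a refinement of the formulation of Step~1 rather than a gap in the strategy.
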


This gives immediately the following series of examples.

\begin{example}
	Let $Y_0 = \quotient{\Gamma}{\Omega}$ be a smooth compact quotient, and let $Y_1 \longrightarrow Y_0$ be the étale cover provided by \cite{HT00}. Let $m \in \mathbb N^\ast$, and let $q$ be an integer such that $q > 4 \frac{m (m-1)}{\gamma}$. 
	
	Now let $X \overset{p}{\longrightarrow} Y_1$ be a Bloch-Gieseker covering such that $p^\ast(K_{Y_1}^{\otimes 2}) = A^{\otimes q}$, with $A$ very ample. Then, we have $\delta \gamma = \frac{q \gamma}{2} > 2 m (m - 1)$, so that $X_m$ is Brody hyperbolic modulo $\mathfrak{d}_1({\rm Sing}(p))$.
\end{example}

\medskip

\begin{example} For $1 \leq i \leq n$, let $X_i$ be a smooth projective curve of genus $g \geq 2$, and fix some integer $q$. For all $i$, since $3 K_{X_i}$ is very ample, we can perform a $q$-fold Bloch-Gieseker covering $p_i : X_i' \longrightarrow X_i$, so that $p_i^\ast (3 K_{X_i}) = A_i^{\otimes q}$, with $A_i$ very ample on $X_i'$. 
\medskip

	Letting $X = X_1' \times ... \times X_n' \overset{p}{\longrightarrow} X_1 \times ... \times X_n = Y$, we have then $p^\ast K_Y^{\otimes 3} = A^{\otimes q}$, where $A = \bigotimes\limits_{1 \leq j \leq n} p_j^\ast K_{X_j}$ is very ample on $X$. The manifold $Y$ is a smooth compact quotient of $\Delta^n$, and $\gamma = \frac{1}{n}$ for this domain. Proposition \ref{prophyp} shows then $X_m$ is Brody hyperbolic modulo $(X_m)_{\mathrm{sing}}$ as soon as 
	$$q \geq 6  m (m - 1) n.$$
\end{example}

\subsection{Non-compact ball quotients} In the case where the domain is the ball, it is possible to give explicit values for the constants $C_p$. The result can be stated as follows when $\dim \Omega \geq 5$.
\medskip

\begin{proposition} \label{propCpball}
	We let $\Omega = \mathbb B^n$ for some $n \geq 5$. Let $m \in \mathbb N$, and fix $p \in \llbracket 1, mn \rrbracket$. Let $k \in \mathbb N$ (resp. $d \in \llbracket 0, n-1\rrbracket$) be the quotient (resp. the remainder) in the euclidean division of $p-1$ by $n$. Then the value of $C_p(\Omega^m)$ is given by the table of Figure \ref{tablecp}. 
\end{proposition}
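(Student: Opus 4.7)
The plan is to compute $C_p(\Omega^m)$ by a direct analysis of the Bergman curvature on the product $(\mathbb B^n)^m$, followed by a finite-dimensional optimization. First I recall that on $\mathbb B^n$ with Bergman metric normalized so that $C_n(\mathbb B^n)=1$, the curvature tensor has the standard form
$$
R_{\mathbb B^n}(X,\bar Y,Z,\bar W)=-\tfrac{c}{2}\bigl(\langle X,Y\rangle\langle Z,W\rangle+\langle X,W\rangle\langle Z,Y\rangle\bigr),\qquad c=\tfrac{2}{n+1},
$$
so that the bisectional curvature between unit vectors satisfies $-B(X,Y)=\tfrac{c}{2}(1+|\langle X,Y\rangle|^2)$. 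The key observation is that on $\Omega^m=(\mathbb B^n)^m$ the curvature is an orthogonal direct sum over the factors, so writing $X=(X_1,\dots,X_m)$ at a point, $P_j$ for the projection onto $T_j\Omega^m$, $\Pi_V$ for the projection onto a given $p$-dimensional subspace $V\ni X$, and $A_j=\Pi_V P_j\Pi_V|_V$, a short calculation yields
$$
-\sum_{i=1}^{p} B(X,e_i)=\tfrac{c}{2}\Bigl(\sum_{j=1}^{m}a_j s_j+\sum_{j=1}^{m}\|A_j X\|^2\Bigr),
$$
where $a_j=\|P_jX\|^2$ and $s_j=\operatorname{tr}(A_j)$. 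Thus $C_p(\Omega^m)$ is $\tfrac{c}{2}$ times the minimum over $(V,X)$ of the bracketed expression $T'(V,X)$.

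Next I would rewrite this minimization intrinsically in terms of the operators $A_j$ on $V$. Since $\sum_j A_j=\mathrm{Id}_V$ and each $A_j$ is a compression of an orthogonal projection, the family $\{A_j\}$ is a positive operator-valued measure on $V$ with $\operatorname{tr}(A_j)=s_j$ and $\sum_j s_j=p$; moreover $s_j\leq n$ because $\operatorname{rank}(P_j)=n$. Introducing the self-adjoint operator
$$
M_V=\sum_{j=1}^{m}s_j A_j+\sum_{j=1}^{m}A_j^{2},
$$
one has $T'(V,X)=\langle M_V X,X\rangle$, so $\min_{X\in V}T'(V,X)=\lambda_{\min}(M_V)$ and the problem becomes one of minimizing $\lambda_{\min}(M_V)$ over $V$. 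The structural inequalities $\|A_jX\|^2\geq a_j^2$ and $s_j\geq\lambda_{\max}(A_j)\geq a_j$ will be the main engine for the lower bound.

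To produce a matching upper bound, I propose an explicit "block-plus-balanced" extremal $V$ of the form $V=T_1\oplus\cdots\oplus T_k\oplus V'$, where $V'$ is a $(d+1)$-dimensional subspace of $T_{k+1}\oplus\cdots\oplus T_m$ whose orthonormal basis consists of pure vectors in the factors together with one "mixed" eigenvector whose distribution $(\lambda_j)_{j>k}$ is obtained by solving the Lagrange system coming from the restricted problem $(p',m')=(d+1,m-k)$. Taking $X$ in the prescribed mixed direction, a direct computation evaluates $T'(V,X)$ in terms of $d$, $k$, $m$ and $n$, producing the value that should appear in Figure \ref{tablecp}. The hypothesis $n\geq 5$ appears naturally here: it is exactly what is needed to guarantee that the pure vectors in the $k$ full factors and in the mixed block can be chosen orthonormal, and that the Lagrange critical point of the reduced problem lies in the feasible region rather than being forced to a boundary where the formula would degenerate.

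The hard step is the matching lower bound $\lambda_{\min}(M_V)\geq (\text{table value})$ over all $V$. For this I would argue as follows: on any common eigenbasis of the commuting pair $A_j,A_j^{2}$ (available on the $A_j$-invariant spectral subspaces) one can parametrize by eigenvalues $\mu_{i,j}\in[0,1]$ with $\sum_j\mu_{i,j}=1$ and $\sum_i\mu_{i,j}=s_j$, and compute explicitly
$E_i=\sum_j s_j\mu_{i,j}+\sum_j\mu_{i,j}^{2}$. Minimizing $\min_i E_i$ under these constraints together with $s_j\leq n$, $\sum s_j=p$ reduces to a convex programming problem whose KKT analysis — treating separately the cases where the constraint $s_j\leq n$ is active (forcing the block structure with $k$ "full" factors) and the reduced simplex problem in the remaining $m-k$ factors — exactly reproduces the candidate value. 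This is where I expect the principal technical difficulty: handling all activity patterns of the constraints $s_j\leq n$ and $a_j\geq 0$ in a unified way, and verifying that no non-simultaneously-diagonalizable configuration of $\{A_j\}$ (possible when $m\geq 3$) can do better, which I would control using $\sum_j A_j^{2}\geq\frac{1}{m}\mathrm{Id}_V$ applied to the non-commuting residue.
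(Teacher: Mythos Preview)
Your curvature computation and the identity $T'(V,X)=\langle M_VX,X\rangle$ with $M_V=\sum_js_jA_j+\sum_jA_j^2$ are correct, and the upper-bound construction is roughly on target. The lower bound, however, has a genuine gap. Writing ``on any common eigenbasis of the commuting pair $A_j,A_j^2$'' obscures the real issue: for fixed $j$ these commute trivially, but your parametrisation by scalars $\mu_{i,j}$ with $\sum_j\mu_{i,j}=1$ requires a \emph{simultaneous} eigenbasis for all the $A_j$, which does not exist in general. The proposed remedy $\sum_jA_j^2\geq\tfrac{1}{m}\mathrm{Id}_V$ is far too crude to yield the sharp constants---it gives at best something of order $\tfrac{1}{m}$, whereas the true value is $\tfrac{2}{m-k}$ or one of the exceptional fractions $\tfrac{23}{16},\tfrac{11}{12},\tfrac{21}{32},\tfrac{7}{4},\tfrac{31}{16}$. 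Your account of the role of $n\geq5$ is also off: it is not about orthonormality but about forcing $(n-1)\cdot(\text{number of residual factors})>3$, which eliminates a whole branch of the case analysis.

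The paper avoids all of this by reversing the order of the optimisation. One fixes $X$ first, uses $\mathrm{Aut}(\mathbb B^n)^m$ to put it in the form $X=(\sqrt{r_1}\,e_1^1,\dots,\sqrt{r_m}\,e_1^m)$, and observes that the Hermitian form $-B_0(X,\cdot)$ is then \emph{explicitly diagonal} on $T_{\Omega^m,0}$, with eigenvalues $2r_i$ (once) and $r_i$ ($n-1$ times) for each $i$. For fixed $X$ the optimal $V$ is then dictated by the Ky Fan principle (pick the $p-1$ smallest available eigenvalues on $X^\perp$), so the whole problem collapses to minimising a combinatorial functional $\mathcal F(\underline r,\Gamma)$ over $\underline r\in\Delta_m$ and cardinality-$(p{-}1)$ index sets $\Gamma\subset\llbracket1,m\rrbracket\times\llbracket1,n\rrbracket$. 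The minimisation in $\underline r$ is a constrained quadratic form, handled by an elementary lemma (Lemma~\ref{lemquadform}) saying precisely when coordinates of the minimiser must vanish; a short case analysis on the shape of $\Gamma$ then produces each entry of the table. Your operator-theoretic formulation, if it could be completed, would be more intrinsic, but the eigenvalue-combinatorial route is both simpler and what actually delivers the sharp constants.
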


\begin{figure}[!h]
\centering
\setlength{\tabcolsep}{5pt}
\renewcommand{\arraystretch}{1.5}
\begin{tabular}{|c|c|c|c|c|c|}
\hline
 & $m-k = 1$ & $m-k=2$ & $m-k = 3$ & $m-k = 4$ & $m- k \geq 5$ \\
\hline
	$d =0$ & \multirow{4}{*}{$\frac{d + 2}{n+1}$} & \multicolumn{4}{c|}{$\frac{2}{(m-k)(n+1)}$} \\
\cline{1-1}\cline{3-6}
	$d =1$ &  & $\frac{23}{16} \frac{1}{n+1}$ & $\frac{11}{12} \frac{1}{n+1}$ & $\frac{21}{32} \frac{1}{n+1}$ & \\
\cline{1-1}\cline{3-5}
	$d = 2$ & & $\frac{7}{4} \frac{1}{n+1}$ & \multicolumn{3}{c|}{} \\
\cline{1-1}\cline{3-3}
	$d = 3$ & & $\frac{31}{16} \frac{1}{n+1}$ & \multicolumn{3}{c|}{$\frac{2}{m-k-1} \frac{1}{n+1}$} \\
\cline{1-1}\cline{3-3}
$d \geq 4$ & & \multicolumn{4}{c|}{} \\
\hline
\end{tabular}
	\caption{Values of $C_p$ for the domain $(\mathbb B^n)^m$} \label{tablecp}
\end{figure}

Note the similarity with the case where $\Omega$ is the Siegel upper half-space (see \cite[Proposition 1.4]{cad18}). We will prove Proposition \ref{propCpball} in Section \ref{sectconstantsball}. As an application, we can derive the following hyperbolicity result for symmetric products of ball quotients.  

\begin{corollary} Let $X = \quotient{\Gamma}{\mathbb B^n}$ be a ball quotient by a torsion free lattice with only unipotent parabolic elements, and let $\overline{X} = X \cup D$ be a smooth minimal compactification (see \cite{mok12}). Let $m \geq 1$. Then :
	\begin{enumerate}[(a)]
		\item Let $V \subset \overline{X}_m$ be a subvariety with ${\rm codim} V \leq n - 6$ and $V \not\subset \mathfrak{d}_1(D) \cup (\overline{X}_m)_{\rm sing}$.  Then $V$ is of general type.

		\item Let $p \geq n(m-1) + 6$, and $f : \mathbb C^p \to \overline{X}_m$ be a holomorphic map such that $f(\mathbb C^p) \not\subset  \mathfrak{d}_1(D) \cup (\overline{X}_m)_{\rm sing}$. Then ${\rm Jac}(f)$ is identically degenerate. 
	\end{enumerate}	
\end{corollary}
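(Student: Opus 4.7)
The strategy is to apply Proposition \ref{propcritmetric} (together with its singular-target variant Remark \ref{remCpsing}) to a log-resolution $q : M \longrightarrow \overline{X}_m$ of the compactified symmetric product, endowed with the metric $h$ obtained by pulling back the product Bergman metric on $(\mathbb{B}^n)^m$. Let $E = \sum_i E_i$ be the exceptional divisor of $q$, and let $D'$ denote the reduced strict transform of $\mathfrak{d}_1(D)$; then $h$ is smooth and non-degenerate on $M \setminus (E \cup D')$. Take the threshold $p = n(m-1) + 6$. Under the standing hypothesis $n \geq 6$ (implicit in the codimension bound of $(a)$), write $p - 1 = n(m-1) + 5 = nk + d$ with $k = m-1$ and $d = 5 \in [0, n-1]$; then Proposition \ref{propCpball} gives $C_p((\mathbb{B}^n)^m) = (d+2)/(n+1) = 7/(n+1)$.

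The three hypotheses of Proposition \ref{propcritmetric} are verified as follows. Condition $(i)$ is immediate by construction of $h$. For condition $(ii)$, I invoke Lemma \ref{lemcrit}: since $p \geq n(m-1) + 2$, the Reid-Tai-Weissauer condition $(I_{x, p})$ is satisfied everywhere on $(\mathbb{B}^n)^m$ under the $\mathfrak{S}_m$-action, so we may take all orbifold multiplicities $\alpha_i = 0$ along the exceptional components, i.e., the induced metric on $\bigwedge^{p} T_M$ extends holomorphically across $E$. Consequently, $E$ contributes no orbifold weight and only $D'$ matters for the subsequent vanishing estimate. For condition $(iii)$, the required section $s$ will be constructed by pulling back appropriate $\mathfrak{S}_m$-invariant tensor powers of twisted combinations of $K_{\overline{X}}$ and $D$ from $\overline{X}^m$ to $M$: by Mok's theorem \cite{mok12} that $K_{\overline{X}} + D$ is ample on the smooth minimal compactification, together with the bigness of suitable twists $l K_{\overline{X}} - aD$ coming from Mumford-type proportionality and the ampleness of $K_{\overline{X}}|_D$, one produces for $l$ sufficiently large and divisible enough sections whose Bergman norm, computed using the standard cusp asymptotic in toroidal coordinates, vanishes along $D'$ at order strictly exceeding $1/C_p = (n+1)/7$.

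Once the three conditions are verified, conclusion $(a)$ of Proposition \ref{propcritmetric} gives $(a)$ of the corollary: any subvariety $V \subset \overline{X}_m$ of dimension $\geq p$ not contained in $\mathfrak{d}_1(D) \cup (\overline{X}_m)_{\rm sing}$ lifts to a subvariety of $M$ of the same dimension not contained in $D' \cup E \cup Z(s)$, and is therefore of general type after desingularization. Statement $(b)$ follows from the singular-target variant in Remark \ref{remCpsing}: any holomorphic map $f : \mathbb{C}^p \to \overline{X}_m$ with $f(\mathbb{C}^p) \not\subset \mathfrak{d}_1(D) \cup (\overline{X}_m)_{\rm sing}$ lifts, via a resolution of the main component of the fibered product, to a map whose image meets $M \setminus (D' \cup E \cup Z(s))$, and the Ahlfors-Schwarz argument applied to $\widetilde{h}$ forces ${\rm Jac}(f)$ to be identically degenerate. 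The main obstacle is the construction in condition $(iii)$: precisely controlling the interplay between Mok's ampleness of $K_{\overline{X}} + D$, the positivity of $K_{\overline{X}}|_D$ (ample on each abelian component of $D$), and the exact cusp asymptotic of the Bergman norm along $D'$ — this is what ensures that the vanishing rate $(n+1)/7$ can actually be reached by a global section for every $n \geq 6$.
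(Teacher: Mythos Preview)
Your overall strategy — apply Proposition \ref{propcritmetric} and Remark \ref{remCpsing} to a log-resolution $M \to \overline{X}_m$ equipped with the product Bergman metric, verifying (i) via construction, (ii) via Lemma \ref{lemcrit}, and (iii) via a suitable pluricanonical section — is exactly the paper's approach, and your computation $C_p = 7/(n+1)$ for $p = n(m-1)+6$ is correct. Two minor points: in the paper's convention $E = \sum_i (1-\alpha_i) E_i$, the condition $(I_{x,p})$ gives $\alpha_i = 1$ (not $0$), so that the coefficient $1-\alpha_i$ vanishes; your intended meaning is right but the notation is reversed. Also, to pass from ``not contained in $\mathfrak{d}_1(D) \cup (\overline{X}_m)_{\rm sing}$'' to ``not contained in $Z(s)$'' you must vary $s$ over enough sections to make $\bigcap_s Z(s) \subset E \cup \widetilde{D}$; this is straightforward once (iii) is established with base-point-free sections, but should be said.

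The genuine gap is in condition (iii). You gesture at Mok's ampleness of $K_{\overline{X}} + D$, Mumford proportionality, and ampleness of $K_{\overline{X}}|_D$, but none of these directly yields what is needed: a section of $K_U^{\otimes l}$ whose Bergman norm vanishes along the boundary at rate strictly exceeding $1/C_p = (n+1)/7$, \emph{through any prescribed point} of $U$. The paper closes this gap by invoking the result of Bakker--Tsimerman \cite{BT18}: for a smooth toroidal compactification of a ball quotient, the $\mathbb{Q}$-line bundle $K_{\overline{X}} + (1-\alpha)D$ is ample whenever $\alpha < (n+1)/(2\pi)$. One then picks $\alpha \in \left] \, 1/C_p,\; (n+1)/(2\pi) \, \right[$, which is nonempty precisely because $C_p \geq 7/(n+1) > 2\pi/(n+1)$, i.e.\ because $7 > 2\pi$. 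This numerical inequality is the entire reason the threshold in the statement is $n(m-1)+6$ rather than $n(m-1)+2$. Ampleness then gives, for large $l$, a section $\sigma$ of $l(K_{\overline{X}} + (1-\alpha)D)$ nonvanishing at any finite set of points of $X$; viewed as a section of $l(K_{\overline{X}}+D)$ it vanishes to order $l\alpha$ along $D$, and Mumford's logarithmic growth of the Bergman metric on $K_{\overline{X}}+D$ converts this into vanishing of $\|s^\sharp\|^{2/l}$ at order $\alpha > 1/C_p$ along $\widetilde{D}$. Your proposed ingredients do not obviously produce this precise ampleness at the required threshold, so as written the argument for (iii) is incomplete; citing \cite{BT18} and the inequality $7 > 2\pi$ is what makes it work.
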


\begin{proof}
	Let $q : \widetilde{X} \to \overline{X}_m$ be a resolution of singularities. We may assume that $F = q^{-1}(\mathfrak{d}_1(D) \cup (\overline{X}_m)_{\rm sing})$ is a simple normal crossing divisor. Let $\widetilde{D}$ denote the sum of components of $F$ that project in $ \mathfrak{d}_1(D)$, and $E$ the sum of all other components.	

	Let $p \geq n(m - 1) + 6$ be an integer. By Proposition \ref{propCpball}, since $p \geq n(m-1) + 6$, the constant $C_p$ is given by the first column of Figure \ref{tablecp}, and $C_p = \frac{p - n(m-1) + 1}{n+1} > \frac{2\pi}{n+1}$.

	Let $h$ be the metric induced on $U = \widetilde{X} \setminus (E + D)$. Let us check that the assumptions of Proposition \ref{propcritmetric} are satisfied, with $\Omega = (\mathbb B^n)^m$. (i) is obvious, taking $Z = \emptyset$. By Lemma \ref{lemcrit}, since $p \geq n(m-1) + 2$, the condition $(I_{x, p})$ is satisfied above any singular point of $\overline{X}_m$, so Remark \ref{remcritmet} implies that the hypothesis (ii) is satisfied with $\alpha_i = 1$ for any component $E_i \subset E$.  

	To prove (iii), we make use of \cite{BT18}, whose main result shows that the line bundle $K_{\overline{X}} + (1 - \alpha) D$ is ample for any $\alpha > \frac{n+1}{2\pi}$. Let $\alpha \in ] \frac{1}{C_p}, \frac{n+1}{2\pi}[$. Thus, for $l \in \mathbb N$ large enough, and any $x = (x_1, ..., x_m) \in \overline{X}^m \setminus \cup_{i = 1} {\rm pr}_i^{-1}(D)$, we can find a section $\sigma$ of $l \left( K_{\overline{X}} + (1 - \alpha) D\right)$,  such that $\sigma(x_i) \neq 0$ ($1 \leq i \leq m)$. Let $s^\sharp = \bigotimes_{1 \leq j \leq m} {\rm pr}_j^\ast \sigma$. This is a $\mathfrak{S}_m$-invariant section of $K_{X^m}^{\otimes l}$, which descends to a section $s$ of $K_{U}^{\otimes l}$. Let $u = || s ||^{2/l}_{(\det h^\ast)^l}$.
	
	We need to check the conditions on the growth of $u$ near $E + \widetilde{D}$. First, $u$ is bounded near any point of $E$ since $|| s^{\sharp}||_{(\det h_{\Omega}^\ast)^l}$ is continuous on the manifold $X^m$. Besides, by \cite{mum77}, the determinant of the Bergman metric on $K_{\overline{X}} + D$ has logarithmic growth near $D$. Hence, since $\sigma$, seen as a section of $l(K_{\overline{X}} + D)$, vanishes at order $l \alpha$ along $D$, then the function $|| s^{\sharp} ||_{\det h_{\Omega}^\ast}^{2} = \prod_i {\rm pr}_i^\ast ||s||_{h_{\mathbb B^n}}$ vanishes at any order $< l \alpha$ near ${\rm pr_i}^{\ast} D$. Now  $|| s^\sharp||_{(\det h_{\Omega}^\ast)^l}^{2/l} = u \circ \pi$, where $\pi : \overline{X}^m \to \overline{X}_m$ is the projection, so  $u$ vanishes at order $\alpha$ near any point of $\widetilde{D} \setminus E$. As $\alpha > \frac{1}{C_p}$, the section $s$ satisfies the condition (iii). 
	
	Finally, since $x$ was arbitrary outside $\bigcup_{1 \leq i \leq m} {\rm pr}_i^\ast D$, we conclude from Proposition \ref{propcritmetric} that all $p$-dimensional varieties $V \subset \widetilde{X}$, not included in  $E + \widetilde{D}$, are of general type. This proves (a).

	The proof of $(b)$ follows from the conclusion $(b')$ in Remark \ref{remCpsing}, applied with $M = \widetilde{X}$, and $M_0 = \overline{X}_m$.	
\end{proof}

\subsection{Computation of the curvature constants for the domain $(\mathbb B^n)^m$} \label{sectconstantsball}

We now prove Proposition \ref{propCpball}. We will proceed as in \cite{cad18}, and introduce a certain combinatorial functional whose minimum will give us the value of $C_p (\Omega^m)$.

\begin{defi} \label{defiF} Let 
	$$
	\Delta_m = \{ (r_1, ..., r_m) \in (\mathbb R_+)^m \; | \sum_{1 \leq j \leq m} r_j = 1 \; \text{and}\; r_1 \geq r_2 \geq ... \geq r_m \}.
	$$
	Let $\underline{r} = (r_1, ..., r_m) \in \Delta_m$ and $\Gamma \subset \llbracket 1, m \rrbracket \times \llbracket 1, n \rrbracket$. Denote by $k$ the number of elements of $\Gamma$ in the first column. We assume that $k \leq m - 1$. We define:
	$$
	\mathcal F(\underline{r}, \Gamma) = \left\{ \begin{array}{l}
		2 + \sum\limits_{(i, j) \in \Gamma,\; i \geq 2}  r_i  \; \; \hspace{2em} \text{if} \; k = m - 1 \vspace{1em} \\

		2 \sum\limits_{1 \leq i \leq m} r_i^2 + 2 \sum\limits_{(i, 1) \in \Gamma} r_i + \sum\limits_{(i, j) \in \Gamma, \; j \geq 2} r_i \; \; \hspace{2em} \text{if} \; k \leq m - 2. 
							\end{array} \right.
	$$
\end{defi}
\medskip

From now on, we fix a given minimizer $(\underline{r}, \Gamma)$ for $\mathcal F$, where $\underline{r} \in \Delta_m$, and $\Gamma$ runs among cardinal $p - 1$ subsets of $\llbracket 1, m \rrbracket \times \llbracket 1, n \rrbracket$ with less than $m-1$ elements on the first column. Let $k$ be the number of these elements. We will assume that $(\underline{r}, \Gamma)$ is chosen among all the minimizers so that
\begin{enumerate}[(1)]
	\item $\underline{r} = (r_1, ..., r_m)$ has the maximal number of zero components ;
	\item among all minimizing couples $(\underline{r}, \Gamma)$ satisfying $(1)$, $\Gamma$ is chosen so that {\em $k$ is maximal}.
\end{enumerate}
\medskip

We can make a simple remark on the geometry of $\Gamma$. Let 
$$\Pi = \Gamma \cap \left( \llbracket 1, m \rrbracket \times \llbracket 2, n \rrbracket \right)$$ be the set of elements of $\Gamma$ which are outside of the first column. For each $i \in \llbracket 1, m \rrbracket$, denote by $b_i$ the number of elements of $\Pi$ which are on the $i$-th line. Then, since $r_1 \geq ... \geq r_m$, we see from the formula for $\mathcal F$ that we may suppose that the elements of $\Pi$ are the largest possible in the lexicographic order. This implies that for some $q \in \llbracket 0, m \rrbracket$, $d \in \llbracket 0, n - 2 \rrbracket$, we have $b_{m-j} = n - 1$ ($0 \leq j \leq q - 1$), $b_{m-q} = d$, and $b_{m-j} = 0$ ($m \leq j \leq q + 1$).

\begin{lemma} \label{lemlk} Let $l$ be the maximal integer such that $r_{m - l + 1} = ... = r_m = 0$. We have $l = k$.
\end{lemma}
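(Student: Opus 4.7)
The plan is to establish the two inequalities $k \leq l$ and $l \leq k$ separately, using three ingredients: the Lagrange conditions for the $\underline r$-minimization of $\mathcal F$ with $\Gamma$ fixed, the swap-optimality of $\Gamma$ for fixed $\underline r$, and the two tie-breaking properties (maximal zeros, then maximal $k$).

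For $k \leq l$, the idea is to show that every first-column element of $\Gamma$ must sit on a zero row of $\underline r$. Assume by contradiction that some $(i,1) \in \Gamma$ has $r_i > 0$. A swap within the row first forces $b_i = n-1$: otherwise replacing $(i,1)$ by any unused $(i,j)$ with $j \geq 2$ would drop $\mathcal F$ by $r_i > 0$, contradicting minimality. Hence $a_i := 2 + b_i = n+1$ is the maximal possible value of the coefficient. The Lagrange conditions $4 r_j + a_j = \lambda$ on nonzero rows and $a_j \geq \lambda$ on zero rows then yield $\lambda > n+1$, which is incompatible with any zero row (whose $a_j$ is bounded above by $n+1$). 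Therefore $l = 0$; substituting the explicit $\lambda = (p+k+3)/m$ into $\lambda > n+1$ and combining with the capacity bound $|\Pi| = p-1-k \leq m(n-1)$ gives $k \geq m-1$, contradicting the standing hypothesis $k \leq m-2$.

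For $l \leq k$, the approach is to suppose $l > k$ and derive a contradiction via a local modification of $\Gamma$. Since $l > k$, some zero row $i^*$ has $(i^*,1) \notin \Gamma$, so I insert $(i^*,1)$ into $\Gamma$ while removing another cell. The argument splits according to $\Pi$: (i) if some $(i',j') \in \Pi$ satisfies $r_{i'} > 0$, the swap strictly drops $\mathcal F$ by $r_{i'}$, contradicting minimality; (ii) if $\Pi$ is nonempty but supported on zero rows, $\mathcal F$ is unchanged while the first-column count jumps to $k+1$, contradicting maximality of $k$; (iii) if $\Pi = \emptyset$, then $|\Gamma| = k$ and $\mathcal F = 2/(m-l)$ at the optimum, so redistributing mass to turn one excess zero row (without first-column) into a nonzero one keeps $\Gamma$ and first-column positions intact while strictly lowering $\mathcal F$ to $2/(m-l+1)$, again contradicting minimality.

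The main obstacle I expect is the Case X scenario in the first direction, where a priori a minimizer might exist with all rows nonzero and positive $k$. Closing this loophole relies on the delicate coupling of the explicit $\lambda$-formula, the capacity bound $|\Pi| \leq m(n-1)$, and the standing hypothesis $k \leq m-2$; once that step is in place, the rest of the proof consists of routine swap and redistribution modifications leveraging the two tie-breaking conditions on $(\underline r, \Gamma)$.
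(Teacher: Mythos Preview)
Your approach via swap arguments and Lagrange/KKT conditions is in the same spirit as the paper's (which defers to \cite{cad18}, where the key tool is Lemma~\ref{lemquadform}---itself a packaged Lagrange analysis). Two points need attention.

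First, the ``standing hypothesis $k \leq m-2$'' is not actually in force. The paper's phrase ``less than $m-1$ elements on the first column'' is a slip (it should read ``at most $m-1$''): the definition of $\mathcal F$ explicitly splits into the cases $k=m-1$ and $k\le m-2$, and the lemma immediately after Lemma~\ref{lemlk} treats $k = m-1$ as a live value for the chosen minimizer. Your entire $k \leq l$ argument uses the quadratic branch of $\mathcal F$; in particular your swap $(i,1) \to (i,j)$ and your Lagrange computation both break down at $k=m-1$, and your derived conclusion ``$k\ge m-1$'' is then no contradiction. That case must be handled separately. Fortunately it is trivial: for $k=m-1$ the functional is linear, $\mathcal F = 2 + \sum_j b_j r_j$ with $b_1\le\dots\le b_m$, so tie-breaking rule~(1) forces $\underline r = (1,0,\dots,0)$, giving $l = m-1 = k$ directly.

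Second, in case~(iii) of your $l \leq k$ argument, the assertion $\mathcal F(\underline r, \Gamma) = 2/(m-l)$ is not correct as stated. Granting your already-established $k \leq l$ step, the first-column rows lie among the zero rows, so $\mathcal F(\cdot, \Gamma) = 2\sum_j r_j'^2 + 2\sum_{j \in F} r_j'$ with $|F| = k$. By strict convexity this has a \emph{unique} minimizer on $\Delta_m$, with exactly $k$ zeros (on $F$) and value $2/(m-k)$. Since $(\underline r,\Gamma)$ is a global minimizer, $\underline r$ must be that point, whence $l = k$---contradicting $l > k$ immediately, with no redistribution step needed. If you prefer to keep your redistribution argument, replace the equality by the inequality $\mathcal F(\underline r, \Gamma) \geq 2/(m-l)$ (Cauchy--Schwarz on the $m-l$ nonzero entries); then your $\underline r'$ with $m-l+1$ equal nonzero entries on non-first-column rows gives $\mathcal F(\underline r',\Gamma)=2/(m-l+1)<2/(m-l)\le\mathcal F(\underline r,\Gamma)$, and the contradiction goes through.
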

\begin{proof}
	The proof is exactly the same as the one of \cite[Lemma 3.8]{cad18}, replacing $g$ by $m$, $\Gamma_0$ by $\Gamma$, and "off-diagonal" by "off the first column". 
\end{proof}

The previous proof relies on the following lemma, which will be used frequently in the following.

\begin{lemma} [{see \cite[Lemma 3.9]{cad18}}] \label{lemquadform} Let $a_1 \leq ... \leq a_m$ be non-negative integers, and let $t$ be the smallest integer such that $\sum_{i=1}^t (a_t - a_i) \geq 4$ (let $t = m + 1$ if there is no such integer). Let $\underline{r} \in \Delta_m$ be a minimizer for the quadratic form
	$$
	Q(r_1, ..., r_m) = 2 \sum_{i = 1}^m r_i^2 + \sum_{i = 1}^m a_i r_i.
	$$
	Then $r_{t} = ... = r_m = 0$.
\end{lemma}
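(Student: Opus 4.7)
The plan is to treat this as a convex optimization problem and exploit the fact that the sortedness of the $a_i$ makes the monotonicity constraint in $\Delta_m$ automatic at the optimum. Since $Q$ is strictly convex and the constraint set $\Delta_m$ is a compact convex polytope, the minimizer exists and is unique.

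First I would set up Lagrangian/KKT conditions on the \emph{relaxed} problem where we only impose $\sum r_i = 1$ and $r_i \geq 0$ (dropping $r_1 \geq \dots \geq r_m$). Writing $\lambda$ for the multiplier of the equality constraint and $\mu_i \geq 0$ for the sign constraints, stationarity gives $4 r_i + a_i - \lambda = \mu_i \geq 0$, with $\mu_i r_i = 0$. Hence
\[
r_i = \tfrac{1}{4}\max(0,\, \lambda - a_i),
\]
and the common threshold $\lambda$ is characterised by the normalisation
\[
\sum_{i:\, a_i < \lambda}(\lambda - a_i) \;=\; 4.
\]
Because $a_1 \leq a_2 \leq \dots \leq a_m$, the resulting vector $\underline{r}$ is automatically weakly decreasing, so it lies in $\Delta_m$ and must therefore coincide with the constrained minimizer (by uniqueness).

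Now I would turn to the claim $r_t = \dots = r_m = 0$, i.e.\ that $a_i \geq \lambda$ for all $i \geq t$. Since the $a_i$ are non-decreasing, it suffices to show $\lambda \leq a_t$. Suppose for contradiction that $\lambda > a_t$; then each term $\lambda - a_i$ for $i \leq t$ strictly exceeds $a_t - a_i$, and letting $s \geq t$ be the largest index with $a_s < \lambda$ we get
\[
4 \;=\; \sum_{i=1}^{s}(\lambda - a_i) \;\geq\; \sum_{i=1}^{t}(\lambda - a_i) \;>\; \sum_{i=1}^{t}(a_t - a_i) \;\geq\; 4,
\]
using the defining property of $t$ in the last inequality. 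The strict middle inequality requires $\lambda - a_t > 0$ for at least one index (which holds since $\lambda > a_t$ and $t \geq 2$—note $t=1$ would make the defining sum equal to $0$, never $\geq 4$). This is the desired contradiction, so $\lambda \leq a_t$ and hence $r_t = 0$, which forces $r_{t+1} = \dots = r_m = 0$ as well.

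The only subtle step is verifying that the relaxed minimizer is automatically monotone, and then manipulating the inequality chain correctly; everything else is routine convex analysis. I expect no real obstacle—once the KKT characterisation $4r_i = \max(0, \lambda - a_i)$ is written down, the conclusion follows from a one-line estimate comparing $\lambda - a_i$ to $a_t - a_i$ on the first $t$ coordinates.
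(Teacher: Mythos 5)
Your argument is correct. Note that the paper itself gives no proof of this lemma --- it is quoted from \cite[Lemma 3.9]{cad18} --- so what you have produced is a self-contained replacement rather than a variant of an argument in the text. The route you take is the natural convex-optimization one: strict convexity gives uniqueness, the KKT conditions on the relaxed simplex yield the ``water-filling'' formula $r_i = \tfrac14\max(0,\lambda-a_i)$ with threshold determined by $\sum_{a_i<\lambda}(\lambda-a_i)=4$, monotonicity of the $a_i$ makes the ordering constraint of $\Delta_m$ automatic, and the chain $4=\sum_{i\le s}(\lambda-a_i)\ge\sum_{i\le t}(\lambda-a_i)>\sum_{i\le t}(a_t-a_i)\ge 4$ correctly rules out $\lambda>a_t$ (the middle inequality is strict because each of the $t\ge 1$ terms differs by exactly $\lambda-a_t>0$). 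Two small points you may as well make explicit: the case $t=m+1$ is vacuous, so the final appeal to ``$\sum_{i=1}^t(a_t-a_i)\ge4$ by definition of $t$'' only needs to be made when $t\le m$; and the existence of the threshold $\lambda$ follows from the fact that $\lambda\mapsto\sum_i\max(0,\lambda-a_i)$ is continuous, vanishes for $\lambda\le a_1$, and tends to $+\infty$. Neither is a gap. Your approach has the advantage of identifying the minimizer completely (not just its support), which is stronger than what the lemma asserts.
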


We will now compute the several possible values for the minimum $\mathcal F(\underline{r}, \Gamma)$. We will proceed by distinguishing along the value of $k$. There is one simple first case.

\begin{lemma} If $k = m - 1$, then
	$$
	\mathcal F(\underline{r}, \Gamma) = 2 + b_1. 
	$$
\end{lemma}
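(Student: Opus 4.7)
The proof will be a short direct computation using Lemma~\ref{lemlk} as the only real input. The case hypothesis $k = m-1$ says that the first column of $\Gamma$ already contains the maximal allowed number of entries. Applied to this situation, Lemma~\ref{lemlk} forces $l = k = m - 1$, i.e. $r_{m-l+1} = r_2 = r_3 = \cdots = r_m = 0$. Combined with the simplex constraint $\sum_i r_i = 1$ coming from $\underline r \in \Delta_m$, this pins down $r_1 = 1$, so the minimizer concentrates all of its mass on the first row.

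Once this single-spike profile $\underline r = (1, 0, \ldots, 0)$ is in hand, one simply substitutes it into the $k = m-1$ branch of Definition~\ref{defiF}. Every term of the defining sum that is weighted by some $r_i$ with $i \geq 2$ vanishes automatically. What survives is the additive constant $2$ plus the contributions coming from the row-one entries of $\Gamma$ that lie outside the first column: by the very definition of $b_1$ as the cardinality of $\Pi \cap (\{1\} \times \llbracket 2, n \rrbracket)$, there are exactly $b_1$ such entries, each carrying weight $r_1 = 1$. Summing these contributions yields $\mathcal F(\underline r, \Gamma) = 2 + b_1$.

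In summary, there is no real obstacle here: once the extremality information from Lemma~\ref{lemlk} is specialized to $k = m-1$ and reduces the weight profile to a single spike, the formula defining $\mathcal F$ collapses and the resulting value is pure bookkeeping in terms of the packing data $(b_i)$ recorded for $\Pi$. The only mild care needed is to match the surviving indices in the sum with the elements counted by $b_1$, using the convention (established when $(\underline r, \Gamma)$ was selected) that $\Gamma$ has been arranged so that the entries of $\Pi$ sit at the lexicographically largest available positions.
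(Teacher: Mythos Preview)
Your argument is correct and ends with the same substitution as the paper. The only difference is in how you pin down $\underline r = (1, 0, \ldots, 0)$: you invoke Lemma~\ref{lemlk} (giving $l = k = m-1$), whereas the paper bypasses that lemma entirely and simply observes that in the $k = m-1$ branch one has $\mathcal F(\underline r, \Gamma) = 2 + \sum_{i=1}^m b_i r_i$ with non-decreasing coefficients $b_1 \leq \cdots \leq b_m$, so the minimum of this affine function over the ordered simplex $\Delta_m$ is visibly attained at the vertex $(1,0,\ldots,0)$. The paper's direct minimization is a touch more self-contained here, since the proof of Lemma~\ref{lemlk} (imported from \cite{cad18}) leans on the quadratic Lemma~\ref{lemquadform}, which is really tailored to the $k \leq m-2$ branch of Definition~\ref{defiF}; but either justification is valid and the final bookkeeping is identical.
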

\begin{proof}
In this case, we have
$$
\mathcal F(\underline{r}, \Gamma) = 2 + \sum_{1 \leq i \leq m} b_i r_i.
$$
	Recall that the $b_i$ are non-decreasing. Since $\underline{r}$ must be an extremum of the function $\mathcal F(\cdot, \Gamma)$, we see that we may chose $\underline{r} = (1, 0, ..., 0)$, which gives the result.
\end{proof}

We will now assume that $k \leq m - 2$, and distinguish several subcases.
\medskip

{\bf Case 0.} $q < k$.

In this situation, since $r_{m-k+1} = ... = r_m = 0$, we simply have $\mathcal F(\underline{r}, \Gamma) = 2 \sum_{i = 1}^{m-k} r_i^2$. The minimum is then reached for $(r_1, ..., r_m) = (\frac{1}{m-k}, ..., \frac{1}{m-k}, 0, ..., 0)$, and the value of the minimum is
$$
\mathcal F(\underline{r}, \Gamma) = \frac{2}{m-k}.
$$
\medskip

{\bf Assumption.} In the remaining cases 1 and 2 below, we will assume that $q \geq k$, which means that $r_{m-q} \neq 0$.
\medskip

{\bf Case 1.} $d \geq 1$. 

 By our previous description of the shape of $\Pi$, this implies that two subcases are {\em a priori} possible.
\medskip

\hspace{1em} {\bf Case 1a.} $q \geq k + 1$, i.e. the line $\{m-k\} \times \llbracket 2, n-1 \rrbracket$ is included in $\Gamma$.
\medskip

\hspace{1em} {\bf Case 1b.} $q = k$ i.e. the only elements of $\llbracket 1, m - k \rrbracket \times \llbracket 2, n-1 \rrbracket$ in $\Gamma$ are the $d$ last elements of $\{m-k\} \times \llbracket 2, n-1 \rrbracket$.

\begin{lemma} The case 1a. cannot occur.
\end{lemma}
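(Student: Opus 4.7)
The plan is to argue by contradiction, combining Lemma \ref{lemquadform} with an explicit combinatorial swap. Assume Case 1a, so $q \geq k+1$ and $1 \leq d \leq n-2$. Set $e := n-1-d$ and $s := m-k-1 \geq 1$. Under the lex-maximal convention for $\Pi$, and with the $k$ first-column elements placed on the bottom lines $m-k+1, \ldots, m$ (where $r_i = 0$ by Lemma \ref{lemlk}), the coefficient sequence $c_i := 2 \mathbf{1}_{(i,1) \in \Gamma} + b_i$ of the quadratic form $\mathcal{F}(\cdot, \Gamma) = 2\sum r_i^2 + \sum c_i r_i$ is non-decreasing and reads
$$
\underbrace{0, \ldots, 0}_{m-q-1},\; d,\; \underbrace{n-1, \ldots, n-1}_{q-k},\; \underbrace{n+1, \ldots, n+1}_{k}.
$$

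The first step is to pin down $q = m-1$. Applying Lemma \ref{lemquadform} with $t = m-k$ yields $\sum_{i=1}^{m-k}(c_{m-k} - c_i) = (m-q)(n-1) - d$; if $m-q \geq 2$, then with $n \geq 5$ and $d \leq n-2$ this is at least $2(n-1) - (n-2) = n \geq 5 > 4$, so the lemma would force $r_{m-k} = 0$, contradicting $l = k$ from Lemma \ref{lemlk}. Combined with $d \geq 1$, this forces $q = m-1$. A direct Lagrange computation then yields $\lambda = (n-1) + (4-e)/(s+1)$ with $r_1 = (es+4)/[4(s+1)]$ and $r_{m-k} = (4-e)/[4(s+1)]$; the constraint $r_{m-k} > 0$ from Lemma \ref{lemlk} further forces $e \in \{1, 2, 3\}$.

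The second step is an elementary swap. Since $b_1 = d \geq 1$, there is some $(1, j_0) \in \Gamma$ with $j_0 \geq 2$, while $(m-k, 1)$ is unoccupied. Set $\Gamma' := (\Gamma \setminus \{(1, j_0)\}) \cup \{(m-k, 1)\}$, which has the same cardinality as $\Gamma$ and $k+1$ first-column elements. At the Lagrange optimum for $\Gamma$,
$$
\mathcal{F}(\underline{r}, \Gamma') - \mathcal{F}(\underline{r}, \Gamma) \;=\; 2 r_{m-k} - r_1 \;=\; \frac{4 - e(s+2)}{4(s+1)},
$$
which is strictly negative as soon as $e(s+2) > 4$. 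This covers every admissible $(e, s)$ with $e \in \{1, 2, 3\}$ and $s \geq 1$ except $(e, s) \in \{(1, 1), (1, 2)\}$, and in each such case contradicts the minimality of $\mathcal{F}(\underline{r}, \Gamma)$.

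The main obstacle will be to handle the two boundary pairs $(e, s) \in \{(1, 1), (1, 2)\}$, for which the swap does not lower $\mathcal{F}$ at fixed $\underline{r}$. I would finish by re-optimizing $\underline{r}$ for $\Gamma'$ and comparing with the closed form $\mathcal{F}^{\ast}(s, e) := (n+1-e) - (4-e)^2 s/[8(s+1)]$ derived from the Lagrange step above. For $(e, s) = (1, 2)$, $\Gamma'$ stays in Case 1a with new parameters $(s', e') = (1, 2)$, and direct substitution gives $\mathcal{F}^{\ast}(1, 2) = n - 5/4 < n - 3/4 = \mathcal{F}^{\ast}(2, 1)$. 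For $(e, s) = (1, 1)$, the new $k+1 = m-1$ puts $\Gamma'$ into the separate regime treated at the start of the section, where the minimum of $\mathcal{F}$ equals $2 + b'_1 = n-1 < n - 9/16 = \mathcal{F}^{\ast}(1, 1)$ using $n \geq 5$. In either sub-case $\Gamma'$ yields a strictly smaller value of $\mathcal{F}$ than $(\underline{r}, \Gamma)$, completing the contradiction.
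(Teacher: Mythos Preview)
Your overall strategy is sound and close in spirit to the paper's: both arguments first use Lemma~\ref{lemquadform} to force $q=m-1$ and $e=n-1-d\in\{1,2,3\}$, and then perform a swap into the first column to reach a contradiction. The paper organizes the endgame differently---it first bounds $b:=s\le 2$ by a ``remove $4-\delta$ elements from row~$1$, then re-insert them in the first column'' trick, then disposes of $b=1$ by citing \cite{cad18} and of $b=2$ by a two-element swap---whereas you try a single one-element swap and isolate the residual cases. Either route works, but yours has a gap at $s=1$ that you must repair.

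The issue is your identity $\mathcal F(\underline r,\Gamma')-\mathcal F(\underline r,\Gamma)=2r_{m-k}-r_1$. This is obtained by applying the \emph{second} branch of Definition~\ref{defiF} to $\Gamma'$, which is only legitimate when $k+1\le m-2$, i.e.\ $s\ge 2$. When $s=1$, your swap produces a $\Gamma'$ with $k+1=m-1$ first-column elements, so $\mathcal F(\underline r,\Gamma')$ must be computed with the \emph{first} branch; the difference then equals $2-r_1-2(r_1^2+r_2^2)=r_1(4r_2-1)$, not $2r_2-r_1$. At your Lagrange point this gives $\tfrac{8-2e-e^2}{16}$, which is $+\tfrac{5}{16},\,0,\,-\tfrac{7}{16}$ for $e=1,2,3$. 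So your claimed strict decrease fails for $(e,s)=(2,1)$ (it is only an equality) and your formula, while giving the right sign, is not the correct computation for $(e,s)=(3,1)$. You only acknowledge the regime change for $(1,1)$.

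The fix is immediate and uniform: for \emph{every} $s=1$ case, do exactly what you already do for $(1,1)$. After the swap, $\Gamma'$ has $m-1$ first-column elements, and by the lemma preceding the case split, $\min_{\underline r'}\mathcal F(\underline r',\Gamma')=2+b'_1=2+(d-1)=n-e$. Comparing with your $\mathcal F^\ast(1,e)=(n+1-e)-\tfrac{(4-e)^2}{16}$, the inequality $n-e<\mathcal F^\ast(1,e)$ is equivalent to $(4-e)^2<16$, which holds for all $e\in\{1,2,3\}$. This yields the desired contradiction for $(e,s)\in\{(1,1),(2,1),(3,1)\}$ simultaneously, and your argument for $s\ge 2$ (where the formula $2r_{m-k}-r_1$ is valid) together with your treatment of $(1,2)$ then completes the proof.
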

\begin{proof}
	In the case 1a, since $r_{m- k} \neq 0$, Lemma \ref{lemquadform} shows that $\sum_{i \leq m-k} (b_{m-k} - b_i) \leq 3$. Hence, all elements of $\llbracket 1, m - k\rrbracket \times \llbracket 2, n-1 \rrbracket$ are in $\Gamma$, except $\delta$ elements on the first line, with $1 \leq \delta \leq 3$. (If $\delta = 0$, we would have $d = 0$).

	This shows that $b_{1} = n- 1 - \delta$, with $1 \leq \delta \leq 3$, and $b_j = n- 1$ ($2 \leq j \leq m- k$). In this setting, the minimizer $\underline{r}$ is of the form $(x, y, ..., y, 0, ..., 0)$ where $y$ is repeated $m-k-1$ times, and $x + (m-k-1)y = 1$. Let $b = m - k -1$.  

	The minimum then equals
	$$
	\begin{aligned}
		\mathcal F(\underline{r}, \Gamma) = 2 x^2 + 2 b y^2 + (n-1) - \delta x. \\
	\end{aligned}
	$$

	We claim that $b \leq 2$. Indeed, if $b \geq 3$, since $n - 1 \geq 4$, we can remove $4 - \delta$ elements on the first line of $\Gamma$, to get a new set $\Gamma'$. If $\underline{r}' \in \Delta_{m}$ is a minimizer for the functional $\mathcal F(\cdot, \Gamma')$, we have $r_2' = ... = r_m' = 0$ by Lemma \ref{lemquadform}. Since $b \geq 3$, there is enough room on the first column of $\Gamma'$ to add back the $4 - \delta$ elements, which gives a new set $\Gamma''$ with strictly more elements on the first column than $\Gamma$. Now 
	$$\mathcal F(\underline{r}', \Gamma'') = \mathcal F(\underline{r}', \Gamma') \leq \mathcal F(\underline{r}, \Gamma') \leq \mathcal F(\underline{r}, \Gamma).$$ (The first equality comes from the fact the $r_2' = ... = r'_m = 0$, and the inequalities are obvious since all $r_i$ are non-negative). This gives a contradiction with our choice of $(\underline{r}, \Gamma$). 

	The same computation as in \cite[Lemma 3.14]{cad18} shows that the case $b = 1$ is impossible.

	Let us finally exclude the case $b = 2$. In this situation $\underline{r} = (x, y, y, 0, ..., 0)$ minimizes $\mathcal F(\underline{r}, \Gamma) = 2x^2 + 4y^2 + (n-1) - \delta x$,
with the constraint $x + 2y = 1$. We check that the minimum is equal to
	$$
	n - \frac{(2 + \delta)^2}{12}.	
	$$
	Since $b = 2$, there are two elements of $\llbracket 1, m \rrbracket \times \{1 \}$ which are not in $\Gamma$, and we can move two elements of the first row $\Gamma$ to get a new set $\Gamma'$ with $m-1$ elements in the first column. Letting $\underline{r}' = (1, 0, ..., 0)$, we have
	$$
	\begin{aligned}
		\mathcal F(\underline{r}', \Gamma') & = 2 + (n-1) - (\delta + 2)  \\
		& = n - 1 - \delta  \\
		&  < n - \frac{(2 + \delta)^2}{12} = \mathcal F(\underline{r}, \Gamma), 
	\end{aligned}
	$$
	since $\delta \in \left\{ 1, 2, 3 \right\}$. This is a contradiction.
\end{proof}

\begin{lemma}
	In the case 1b, there are only 5 possibilities, which are given in the table of Figure 2.
\end{lemma}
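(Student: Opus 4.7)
The plan is to reduce the functional $\mathcal F$ in Case 1b to a one-parameter family of constrained quadratic problems, apply Lemma \ref{lemquadform} to enumerate the admissible configurations, and then compute the minimum in each case explicitly.

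First I would simplify $\mathcal F$ using the Case 1b hypotheses $k \leq m-2$, $q = k$, $d \geq 1$. By Lemma \ref{lemlk} we have $r_{m-k+1} = \ldots = r_m = 0$, and the maximality of $k$ in our choice of minimizer, together with the weight $2 r_i$ attached to first-column elements, allows all $k$ elements of $\Gamma$ on the first column to be placed on these vanishing rows without cost. The lexicographic shape of $\Pi$ then puts $k(n-1)$ of its elements on the same vanishing rows, leaving only $d$ elements sitting on row $m - k$. Setting $l := m - k \geq 2$, the optimization reduces to
\[
\min_{\substack{r_1 \geq \ldots \geq r_l > 0 \\ r_1 + \ldots + r_l = 1}} \Bigl( 2 \sum_{i=1}^{l} r_i^2 \,+\, d\, r_l \Bigr).
\]

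Next I would apply Lemma \ref{lemquadform} with $a_1 = \ldots = a_{l-1} = 0$ and $a_l = d$: the threshold index $t$ of that lemma equals $l$ precisely when $(l-1) d \geq 4$, so the requirement $r_l > 0$ forces $(l-1) d < 4$. Combined with $l \geq 2$ and $d \geq 1$, this leaves exactly the five pairs $(l,d) \in \{(2,1),\, (2,2),\, (2,3),\, (3,1),\, (4,1)\}$, which gives the claimed enumeration.

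Finally, I would solve each Lagrange problem by symmetry: set $r_1 = \ldots = r_{l-1} = x$ and $r_l = y$ with $(l-1) x + y = 1$, reducing the objective to a one-variable quadratic in $y$. Elementary calculus yields the five values $\tfrac{23}{16},\ \tfrac{7}{4},\ \tfrac{31}{16},\ \tfrac{11}{12},\ \tfrac{21}{32}$ for the pairs above respectively, and one checks that the corresponding critical $(x,y)$ does satisfy $x \geq y > 0$. These match the entries of Figure \ref{tablecp} after the universal normalization factor $\tfrac{1}{n+1}$. The main obstacle is ensuring the placement step is genuinely without loss of generality, but this is a direct consequence of the extremality properties (maximality of $k$ and of the number of vanishing components of $\underline r$) built into our choice of minimizer $(\underline r, \Gamma)$.
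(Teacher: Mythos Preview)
Your proposal is correct and follows essentially the same approach as the paper: both reduce $\mathcal F$ to $2\sum_{j=1}^{m-k} r_j^2 + d\,r_{m-k}$, apply Lemma \ref{lemquadform} to obtain the constraint $d(m-k-1)\leq 3$ (your $(l-1)d<4$), and enumerate the five admissible pairs. The only difference is that the paper outsources the explicit computation of the five minima to \cite[Case 2]{cad18}, whereas you carry out the one-variable quadratic minimization directly.
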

\begin{proof}
	In this case, we have $b_{m-q} = d$, and this is the only non-zero $b_j$ with $j \leq m - l$. By Lemma \ref{lemquadform} again, we have $d(m - k - 1) \leq 3$ since $r_{m-k} \neq 0$. Since $d \neq 0$ and $m-k \geq 2$ in the case under study, this gives only only five possibilities. The corresponding values for the minimum of $\mathcal F(\underline{r}, \Gamma) = 2 \sum_{j=1}^{m-k} r_j^2 + d r_{m-k}$ were computed in \cite[Case 2]{cad18}. 
\end{proof}

\begin{figure}[h]
\centering
\renewcommand{\arraystretch}{1.5}
\setlength{\tabcolsep}{5pt}
\begin{tabular}{|c|c|c|c|c|}
\hline
 &  $m-k=2$ & $m-k = 3$ & $m-k = 4$ \\
\hline
$d =1$ &  $\frac{23}{16}$ & $\frac{11}{12}$ & $\frac{21}{32}$ \\
\hline
$d = 2$  & $\frac{7}{4}$ &  &  \\
\hline
$d = 3$  & $\frac{31}{16}$ &  &  \\
\hline
\end{tabular}
\caption{Possible values of the minimum of $\mathcal F$ in the case 1b}
\end{figure}

There is only one remaining case.
\medskip

{\bf Case 2.} $d = 0$. 

\begin{lemma}
	Case 2 cannot occur unless $\Gamma$ is of the form $\llbracket m - k + 1, m \rrbracket \times \llbracket 1, n \rrbracket$.
	The value of the minimum is then
	$$
	\mathcal F(\underline{r}, \Gamma) = \frac{2}{m-k}.
	$$
\end{lemma}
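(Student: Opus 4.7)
The plan is to combine the geometric description of $\Pi$ in case 2 with the explicit form of the minimizer $\underline{r}$ to reduce $\mathcal{F}(\underline{r}, \Gamma)$ to an elementary expression, then read off the answer by inspection. Recall that in case 2 we have $d = 0$ and $q \geq k$, which forces $\Pi = \llbracket m-q+1, m \rrbracket \times \llbracket 2, n \rrbracket$; writing $K \subset \llbracket 1, m \rrbracket$ for the set of rows carrying a first-column element of $\Gamma$, one has $|K| = k$ and $\Gamma = (K \times \{1\}) \sqcup \Pi$.

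First I would fix $\underline{r}$. By Lemma \ref{lemlk}, $r_{m-k+1} = \cdots = r_m = 0$, so the functional reduces to the quadratic form $2\sum_{i=1}^{m-k} r_i^2$ plus a linear form whose coefficients are non-negative. The quadratic part wants the non-zero $r_i$ balanced, while the linear part is non-increasing along any transfer of mass from a row carrying a constraint to a row not carrying one; hence the minimum of $\mathcal{F}(\,\cdot\,, \Gamma)$ for fixed $\Gamma$ is attained at $r_1 = \cdots = r_{m-k} = \tfrac{1}{m-k}$. Substituting this gives
$$
\mathcal{F}(\underline{r}, \Gamma) \;=\; \frac{1}{m-k}\Bigl(\, 2 \;+\; 2\bigl|K \cap \llbracket 1, m-k \rrbracket\bigr| \;+\; (n-1)(q-k) \,\Bigr).
$$

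Next I would minimize over the remaining combinatorial data (the placement of $K$ and the value of $q$). Both correction terms are non-negative, so the value $\tfrac{2}{m-k}$ is achieved only when $K \cap \llbracket 1, m-k \rrbracket = \emptyset$ and $q = k$. Since $|K| = k = |\llbracket m-k+1, m \rrbracket|$, the first condition forces $K = \llbracket m-k+1, m \rrbracket$; combined with $q = k$, this gives $\Pi = \llbracket m-k+1, m \rrbracket \times \llbracket 2, n \rrbracket$, so $\Gamma = \llbracket m-k+1, m \rrbracket \times \llbracket 1, n \rrbracket$ and $\mathcal{F}(\underline{r}, \Gamma) = \tfrac{2}{m-k}$, as asserted.

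The only potential subtlety is justifying that the $\underline{r}$ I chose really agrees with the one selected by the minimality rules (1) and (2) preceding the case analysis, and that no competing configuration with a different $k$ can sneak in under the hood. But those rules freeze $k$ and the number of zero components of $\underline{r}$ at the outset of the case discussion, so the reduction above is valid for \emph{every} admissible couple with the given parameters; any deviation from $\Gamma = \llbracket m-k+1, m \rrbracket \times \llbracket 1, n \rrbracket$ then strictly increases $\mathcal{F}$, which is incompatible with $(\underline{r}, \Gamma)$ being a minimizer. This closes the case.
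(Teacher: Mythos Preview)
There are two genuine gaps in your argument.

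\textbf{The balanced minimizer is not justified (and is generally false).} You claim that for fixed $\Gamma$, the minimum of $\mathcal F(\cdot,\Gamma)$ over $\underline r$ with $r_{m-k+1}=\dots=r_m=0$ is attained at the balanced point. Your two observations (``the quadratic part wants the $r_i$ balanced'' and ``the linear part decreases when mass is moved from constrained to unconstrained rows'') pull in opposite directions, so the ``hence'' does not follow. Concretely, if $k<q<m$ then rows $m-q+1,\dots,m-k$ carry linear coefficient $n-1\geq 4$ while rows $1,\dots,m-q$ carry coefficient $0$; Lemma~\ref{lemquadform} then shows that the actual minimizer of $\mathcal F(\cdot,\Gamma)$ has $r_{m-q+1}=\dots=r_m=0$, i.e.\ strictly more than $k$ zeros, not the balanced vector. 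This is exactly how the paper proceeds: the contradiction with Lemma~\ref{lemlk} (which says the chosen $\underline r$ has \emph{exactly} $k$ zeros) eliminates $k<q<m$ and leaves only $q=k$ or $q=m$. Your formula at the balanced point is therefore only an upper bound for $\mathcal F(\underline r,\Gamma)$ in general, and it does not by itself pin down $\underline r$.

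\textbf{You cannot ``minimize over $q$'' at fixed $k$.} In Case~2 one has $|\Gamma|=k+q(n-1)=p-1$, so once $k$ is fixed (and you do freeze $k$, as you say), $q$ is uniquely determined. In particular the prescribed set $\llbracket m-k+1,m\rrbracket\times\llbracket1,n\rrbracket$ has cardinality $kn$, which equals $p-1$ only when $q=k$; if $q=m$ it is \emph{not} a competitor, so your comparison ``any deviation strictly increases $\mathcal F$'' is comparing against an inadmissible configuration. The paper handles the surviving case $q=m$ by a genuinely different move: it relocates elements of $\Gamma$ into empty first-column slots to build a competitor $\Gamma'$ with \emph{more} first-column elements, and then either obtains a strictly smaller value of $\mathcal F$ (for $m-k\geq 3$) or an equal value with larger first-column count (for $m-k=2$), contradicting the maximality rule (2). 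That step, together with the application of Lemma~\ref{lemquadform} using $n\geq 5$, is the substance of the argument and is missing from your proposal.
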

\begin{proof}
	If $\Gamma$ is not of the prescribed form, we have 
	$$
	\mathcal F (\underline{r}, \Gamma) = 2 \sum_{1 \leq j \leq m- k} r_j^2 + (n-1) \sum\limits_{j = m - q + 1}^{m-k} r_j,
	$$
	with $q < k$. Applying another time Lemma \ref{lemquadform}, since $r_{m-k} \neq 0$, we have $(n-1)(m-q) \leq 3$ for all $t \geq 1$. As we assumed that $n \geq 5$, this implies that $q = m$, i.e. $\Gamma$ contains all the elements which are not on the first column. The minimum is then reached for $\underline{r}$ of the form $\underline{r} = (\frac{1}{m-k}, ..., \frac{1}{m-k}, 0, ..., 0)$ ($1/(m-k)$ repeated $m-k$ times), and its value is
$$
	\mathcal F (\underline{r}, \Gamma) = \frac{2}{m-k} + (n-1).
$$
	However, this is absurd. Indeed, let $\Gamma'$ be obtained from $\Gamma$ by moving elements to its $m - k - 1$ empty slots on the first column (recall that we consider sets with at most $m-1$ elements on the first column). 
	
	If $m-k \geq 3$, we may then assume that $\Gamma'$ has less than $(n-1) - 2$ elements on the first line. Letting $\underline{r}' = (1, 0, ..., 0)$, we get
	$$
	\mathcal F(\underline{r}', \Gamma') \leq 2 + (n-3) < \frac{2}{m-k} + (n-1) = \mathcal F(\underline{r}, \Gamma),  
	$$
	which is a contradiction.

	If $m-k = 2$, we may move one element, and assume that $\Gamma'$ has $n-2$ elements on the first line. Then, letting again $\underline{r}' = (1, 0, ..., 0)$, we get
	$$
	\mathcal F(\underline{r}', \Gamma') = 2 + (n-2) = \frac{2}{m-k} + (n-1) = \mathcal F(\underline{r}, \Gamma).
	$$
This is again a contradiction, since we assumed that $\Gamma$ had the maximal number of elements on the first column.
\end{proof}

Putting everything together, we have proved the following.

\begin{proposition} \label{propCp}
	Let $p \in \llbracket 1, mn \rrbracket$. Let $k = \lfloor \frac{p - 1}{n} \rfloor$, and $d = p - 1 - kn$. Let $(\underline{r}, \Gamma)$ be a minimizer for $\mathcal F$, where $\underline{r} \in \Delta_m$, and $\Gamma \subset \llbracket 1, m \rrbracket \times \llbracket 1, n \rrbracket$ is a cardinal $p - 1$ subset with less that $m-1$ elements on the first column. Then
	\begin{enumerate}
		\item the value of $\mathcal F(\underline{r}, \Gamma)$ is given by the table of Figure \ref{figF} ;
		\item we may choose $(\underline{r}, \Gamma)$ so that the elements of $\Gamma$ in the first column are the $(j, 1)$ with $j \geq m-k+1$, and so that $r_{m-k+1} = ... = r_{m} = 0$. 
	\end{enumerate}
\end{proposition}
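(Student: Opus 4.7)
The plan is to assemble Proposition \ref{propCp} as the direct synthesis of the case analysis already carried out in the preceding lemmas of Section \ref{sectconstantsball}. I would organize the proof in three short paragraphs.

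First, the structural claim (2) is essentially a repackaging of the preliminary observations made after Definition \ref{defiF}. The lexicographic reordering argument (exploiting $r_1 \geq \ldots \geq r_m$ together with the fact that $\mathcal{F}$ depends on $\Pi$ only through its row-profile $(b_i)$) allows one to push the elements of $\Pi$ as far towards the bottom-right as possible, while Lemma \ref{lemlk} identifies the number $l$ of zero components of $\underline{r}$ with the number $k$ of first-column entries of $\Gamma$. The remaining freedom to place first-column elements in rows where $r_i = 0$ without changing $\mathcal{F}$ then lets one position them at rows $j \geq m - k + 1$, which is the desired normal form.

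Second, I would tabulate the minimum value of part (1) by enumerating the cases already handled: if $k = m-1$ (equivalently $m-k = 1$), the dedicated lemma gives $\mathcal{F} = 2 + b_1$, which, after the normalization above, equals $d + 2$ with $d = p - 1 - (m-1)n$; if $k \leq m - 2$ with $q < k$, the minimum is $\tfrac{2}{m-k}$, attained at $\underline{r} = (\tfrac{1}{m-k}, \ldots, \tfrac{1}{m-k}, 0, \ldots, 0)$; if $k \leq m - 2$ with $q \geq k$ and $d \geq 1$, then sub-case 1a is ruled out by moving off-column elements to the first column and contradicting the maximality of $k$, while sub-case 1b yields exactly the five exceptional fractions tabulated there; and if $k \leq m-2$ with $q \geq k$ and $d = 0$, a similar move-to-first-column argument (crucially invoking $n \geq 5$) collapses the value again to $\tfrac{2}{m-k}$.

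Third, I would verify that each of these outcomes matches the corresponding cell of Figure \ref{figF} indexed by $(m-k, d)$, paying particular attention to the ranges $m - k \geq 5$ and $d \geq 4$ where the exceptional fractions from case 1b no longer appear, and checking that the formula $\tfrac{2}{m-k}$ in cases 0 and 2 agrees with the entry $\tfrac{2}{(m-k)(n+1)}$ (after the normalizing factor) listed in Figure \ref{tablecp}. The main obstacle is not analytical but bookkeeping: one must carefully track that the two maximality conditions on $(\underline{r}, \Gamma)$ (maximal number of zero components of $\underline{r}$, then maximal $k$ among such minimizers) are genuinely preserved through every reduction, since cases 1a and 2 are both eliminated precisely by exhibiting a competing $(\underline{r}', \Gamma')$ with strictly more first-column elements. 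The hypothesis $n \geq 5$ enters exactly once, in case 2, to force $q = m$ via Lemma \ref{lemquadform}.
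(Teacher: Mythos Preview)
Your proposal is correct and matches the paper's own approach: the paper presents Proposition~\ref{propCp} simply as the summary ``Putting everything together, we have proved the following'', and your three-paragraph synthesis of the preceding case analysis (lexicographic normalisation of $\Pi$, Lemma~\ref{lemlk}, then the enumeration Case~$k=m-1$ / Case~0 / Case~1b / Case~2) is exactly that. One small inaccuracy: the hypothesis $n\geq 5$ is not used only in Case~2; it also enters in the exclusion of Case~1a, where the paper needs $n-1\geq 4$ to remove $4-\delta$ off-column elements from the first row before reinserting them in the first column.
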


\begin{figure}[!h]
\centering
\setlength{\tabcolsep}{5pt}
\renewcommand{\arraystretch}{1.5}
\begin{tabular}{|c|c|c|c|c|c|}
\hline
 & $m-k = 1$ & $m-k=2$ & $m-k = 3$ & $m-k = 4$ & $m- k \geq 5$ \\
\hline
$r =0$ & \multirow{4}{*}{$d + 2$} & \multicolumn{4}{c|}{$\frac{2}{m-k}$} \\
\cline{1-1}\cline{3-6}
$d =1$ &  & $\frac{23}{16}$ & $\frac{11}{12}$ & $\frac{21}{32}$ & \\
\cline{1-1}\cline{3-5}
$d = 2$ & & $\frac{7}{4}$ & \multicolumn{3}{c|}{} \\
\cline{1-1}\cline{3-3}
$r = 3$ & & $\frac{31}{16}$ & \multicolumn{3}{c|}{$\frac{2}{m-k-1}$} \\
\cline{1-1}\cline{3-3}
$d \geq 4$ & & \multicolumn{4}{c|}{} \\
\hline
\end{tabular}
\caption{Values of the maxima of $\mathcal F$} \label{figF}
\end{figure}

We will now show that the previously computed maxima permit to give the constant $C_p$. Let us recall how this constant can be computed. 
\medskip

In the following, if $\Omega$ is a bounded symmetric domain, and $X$ is a vector tangent to $\Omega$, we will denote by $B_0^{\Omega}(X, \cdot)$ the following bilinear form:
$$
B_0^\Omega (X, \cdot) : Y \longmapsto i \Theta(h_{\Omega})(X, \overline{X}, Y, \overline{Y}).
$$

	Let $X \in T_{\Omega, 0} $ be a unitary vector. Let $V \subset T_{\Omega^m, 0}$  be a $d$-dimensional vector space containing $X$. We now assume that the pair $(X, V)$ realizes the maximum of \eqref{eqCp}. We let $\mathrm{Aut}(\mathbb B^n)^m$ act on $\Omega$ so that $X$ decomposes in the direct sum $T_{\Omega, 0} = (T_{\mathbb B^n, 0})^{\oplus m}$ as $X = (\alpha_1 e_1^1, ..., \alpha_m e_1^m)$, where $(e_1^i, ..., e_n^i)$ denotes a unitary basis of the $i$-th factor $T_{\mathbb B^n}$. We let $r_i = \alpha_i^2$ ($1 \leq i \leq m$), so that $\sum_{1 \leq i \leq m} r_i = 1$. We may assume that $r_1 \geq r_2 \geq ... \geq r_m$.

	By our choice of $(X, V)$, we have
	\begin{equation} \label{exprcp}
		C_{p} = - B_0(X, X) + \sum_{\lambda \in S(V)} \lambda,
	\end{equation}
	where $S(V)$ is the set of the $p-1$ eigenvalues of the restriction of the hermitian form $ - B_0(X, \cdot)$ to $X^{\perp} \cap V$ (with multiplicities). We let $W \subset V$ be a $(p-1)$-dimensional vector subspace, spanned by corresponding eigenvectors, so that $V = \mathbb C X \overset{\perp}{\oplus} W$.
	\medskip

	Let us now explain how to compute the eigenvalues of the hermitian form $B_0^{\Omega}(X, \cdot)$ on the space $T_{\Omega, 0}$. First, it is easy to show that for $U = (U_1, ..., U_m)$, $V = (V_1, ..., V_m)$ in $T_{\Omega, 0}$, we have
	$$
	B_0^{\Omega} (U, V) = \sum_{1 \leq m} B_0^{\mathbb B^n}(U_i, V_i).
	$$

To simplify the computation, we will temporarily adopt a new normalization on $h_{\mathbb B^n}$, so that for any $U \in T_{\mathbb B^n, 0}$, the eigenspaces of $- B_0^{\mathbb B^n}(U, \cdot)$ are
$$
\left\{ \begin{array}{ll}
	\mathbb C \cdot U & \text{for the eigenvalue} \; 2 || U ||^2 ; \\
	U^{\perp} \subset T_{\mathbb B^n} & \text{for the eigenvalue} \; || U||^2. \\
\end{array} \right.
$$

Thus, with this normalization, the eigenvalues of $B_0^{\Omega} (X, \cdot)$ are $2 r_i$ (with multiplicity $1$, and eigenvector $e_1^i$) and $r_i$ (with multiplicity $n-1$, with eigenvectors $e_2^i, ..., e_n^i$), for $1 \leq i \leq m$.

\begin{proposition}
	With the above normalization, the constant $C_p$ is equal to the minimum of $\mathcal F$.
\end{proposition}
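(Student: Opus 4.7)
The plan is to prove $C_p = \min \mathcal{F}$ by establishing two matching inequalities. Recall that $C_p = \min_{X, V} \operatorname{tr}(A|_V)$ with $A \bydef -B_0^\Omega(X, \cdot)$, $X \in T_{\Omega, 0}$ unitary, $V \ni X$ of dimension $p$; after rotating each $\mathbb{B}^n$ factor we may take $X = \sum_i \sqrt{r_i}\, e_1^i$ with $\underline r \in \Delta_m$, so $\langle AX, X\rangle = 2\sum r_i^2$, and $A$ has eigenvalue $2r_i$ on $\mathbb{C} e_1^i$ and $r_i$ on $(e_1^i)^\perp$.

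For the upper bound $C_p \le \min \mathcal{F}$, I would take a minimizer $(\underline r, \Gamma)$ in the normalized form furnished by Proposition~\ref{propCp}(2), so that $I \bydef \{i : (i,1) \in \Gamma\} = \{m-k+1, \ldots, m\}$ and $r_i = 0$ for $i \in I$. Setting $X = \sum_{i \notin I} \sqrt{r_i}\, e_1^i$ and $V = \mathbb{C} X \oplus \operatorname{span}\{e_j^i : (i,j) \in \Gamma\}$, the orthogonality $X \perp e_j^i$ (automatic for $j \ge 2$, and following from $r_i = 0$ when $j = 1$) gives $\dim V = 1 + |\Gamma| = p$. Summing the eigenvalues then yields
\[
\operatorname{tr}(A|_V) \;=\; 2\sum_i r_i^2 \;+\; \sum_{(i,1) \in \Gamma} 2 r_i \;+\; \sum_{(i,j) \in \Gamma,\, j \ge 2} r_i \;=\; \mathcal{F}(\underline r, \Gamma),
\]
the middle sum being zero by construction, so $C_p \le \mathcal{F}(\underline r, \Gamma) = \min \mathcal{F}$.

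For the lower bound $C_p \ge \min \mathcal{F}$, given any $(X, V)$ the plan is to exhibit an admissible $(\underline r, \Gamma)$ with $\mathcal{F}(\underline r, \Gamma) \le \operatorname{tr}(A|_V)$. Setting $W \bydef V \cap X^\perp$, we have $\operatorname{tr}(A|_V) = 2\sum r_i^2 + \operatorname{tr}(A|_W)$, and by Courant--Fischer this dominates $2\sum r_i^2$ plus the sum of the $p - 1$ smallest eigenvalues of $A|_{X^\perp}$. These eigenvalues split into three types: (a) $r_i$ with multiplicity $n - 1$ from each $(e_1^i)^\perp$ with $r_i > 0$; (b) zeros on the factors where $r_i = 0$; and (c) the ``mixed'' eigenvalues $\mu_1, \ldots, \mu_{s-1}$ of the restriction of $\operatorname{diag}(2 r_i)$ to $\operatorname{span}(e_1^i)_{r_i > 0} \cap X^\perp$, satisfying $\sum_j \mu_j = 2(1 - \sum r_i^2)$ by trace invariance.

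The main obstacle is handling the mixed spectrum (c), which does not appear directly in $\mathcal{F}$. I would address this by an exchange argument: whenever $W$ contains a vector in $\operatorname{span}(e_1^i)_{r_i > 0} \cap X^\perp$, one modifies $(X, \underline r)$ by contracting the mass of $\underline r$ onto fewer coordinates (introducing extra zeros) and simultaneously replaces the mixed vector by a pure eigenvector $e_1^{i_0}$ with $r_{i_0} = 0$; using the identity $\sum \mu_j = 2 - 2\sum r_i^2$ one checks that this operation does not increase the corresponding $\mathcal{F}$-value. After iteration one is reduced to $W = \operatorname{span}\{e_j^i : (i,j) \in \Gamma\}$ for some $\Gamma$ with $|\Gamma \cap \text{col } 1| \le m - 1$ (at most $m - 1$ of the $e_1^i$ can lie in $X^\perp$ as $X$ has positive mass on some factor), and the upper-bound computation gives $\operatorname{tr}(A|_V) = \mathcal{F}(\underline r, \Gamma) \ge \min \mathcal{F}$. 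The delicate point will be verifying that the exchange preserves the admissibility constraints, namely the descending order $r_1 \ge \cdots \ge r_m$ and the bound $|\Gamma \cap \text{col } 1| \le m - 1$.
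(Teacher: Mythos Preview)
Your upper bound $C_p \le \min \mathcal F$ is exactly the paper's argument: use the normalized minimizer from Proposition~\ref{propCp}(2) to build an explicit $(X,V)$ realizing $\mathcal F(\underline r,\Gamma)$.

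For the lower bound your route diverges from the paper's and runs into the ``mixed spectrum'' difficulty you yourself flag as delicate. The paper sidesteps this completely by a simple observation. Since $(X,V)$ is assumed optimal and the splitting
\[
X^\perp \;=\; \Bigl(\bigoplus_i \mathbb C\,e_1^i\Bigr)\cap X^\perp \;\;\oplus\;\; \bigoplus_{i}\bigoplus_{j\ge 2}\mathbb C\,e_j^i
\]
is into $A$-invariant pieces, the minimizing $W = V\cap X^\perp$ may be taken to split as $W_1 \oplus W_2$ accordingly, with $k := \dim W_1 \le m-1$. Now the key point: rather than diagonalize $A$ on $X^\perp$, regard $W_1$ as a $k$-dimensional subspace of the \emph{full} $m$-dimensional space $\bigoplus_i \mathbb C\,e_1^i$, on which $A$ has the pure eigenvalues $2r_1,\dots,2r_m$. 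Ky Fan's inequality gives directly
\[
\operatorname{tr}(A|_{W_1}) \;\ge\; \text{(sum of the $k$ smallest $2r_i$)},
\]
and combined with the analogous bound on $\operatorname{tr}(A|_{W_2})$ this is already $\ge \mathcal F(\underline r,\Gamma)$ for an appropriate $\Gamma$ with $k$ first-column entries (when $k = m-1$ one uses instead $\operatorname{tr}(A|_{\mathbb C X \oplus W_1}) = \sum_i 2r_i = 2$, matching the first branch of Definition~\ref{defiF}). No mixed eigenvalues ever appear, and no exchange is needed.

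Your proposed exchange procedure --- contracting the mass of $\underline r$ while swapping mixed eigenvectors for pure $e_1^{i_0}$ --- may be completable, but as written it is only a plan: you have not verified that each step weakly decreases the relevant quantity, nor that the ordering $r_1 \ge \cdots \ge r_m$ and the column bound survive the iteration. The paper's one-line idea (apply Ky Fan in the ambient first-column space rather than in $X^\perp$) renders all of this unnecessary.
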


	The proof is the same as in \cite{cad18}, so we will only sketch it briefly.
\begin{lemma}
	We have $C_p \geq \min\limits_{\underline{r}, \Gamma} \mathcal F(\underline{r}, \Gamma)$, where $\underline{r} \in \Delta_m$, and $\Gamma \subset \llbracket 1, m \rrbracket \times \llbracket 1, n \rrbracket$ runs among the cardinal $p-1$ subsets with less that $m-1$ elements on the first column.
\end{lemma}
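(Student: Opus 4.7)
The strategy is to realize the lower bound by exhibiting, for a judiciously chosen minimizer $(\underline{r}, \Gamma)$ of $\mathcal F$, an explicit pair $(X, V)$—a unit vector $X \in T_{\Omega, 0}$ together with a $p$-dimensional subspace $V$ containing $X$—such that $\operatorname{tr}_V(-B_0(X, \cdot)) \geq \mathcal F(\underline{r}, \Gamma)$. Since $C_p$ is by definition the supremum of such traces over admissible pairs, this immediately yields the claim.

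By Proposition \ref{propCp}(2), I may restrict attention to a minimizer $(\underline{r}, \Gamma)$ whose first-column entries are precisely $(m - k + 1, 1), \ldots, (m, 1)$ and for which $r_{m-k+1} = \ldots = r_m = 0$. Set
$$X := \sum_{i=1}^{m - k} \sqrt{r_i}\, e_1^i \in T_{\Omega, 0}, \qquad V := \mathbb C\, X \,\oplus\, \bigoplus_{(i,1) \in \Gamma} \mathbb C\, e_1^i \,\oplus \bigoplus_{\substack{(i,j) \in \Gamma \\ j \geq 2}} \mathbb C\, e_j^i.$$
Then $X$ is of unit norm (since $\sum_i r_i = 1$), the sum defining $V$ is direct (because $X$ is supported on the vectors $e_1^i$ with $i \leq m - k$, hence is orthogonal to every remaining summand, while those remaining summands are themselves pairwise orthogonal eigenvectors of $-B_0(X, \cdot)$), and $\dim V = 1 + k + (p - 1 - k) = p$. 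Moreover the form $-B_0(X, \cdot)$ decomposes along the $m$ factors of $T_\Omega = \bigoplus_i T_{\mathbb B^n, 0}$, and in each factor is diagonal in the basis $\{e_1^i, \ldots, e_n^i\}$ with eigenvalue $2 r_i$ along $e_1^i$ and eigenvalue $r_i$ along $e_j^i$ for $j \geq 2$.

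Computing the trace in the natural orthonormal basis of $V$ therefore yields
$$\operatorname{tr}_V(-B_0(X, \cdot)) \;=\; 2 \sum_{i=1}^m r_i^2 \;+\; 2 \sum_{(i,1) \in \Gamma} r_i \;+\; \sum_{\substack{(i,j) \in \Gamma \\ j \geq 2}} r_i \;=\; \mathcal F(\underline{r}, \Gamma),$$
the second sum vanishing because $r_i = 0$ whenever $(i, 1) \in \Gamma$, by the choice of minimizer. The boundary case $k = m - 1$ is handled by the same construction with $r_1 = 1$ and the remaining $r_i$ set to zero; a direct inspection then shows $\operatorname{tr}_V(-B_0(X, \cdot)) \geq 2 = \mathcal F(\underline r, \Gamma)$. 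The only real difficulty is combinatorial bookkeeping: one must arrange, via Proposition \ref{propCp}(2), that the first-column indices of $\Gamma$ coincide with the indices at which $r_i$ vanishes, so that the corresponding eigenvectors $e_1^i$ are automatically in $X^\perp$ and can be freely included as an orthogonal summand of $V$. Once this alignment is secured the remaining computation is mechanical.
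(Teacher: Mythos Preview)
Your argument proves the wrong inequality. By definition \eqref{eqCp},
\[
C_p = -\max_{X}\max_{V\ni X}\sum_i B(X,e_i) = \min_{X}\min_{V\ni X}\operatorname{tr}_V\bigl(-B_0(X,\cdot)\bigr),
\]
so $C_p$ is an \emph{infimum} of traces, not a supremum. Exhibiting one particular pair $(X,V)$ with $\operatorname{tr}_V(-B_0(X,\cdot)) = \mathcal F(\underline r,\Gamma)=\min\mathcal F$ therefore yields $C_p \le \min\mathcal F$, which is precisely the content of the \emph{next} lemma in the paper, not this one. Your construction is in fact essentially the paper's proof of that next lemma.

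To prove the present inequality $C_p \ge \min\mathcal F$, you must instead start from the pair $(X,V)$ realizing the extremum in the definition of $C_p$ (so that $C_p = \operatorname{tr}_V(-B_0(X,\cdot))$ exactly), and show that this trace dominates $\mathcal F(\underline r,\Gamma)$ for \emph{some} admissible $(\underline r,\Gamma)$. The paper does this by writing $V=\mathbb C X\oplus W$, decomposing $W=W_1\oplus W_2$ according to the eigenspace splitting $T_{\Omega,0}=\bigoplus_i \mathbb C e_1^i \oplus \bigoplus_i \operatorname{Vect}(e_2^i,\dots,e_n^i)$, and then lower-bounding $-\operatorname{Tr} B_0(X,\cdot)|_{W_1}$ and $-\operatorname{Tr} B_0(X,\cdot)|_{W_2}$ by the sums $S_1,S_2$ of the smallest available eigenvalues; these sums are indexed by a subset $\Gamma$ of the prescribed shape, and the resulting lower bound is exactly $\mathcal F(\underline r,\Gamma)$ where $\underline r$ records the squared components of $X$.
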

\begin{proof}

	We can decompose $W = W_1 \overset{\perp}{\oplus} W_2$, where 
	$$W_1 \subset \overset{\perp}{\bigoplus\limits_{1 \leq i \leq m}} \mathbb C e_1^i,\; \text{and} \; W_2 \subset \overset{\perp}{\bigoplus\limits_{1 \leq i \leq m}} {\rm Vect}(e_2^i, ..., e_n^i).$$ Let $k = \dim W_1$. By the description above of the eigenvalues of $B_0^{\Omega}(X, \cdot)$, we see that $W_2$ is spanned by $p-1 -k$ eigenvectors corresponding to the eigenvalues $r_i$ ($1 \leq i \leq m$).

	Let $S_1$ be the sum of the $k$ smallest of the $2r_i$, and $S_2$ be the sum of the $k$-th smallest of the eigenvalues of $-B_0(X, \cdot)$ on $W_2$. Then
	$$
	\begin{aligned}
		C_p & = -B_0(X, X) - {\rm Tr} B_0(X, \cdot)|_{W_1} -  {\rm Tr} B_0(X, \cdot)|_{W_2} \\
		& \geq - B_0(X, X) + S_1 + S_2 = 2 \sum_{i \geq i} r_i^2 + S_1 + S_2.
	\end{aligned}
	$$
	The eigenvalues appearing in $S_1$ and $S_2$ can be indexed by a subset $\Gamma \subset \llbracket 1, m \rrbracket \times \llbracket 1, n \rrbracket$, with $k$-elements of the first column corresponding to the $k$-th smallest $2 r_i$, and the elements $(i, j)$ to the $r_j$ if $j \geq 2$. 
	
	There are two cases to distinguish. First, if $k \leq m -1$, what has just been said shows that $C_p \geq \mathcal F(\underline{r}, \Gamma)$.
	
	Now, if $k = m-1$, then $\mathbb C X \overset{\perp}{\oplus} W_1 = \bigoplus_{i=1}^m \mathbb C \cdot e_1^i$, so
	$$ \begin{aligned}
		-B_0(X, X)   - {\rm Tr} B_0(X, \cdot)|_{W_1} & = {\rm Tr} \left( -B_0(X, \cdot)|_{\bigoplus_{i=1}^m \mathbb C \cdot e_1^i} \right) \\
		& = 2.
	\end{aligned}$$ $C_p$ is equal to the first case of the definition of $\mathcal F$ in Definition \ref{defiF}, so $C_p = \mathcal F(\underline{r}, \Gamma)$. 
\end{proof}

\begin{lemma} We have $\min_{\underline{r}, \Gamma} F(\underline{r}, \Gamma) \geq C_p$. 
\end{lemma}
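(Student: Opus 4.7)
The plan is to establish $\min \mathcal{F} \geq C_p$ by producing, for a minimizer $(\underline{r}_0, \Gamma_0)$ of $\mathcal{F}$ satisfying the structural conditions of Proposition~\ref{propCp}(2), an explicit admissible pair $(X_0, V_0)$ for which the defining quantity $\sum_{l=1}^p (-B_0(X_0, e_l))$ equals $\mathcal{F}(\underline{r}_0, \Gamma_0)$. Since $C_p$ is by definition the infimum of this quantity over admissible $(X, V)$, this immediately yields $C_p \leq \mathcal{F}(\underline{r}_0, \Gamma_0) = \min \mathcal{F}$, as desired.

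The construction is dual to that of the previous lemma. Set $X_0 = \sum_{i=1}^m \sqrt{r_i}\, e_1^i$, a unitary vector of $T_{\Omega, 0}$ since $\sum r_i = 1$, and view each $e_j^i$ with $(i,j) \in \Gamma_0$ as a vector of $T_{\Omega, 0}$ supported in the $i$-th factor. Let $V_0 = \mathbb{C} \cdot X_0 \oplus \mathrm{span}\{e_j^i : (i, j) \in \Gamma_0\}$. The orthogonality of each $e_j^i$ to $X_0$ is immediate when $j \geq 2$; when $j = 1$, Proposition~\ref{propCp}(2) forces $i \geq m - k + 1$ and hence $r_i = 0$, so orthogonality persists. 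Thus $V_0$ is $p$-dimensional with the claimed orthonormal basis.

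Using that $-B_0(X_0, \cdot)$ acts on the $i$-th factor with eigenvalue $2 r_i$ on $\mathbb{C} e_1^i$ and with eigenvalue $r_i$ on each $e_j^i$ for $j \geq 2$, the defining sum becomes
\[
\sum_{l=1}^p -B_0(X_0, e_l) = 2 \sum_{i=1}^m r_i^2 + 2 \sum_{(i, 1) \in \Gamma_0} r_i + \sum_{(i, j) \in \Gamma_0,\, j \geq 2} r_i.
\]
The middle term vanishes because $r_i = 0$ for every $(i, 1) \in \Gamma_0$, and comparison with Definition~\ref{defiF} identifies the remainder with $\mathcal{F}(\underline{r}_0, \Gamma_0)$ in the case $k \leq m - 2$. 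In the remaining case $k = m - 1$, one has $\underline{r}_0 = (1, 0, \ldots, 0)$ and the computation above gives $2$ plus the number of elements $(1, j) \in \Gamma_0$ with $j \geq 2$, which matches the value $d + 2$ by the row-distribution analysis of $\Gamma_0$ carried out in the proof of Proposition~\ref{propCp}.

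The principal subtlety is aligning the eigenvectors $e_j^i$ indexed by $\Gamma_0$ with vectors that are simultaneously orthogonal to $X_0$ and span a $p$-dimensional $V_0$: this is precisely what condition~(2) of Proposition~\ref{propCp} achieves by matching the first-column entries of $\Gamma_0$ with the zero coordinates of $\underline{r}_0$.
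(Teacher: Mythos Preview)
Your proof is correct and follows essentially the same approach as the paper's: construct $X_0 = \sum_i \sqrt{r_i}\, e_1^i$ and $V_0 = \mathbb C X_0 \oplus \mathrm{span}\{e_j^i : (i,j)\in\Gamma_0\}$, use Proposition~\ref{propCp}(2) to ensure $V_0$ has dimension $p$ with $X_0$ orthogonal to the eigenvectors indexed by $\Gamma_0$, and verify that $-\mathrm{Tr}\,B_0(X_0,\cdot)|_{V_0} = \mathcal F(\underline r_0,\Gamma_0)$. Your version is in fact more detailed than the paper's, spelling out the orthogonality check and the separate verification in the case $k=m-1$.
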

\begin{proof}
	Let $\underline{r}$ and $\Gamma$ realizing this minimum. Let $W$ be the $p-1$-dimensional space spanned by the eigenvectors corresponding to the elements of $\Gamma$, and let $X = (\sqrt{r_1} e_1^1, ..., \sqrt{r_m} e_1^m)$. By Proposition \ref{propCp} (2), we see that $W \subset X^{\perp}$, so if we let $V = \mathbb C \oplus W$, we have
	$$
	\begin{aligned}
		- {\rm Tr} B_0(X, \cdot)|_{V} & = - B_0(X, X) - {\rm Tr}\, B_0(X, \cdot)|_{W} \\
		& =  \mathcal F(\underline{r}, \Gamma).
	\end{aligned}
	$$
	As $C_p$ is defined to be the minimum of the left hand side for all $X$ and $V$ with $\dim V= p$ and $X \in V$ unitary, this shows that $\mathcal F(\underline{r}, \Gamma) \geq C_p$.
\end{proof}

Thus, the table \ref{figF} gives the constants $C_p$ with our simplifying normalization. To obtain the table \ref{tablecp}, for which the normalization is chosen so that $C_{nm} = 1$, we must replace $C_p$ by $\frac{C_p}{C_{nm}}$. In our current normalization, we have $C_{nm} = n + 1$ according the the first column of table \ref{figF}. This ends the proof of Proposition \ref{propCpball}.

\bibliographystyle{alpha}
\bibliography{biblio}

\begin{thebibliography}{BPVdV84}

\bibitem[AA03]{AA02}
D.~Arapura and S.~Archava.
\newblock Kodaira dimension of symmetric powers.
\newblock {\em Proceedings of the American Mathematical Society},
  131(5):1369--1372, 2003.

\bibitem[BD18]{BD18}
D.~{Brotbek} and L.~{Darondeau}.
\newblock {Complete intersection varieties with ample cotangent bundles.}
\newblock {\em {Invent. Math.}}, 212(3):913--940, 2018.

\bibitem[Bea83]{beauville1}
A.~Beauville.
\newblock Vari\'{e}t\'{e}s {K}\"{a}hleriennes dont la premi\`ere classe de
  {C}hern est nulle.
\newblock {\em J. Differential Geom.}, 18(4):755--782 (1984), 1983.

\bibitem[Bea91]{beauville2}
A.~Beauville.
\newblock {Syst\`emes Hamiltoniens compl\`etement int\'egrables associ\'es aux
  surfaces K3.}
\newblock In {\em {Problems in the theory of surfaces and their
  classification.}}, pages 25--31. Symposia Mathematica. Academic Press, 1991.

\bibitem[BK19]{BK19}
G.~Bérczi and F.~Kirwan.
\newblock Non-reductive geometric invariant theory and hyperbolicity.
\newblock {\em arXiv:1909.11417}, 2019.

\bibitem[BL00]{BL00}
G.~T. {Buzzard} and S.~{Lu}.
\newblock {Algebraic surfaces holomorphically dominable by {$\mathbb{C}^2$}.}
\newblock {\em {Invent. Math.}}, 139(3):617--659, 2000.

\bibitem[Bog79]{Bog}
F.~A. Bogomolov.
\newblock {Holomorphic tensors and vector bundles on projective varieties}.
\newblock {\em Mathematics of the {USSR}-Izvestiya}, 13(3):499--555, 1979.

\bibitem[BPVdV84]{BPV}
W.~Barth, C.~Peters, and A.~Van~de Ven.
\newblock {\em Compact complex surfaces}, volume~4 of {\em Ergebnisse der
  Mathematik und ihrer Grenzgebiete (3) [Results in Mathematics and Related
  Areas (3)]}.
\newblock Springer-Verlag, Berlin, 1984.

\bibitem[Bro17]{bro17}
D.~Brotbek.
\newblock On the hyperbolicity of general hypersurfaces.
\newblock {\em Publications math{\'e}matiques de l'IH{\'E}S}, 126(1):1--34, Nov
  2017.

\bibitem[BT18]{BT18}
B.~Bakker and J.~Tsimerman.
\newblock {The Kodaira dimension of complex hyperbolic manifolds with cusps}.
\newblock {\em Compositio Mathematica}, 154(3):549–564, 2018.

\bibitem[Cad18]{cad18}
B.~Cadorel.
\newblock Subvarieties of quotients of bounded symmetric domains.
\newblock 2018.

\bibitem[Cam04]{Cam04}
F.~Campana.
\newblock Orbifolds, special varieties and classification theory.
\newblock {\em Annales de l'Institut Fourier}, 54(3):499--630, 2004.

\bibitem[CDG19]{CDG19}
B.~Cadorel, S.~Diverio, and H.~Guenancia.
\newblock On subvarieties of singular quotients of bounded domains.
\newblock {\em arXiv:1905.04212}, 2019.

\bibitem[CDR18]{CDR18}
F.~Campana, L.~Darondeau, and E.~Rousseau.
\newblock Orbifold hyperbolicity.
\newblock {\em arXiv:1803.10716}, 2018.

\bibitem[CP07]{CP07}
Fr\'{e}d\'{e}ric Campana and Mihai P\u{a}un.
\newblock Vari\'{e}t\'{e}s faiblement sp\'{e}ciales \`a courbes enti\`eres
  d\'{e}g\'{e}n\'{e}r\'{e}es.
\newblock {\em Compos. Math.}, 143(1):95--111, 2007.

\bibitem[CRT19]{CRT17}
B.~Cadorel, E.~Rousseau, and B.~Taji.
\newblock Hyperbolicity of singular spaces.
\newblock {\em Journal de l'\'Ecole polytechnique --- Math\'ematiques},
  6:1--18, 2019.

\bibitem[CW19]{CW}
F.~Campana and J.~Winkelmann.
\newblock Dense entire curves in rationally connected manifolds.
\newblock {\em arXiv:1905.01104}, 2019.

\bibitem[{Dar}16]{dar16}
L.~{Darondeau}.
\newblock {Slanted vector fields for jet spaces.}
\newblock {\em {Math. Z.}}, 282(1-2):547--575, 2016.

\bibitem[Dem97a]{dem97}
J.-P. Demailly.
\newblock Vari{é}t{é}s hyperboliques et {é}quations diff{é}rentielles
  alg{é}briques.
\newblock {\em Gaz. Math.}, (73):3--23, 1997.

\bibitem[Dem97b]{Dem95}
Jean-Pierre Demailly.
\newblock Algebraic criteria for {K}obayashi hyperbolic projective varieties
  and jet differentials.
\newblock In {\em Algebraic geometry---{S}anta {C}ruz 1995}, volume~62 of {\em
  Proc. Sympos. Pure Math.}, pages 285--360. Amer. Math. Soc., Providence, RI,
  1997.

\bibitem[Dem12]{dem12a}
Jean-Pierre Demailly.
\newblock Hyperbolic algebraic varieties and holomorphic differential
  equations.
\newblock {\em Acta Math. Vietnam.}, 37(4):441--512, 2012.

\bibitem[Dem18]{dem18}
J.-P. Demailly.
\newblock {Recent results on the Kobayashi and Green-Griffiths-Lang
  conjectures. 16th Takagi Lectures}.
\newblock 2018.

\bibitem[Den17]{den17}
Y.~Deng.
\newblock {Effectivity in the hyperbolicity related problems. Chap. 4 of the
  PhD memoir “Generalized Okounkov Bodies, Hyperbolicity-Related and Direct
  Image Problems”}.
\newblock {\em arXiv:1606.03831}, 2017.

\bibitem[DMR10]{DMR10}
S.~{Diverio}, J.~{Merker}, and E.~{Rousseau}.
\newblock {Effective algebraic degeneracy.}
\newblock {\em {Invent. Math.}}, 180(1):161--223, 2010.

\bibitem[Fog68]{F}
John Fogarty.
\newblock Algebraic families on an algebraic surface.
\newblock {\em Amer. J. Math.}, 90:511--521, 1968.

\bibitem[Fre71]{Freitag}
Eberhard Freitag.
\newblock \"{U}ber die {S}truktur der {F}unktionenk\"{o}rper zu hyperabelschen
  {G}ruppen. {I}.
\newblock {\em J. Reine Angew. Math.}, 247:97--117, 1971.

\bibitem[GHS03]{GHS}
T.~Graber, J.~Harris, and J.~Starr.
\newblock Families of rationally connected varieties.
\newblock {\em J. Amer. Math. Soc.}, 16(1):57--67, 2003.

\bibitem[GKKP10]{GKKP}
D.~Greb, S.~Kebekus, S.~Kovács, and T.~Peternell.
\newblock Differential forms on log canonical spaces.
\newblock {\em Publications mathématiques de l'IHÉS}, 114, 03 2010.

\bibitem[Gro62]{Gro}
Alexander Grothendieck.
\newblock {\em Fondements de la g\'{e}om\'{e}trie alg\'{e}brique. [{E}xtraits
  du {S}\'{e}minaire {B}ourbaki, 1957--1962.]}.
\newblock Secr\'{e}tariat math\'{e}matique, Paris, 1962.

\bibitem[HT00a]{HaT}
Joe Harris and Yuri Tschinkel.
\newblock Rational points on quartics.
\newblock {\em Duke Math. J.}, 104(3):477--500, 2000.

\bibitem[HT00b]{HT00}
J.-M Hwang and W.-K To.
\newblock {On Seshadri constants of canonical bundles of compact quotients of
  bounded symmetric domains}.
\newblock {\em Journal f\"ur die reine und angewandte Mathematik},
  2000:173--197, 06 2000.

\bibitem[HT01]{HT}
B.~Hassett and Y.~Tschinkel.
\newblock Density of integral points on algebraic varieties.
\newblock In {\em Rational Points on Algebraic Varieties}, pages 169--197.
  Birkh{\"a}user Basel, Basel, 2001.

\bibitem[Kob98]{Kobayashi}
S.~Kobayashi.
\newblock {\em Hyperbolic complex spaces}, volume 318 of {\em Grundlehren der
  Mathematischen Wissenschaften}.
\newblock Springer-Verlag, Berlin, 1998.

\bibitem[Kol95]{kol95}
J.~Koll{\'a}r.
\newblock {\em {Shafarevich Maps and Automorphic Forms}}.
\newblock Porter Lectures. Princeton University Press, 1995.

\bibitem[Laz04]{lazpos2}
R.K. Lazarsfeld.
\newblock {\em Positivity in algebraic geometry, II}.
\newblock {Ergebnisse der Mathematik und ihrer Grenzgebiete : a series of
  modern surveys in mathematics. Folge 3}. Springer Berlin Heidelberg, 2004.

\bibitem[Lev]{L}
A.~Levin.
\newblock On the geometric and arithmetic puncturing problems.

\bibitem[Mat68]{mat68}
A.~Mattuck.
\newblock The field of multisymmetric functions.
\newblock {\em Proceedings of the American Mathematical Society},
  19(3):764--765, 1968.

\bibitem[MM86]{Miyaoka-Mori}
Y.~Miyaoka and S.~Mori.
\newblock A numerical criterion for uniruledness.
\newblock {\em Ann. of Math. (2)}, 124(1):65--69, 1986.

\bibitem[Mok89]{mok89}
N.~Mok.
\newblock {\em {Metric rigidity theorems on Hermitian locally symmetric
  manifolds}}.
\newblock Pure Mathematics Series. World Scientific, 1989.

\bibitem[Mok12]{mok12}
Ngaiming Mok.
\newblock Projective algebraicity of minimal compactifications of
  complex-hyperbolic space forms of finite volume.
\newblock In {\em Perspectives in analysis, geometry, and topology}, volume
  296, pages 331--354. Birkh\"auser/Springer, New York, 2012.

\bibitem[Muk84]{Mu}
S.~Mukai.
\newblock {Symplectic structure of the moduli space of sheaves on an abelian or
  K3 surface.}
\newblock {\em Inventiones mathematicae}, 77:101--116, 1984.

\bibitem[Mum77]{mum77}
D.~Mumford.
\newblock {Hirzebruch's Proportionality Theorem in the Non-Compact Case}.
\newblock {\em Inv. math.}, 42, 1977.

\bibitem[Rei79]{reid79}
M.~Reid.
\newblock Journ{\'e}es de g{\'e}om{\'e}trie alg{\'e}brique d'{A}ngers (juillet
  1979) {Algebraic Geometry Angers} 1979.
\newblock 1979.

\bibitem[RTJ20]{RTW}
E.~Rousseau, A.~Turchet, and Wang J.
\newblock Nonspecial varieties and generalized lang-vojta conjectures.
\newblock {\em arXiv:2001.10229}, 2020.

\bibitem[RY18]{RY18}
E.~Riedl and Y.~Yang.
\newblock Applications of a grassmannian technique in hypersurfaces.
\newblock {\em arXiv:1806.02364}, 2018.

\bibitem[SY96]{SY96}
Y.-T. Siu and S.~K. Yeung.
\newblock Hyperbolicity of the complement of a generic smooth curve of high
  degree in the complex projective plane.
\newblock {\em Invent. Math.}, 124:573–618, 1996.

\bibitem[Tai82]{tai82}
Y.-S. Tai.
\newblock {On the Kodaira dimension of the moduli space of Abelian varieties.}
\newblock {\em {Invent. Math.}}, 68:425--439, 1982.

\bibitem[Wei86]{wei86}
R.~Weissauer.
\newblock {Untervarietäten der Siegelschen Modulmannigfaltigkeiten von
  allgemeinem Typ}.
\newblock {\em Mathematische Annalen}, 275:207--220, 1986.

\bibitem[{Xie}18]{xie18}
S.-Y. {Xie}.
\newblock {On the ampleness of the cotangent bundles of complete
  intersections.}
\newblock {\em {Invent. Math.}}, 212(3):941--996, 2018.

\bibitem[Yam04]{Yam04}
Katsutoshi Yamanoi.
\newblock Holomorphic curves in abelian varieties and intersections with higher
  codimensional subvarieties.
\newblock {\em Forum Math.}, 16(5):749--788, 2004.

\end{thebibliography}

\end{document}